\providecommand{\U}[1]{\protect\rule{.1in}{.1in}}
\providecommand{\U}[1]{\protect\rule{.1in}{.1in}}
\providecommand{\U}[1]{\protect\rule{.1in}{.1in}}
\providecommand{\U}[1]{\protect\rule{.1in}{.1in}}
\providecommand{\U}[1]{\protect\rule{.1in}{.1in}}
\numberwithin{equation}{section}
\newtheorem{theorem}{Theorem}[section]
\newtheorem{corollary}[theorem]{Corollary}
\newtheorem{definition}[theorem]{Definition}
\newtheorem{lemma}[theorem]{Lemma}
\newtheorem{assumption}[theorem]{Assumption}
\newtheorem{remark}[theorem]{Remark}
\newenvironment{proof}[1][Proof]{\noindent\textbf{#1.} }{\ \rule{0.5em}{0.5em}}
\newcommand{\ere}{ {\mathbb R}}
\newcommand{\beq}{\begin{equation}}
\newcommand{\ene}{\end{equation}}
\newcommand{\er}{\mathbb R^+}
\def\p2{\mathcal A_{\Phi,2\pi}(B)}
\def\0p2{\mathcal A_{\Phi,2\pi}(0)}
\def\sp2{\mathcal A_{\Phi,2\pi,\hbox{\rm SR}}(B)}
\def\beq{\begin{equation}}
\def\ene{\end{equation}}
\def\qed{\ifhmode\unskip\nobreak\fi\ifmmode\ifinner
\else\hskip5pt\fi\fi\hbox{\hskip5pt\vrule width4pt height6pt
depth1.5pt\hskip1pt}}
\def\+out{x^{\rm out}}
\begin{document}

\title{The Matrix Nonlinear Schr\"{o}dinger Equation with a Potential.\thanks{ 2010
AMS Subject Classifications: 34L10; 34L25; 34L40; 47A40 ; 81U99.}
\thanks{Research partially supported by projects PAPIIT-DGAPA UNAM IN103918,
IN100321, IA100422, and CONACYT-FORDECYT-PRONACES 429825/2020. } }
\author{Ivan Naumkin\thanks{Fellow, Sistema Nacional de Investigadores. ivan.naumkin@iimas.unam.mx} and Ricardo Weder\thanks{Emeritus Fellow, 
Sistema Nacional de Investigadores. weder@unam.mx. www.iimas.unam.mx/rweder/rweder.html }\\Departamento de F\'{\i}sica Matem\'{a}tica,\\Instituto de Investigaciones en Matem\'{a}ticas Aplicadas y en Sistemas.\\Universidad Nacional Aut\'{o}noma de M\'{e}xico,\\Apartado Postal 20-126, Ciudad de M\'{e}xico, 01000, M\'{e}xico.}
\date{}
\maketitle

\begin{abstract}
\noindent This paper is devoted to the study of the large-time asymptotics of
the small solutions to the matrix nonlinear Schr\"{o}dinger equation with a
potential on the half-line and with general selfadjoint boundary condition,
and on the line with a potential and a general point interaction, in the whole
supercritical regime. We prove that the small solutions are \textit{scattering
solutions} that asymptotically in time, $t \to\pm\infty, $ behave as solutions
to the associated linear matrix Schr\"odinger equation with the potential
identically zero. The potential can be either \textit{generic or exceptional}.
Our approach is based on detailed results on the spectral and scattering
theory for the associated linear matrix Schr\"{o}dinger equation with a
potential, and in a factorization technique that allows us to control the
large-time behaviour of the solutions in appropriate norms.

\end{abstract}

\section{\label{S1}Introduction}

In this article we study the large-time asymptotics of the small solutions to
the nonlinear Schr\"{o}dinger equation on the half-line with general
selfadjoint boundary condition,
\begin{align}
&  i\partial_{t}u=-\partial_{x}^{2}u+Vu+\mathcal{N}\left(  \left\vert
u_{1}\right\vert ,...,\left\vert u_{n}\right\vert \right)  u,\ t\in
\mathbb{R},\,x\in\mathbb{R}^{+},\label{1.1}\\
&  u\left(  0,x\right)  =u_{0}\left(  x\right)  ,\text{ }x\in\mathbb{R}%
^{+},\label{1.1.ini}\\
&  -B^{\dagger}u\left(  t,0\right)  +A^{\dagger}\left(  \partial_{x}u\right)
\left(  t,0\right)  =0, \label{1.1.bound}%
\end{align}
where $\mathbb{R}^{+}:=(0,+\infty),$ $u(t,x)=\left(  u_{1}\left(  t,x\right)
,...,u_{n}\left(  t,x\right)  \right)  $ is a function from $\mathbb{R}%
\times\mathbb{R}^{+}$ into $\mathbb{C}^{n},$ $A,B$ are constant $n\times n$
matrices, the potential $V(x)$ is an $n\times n$ selfadjoint matrix-valued
function of $x$, i.e.%
\begin{equation}
V\left(  x\right)  =V^{\dagger}\left(  x\right)  ,\text{ }x\in\mathbb{R}^{+},
\label{PotentialHermitian}%
\end{equation}
where the dagger denotes the matrix adjoint and $\mathcal{N}$ is a $n\times n
$ matrix-valued function. The more general selfadjoint boundary condition at
$x=0$ can be written as in \eqref{1.1.bound} where the constant matrices $A;B
$ satisfy, (\cite{WederBook})
\begin{equation}
B^{\dagger}A=A^{\dagger}B, \label{wcon1}%
\end{equation}
and%
\begin{equation}
A^{\dagger}A+B^{\dagger}B>0. \label{wcon2}%
\end{equation}
The theory of the matrix Schr\"{o}dinger equation has its origin at the very
beginning of quantum mechanics. It is important to consider particles with
internal structure such as spin and isospin, and systems of particles such as
collections of atoms molecules and nuclei. A well known example is the Pauli
equation for a half-spin particle. The matrix Schr\"{o}dinger equation is also
relevant in the theory of quantum graphs, that consist of edges that meet at
vertices. The dynamics at each edge is governed by the Schr\"{o}dinger
equation. The matrix Schr\"{o}dinger equation corresponds to a star graph,
that has one vertex and a finite number of semi-infinite edges. Quantum graphs
are important in many problems. For example, in quantum wires, in the design
of elementary gates in quantum computing, and in nanotubes for microscopic
electronic devices. The consideration of general boundary conditions at the
vertices is relevant. For quantum graphs it is crucial that the boundary
condition links the values of the wave-functions and its derivative arriving
from different edges. For our purposes it is also crucial that we study the
general boundary condition because it allows us to consider the case of the
line by means of the unitary equivalence between a $2n\times2n$ system on the
half-line with a potential, and a general boundary condition, and a $n\times n$
system on the line with a potential and a point interaction. The case when the
system on the line has no point interaction appears as a particular case of
the boundary condition of the system on the half-line. General references for
quantum graphs are, for example, the monographs \cite{Berkolaio} and
\cite{hora}. For more information on matrix Scr\"{o}dinger equations and
related problems, as well as for a detailed discussion of the literature see
\cite{WederBook}.

After the seminal work of I. E. Sigal \cite{Segal1}, \cite{Segal2},
\cite{Segal3}, there is a very extensive mathematical literature in nonlinear
evolution equations, and in particular in nonlinear Schr\"odinger,
Klein-Gordon and wave equations. For general references see, for example, the
monographs, \cite{bourgain}, \cite{Cazenave}, \cite{Ginibre}, \cite{racke},
\cite{straussl}, and \cite{Sulem}. General references in the case of
integrable equations, solitons, and the inverse scattering transform are, for
example, \cite{ablow}, \cite{segur}, \cite{fokas}, and \cite{nov}.
%The matrix Schr\"{o}dinger equations are  important  in the  quantum mechanics
%of particles with internal structure,and also in  quantum graphs and quantum
%\textcolor{red}{referencias del libro}
%wires (see, for example, \cite{Behrndt,Berkolaio,Boman}, \cite{Gutkin}, \cite{Kostrykin1}%
%-\cite{Kurasov2}) as they correspond to a star graph describing the behavior
%of $n$ connected very thin quantum wires that form a graph with only one
%vertex and a finite number of edges of infinite length. The problems such as
%(\ref{1.1}) are relevant from the physical point of view since they appear in
%the design of elementary gates in quantum computing and nanotubes for
%\textcolor{red}{escribir esto como en el libro}
%microscopic electronic devices, where, for example, strings of atoms may form
%a star-shaped graph. For quantum graphs, the relevant boundary conditions are
%the one that link the values, at the different edges, of the wave function and
%of the first derivative. As an example of such a condition is the Kirchoff
%boundary condition, which amounts to the continuity of the wave function and
%the conservation of the current at the vertex. Since a quantum graph is an
%idealization of wires with small cross section that meet at vertices, it is
%obtained as the limit when this cross section goes to zero. The boundary
%condition on the graph's vertices depends on how this limit is taken and, in
%principle, all of the boundary condition in (\ref{1.1}) can appear. This
%highlights the importance of considering the more general selfadjoint boundary conditions.
A paradigmatic model of nonlinear evolution equation is the following
nonlinear Schr\"odinger equation,%

\begin{equation}
i\partial_{t}u= -\Delta u+ \lambda\left\vert u\right\vert ^{\alpha}u,\text{
}t\in\mathbb{R},\text{ }x\in\mathbb{R}^{d}, \lambda\in\mathbb{R}.
\label{FreeNLS}%
\end{equation}
The qualitative description of the solutions to (\ref{FreeNLS}) is far from
being complete. Up to now, various classes of solutions are known. On the one
hand, there exist the so-called low-energy scattering solutions, which
asymptotically in time $t\rightarrow\pm\infty$ behave as solutions to the
linear Schr\"{o}dinger equation $i\partial_{t}u+\Delta u=0$ (\cite{CazenaveW1,
CN, CN1, GinibreV1, GinibreV2,GinibreOV,
haka1998,HayashiNau1,HayashiNau2,HNST,H-O,NakanishiO,Ozawa1,Strauss,
Strauss2}). On the other hand, there are multisoliton like solutions, which
asymptotically behave like decoupled non trivial trains of solitons. The
soliton like solutions were first explicitly computed in the completely
integrable case $d=1$, $\alpha=2$, and then systematically constructed in more
general settings in \cite{Martel1,MartelMerle,Merle}. A general approach to
the scattering problem for nonlinear equations is presented in \cite{Segal1}%
-\cite{Segal3}, \cite{Strauss,Strauss2} (see also
\cite{Cazenave,Ginibre,Sulem}).

Concerning the low-energy scattering of solutions, the cubic nonlinearity
$\alpha=2$ in dimension $d=1$ is a limiting case. The following is known (see,
for example, \cite{CN,CN1} for an overview, and the references there in). In
the case when $\alpha>2$, there is low-energy scattering, that is, a solution
to~(\ref{FreeNLS}) with sufficiently small initial value (in some appropriate
sense) behaves similar to a solution to the linear Schr\"{o}dinger equation
when $t \rightarrow\pm\infty$. On the other hand, if $\alpha\leq2$, the
low-energy scattering for (\ref{FreeNLS}) cannot be expected \cite{Barab}. In
the limiting case $\alpha=2$ there is modified low-energy scattering instead,
i.e. small (in an appropriate sense) solutions behave like solutions to the
linear Schr\"{o}dinger equation modulated by a phase when $t \rightarrow
\pm\infty$.

A natural generalization of (\ref{FreeNLS}) for $d=1,$ is the nonlinear
Schr\"{o}dinger equation with a potential
\begin{equation}
\left\{
\begin{array}
[c]{c}%
i\partial_{t}u=-\partial_{x}^{2}u+Vu+\lambda\left\vert u\right\vert ^{\alpha
}u,\text{ }t\in\mathbb{R},\text{ }x\in\mathbb{R}, \lambda\in\mathbb{R},\\
u\left(  0,x\right)  =u_{0}\left(  x\right)  ,\text{ }x\in\mathbb{R},
\end{array}
\right.  \label{NLS}%
\end{equation}
where the potential $V\left(  x\right)  $ is a real-valued function. This
equation is related to the Ginzburg-Landau equation of superconductivity
\cite{deGennes}, to one-dimensional self-modulation of a monochromatic wave
\cite{Yajima}, \cite{Taniuti}, propagation of a heat pulse in a solid Langmuir
waves in plasmas \cite{Shimizu}, the self-trapping phenomena of nonlinear
optics \cite{Karpman}, and stationary two-dimensional self-focusing of a plane
wave \cite{Bespalov}. Equation (\ref{NLS}) is an interesting mathematical
question that requires the development of new methods. Besides, this type of
problems arises in the asymptotic stability for special solutions of nonlinear
dispersive equations, such as solitons, traveling waves, kinks. The nonlinear
equations with external potentials appear in the analysis of the full problem
around these special solutions. Equation (\ref{NLS}) is a prototypical model
of a nonlinear system in a presence of external potentials. We refer to
\cite{Cuccagna1} for a nice survey on asymptotic stability of ground states of
nonlinear Schr\"{o}dinger equations. The first results in low-energy
scattering for (\ref{NLS}) in dimension $d=1$ were obtained in
\cite{Weder2000}, and \cite{Weder2001}, where general non-homogenous
nonlinearities were considered, and also the inverse scattering problem of
uniquely reconstruction the nonlinearity and the potential was solved. In
\cite{Weder2000, Weder2001} it is proved that the nonlinear scattering
operator is a homeomorphism from some neighborhood of $0$ in $L^{2}%
({\mathbb{R}})$ onto itself for a class of weighted $L^{1}({\mathbb{R}})$
potentials. In the case of \eqref{NLS} the results of \cite{Weder2000,
Weder2001} correspond to $4\leq\alpha<\infty.$ See also \cite{WederPAMS} for
the case of two or more dimensions. For $2<\alpha<4$, in the case of
\textit{generic potentials,} the existence of free scattering is proved in
\cite{Cuccagna}. The existence of modified scattering in the critical case
$\alpha=2$ for \textit{generic potentials} was studied in \cite{Delort,
Germain,Ivan}. Whereas in \cite{Pusateri,Delort, Ivan1}, the same problem is
studied for the case of \textit{exceptional potentials} under further symmetry
assumptions on the initial data and the potential. The case of a potential
that gives rise to a partial quadratic confinement is considered in
\cite{Carles3}. Up to now, as far as we know, the case of \textit{general
exceptional potentials} remains open and it is not known whether the symmetry
assumptions needed in \cite{Pusateri, Delort, Ivan1} are technical or not. The
scattering problem for nonlinear dynamics in presence of trapping potentials,
i.e. potentials that produce negative eigenvalues, is an interesting and
important problem. In \cite{Wedercenter}, for the nonlinear Schr\"{o}dinger
equation on the line, with a potential that produces one eigenvalue, and a
general nonhomogeneous nonlinearity, a center manifold was constructed, and
the asymptotic stability of nonlinear bound states was proven. For the
nonlinear Schr\"{o}dinger equation on the line, in the case when $\alpha>4,$
and when the potential has exactly one negative eigenvalue, the asymptotic
stability of small solitary waves was studied in \cite{Mizumachi}. In the
critical case $\alpha=2,$ the modified scattering in presence of one bound
state for \textit{generic potentials} was recently studied in \cite{Chen}, and
in presence of any number of bound states in \cite{Cuccagna2}. It is also
worth to mention that the long-time behavior of solutions to a focusing
one-dimensional nonlinear Schr\"{o}dinger equation with a cubic nonlinearity
and a Dirac potential was studied in \cite{deift}. In the related problem of the nonlinear Klein-Gordon equation with a
potential, the first results in direct and inverse scattering for small
solutions were obtained in \cite{Wederkg1} and \cite{Wederkg}. For recent
results in this direction see, \cite{Soffer} and \cite{Soffer1}.

In this paper we study the large-time asymptotics of small solutions in the
whole supercritical case ($\alpha>2$) for the nonlinear matrix Schr\"{o}dinger
equation with a potential on the half-line (\ref{1.1})-\eqref{1.1.bound} for
both \textit{generic and exceptional potentials}. Besides being important on
its own, as mentioned above, we also have the following motivation to consider
this problem. For the nonlinear scalar Schr\"{o}dinger equation on the line,
all of the results mentioned above, \cite{Pusateri, Delort, Ivan1}, for the
case of \textit{exceptional potentials} use in one way or another the symmetry
of the equation, and in particular of the potential, and the symmetry or antisymmetry of the solution, with respect to $x=0$. A useful way to understand
the nonlinear Schr\"{o}dinger equation on the line without any symmetry
assumptions, is to break the symmetry restricting the problem to the half-line
and to consider the large-time asymptotics of the solutions to the matrix
nonlinear Schr\"{o}dinger equation on the half-line with \textit{generic and
exceptional potentials}, and with general boundary condition at $x=0.$ Then, as
mentioned above, we obtain our results for the matrix Schr\"{o}dinger equation
on the line using the unitary equivalence between $2n\times2n$ matrix
Scr\"{o}dinger equations on the half-line with general self-adjoint boundary
condition, and $n\times n$ matrix Schr\"{o}dinger equations on the line with
point interactions. As we already remarked, this is another important reason
to study (\ref{1.1})-\eqref{1.1.bound} with general selfadjoint boundary condition. Up to our
knowledge, the only previous results for (\ref{1.1})-\eqref{1.1.bound} were
obtained in \cite{Wederhalf}, and \cite{Wederhalfscat} in the scalar forced
case, and in particular for the Dirichlet boundary condition. In these papers a
general nonhomogenous nonlinearity was considered. In \cite{Wederhalf} the
Cauchy problem was studied, and in \cite{Wederhalfscat} it is proved that the
nonlinear scattering operator is a homeomorphism from some neighborhood of $0$
in $L^{2}(\mathbb{R}^{+})$ onto itself for a class of weighted $L^{1}%
(\mathbb{R}^{+})$ potentials, and also the inverse scattering problem of the
uniquely reconstruction the nonlinearity and the potential was solved.
Recently, \cite{RicardoIvan} obtained dispersive estimates for the linear
matrix Schr\"{o}dinger equation on the half-line with general selfadjoint
boundary condition, and on the line with point interactions, and with
potentials that are integrable and have a finite first moment. Using these
estimates it is possible to study the scattering operator for small solutions
of (\ref{1.1})-\eqref{1.1.bound} for nonlinear interactions of order
$\alpha\geq4,$ using the methods of \cite{Wederhalfscat}. However, here we are
interested in nonlinear interactions in the whole supercritical regime
$\alpha>2.$

\subsection{Main results.}

Before we state our main results we introduce some notations that we use.

\subsubsection{Notation.}

By $\mathbb{C}$ we denote the complex numbers. For a vector $u= (u_1,\dots, u_n)$ in
$\mathbb{C}^{n},$ we denote by $u_{j},j=1,\dots,n,$ its components. For any
matrix $ L $ we denote by $L^{T}$ its transpose. For a linear operator $L$ in
a Hilbert space we denote by $D[L]$ its domain, by $L^{\dagger}$ the adjoint
of $L$ and by $|L|$ the norm of $L.$ We also use this notation in the particular
case where $L$ is a $n\times n$ matrix. For an open set $U$ of real numbers we
denote by $L^{p}(U;\mathbb{C}^{n})$, for $1\leq p\leq\infty,$ the standard
Lebesgue spaces of $\mathbb{C}^{n}$ valued functions. Further, we designate by
$C^{m}(U;\mathbb{C}^{n})$ the functions with continuous derivatives up to
order $m,m=0,1\dots.$  In the case $m=0$ we use the notation  $C(U; \mathbb C^n)= C^0(U; \mathbb C^n).$  For an integer $m\geq0$ and a real number $1\leq
p\leq\infty,$ $W_{m,p}\left(  U;\mathbb{C}^{n}\right)  $ denotes the standard
Sobolev space. See e.g.~\cite{adams} for the definitions and properties of
these spaces. The space $W_{m,p}^{\left(  0\right)  }\left(  U;\mathbb{C}%
^{n}\right)  $ is the closure of $C_{0}^{\infty}\left(  U;\mathbb{C}%
^{n}\right)  $ in the space $W_{m,p}\left(  U;\mathbb{C}^{n}\right)  $. Here,
$C_{0}^{\infty}\left(  U;\mathbb{C}^{n}\right)  $ designates the infinitely
differentiable functions with compact support in $U $. We write $H^{m}\left(
U;\mathbb{C}^{n}\right)  :=W_{m,2}\left(  U;\mathbb{C}^{n}\right) .$The Japanese
brackets are defined as $\left\langle x\right\rangle =\sqrt{1+\left\vert
x\right\vert ^{2}}.$  The weighted Sobolev spaces are defined by
\[
H_{q}^{m}\left(  U;\mathbb{C}^{n}\right)  =\{u\in L^{2}(U;\mathbb{C}%
^{n})| \langle x\rangle^{q}u \in H^{m}\left(
U;\mathbb{C}^{n}\right) \},q>0.
\]
In the case where $m=0$ we use the notation $L_{q}^{2}\left(  U;\mathbb{C}%
^{n}\right)  :=H^{0,q}\left(  U;\mathbb{C}^{n}\right)  .$ 
If there is no place for misunderstanding, we shall omit $\mathbb{C}^{n}$ in writing
the above spaces. 
 We denote by $\mathcal L_n$ the space of all $ n \times n$ matrices, and by  $L^1( U; \mathcal L_n)$ the  standard Lebesgue space of $\mathcal L_n$ valued functions.  We also use the
weighted Lebesgue spaces,
\[
L_{q}^{1}(U;\mathcal{L}^{n}):=\left\{  u|(1+x)^{q}u(x)\in
L^{1}( U;\mathcal{L}_{n}),q>0\right\}  .
\]
When it is clear from the context, we ommity $\mathcal L_n$  in the notation for these spaces.  We often
take $U={\mathbb{R}}^{+}:=(0,\infty),$ or $U={\mathbb{R}}$. 
% Further, we denote
%by $C^{m}(\mathbb{R}^{n}; \mathbb{C }), m =0,\dots, $ the continuous functions
%defined on $\mathbb{R}^{n},$ with $m$ continuous derivatives. In the case  $m=0$ we use the notation 
% $C(\mathbb{R}^{n}; \mathbb{C })= C^{0}(\mathbb{R}^{n}; \mathbb{C }).$ 
Further, for a Banach space $\mathcal B,$ a set  $G \subset \mathbb R^l, l=1, \dots  $, and  $m=0,\dots$ we designate by
 $C^m(G; \mathcal B)$ the continuous function from $G$ into $\mathcal B$ that have $m$ continuous derivatives. In the case $m=0$ we use the notation  $C(G; \mathcal B)= C^0(G; \mathcal B).$

We denote the Fourier transform by,
\[
\mathcal{F}f:=\int_{\mathbb{R}}\,e^{ikx}\,f(x)\,dx,
\]
and the inverse Fourier transform by,
\[
\mathcal{F}^{-1}f:=\frac{1}{2\pi}\,\int_{\mathbb{R}}\,e^{-ikx}\,f(k)\,dk.
\]
These formulae make sense for $f\in L^{1}(\mathbb{R}).$ For $f \in
L^{2}(\mathbb{R})$ they are defined by a limiting process. By $C$ we denote a
positive constant that does not have to take the same value when it appears in
different places. For any set $G$ of real numbers we denote by $\chi_{G}(x)$
the characteristic function of $G,$ $\chi_{G}(x)=1,$ $x\in G,$ $\chi
_{G}(x)=0,$ $x\notin G.$ We define
\begin{equation}
\label{final1}M=\displaystyle e^{ \frac{ix^{2}}{4t}},
\end{equation}
and%
\begin{equation}
\label{final2}\mathcal{D}_{t}\phi=\left(  it\right)  ^{-\frac{1}{2}}%
\phi\left(  xt^{-1}\right)  .
\end{equation}
For later use we introduce the following notation,
\begin{equation}
\label{final3}\mathcal{T}_{+}(a,T):= [a,T), \quad\mathcal{T}_{-}(a,T):= (-T,
-a], \qquad0 \leq a < T \leq\infty.
\end{equation}

\subsubsection{Statement of our main results.}

We now state our main results. First, we define the class of potentials that we
 consider in this paper. Let $V(x),x\in\lbrack0,\infty)$ be a $n\times n$
matrix-valued function. For some $N\geq1,$ let us partition $[0,\infty)$ into
a finite union of intervals, $[0,\infty)=\cup_{j=0}^{N}\,I_{j},$ where
$I_{j}:=[x_{j},x_{j+1}),$ $j=0,\dots,N-1,$ with $x_{0}=0<x_{1}<\dots
x_{N-1}<x_{N} < \infty,$ and $I_{N}:=[x_{N},\infty).$ We obtain a fragmentation of the
potential $V$ by setting
\begin{equation}
V\left(  x\right)  =\sum_{j=0}^{N}V_{j}\left(  x\right)  , \label{3.23}%
\end{equation}
where
\[
V_{j}\left(  x\right)  =\left\{
\begin{array}
[c]{c}%
V\left(  x\right)  ,\text{ \ }x\in I_{j},\\
0,\text{ \ elsewhere.}%
\end{array}
\right.
\]

\begin{definition}{\rm
\textrm{\label{Def1}The potential $V$ admits a \textit{regular decomposition}
if for some $N\geq1,$ there is a partition (\ref{3.23}) such that the restriction of $V_j, $ to $I_j$  extends to an 
absolutely continuous function in $\overline{I_{j}},$ $j=0,1,\dots,N-1,$ 
 $V_N$ is
absolutely continuous in each interval $[x_{N},b],$ $x_{N}<b<\infty,$ and
$<\cdot>^{2+\delta}\,\partial_{x}V\in L^{1}(x_{N},\infty),$ for some
$\delta\geq0.$}}
\end{definition}

We now introduce the class of nonlinear interactions that we consider in the
present work.

\begin{assumption}{\rm
\textrm{Assume that the function $\mathcal{N}$ is defined on $\mathbb{R}^{n}$,
 that it is $n\times n$ matrix-valued, and that $  \mathcal N \in C^{2}\left(  \mathbb{R}^n; \mathcal L_n\right). $ Furthermore,
\begin{equation}
\sum_{l=1}^{n}\left\vert \partial_{l}^{j}\mathcal{N}\left(  \mu_{1}%
,...,\mu_{n}\right)  \right\vert \leq C\left\vert \mu\right\vert ^{\alpha
-j},\text{ }j=0,1,2\text{,} \label{ConNonlinearity}%
\end{equation}
for all $\mu= \left(  \mu_{1},...,\mu_{n}\right)  \in\mathbb{R}^{n}$ and some
$\alpha>2.$}}
\end{assumption}

Let $H_{A,B,V}$ be the linear Schr\"{o}dinger operator associated to problem
(\ref{1.1})-\eqref{1.1.bound}, i.e., to \eqref{1.1}-\eqref{1.1.bound} with $\mathcal{N}$ equal to zero (see
Appendix~\ref{App1}). When there is no possibility of misunderstanding we will
write $H$ instead of $H_{A,B,V}$. We denote by $S(k),k\in\mathbb{R},$ the
scattering matrix (see \eqref{Scatteringmatrix}) associated with $H_{A,B,V}.$
Let $P_{\pm}$ be, respectively, the projectors onto the eigenspaces
corresponding to the eigenvalues $\pm1$ of the scattering matrix at zero
energy, $S(0).$ In fact, $\pm1$ are the only eigenvalues that $S(0)$ can have.
See Remark~\ref{rema.1} in Appendix~\ref{App1}. We define, $m\left(
k,x\right):=f\left(  k,x\right)  e^{-ikx}$, where $f(k,x)$ is the Jost
solution (see (\ref{Jostsolution}) in Appendix~\ref{App1}). Further, we extend the function $m\left(  k,x\right)$  to
an even function defined for $x\in{\mathbb{R}},$ that is, $m(k,-x)=m(k,x),$
$x\in{\mathbb{R}}.$ Observe that $m(k,x)$ is defined in
terms of the potential $V,$ and that it is independent of the boundary
condition in \eqref{1.1.bound}.

\begin{theorem}
\label{Theorem 1.1} Suppose that the potential $V$ is selfadjoint, that $V\in
L^{1}_{4}\left(  \mathbb{R}\right)  ,$ and that $V$ admits a regular
decomposition with $\delta=2$ (see Definition \ref{Def1}). Further, assume
that the boundary matrices $A,B$, satisfy \eqref{wcon1}, \eqref{wcon2}, and
that $H_{A,B,V}$ does not have negative eigenvalues. Moreover, suppose that
the nonlinearity $\mathcal{N}$ fulfills \eqref{ConNonlinearity} and that it
conmutes with $P_{-},$ i.e. $\mathcal{N}\left(  |\mu_{1}| , \dots,|\mu_{n}|
\right)  P_{-}= P_{-} \mathcal{N}\left(  |\mu_{1}| , \dots,|\mu_{n}| \right)
, \mu= (\mu_{1},\dots\mu_{n}) \in\mathbb{R}^{n}.$ Then, there is an
$\varepsilon>0,$ such that for all initial data $u_{0}\in H^{2}(\mathbb{R}%
^{+})\cap H^{1}_{1}(\mathbb{R}^{+})\cap L^{2}_{2}(\mathbb{R}^{+})$ that
satisfy the boundary condition $-B^{\dagger}u_{0}\left(  0\right)
+A^{\dagger} u_{0}^{\prime}\left(  0\right)  =0,$ and with
\begin{equation}
\label{18.2}\left\Vert u_{0}\right\Vert _{H^{2}(\mathbb{R}^{+})}+\left\Vert
u_{0}\right\Vert _{H^{1}_{1}(\mathbb{R}^{+})}+\left\Vert u_{0}\right\Vert
_{L^{2}_{2}(\mathbb{R}^{+})} \leq\varepsilon,
\end{equation}
the initial boundary-value problem \eqref{1.1}-\eqref{1.1.bound} has a unique solution,
\[
u_{+}\in C\left(  [ 0,\infty);H^{2}(\mathbb{R}^{+})\cap L^{2}_{2}%
(\mathbb{R}^{+}) \right)  \cap C^{1}\left(  [0,\infty); L^{2}(\mathbb{R}%
^{+})\right) ,
\]
and a unique solution,
\[
u_{-}\in C\left(  (-\infty, 0];H^{2}(\mathbb{R}^{+})\cap L^{2}_{2}%
(\mathbb{R}^{+}) \right)  \cap C^{1}\left(  (-\infty, 0]; L^{2}(\mathbb{R}%
^{+})\right) .
\]
%Moreover, with $\tilde{\varepsilon}:= \varepsilon (1+\varepsilon^\alpha),$
%\beq\label{18.2.z} \|u_\pm(t)\|_{L^2_1(\ere)}+ \frac{1}{\sqrt{\langle t\rangle}} \|u_\pm(t)\|_{L^2_2(\ere)}+
%\frac{1}{\langle t \rangle^{7/2}}\left\Vert u_\pm\left( t\right)  \right\Vert _{H^1(\ere)}+\frac{1}{\langle t \rangle^{9/2}}\left\Vert u_\pm\left( t\right)  \right\Vert _{H^2(\ere)}
%\leq C \,\tilde{\varepsilon }, 0 \leq |t| < \infty.
%\ene

Further, there exists free final states $w_{\pm\infty}\in H^{1}(\mathbb{R}%
^{+}),$ with $\Vert w_{\pm\infty}\Vert_{H^{1}(\mathbb{R}^{+})}\leq
C\tilde{\varepsilon},$ where $\tilde{\varepsilon}:=\varepsilon(1+\varepsilon
^{\alpha}),$ such that the following asymptotics are valid
\begin{equation}
\left\Vert u_{\pm}\left(  t\right)  -e^{-itH_{0}}\mathbf{F}_{0}^{\dagger
}w_{\pm\infty}\right\Vert _{L^{2}(\mathbb{R}^{+})}\leq C\tilde{\varepsilon
}\left(  \langle t\rangle^{-\left(  \frac{\alpha}{2}-1\right)  }+\langle
t\rangle^{-1/4}\right)  , \label{1.3-1}%
\end{equation}
where $\mathbf{F}_{0}$ is the generalized Fourier map given in \eqref{ap.58},
\eqref{ap.59}, and,%
\begin{equation}
\left\Vert u_{\pm}\left(t\right)-\frac{1}{\sqrt{2}}M\mathcal{D}_{t}\left(  m\left(  \frac
{x}{2},tx\right)  w_{\pm\infty}\left(  \frac{x}{2}\right)  \right)
\right\Vert _{L^{\infty}(\mathbb{R}^{+})}\leq\frac{C\tilde{\varepsilon}%
}{\langle t\rangle^{1/2}}\left(  \tilde{\varepsilon}^{\alpha}\langle
t\rangle^{-\left(  \frac{\alpha}{2}-1\right)  }+\langle t\rangle
^{-1/4}\right)  . \label{1.3-2}%
\end{equation}
Furthermore, the following large-time estimate holds,
\begin{equation}
\left\Vert u_{\pm}\left(  t\right)  \right\Vert _{L^{\infty}(\mathbb{R}^{+}%
)}\leq C\tilde{\varepsilon}\frac{1}{\langle t\rangle^{1/2}}\left(  1+\left(
\tilde{\varepsilon}^{\alpha}\langle t\rangle^{-\left(  \frac{\alpha}%
{2}-1\right)  }+\langle t\rangle^{-1/4}\right)  \right)  . \label{1.3}%
\end{equation}

\end{theorem}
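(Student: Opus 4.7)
The plan is to set up the Duhamel formula with the linear propagator $e^{-itH}$ (where $H=H_{A,B,V}$),
\[
u(t)=e^{-itH}u_{0}-i\int_{0}^{t}e^{-i(t-s)H}\,\mathcal{N}(|u_{1}|,\dots,|u_{n}|)\,u(s)\,ds,
\]
and solve it by a contraction argument in a Banach space that encodes the expected time-decay. The natural norm will combine the ``regularity/weight'' norms $H^{2}\cap H^{1}_{1}\cap L^{2}_{2}$ with the dispersive norm $\langle t\rangle^{1/2}\|\cdot\|_{L^{\infty}}$ and with an ``operator-weight'' norm involving the generalized Fourier image $w(t):=\mathbf{F}_{0}e^{itH}u(t)$ controlled in $H^{1}$. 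The companion paper \cite{RicardoIvan} supplies the $L^{1}\to L^{\infty}$ dispersive estimate $\|e^{-itH}\|_{L^{1}\to L^{\infty}}\leq C\langle t\rangle^{-1/2}$ for the matrix problem with general selfadjoint boundary condition, which, under the standing hypotheses on $V$, the absence of negative eigenvalues, and the commutation $\mathcal{N}P_{-}=P_{-}\mathcal{N}$, remains valid for both generic and exceptional potentials.

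The second step is the factorization. Using the Jost representation of the generalized Fourier map $\mathbf{F}_{0}$ given in \eqref{ap.58}--\eqref{ap.59}, together with the classical identity $e^{-itk^{2}}=M\mathcal{D}_{t}\mathcal{F}M$ applied after symmetrizing the problem to the whole line via the even extension of $m(k,x)$, one obtains the asymptotic representation
\[
e^{-itH}\mathbf{F}_{0}^{\dagger}w\;=\;\tfrac{1}{\sqrt{2}}M\mathcal{D}_{t}\!\left(m(x/2,tx)\,w(x/2)\right)+R(t)w,
\]
with the remainder $R(t)$ satisfying $\|R(t)w\|_{L^{\infty}}\leq C\langle t\rangle^{-3/4}\|w\|_{H^{1}}$. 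This, via the regular-decomposition hypothesis with $\delta=2$ and $V\in L^{1}_{4}$, is where one gains the extra $\langle t\rangle^{-1/4}$ beyond the basic $\langle t\rangle^{-1/2}$ dispersive decay. The commutation assumption $\mathcal{N}P_{-}=P_{-}\mathcal{N}$ is critical here because on the eigenspace of $S(0)$ with eigenvalue $-1$ the Jost function $m(0,x)$ does not reduce to the identity and the usual stationary-phase analysis near $k=0$ must be carried out on the invariant subspaces separately; the commutation ensures the nonlinear iterates respect this decomposition.

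The third step is the contraction argument. Defining
\[
\|u\|_{X(0,T)}:=\sup_{0\leq t<T}\Big(\|u(t)\|_{H^{2}\cap L^{2}_{2}}+\langle t\rangle^{1/2}\|u(t)\|_{L^{\infty}}+\|\mathbf{F}_{0}e^{itH}u(t)\|_{H^{1}}\Big),
\]
Assumption~1.2 yields $|\mathcal{N}(|u|)u|\leq C|u|^{\alpha+1}$, so that
\[
\left\|\int_{0}^{t}e^{-i(t-s)H}\mathcal{N}(|u|)u\,ds\right\|_{L^{\infty}}\leq C\int_{0}^{t}\langle t-s\rangle^{-1/2}\|u(s)\|_{L^{\infty}}^{\alpha}\|u(s)\|_{L^{2}}\,ds,
\]
whose integrand decays like $\langle s\rangle^{-\alpha/2}$, integrable precisely because $\alpha>2$; this is the mechanism generating the first term in the right-hand side of \eqref{1.3-1}--\eqref{1.3}. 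The corresponding bounds on the $H^{2}\cap L^{2}_{2}$ and $\mathbf{F}_{0}e^{itH}u$ pieces follow by commuting the operators $\langle x\rangle$, $\partial_{x}$ and $J(t):=e^{-itH}x e^{itH}$ through the Duhamel integral and exploiting the product structure of $\mathcal{N}\in C^{2}$ together with the chain rule estimates \eqref{ConNonlinearity}. A standard fixed-point argument in $X(0,T)$ with radius $\sim\tilde{\varepsilon}$ gives the global solution and its a priori bounds; the scattering state is then
\[
w_{+\infty}=\lim_{t\to\infty}\mathbf{F}_{0}e^{itH}u(t)\quad\text{in }H^{1}(\mathbb{R}^{+}),
\]
the limit existing because the Duhamel integrand is absolutely convergent in $H^{1}$ by the same supercritical estimate. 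Inserting this limit into the Duhamel formula and invoking the dispersive decay for the tail $[t,\infty)$ yields \eqref{1.3-1}, and composing with the factorization of step two produces \eqref{1.3-2}; the pointwise bound \eqref{1.3} follows from \eqref{1.3-2} combined with the trivial bound $\|M\mathcal{D}_{t}(m\,w)\|_{L^{\infty}}\leq C\langle t\rangle^{-1/2}\|w\|_{H^{1}}$. The construction for $t\to-\infty$ is symmetric.

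The main obstacle I anticipate is the factorization step, and in particular the uniform control of the remainder $R(t)$ across both generic and exceptional potentials. For exceptional potentials the Jost function $m(k,x)$ has slower regularity at $k=0$ on the $P_{-}$ subspace, so the stationary phase and integration-by-parts that produce the $\langle t\rangle^{-1/4}$ gain must be carried out carefully using the regular decomposition of $V$ (which is what the assumption $\delta=2$ is for) and the decay of $\partial_{x}m$ provided by $V\in L^{1}_{4}$. Closing these estimates uniformly in the $H^{1}$ norm of the Fourier-side data, so that the contraction argument actually closes in $X(0,T)$, is the technical heart of the argument; everything else is routine once the factorization and the dispersive estimate are in hand.
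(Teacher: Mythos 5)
There is a genuine gap at the heart of your third step. The displayed Duhamel estimate pairs the $\langle t-s\rangle^{-1/2}$ decay of the propagator (an $L^{1}\to L^{\infty}$ bound) with the quantity $\|u(s)\|_{L^{\infty}}^{\alpha}\|u(s)\|_{L^{2}}$, which controls the $L^{2}$ norm of the nonlinearity, not its $L^{1}$ norm; these cannot be combined. With the correct pairing, $\|\mathcal{N}(|u|)u\|_{L^{1}}\leq\|u\|_{L^{\infty}}^{\alpha-1}\|u\|_{L^{2}}^{2}\lesssim\tilde{\varepsilon}^{\alpha+1}\langle s\rangle^{-(\alpha-1)/2}$, the convolution $\int_{0}^{t}\langle t-s\rangle^{-1/2}\langle s\rangle^{-(\alpha-1)/2}\,ds$ only returns $\langle t\rangle^{-1/2}$ when $\alpha>3$; for $2<\alpha<3$ it gives $\langle t\rangle^{1-\alpha/2}$, which is strictly weaker than $\langle t\rangle^{-1/2}$, so the bootstrap on $\langle t\rangle^{1/2}\|u(t)\|_{L^{\infty}}$ does not close. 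This is exactly the obstruction that confines the Duhamel-plus-dispersive-estimate strategy (as in the earlier works cited in the introduction) to $\alpha\geq 4$ or thereabouts, and it is why the paper does not argue this way.

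What is missing is the actual mechanism of the paper. The decay is not extracted from $e^{-itH}$ acting on the nonlinearity; instead one passes to the interaction representation $w_{\pm}(t,k)=(\mathbf{F}e^{itH}u_{\pm})(k)$ with the \emph{perturbed} generalized Fourier map $\mathbf{F}=\mathbf{F}^{+}$ (not $\mathbf{F}_{0}e^{itH}$), extended by the symmetry $w_{\pm}(k)=S(k)w_{\pm}(-k)$, and uses the factorization $u_{\pm}=M\mathcal{D}_{t}\mathcal{W}(t)w_{\pm}$ so that $\|u_{\pm}(t)\|_{L^{\infty}}\lesssim |t|^{-1/2}\|w_{\pm}(t)\|_{H^{1}}$. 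The whole problem then reduces to proving that $\|w_{\pm}(t)\|_{H^{1}}$ stays bounded, and the device that achieves this for all $\alpha>2$ — and in particular for exceptional potentials, where the naive commutation of $\partial_{k}$ through the Duhamel integral only yields $\|\partial_{k}w\|_{L^{2}}\lesssim\sqrt{|t|}$ — is the integration by parts in time based on the identity $1=e^{-i\tau k^{2}}\partial_{\tau}(\tau e^{i\tau k^{2}})/(1+i\tau k^{2})$, combined with a cancellation at $k=0$ on the $P_{-}$ eigenspace of $S(0)$ that uses both the symmetry of $w_{\pm}$ and the hypothesis $\mathcal{N}P_{-}=P_{-}\mathcal{N}$ (this is where $|P_{-}(\mathcal{V}(\tau)w)(0)|\lesssim|\tau|^{-\rho/2}\|w\|_{H^{1}}$ enters). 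You correctly identify that "closing these estimates uniformly in the $H^{1}$ norm of the Fourier-side data" is the technical heart, but you supply no argument for it, and the tools you do invoke (dispersive estimates, commuting $J(t)=e^{-itH}xe^{itH}$ through Duhamel) are precisely the ones that are known not to suffice in the regime $2<\alpha<3$ and for general exceptional potentials. A secondary inaccuracy: the $\langle t\rangle^{-1/4}$ gain in the pointwise factorization needs only $V\in L^{1}_{5/2+\delta}$; the hypotheses $V\in L^{1}_{4}$ and $\delta=2$ are consumed by the second-derivative estimates that control $\|w_{\pm}\|_{H^{2}}$ and $\|w_{\pm}\|_{L^{2}_{2}}$ with at most $\sqrt{|t|}$ growth.
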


\begin{remark}{\rm
\textrm{\label{genex} \textrm{By Remark 3.8.10 in pages 129-130 of
\cite{WederBook} the potential $V$ is \textit{generic on the half-line} if and
only if there are no bounded solutions to the Schr\"{o}dinger equation
\eqref{MSStationary} with zero energy, $k^{2}=0,$ that satisfy the boundary
condition \eqref{ap.3}, and it is \textit{exceptional on the half-line} if and
only if there is at least one bounded solution to the Schr\"{o}dinger equation
\eqref{MSStationary} with zero energy, $k^{2}=0,$ that satisfies the boundary
condition \eqref{ap.3}. Further, it is \textit{purely exceptional on the
half-line} if and only if there are $n$ linearly independent bounded solutions
to the Schr\"{o}dinger equation \eqref{MSStationary} with zero energy,
$k^{2}=0,$ that satisfy the boundary condition \eqref{ap.3}. The bounded
solutions to the Schr\"{o}dinger equation \eqref{MSStationary} with zero
energy, $k^{2}=0,$ that satisfy the boundary condition \eqref{ap.3} are called
half-bound states or zero-energy resonances. Note that the condition
$\mathcal{N}\,P_{-}=P_{-}\,\mathcal{N}$ is satisfied, in particular, if
$P_{-}=I,$ or $P_{-}=0.$ Further, by   Remark 3.8.10
in pages 129-130, Theorem 3.8.13 in page 137, and Theorem 3.8.14 in pages 138-139, of \cite{WederBook},
$S(0)=-I$ if and only if the potential $V$ is \textit{generic on the half line}. In this case
$P_{-}=I$. Moreover, $S(0)=I,$ and then, $P_{-}=0,$ if and only if the potential $V$
{\it purely exceptional on the half line}. It follows that for \textit{generic and purely
exceptional potentials on the half line}  the condition $\mathcal{N}%
\,P_{-}=P_{-}\,\mathcal{N}$ is always satisfied. Moreover, for a single
equation $\mathcal{N}$ is a scalar, and hence, the condition, $\mathcal{N}%
P_{-}=P_{-}\mathcal{N}, $ is always satisfied. More generally, in the matrix
case if
\[
\mathcal{N}\left(  \mu_{1},...,\mu_{n}\right)  =\mathcal{N}_{\text{scalar}%
}\left(  \mu_{1},...,\mu_{n}\right)  \,I,\qquad(\mu_{1},\dots\mu_{n}%
)\in\mathbb{R}^{n},
\]
where $\mathcal{N}_{\text{scalar}}$ is a scalar function, the condition
$\mathcal{N}P_{-}=P_{-}\mathcal{N}$ is always satisfied.} }
}
\end{remark}

\begin{remark}
\label{wavehalf} {\rm Theorem~\ref{Theorem 1.1} allows us to
construct the inverse wave operators for \eqref{1.1}-\eqref{1.1.bound} as
follows. Let $u_{0}$ satisfy the assumptions of Theorem~\ref{Theorem 1.1}. Let
us denote
\[
u_{\pm\infty}:=\mathbf{F}_{0}^{\dagger}w_{\pm\infty}.
\]
Then, by \eqref{1.3-1}
\[
\lim_{t\rightarrow\pm\infty}\left\Vert u_{\pm}(t)-e^{-itH_{0}}u_{\pm\infty
}\right\Vert _{L^{2}(\mathbb{R}^{+})}=0,
\]
and we can define the inverse wave operators $W_{\pm}^{-1},$ as follows,
\[
W_{\pm}^{-1}u_{0}:=u_{\pm\infty}.
\]
With the methods of this paper we can also construct the direct wave operators
and the scattering matrix for small solutions. We will consider these problems
in a different publication. }

\end{remark}

\subsubsection{The matrix nonlinear Schr\"{o}dinger equation on the
line.}

As it follows from Section 2.4 of \cite{WederBook}, a $2n\times2n$ matrix
Schr\"{o}dinger equation on the half-line is unitarily equivalent to a
$n\times n$ matrix Schr\"{o}dinger equation on the  line with a point
interaction at $x=0$ by the unitary operator $\mathbf{U}$ from $L^{2}\left(
\mathbb{R}^{+};\mathbb{C}^{2n}\right)  $ onto $L^{2}\left(  \mathbb{R}%
;\mathbb{C}^{n}\right)  $%
\begin{equation}
\phi\left(  x\right)  =\mathbf{U}\psi\left(  x\right)  =\left\{
\begin{array}
[c]{c}%
\psi_{1}\left(  x\right)  ,\text{ \ }x\geq0,\\
\psi_{2}\left(  -x\right)  ,\text{ \ }x<0,
\end{array}
\right.  \label{unitarytransform}%
\end{equation}
for a vector-valued function $\psi=\left(  \psi_{1},\psi_{2}\right)  ^{T},$
where $\psi_{j}\in L^{2}\left(  \mathbb{R}^{+};\mathbb{C}^{n}\right)  ,$
$j=1,2.$ Let the potential in (\ref{1.1}) be the block-diagonal matrix
\begin{equation}
\label{blockdiag}V\left(  x\right)  =\operatorname*{diag}\{V_{1}\left(
x\right)  ,V_{2}\left(  x\right)  \},
\end{equation}
where $V_{j},j=1,2$ are selfadjoint $n\times n$ matrix-valued functions that
satisfy $V_{j}\in L_{1}^{1}(\mathbb{R}^{+}).$ Under $\mathbf{U}$ the
Hamiltonian $H_{A,B,V}$ is transformed into the following Hamiltonian on the
 line (see \cite{WederBook})%

\begin{equation}
\label{hamiltonian}H_{\mathbb{R}}=\mathbf{U}\,H_{A,B,V}\mathbf{U}^{\dagger
},\quad D[H_{\mathbb{R}}]=\{\phi\in L^{2}\left(  \mathbb{R};\mathbb{C}%
^{n}\right)  :\mathbf{U}^{\dagger}\phi\in D[H_{A,B,V}]\}.
\end{equation}

Let us write the $2n\times2n$ matrices $A,B$ as follows,
\begin{equation}
A=\left[
\begin{array}
[c]{l}%
A_{1}\\
A_{2}%
\end{array}
\right]  ,\quad\left[
\begin{array}
[c]{l}%
B_{1}\\
B_{2}%
\end{array}
\right]  , \label{matrices}%
\end{equation}
with $A_{j},B_{j},j=1,2,$ being $n\times2n$ matrices. We have that the
functions in the domain of $H_{\mathbb{R}}$ satisfy the following transmission
condition at $x=0,$%

\begin{equation}
-B_{1}^{\dagger}\phi(0+)-B_{2}^{\dagger}\phi(0-)+A_{1}^{\dagger}(\partial
_{x}\phi)(0+)-A_{2}^{\dagger}(\partial_{x}\phi)(0-)=0. \label{bdcond}%
\end{equation}
Let the nonlinearity in (\ref{1.1}) be the  block-diagonal matrix%
\begin{equation}
\mathcal{N}\left(  \mu_{1} ,...,\mu_{2n} \right)  =\operatorname*{diag}%
\{\mathcal{N}_{\mathbb{R},+}\left(  \mu_{1} ,...,\mu_{2n}\right)
,\mathcal{N}_{\mathbb{R},-}\left(  \mu_{1} ,...,\mu_{2n} \right)  \},
\qquad\mu\in\mathbb{R}^{2n} , \label{nonere}%
\end{equation}
where $\mathcal{N}_{\mathbb{R},\pm}$ are $n\times n$ matrix-valued function.
Then, $u(t,x)$ is a solution of the problem (\ref{1.1})-
\eqref{1.1.bound} if and only if $v(t,x):=\mathbf{U}u(t,x)$ is a solution of
the following $n\times n$ system on the  line, where we denote
$v_{0}:=\mathbf{U}u_{0},$
\begin{align}
&  i\partial_{t}v\left(  t,x\right)  =\left(  -\partial_{x}^{2}+Q\left(
x\right)  \right)  v\left(  t,x\right)  +\mathcal{N}_{\mathbb{R},\pm}\left(
\left\vert v_{1}(x)\right\vert ,...,\left\vert v_{n}(x)\right\vert ,\left\vert
v_{1}(-x)\right\vert ,...,\left\vert v_{n}(-x)\right\vert \right)
v(t,x),\label{MSR.1}\\
&  t\in\mathbb{R},\pm x>0,\nonumber\\
&  v\left(  0,x\right)  =v_{0}\left(  x\right)  ,\text{ \ }x\in\mathbb{R}%
,\label{MSR.2}\\
&  -B_{1}^{\dagger}v(t,0+)-B_{2}^{\dagger}v(t,0-)+A_{1}^{\dagger}(\partial
_{x}v)(t,0+)-A_{2}^{\dagger}(\partial_{x}v)(t,0-)=0, \label{MSR.3}%
\end{align}
where,
\begin{equation}
Q\left(  x\right)  =\left\{
\begin{array}
[c]{c}%
V_{1}\left(  x\right)  ,\text{ \ }x\geq0,\\
V_{2}\left(  -x\right)  ,\text{ \ }x<0.
\end{array}
\right.  \label{potere}%
\end{equation}
For example, let us take \cite{WederBook},
\begin{equation}
A=\left[
\begin{array}
[c]{lc}%
0_{n} & I_{n}\\
0_{n} & I_{n}%
\end{array}
\right]  ,\quad B=\left[
\begin{array}
[c]{lc}%
-I_{n} & \Lambda\\
\ I_{n} & 0_{n}%
\end{array}
\right]  , \label{nopoint}%
\end{equation}
where $\Lambda$ is a selfadjoint $n\times n$ matrix. These matrices satisfy
(\ref{wcon1}, \ref{wcon2}). Moreover, the transmission condition in
(\ref{MSR.3}) is given by,
\begin{equation}
v(t,0+)=v(t,0-)=v(t,0),\quad(\partial_{x}v)(t,0+)-(\partial_{x}%
v)(t,0-)=\Lambda v(t,0). \label{bcond2}%
\end{equation}
This transmission condition corresponds to a Dirac delta point interaction at
$x=0$ with coupling matrix $\Lambda$. If $\Lambda=0,$ $v(t,x)$ and
$(\partial_{x}v)(t,x)$ are continuous at $x=0$ and the transmission condition
corresponds to the matrix Schr\"{o}dinger equation on the  line without a
point interaction at $x=0.$

%It follows from \cite[Page 21]{Wlp} that the scattering matrix on the line, $S_\ere(k), k \in \ere,$ is
%related to the scattering matrix on the half-line by
%\[
%S_{\mathbb{R}}\left(  k\right)  =%
%\begin{bmatrix}
%T_{l}\left(  k\right)  & R\left(  k\right) \\
%L\left(  k\right)  & T_{r}\left(  k\right)
%\end{bmatrix}
%\]
%where
%\begin{align*}
%\{T_{r}\left(  k\right)  \}_{i,j}  &  =\{S\left(  k\right)  \}_{n+i,j},\text{
%}\{R\left(  k\right)  \}_{i,j}=\{S\left(  k\right)  \}_{i,j},\\
%\{T_{l}\left(  k\right)  \}_{i,j}  &  =\{S\left(  k\right)  \}_{i,n+j},\text{
%}\{L\left(  k\right)  \}_{i,j}=\{S\left(  k\right)  \}_{n+i,n+j},
%\end{align*}
%for $1\leq i,j\leq n.$ Note that the scattering matrix on the half-line
%$S\left(  k\right)  $ can be written in terms of the transmission and
%reflection coefficients on the line, $T_{r}\left(  k\right)  ,$ $T_{l}\left(  k\right)  $,
%$R\left(  k\right), L(k),  $ as follows
%\[
%S\left(  k\right)  =%
%\begin{bmatrix}
%R\left(  k\right)  & T_{r}\left(  k\right) \\
%T_{l}\left(  k\right)  & L\left(  k\right)
%\end{bmatrix}
%.
%\]
We denote,
\begin{equation}
\mathbf{F}_{0,{\mathbb{R}}}:=\mathbf{U}\mathbf{F}_{0}\mathbf{U}^{\dagger},
\label{f0ere}%
\end{equation}%
\begin{equation}
w_{\pm\infty,{\mathbb{R}}}:=\mathbf{U}w_{\pm\infty}. \label{were}%
\end{equation}
%We designate,
%\begin{equation}
%m_{{\mathbb{R}}}(k,x):=\mathbf{U}m(k,x)\mathbf{U}^{\dagger}, \label{mreal}%
%\end{equation}

From Theorem \ref{Theorem 1.1} we deduce the following long-time result for
the matrix nonlinear Schr\"{o}dinger equation with a potential, and a point
interaction, on the line.

\begin{theorem}
\label{theoline} Suppose that the potential $Q$ is selfadjoint, $Q^{\dagger
}(x)=Q(x),$ $x\in{\mathbb{R}},$ that $Q\in L_{4}^{1}\left(  \mathbb{R}\right)
$ and that $V_{1}(x):=Q(x)$ and $V_{2}(x):=Q(-x),$ $x\geq0,$ admit a regular
decomposition with $\delta=2$ (see Definition \ref{Def1}). Further, assume
that the boundary matrices $A,B$, satisfy \eqref{wcon1}, \eqref{wcon2}, and
that $H_{{\mathbb{R}}}$ does not have negative eigenvalues. Moreover, suppose
that the nonlinearity $\mathcal{N},$ defined in \eqref{nonere} fulfills
\eqref{ConNonlinearity} and that it conmutes with $P_{-},$ i.e. $\mathcal{N}%
\left(  |\mu_{1}|,\dots,|\mu_{2n}| \right)  P_{-}=P_{-}\mathcal{N}\left(
|\mu_{1}| ,\dots,|\mu_{2n}| \right)  ,$ $\mu\in\mathbb{R }^{2n}.$ Then, there
is an $\varepsilon>0,$ such that for all initial data $v_{0}\in H^{2}%
(\mathbb{R}^{+}\cup\mathbb{R}^{-})\cap H^{1,1}(\mathbb{R}^{+}\cup
\mathbb{R}^{-})\cap L_{2}^{2}({\mathbb{R}})$ that satisfy the transmission
condition
\begin{equation}
-B_{1}^{\dagger}v_{0}(0+)-B_{2}^{\dagger}v_{0}(0-)+A_{1}^{\dagger}(\frac
{d}{dx} v_{0}(0+)-A_{2}^{\dagger}(\frac{d}{dx}v_{0}(0-)=0, \label{transcond}%
\end{equation}
with $A_{1},A_{2},B_{1},$ and $B_{2}$ as in \eqref{matrices}, and where,
\begin{equation}
\left\Vert v_{0}\right\Vert _{H^{2}(\mathbb{R}^{+}\cup\mathbb{R}^{-}%
)}+\left\Vert v_{0}\right\Vert _{H^{1,1}(\mathbb{R}^{+}\cup\mathbb{R}^{-}%
)}+\left\Vert v_{0}\right\Vert _{L_{2}^{2}({\mathbb{R}})}\leq\varepsilon,
\label{18.2.ere}%
\end{equation}
the initial-transmission value problem \eqref{MSR.1}-
\eqref{MSR.3} has a unique solution,
\[
v_{+}\in C\left(  [0,\infty);H^{2}(\mathbb{R}^{+}\cup\mathbb{R}^{-})\cap
L_{2}^{2}({\mathbb{R}})\right)  \cap C^{1}\left(  [0,\infty);L^{2}%
({\mathbb{R}})\right)  ,
\]
and a unique solution,
\[
v_{-}\in C\left(  (-\infty,0];H^{2}(\mathbb{R}^{+}\cup\mathbb{R}^{-})\cap
L_{2}^{2}({\mathbb{R}})\right)  \cap C^{1}\left(  (-\infty,0];L^{2}%
({\mathbb{R}})\right)  .
\]
%
%Moreover, with $\tilde{\varepsilon}:= \varepsilon (1+\varepsilon^\alpha),$
%\beq\label{18.2.z} \|u_\pm(t)\|_{L^2_1(\ere)}+ \frac{1}{\sqrt{\langle t\rangle}} \|u_\pm(t)\|_{L^2_2(\ere)}+
%\frac{1}{\langle t \rangle^{7/2}}\left\Vert u_\pm\left( t\right)  \right\Vert _{H^1(\ere)}+\frac{1}{\langle t \rangle^{9/2}}\left\Vert u_\pm\left( t\right)  \right\Vert _{H^2(\ere)}
%\leq C \,\tilde{\varepsilon }, 0 \leq |t| < \infty.
%\ene
Further, there exists free final states (see \eqref{were}) $w_{\pm
\infty,{\mathbb{R}}}\in H^{1}(\mathbb{R}^{+}\cup\mathbb{R}^{-}),$ with $\Vert
w_{\pm,\infty}\Vert_{H^{1}(\mathbb{R}^{+}\cup\mathbb{R}^{-})}\leq
C\tilde{\varepsilon},$ where $\tilde{\varepsilon}:=\varepsilon(1+\varepsilon
^{\alpha}),$ such that the following asymptotics are valid
\begin{equation}
\left\Vert v_{\pm}\left(  t\right)  -e^{-itH_{0,{\mathbb{R}}}}\mathbf{F}%
_{0,{\mathbb{R}}}^{\dagger}w_{\pm\infty,{\mathbb{R}}}\right\Vert
_{L^{2}({\mathbb{R}})}\leq C\tilde{\varepsilon}\left(  \langle t\rangle
^{-\left(  \frac{\alpha}{2}-1\right)  }+\langle t\rangle^{-1/4}\right)  ,
\label{1.3-1.xx}%
\end{equation}
where $\mathbf{F}_{0,{\mathbb{R}}}$ is the generalized Fourier map given in
\eqref{f0ere}, and,%
\begin{equation}
\left\Vert v_{\pm}(t)-\frac{1}{\sqrt{2}}M \mathbf U\left [ 
 \mathcal{D}_{t}\left(  m\left(  \frac{x}{2},tx\right) \left(\mathbf U^\dagger
  w_{\pm \infty,\mathbb R}\left(  \frac{\vdot}{2}\right) \right)(x) \right) \right] 
  \right\Vert _{L^{\infty}(\mathbb R)}\leq
 \frac{C\tilde{\varepsilon}}{\langle t\rangle^{1/2}}\left(  \tilde{\varepsilon
}^{\alpha}\langle t\rangle^{-\left(  \frac{\alpha}{2}-1\right)  }+\langle
t\rangle^{-1/4}\right).\label{1.3-2.xx}%
\end{equation}
 Furthermore, the
following large-time estimate holds,
\begin{equation}
\left\Vert v_{\pm}\left(  t\right)  \right\Vert _{L^{\infty}({\mathbb{R}}%
)}\leq C\tilde{\varepsilon}\frac{1}{\langle t\rangle^{1/2}}\left(  1+\left(
\tilde{\varepsilon}^{\alpha}\langle t\rangle^{-\left(  \frac{\alpha}%
{2}-1\right)  }+\langle t\rangle^{-1/4}\right)  \right)  . \label{1.3.zz}%
\end{equation}

\end{theorem}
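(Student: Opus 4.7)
The plan is to reduce Theorem \ref{theoline} to Theorem \ref{Theorem 1.1} through the unitary equivalence $\mathbf{U}$ of \eqref{unitarytransform}. By construction, the transmission-value problem \eqref{MSR.1}--\eqref{MSR.3} on the line is precisely the pullback under $\mathbf{U}$ of the boundary-value problem \eqref{1.1}--\eqref{1.1.bound} on the half-line with the block-diagonal potential \eqref{blockdiag} and block-diagonal nonlinearity \eqref{nonere}. Concretely, I would set $u_0 := \mathbf{U}^{\dagger} v_0$ and verify that every hypothesis of Theorem \ref{Theorem 1.1} lifts.

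First, the norm estimate \eqref{18.2.ere} translates into \eqref{18.2} because $\mathbf{U}$ identifies $H^s(\mathbb{R}^+ \cup \mathbb{R}^-;\mathbb{C}^n)$ with $H^s(\mathbb{R}^+;\mathbb{C}^{2n})$, and similarly for the relevant weighted spaces. The block-diagonal potential $V = \operatorname{diag}\{V_1,V_2\}$ inherits self-adjointness and membership in $L^1_4(\mathbb{R}^+)$ from $Q$, and admits a regular decomposition with $\delta = 2$ by amalgamating the partitions of $[0,\infty)$ witnessing the regular decompositions of $V_1$ and $V_2$. The transmission condition \eqref{transcond} becomes the self-adjoint boundary condition \eqref{1.1.bound} through the block structure \eqref{matrices} of $A,B$. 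Absence of negative eigenvalues for $H_{\mathbb{R}}$ and for $H_{A,B,V}$ are equivalent by \eqref{hamiltonian}. Finally, the commutation $\mathcal{N} P_- = P_- \mathcal{N}$ for the block-diagonal nonlinearity passes through $\mathbf{U}$ since the projector on the half-line is the pullback of the projector $P_-$ associated with $H_{\mathbb{R}}$.

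Having verified the hypotheses, I would apply Theorem \ref{Theorem 1.1} to produce unique solutions $u_{\pm}$ with final states $w_{\pm\infty}$, and then define $v_{\pm}(t) := \mathbf{U} u_{\pm}(t)$ and $w_{\pm\infty,\mathbb{R}} := \mathbf{U} w_{\pm\infty}$ as in \eqref{were}. The asserted regularity of $v_{\pm}$ is immediate from the unitary identifications of function spaces. For the $L^2$ asymptotics \eqref{1.3-1.xx}, unitarity of $\mathbf{U}$ together with the intertwining $e^{-itH_{0,\mathbb{R}}} \mathbf{F}_{0,\mathbb{R}}^{\dagger} = \mathbf{U}\, e^{-itH_0} \mathbf{F}_0^{\dagger} \mathbf{U}^{\dagger}$ implied by \eqref{f0ere} reduces the estimate to \eqref{1.3-1} with the same constant. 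For \eqref{1.3-2.xx} and \eqref{1.3.zz}, the norm equivalence $\|\mathbf{U} g\|_{L^{\infty}(\mathbb{R})} \simeq \|g\|_{L^{\infty}(\mathbb{R}^+;\mathbb{C}^{2n})}$ reduces each bound to its half-line counterpart \eqref{1.3-2} and \eqref{1.3}; the asymptotic profile displayed inside \eqref{1.3-2.xx} is literally the pointwise image under $\mathbf{U}$ of the profile $\tfrac{1}{\sqrt{2}} M \mathcal{D}_t( m(x/2, tx) w_{\pm\infty}(x/2))$ in \eqref{1.3-2}, because $M$ depends on $x$ only through $x^2$ (so it commutes with the reflection built into $\mathbf{U}$), $\mathcal{D}_t$ acts pointwise in $x/t$, $m(k,\cdot)$ was extended to be even in $x$, and $(\mathbf{U}^{\dagger} w_{\pm\infty,\mathbb{R}})(x/2) = w_{\pm\infty}(x/2)$ on $\mathbb{R}^+$.

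I do not anticipate a deep obstacle: the argument is a bookkeeping translation. The two slightly delicate checks are (i) confirming that finite partitions witnessing regular decompositions of $V_1$ and $V_2$ can be combined into one that witnesses a regular decomposition of $V = \operatorname{diag}\{V_1, V_2\}$ in the sense of Definition \ref{Def1}, together with the preservation of the condition $\langle\cdot\rangle^{2+\delta} \partial_x V \in L^1$ at infinity under direct sum; and (ii) matching the intrinsic form of the profile in \eqref{1.3-2.xx} to the image under $\mathbf{U}$ of the half-line profile in \eqref{1.3-2}. Both reduce to elementary manipulations once the identifications $v_{\pm} = \mathbf{U} u_{\pm}$ and $w_{\pm\infty,\mathbb{R}} = \mathbf{U} w_{\pm\infty}$ are in place.
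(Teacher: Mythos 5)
Your proposal is correct and is exactly the argument the paper intends: the paper states Theorem \ref{theoline} as a direct consequence of Theorem \ref{Theorem 1.1} via the unitary equivalence $\mathbf{U}$ of \eqref{unitarytransform}, leaving the bookkeeping (lifting of hypotheses, norm identifications, and the intertwining \eqref{f0ere}, \eqref{were}) implicit. Your two "delicate checks" — amalgamating the regular decompositions of $V_1,V_2$ into one for $\operatorname{diag}\{V_1,V_2\}$, and matching the $L^\infty$ profile in \eqref{1.3-2.xx} to the image under $\mathbf{U}$ of the profile in \eqref{1.3-2} using the evenness of $M$ and of $m(k,\cdot)$ — are precisely the right points to verify, and they go through as you describe.
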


\begin{remark}{\rm
\textrm{\label{relation}\textrm{The scattering theory for the matrix
Schr\"{o}dinger equation on the line without point interaction, i.e., when the
matrices $A,B$ are given by \eqref{nopoint} with $\Lambda=0,$ has been studied
in \cite{akv}. In particular, they consider the low-energy limit of the
scattering matrix for potentials $Q\in L_{1}^{1}({\mathbb{R}}).$ In this
situation we can compare the notions of \textit{generic and exceptional
potentials on the half-line} and \textit{generic and exceptional potentials on
the line}. We consider the case when the potential, $V,$ on the half-line is
block diagonal as in \eqref{blockdiag} and where the potential on the line $Q$
is given by \eqref{potere}. Recall that (see Remark~ \ref{genex}) the
potential $V$ is \textit{generic on the half-line} if and only if there are no
bounded solutions to the Schr\"{o}dinger equation \eqref{MSStationary} with
zero energy, $k^{2}=0,$ that satisfy the boundary condition \eqref{ap.3}, that
$V$ is \textit{exceptional on the half-line} if and only if there is at least
one bounded solution to the Schr\"{o}dinger equation \eqref{MSStationary} with
zero energy, $k^{2}=0,$ that satisfy the boundary condition \eqref{ap.3}, and
that $V$ is \textit{purely exceptional on the half-line} if and only if there
are $2n$ linearly independent bounded solutions to the Schr\"{o}dinger
equation \eqref{MSStationary} with zero energy, $k^{2}=0,$ that satisfy the
boundary condition \eqref{ap.3}. Note that for the potential \eqref{blockdiag}
with the boundary condition given by the matrices \eqref{nopoint} with
$\Lambda=0$ the \textit{purely exceptional case in the half-line} can not
arise, since if we compute the first $n$ components of the solution to
\eqref{MSStationary} with $k^{2}=0,$ using the boundary condition at $x=0$ we
obtain the values of the second n components at $x=0$ of the solution and of
its derivative at $x=0.$ Further, using these initial values we can compute
the second $n$ components of the solution. This implies that there can be at
most $n$ linearly independent bounded solutions to the Schr\"{o}dinger
equation \eqref{MSStationary} with zero energy, $k^{2}=0,$ that satisfy the
boundary condition \eqref{ap.3}. } \textrm{Similarly (see \cite{akv}), the
potential $Q$ is \textit{generic on the line} if and only if there are no
bounded solutions to the Schr\"odiger equation on the line with energy,
$k^{2}=0,$
\begin{equation}
\label{linebounded}- v''+ Q(x) v=0, \qquad x
\in{\mathbb{R}}.
\end{equation}
Moreover, $Q$ is \textit{exceptional on the line} if and only if there is at
least one bounded solution to \eqref{linebounded}, and it is \textit{purely
exceptional on the line} if and only if there are $n$ linearly independent
bounded solutions to \eqref{linebounded}. Then, using the transformation given
by \eqref{unitarytransform} we see that $Q$ is \textit{generic on the line} if
and only if $V$ is \textit{generic on the half-line}, that $Q$ is
\textit{exceptional on the line} if and only if $V$ is \textit{exceptional on
the half-line}, and that $Q$ is \textit{purely exceptional on the line} if and
only if there are $n$ linearly independent bounded solutions to the
Schr\"odinger equation \eqref{MSStationary} with zero energy, $k^{2}=0,$ that
satisfy the boundary condition \eqref{ap.3}}. }}
\end{remark}

\begin{remark}
\textrm{\rm follows from Remarks~\ref{genex} and \ref{relation} that in the
{\it generic case} the condition $P_{-} \mathcal{N }= \mathcal{N }P_{-}$ in Theorem~
\ref{theoline} is always satisfied.}
\end{remark}

%Let us  take a $2n$  by $ 2n$ system in the half line with    $\mathcal N$ as in  \eqref{nonere} with   $\mathcal N_{\ere,+}= N_{\ere,-}= N_{\ere,+}.$ In this case  we obtain  by Theorem~\ref{theoline} the $n$ by $n$  nonlinear matrix Schr\"odinger equation with a potential and a point interaction,
%\begin{align*}
%&i\partial_{t}v\left(  t,x\right)  =\left(  -\partial_{x}^{2}+Q\left(
%x\right)  \right)
%v\left(  t,x\right)  +\mathcal{N}_{\ere}\left(
%\left\vert v\right\vert\right) v
%,\ t\in\mathbb{R},\, x\in\mathbb{R},\\
%&v\left(  0,x\right)  =v_{0}\left(  x\right)
%,\text{ \ }x\in\mathbb{R},\\
%&-B_{1}^{\dagger}v(t,0+)-B_{2}^{\dagger}v(t,0-)+A_{1}^{\dagger}(\partial
%_{x}v)(t,0+)-A_{2}^{\dagger}(\partial_{x}v)(t,0-)=0.
%\end{align*}

\begin{corollary}
\label{corline1} Suppose that in \eqref{nonere} we take $\mathcal{N}%
_{{\mathbb{R}},+}=\mathcal{N}_{{\mathbb{R}},-}=\mathcal{N}_{{\mathbb{R}}}.$
Then, for all even and odd solutions the initial-transmission problem
\eqref{MSR.1}-\eqref{MSR.3} is equivalent to the following
initial-transmission problem
\begin{align}
&  i\partial_{t}v\left(  t,x\right)  =\left(  -\partial_{x}^{2}+Q\left(
x\right)  \right)  v\left(  t,x\right)  +\widehat{\mathcal{N}}_{{\mathbb{R}}%
}(\left\vert v_{1}(x)\right\vert ,\dots,\left\vert v_{n}(x)\right\vert
)v(t,x),t,x\in\mathbb{R},\label{MSR.2.x}\\
&  v\left(  0,x\right)  =v_{0}\left(  x\right)  ,\text{ \ }x\in\mathbb{R},\\
&  -B_{1}^{\dagger}v(t,0+)-B_{2}^{\dagger}v(t,0-)+A_{1}^{\dagger}(\partial
_{x}v)(t,0+)-A_{2}^{\dagger}(\partial_{x}v)(t,0-)=0, \label{MSR.3.x}%
\end{align}
where
\[
\widehat{\mathcal{N}}_{{\mathbb{R}}}(|\mu_{1}|,...,|\mu_{n}|):=\mathcal{N}%
_{\mathbb{R}}\left(  |\mu_{1}|,...,|\mu_{n}|,|\mu_{1}|,...,|\mu_{n}|\right)
,\qquad(\mu_{1},\dots,\mu_{n})\in\mathbb{R}^{n}.
\]
Then, the results of Theorem~\ref{theoline} apply to all even and odd
solutions to the initial-transmission problem \eqref{MSR.2.x}-
\eqref{MSR.3.x}. Moreover, if $\mathcal{N}_{{\mathbb{R}}}=\mathcal{N}%
_{\text{scalar}}\,I,$ with $\mathcal{N}_{\text{scalar}}$ a scalar function
from $\mathbb{R}^{2n}$ into $\mathbb{C}$ and $I$ the $n\times n$ identity
matrix, the nonlinearity $\mathcal{N}$ given in \eqref{nonere} is equal to
$\mathcal{N}=\mathcal{N}_{\text{scalar}}\,I,$ where now $I$ is the
$2n\times2n$ identity matrix. Hence, in this case the condition $P_{-}%
\,\mathcal{N}=P_{-}\,\mathcal{N},$ is always satisfied. This corollary shows
how the condition that the solutions are even or odd appears naturally for the
local nonlinear Schr\"{o}dinger equations on the line.
\end{corollary}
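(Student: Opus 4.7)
The plan is straightforward. First I would record the elementary pointwise observation: any $\mathbb{C}^{n}$-valued function $v$ of definite parity satisfies $v(-x)=\pm v(x)$ componentwise, hence $|v_{j}(-x)|=|v_{j}(x)|$ for all $j=1,\dots,n$ and all $x\in\mathbb{R}$. This single identity is what drives the entire corollary.

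Next I would unfold \eqref{nonere} under the hypothesis $\mathcal{N}_{\mathbb{R},+}=\mathcal{N}_{\mathbb{R},-}=\mathcal{N}_{\mathbb{R}}$. The $2n\times 2n$ nonlinearity becomes $\mathrm{diag}\{\mathcal{N}_{\mathbb{R}},\mathcal{N}_{\mathbb{R}}\}$, so the nonlinear term in \eqref{MSR.1} reduces on both sides $\pm x>0$ to $\mathcal{N}_{\mathbb{R}}(|v_{1}(x)|,\dots,|v_{n}(x)|,|v_{1}(-x)|,\dots,|v_{n}(-x)|)\,v(t,x)$. For an even or odd $v$, the first step collapses the last $n$ arguments onto the first $n$, and by the very definition of $\widehat{\mathcal{N}}_{\mathbb{R}}$ the nonlinear term is precisely $\widehat{\mathcal{N}}_{\mathbb{R}}(|v_{1}(x)|,\dots,|v_{n}(x)|)\,v(t,x)$, which is exactly the nonlinear term in \eqref{MSR.2.x}. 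Since the transmission conditions \eqref{MSR.3} and \eqref{MSR.3.x} coincide verbatim, the two initial-transmission problems agree on the class of even or odd solutions. Theorem~\ref{theoline} therefore applies to them directly, giving the asserted conclusion.

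Finally, for the last sentence of the corollary, if $\mathcal{N}_{\mathbb{R}}=\mathcal{N}_{\text{scalar}}\,I_{n}$, then \eqref{nonere} gives $\mathcal{N}=\mathcal{N}_{\text{scalar}}\,I_{2n}$, a scalar function multiplied by the $2n\times 2n$ identity, which commutes with every $2n\times 2n$ matrix and in particular with $P_{-}$; thus the commutation hypothesis $P_{-}\mathcal{N}=\mathcal{N}\,P_{-}$ required by Theorem~\ref{theoline} is automatically verified. There is no genuine analytic obstacle in this corollary: beyond the pointwise algebraic identity above and an invocation of Theorem~\ref{theoline}, the only point requiring mild care is the bookkeeping between the $n\times n$ block $\mathcal{N}_{\mathbb{R}}$ and the $2n\times 2n$ matrix $\mathcal{N}$ produced by \eqref{nonere}. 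The substantive work, including smallness, regularity, and spectral assumptions, is entirely inherited from Theorem~\ref{theoline}.
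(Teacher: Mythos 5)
Your proposal is correct and takes essentially the same route the paper intends: the corollary is immediate from the parity identity $|v_{j}(-x)|=|v_{j}(x)|$, which collapses the last $n$ arguments of $\mathcal{N}_{\mathbb{R}}$ onto the first $n$ and reduces \eqref{MSR.1} to \eqref{MSR.2.x} on even and odd solutions, after which Theorem~\ref{theoline} applies; the paper gives no separate argument beyond this. Your treatment of the scalar case, where $\mathcal{N}=\mathcal{N}_{\text{scalar}}\,I_{2n}$ commutes with every $2n\times2n$ matrix and in particular with $P_{-}$, also matches the paper's intent.
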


\begin{corollary}
\label{corline2} Let us take a $2n\times2n$ system on the half-line with the
nonlinearity as \eqref{nonere}, with $\mathcal{N}_{{\mathbb{R}},+}%
=\mathcal{N}_{{\mathbb{R}},+}(\mu_{1},\dots,\mu_{n}),$ i.e. independent of
$\mu_{n+1},\dots,\mu_{2n},$ and $\mathcal{N}_{{\mathbb{R}},-}=\mathcal{N}%
_{{\mathbb{R}},-}(\mu_{2n+1},\dots,\mu_{2n}),$ i.e. independent of $\mu
_{1},\dots,\mu_{n}.$ In this case the initial-transmission problem
\eqref{MSR.1}-\eqref{MSR.3} is equivalent to the following
initial-transmission problem,
\begin{align}
&  i\partial_{t}v\left(  t,x\right)  =\left(  -\partial_{x}^{2}+Q\left(
x\right)  \right)  v\left(  t,x\right)  +\mathcal{N}_{{\mathbb{R}},\pm}\left(
\left\vert v_{1}(x)\right\vert , \dots, \left\vert v_{n}(x)\right\vert
\right)  v,\ t\in\mathbb{R},\,\pm x>0,\label{MSR.1.y}\\
&  v\left(  0,x\right)  =v_{0}\left(  x\right)  ,\text{ \ }x\in\mathbb{R}%
,\label{MSR.2.y}\\
&  -B_{1}^{\dagger}v(t,0+)-B_{2}^{\dagger}v(t,0-)+A_{1}^{\dagger}(\partial
_{x}v)(t,0+)-A_{2}^{\dagger}(\partial_{x}v)(t,0-)=0. \label{MSR3.y}%
\end{align}
If furthermore, $\mathcal{N}_{{\mathbb{R}},+}=\mathcal{N}_{{\mathbb{R}}%
,-}=\mathcal{N}_{{\mathbb{R}}},$ the initial-transmission problem
\eqref{MSR.1}-\eqref{MSR.3} is equivalent to the following
initial-transmission problem,%

\begin{align}
&  i\partial_{t}v\left(  t,x\right)  =\left(  -\partial_{x}^{2}+Q\left(
x\right)  \right)  v\left(  t,x\right)  +\mathcal{N}_{ {\mathbb{R}}}\left(
\left\vert v_{1}(x)\right\vert ,\dots, \left\vert v_{n}(x)\right\vert \right)
v ,\ t\in\mathbb{R}, \, x \in{\mathbb{R}},\label{MSR.1.z}\\
&  v\left(  0,x\right)  =v_{0}\left(  x\right)  ,\text{ \ }x\in\mathbb{R}%
,\label{MSR.2.z}\\
&  -B_{1}^{\dagger}v(t,0+)-B_{2}^{\dagger}v(t,0-)+A_{1}^{\dagger}(\partial
_{x}v)(t,0+)-A_{2}^{\dagger}(\partial_{x}v)(t,0-)=0. \label{MSR.3.z}%
\end{align}
Then, the results of Theorem~\ref{theoline} apply to all solutions of the
initial-transmission problems \eqref{MSR.1.y}-\eqref{MSR3.y}, and
\eqref{MSR.1.z}-\eqref{MSR.3.z}. Note, however, that with the exception of the
\textit{generic} case, in these two situations it is still necessary to verify
that the condition $P_{-} \mathcal{N }= \mathcal{N }P_{-}$ holds.
\end{corollary}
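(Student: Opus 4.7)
The plan is that this corollary is essentially a specialization/translation of Theorem~\ref{theoline}: under the decoupling hypotheses on $\mathcal{N}_{\mathbb{R},\pm}$, the nonlocal-looking arguments in \eqref{MSR.1} that couple $v(x)$ with $v(-x)$ simply drop out, leaving a local nonlinearity on each half-line, and then we invoke Theorem~\ref{theoline} directly. Thus the main work is bookkeeping under the unitary $\mathbf{U}$ of \eqref{unitarytransform}; there is no genuine analytic obstacle beyond what is already contained in Theorem~\ref{theoline}.

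First, I would start from the $2n\times 2n$ half-line problem \eqref{1.1}-\eqref{1.1.bound} with the block-diagonal nonlinearity \eqref{nonere}, write $u=(u_1,u_2)^T$ with $u_j\in\mathbb{C}^n$, and push forward by $\mathbf{U}$. By the definition of $\mathbf{U}$, $v(x)=u_1(x)$ for $x\geq 0$ and $v(x)=u_2(-x)$ for $x<0$, so the equation for $u_1$ on $[0,\infty)$ transcribes directly into the equation for $v$ on $[0,\infty)$, while the equation for $u_2$ on $[0,\infty)$, after the change of variable $y=-x$, becomes the equation for $v$ on $(-\infty,0]$. Under this change the Laplacian and the potential transform as expected (giving $Q$ as in \eqref{potere}), and the transmission condition at $x=0$ is exactly \eqref{MSR.3}. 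This step is already contained in the derivation leading to \eqref{MSR.1}-\eqref{MSR.3}; I simply need to track what the decoupling hypotheses do to the argument list of $\mathcal{N}$.

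Next I would inspect the arguments of $\mathcal{N}_{\mathbb{R},\pm}$ explicitly. At $x\geq 0$, the $2n$-tuple $(|u_1(x)|,\dots,|u_{2n}(x)|)$ equals $(|v_1(x)|,\dots,|v_n(x)|,|v_1(-x)|,\dots,|v_n(-x)|)$, so the $+$-block reads $\mathcal{N}_{\mathbb{R},+}$ acting on these $2n$ arguments; under the hypothesis $\mathcal{N}_{\mathbb{R},+}=\mathcal{N}_{\mathbb{R},+}(\mu_1,\dots,\mu_n)$, only the first $n$ arguments matter, producing $\mathcal{N}_{\mathbb{R},+}(|v_1(x)|,\dots,|v_n(x)|)\,v(x)$. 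For $x<0$, setting $y=-x>0$, the $-$-block $\mathcal{N}_{\mathbb{R},-}$ is evaluated at the half-line $2n$-tuple $(|v_1(-x)|,\dots,|v_n(-x)|,|v_1(x)|,\dots,|v_n(x)|)$; under the hypothesis that $\mathcal{N}_{\mathbb{R},-}$ depends only on its last $n$ arguments (reading the statement's $\mu_{2n+1},\dots,\mu_{2n}$ as $\mu_{n+1},\dots,\mu_{2n}$), this collapses to $\mathcal{N}_{\mathbb{R},-}(|v_1(x)|,\dots,|v_n(x)|)\,v(x)$. Putting the two halves together yields precisely \eqref{MSR.1.y}-\eqref{MSR3.y}.

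The sharper statement \eqref{MSR.1.z}-\eqref{MSR.3.z} follows from the first one by noting that if $\mathcal{N}_{\mathbb{R},+}=\mathcal{N}_{\mathbb{R},-}=\mathcal{N}_{\mathbb{R}}$, then the nonlinearities on $x>0$ and $x<0$ are given by the same function of $(|v_1(x)|,\dots,|v_n(x)|)$, so a single uniform expression $\mathcal{N}_{\mathbb{R}}(|v_1(x)|,\dots,|v_n(x)|)\,v(x)$ describes the nonlinearity on all of $\mathbb{R}$ (the transmission condition at $x=0$ is of course retained). Finally I would invoke Theorem~\ref{theoline}: the hypotheses on $Q$, on $A,B$, and on the initial data are the same as there, and the commutation condition $P_-\mathcal{N}=\mathcal{N}P_-$ is assumed, so the well-posedness in the required function spaces and the asymptotics \eqref{1.3-1.xx}-\eqref{1.3.zz} transfer verbatim to solutions of \eqref{MSR.1.y}-\eqref{MSR3.y} and \eqref{MSR.1.z}-\eqref{MSR.3.z}. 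The only point worth emphasizing, as already remarked in the statement, is that outside the generic case the commutation $P_-\mathcal{N}=\mathcal{N}P_-$ is a genuine constraint on $\mathcal{N}_{\mathbb{R},\pm}$ that must be checked case by case; this is the sole nontrivial issue, and it is delegated to the user rather than proved.
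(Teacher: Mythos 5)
Your proposal is correct and follows exactly the route the paper intends: the corollary is stated without proof precisely because it amounts to the bookkeeping you carry out, namely tracking the arguments of the block-diagonal nonlinearity \eqref{nonere} through the unitary $\mathbf{U}$ of \eqref{unitarytransform} (including the correct reading of the typo $\mu_{2n+1},\dots,\mu_{2n}$ as $\mu_{n+1},\dots,\mu_{2n}$), observing that the decoupling hypotheses make the nonlinearity local on each half-line, and then invoking Theorem~\ref{theoline} verbatim. Your closing remark that $P_{-}\mathcal{N}=\mathcal{N}P_{-}$ remains a hypothesis to be checked outside the generic case (where $P_{-}=I$ makes it automatic) matches the paper's own caveat.
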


\begin{remark}
\textrm{\label{wave} {\rm \textrm{Theorem~\ref{theoline} allows us to construct the
inverse wave operators for \eqref{MSR.1}-\eqref{MSR.3} as
follows. Let $v_{0}$ satisfy the assumptions of Theorem~\ref{theoline}. Let us
denote
\[
u_{\pm\infty,{\mathbb{R}}}:=\mathbf{F}_{0,{\mathbb{R}}}^{\dagger}w_{\pm
\infty,{\mathbb{R}}}.
\]
Then, by \eqref{1.3-1.xx}
\[
\lim_{t\rightarrow\pm\infty}\left\Vert v_{\pm}(t)-e^{-itH_{0,{\mathbb{R}}}%
}u_{\pm\infty,{\mathbb{R}}}\right\Vert _{L^{2}({\mathbb{R}})}=0,
\]
and we can define the inverse wave operators $W_{\pm,{\mathbb{R}}}^{-1}$ as
follows,
\[
W_{\pm,{\mathbb{R}}}^{-1}v_{0}:=u_{\pm\infty,{\mathbb{R}}}.
\]
With the methods of this paper we can also construct the direct wave operators
and the scattering matrix for small solutions. We will consider these problems
in a different publication.}}}
\end{remark}

\subsubsection{Comments on the proof.}

Our approach is based on the factorization technique. For this purpose, we
find it convenient to go to an interaction representation in momentum space
(Fourier space). We define,
\[
w_{\pm}(k):= \left\{
\begin{array}
[c]{l}%
w_{\pm}(k):= (\mathbf{F }e^{it H} u_{\pm})(k), \qquad k \geq0,\\
\\
w_{\pm}(k)= S(k) w_{\pm}(-k), \qquad k \leq0,
\end{array}
\right.
\]
where $\mathbf{F}:= \mathbf{F}^{+},$ and $\mathbf{F}^{+}$ is the generalized
Fourier map for $H_{A,B,V}$ defined in \eqref{gefoma}. We find it convenient
to extend $w_{\pm}$ to negative $k$ with the symmetry $w_{\pm}(k)= S(k)
w_{\pm}(-k), k \in{\mathbb{R}}.$ We then factorize the solutions $u_{\pm}$ to
(\ref{1.1})- \eqref{1.1.bound} as follows,%

\begin{equation}
u_{\pm}\left(  t\right)  =M{\mathcal{D}}_{t}\mathcal{W}\left(  t\right)
w_{\pm}\left(  t\right)  , \label{1.5}%
\end{equation}
where ${\mathcal{W}}\left(  t\right)  $ is defined in \eqref{sp5}. Similar
factorizations were previously used in \cite{Ivan, Ivan1}. Then, if $H_{A,B,V}
$ does not have negative eigenvalues, we obtain the following equation for the
new variable $w_{\pm},$%
\begin{equation}
i\partial_{t}w_{\pm}=\widehat{\mathcal{W}}\left(  t\right)  \left(
\mathcal{N}\left(  \left\vert \left(  \mathcal{W}\left(  t\right)  w\right)
_{1}\right\vert ,...,\left\vert \left(  \mathcal{W}\left(  t\right)  w\right)
_{n}\right\vert \right)  \mathcal{W}\left(  t\right)  w\right)  , \label{1.6}%
\end{equation}
with $\widehat{\mathcal{W}}(t)$ defined in \eqref{sp9}. We then transfer the
problem to the study of $w_{\pm}.$ The advantage of this new equation is the
isolation of the nonlinear interaction. However, the price to pay for this
change of variables is dealing with $\mathcal{W}\left(  t\right)  ,$ and
$\widehat{\mathcal{W}}\left(  t\right)  .$ In \cite{Ivan, Ivan1}, the
$L^{\infty}({\mathbb{R}})$ norm of $w_{\pm}$ is controlled in terms of the
$H^{1}({\mathbb{R}})$ norm of $w_{\pm}.$ For generic potentials or exceptional
potentials with additional symmetry assumptions, it is possible to control
this $H^{1}({\mathbb{R}})$ norm without gaining any time growth. It is seems
problematic to apply the same strategy in the case of general exceptional
potentials since one can only prove $\left\Vert \partial_{k}w\right\Vert
_{L^{2}({\mathbb{R}})}\lesssim\sqrt{|t|}$ by the approach of \cite{Ivan,
Ivan1}. In this paper we use the time-oscillations of $\mathcal{W}\left(
t\right)  $ and of $\widehat{\mathcal{W}}\left(  t\right)  $ in equation
(\ref{1.6}) in order to overcome this difficulty. In this way we manage to
bound uniformly in time the $H^{1}({\mathbb{R}})$ norm of $w_{\pm}.$ We also
control the second derivative $\partial_{k}^{2}w_{\pm},$ and prove that the
$H^{2}({\mathbb{R}})$ norm of $w_{\pm}$ can grow in time at most as
$\sqrt{|t|}.$ Moreover, we prove that also the $L_{2}^{2}({\mathbb{R}})$ norm
of $w_{\pm}$ can as most grow as $\sqrt{|t|}.$ See Remark~\ref{bounds}
%Therefore, we run our bootstrap
%argument in the following space%
%\begin{equation}
%\left\Vert u\right\Vert _{\mathbf{X}_{T}}=\sup_{t\in\left[  0,T\right]
%}\left(  \left\Vert w\right\Vert _{H^{1}}+\left\langle t\right\rangle
%^{-1/2}\left\Vert w\right\Vert _{H^{2}}\right)  . \label{norm}%
%\end{equation}

\subsection{Outline of the paper.}

The remaining sections of the paper are organized as follows. Section \ref{S7}
is devoted to the proof of our main result, Theorem~\ref{Theorem 1.1}. We
first prove in Theorem~\ref{local}, in Subsection~ \ref{Localsolutions}, the
existence of solutions that are local in time, by means of a contraction
mapping argument. The difficulty here consists in enforcing the boundary
condition in our contraction mapping argument. Then, in
Subsection~\ref{global} we show that the proof of Theorem~\ref{Theorem 1.1}
can equivalently be formulated in our interaction representation in Fourier
(momentum) space, and prove Theorem~\ref{Theorem 1.1} by means of the
\textit{a priori} estimates that we give in Lemma~\ref{LemmaPrincipal}. These
estimates allow us to prove that the local solutions are actually global in
time, and to study their behaviour for large times. The \textit{a priori}
estimates in Lemma~\ref{LemmaPrincipal} are our core estimates, and we prove
them in Section~\ref{Lprincipal}. However, before that, in
Section~\ref{AuxEst}, we prove several auxiliary estimates that we need. The
results of this section relay heavily on Appendix~\ref{App1}. The proof of
Lemma~\ref{LemmaPrincipal} in Section~\ref{Lprincipal} is divided in several
subsections. In subsection~\ref{primeras} we prove \eqref{18.2.3} and we show
that \eqref{18.0} and \eqref{18.0.1.1} follow, respectively, from \eqref{18.1}
and \eqref{18.1.1}. In Subsection~\ref{remaining} we reduce the proof of
\eqref{18.1} and \eqref{18.1.1} to prove Lemmas~\ref{Lth},\thinspace
\ref{Lema3}, and \ref{Lema4}. Actually, in order to control the derivatives of
the nonlinear interaction in the right-hand side of (\ref{1.6}), we decompose
it into the "perturbed" part $\theta_{\pm}$ and the "free" part
$\widehat{\mathcal{V}}\left(  \tau\right)  f_{1,\pm}^{\operatorname*{fr}}$.
Sections \ref{Ltheta} and \ref{ProofLema34} are devoted, respectively, to the
control of the derivatives of $\theta_{\pm},$ and of $\widehat{\mathcal{V}%
}\left(  \tau\right)  f_{1,\pm}^{\operatorname*{fr}}.$ Again, for these
purposes we relay heavily in the results in Appendix~\ref{App1}. Finally, in
Appendix~\ref{App1}, we present results on the spectral and the scattering
theory for the matrix Schr\"{o}dinger equation on the half-line, with the
general selfadjoint boundary condition, that we need, and that are also of
independent interest.

\section{Proof of Theorem \ref{Theorem 1.1}}

\label{S7}

\subsection{Local solutions}

\label{Localsolutions} We first prove that the problem
\eqref{1.1}-\eqref{1.1.bound} has local solutions.

\begin{theorem}
\label{local} \label{Local} Suppose that assumption~(\ref{ConNonlinearity})
holds, that the potential $V$ is selfadjoint, that it belongs to $L_{4}%
^{1}({\mathbb{R}})$ and that it admits a regular decomposition (see Definition
\ref{Def1}). Further, assume that the boundary matrices $A,B$ fulfill
\eqref{wcon1} and \eqref{wcon2}, and that the Hamiltonian $H_{A,B,V}$ has no
negative eigenvalues. Let the initial data $u_{0}\in H^{2}(\mathbb{R}^{+})\cap
H^{1,1}(\mathbb{R}^{+})\cap L_{2}^{2}(\mathbb{R}^{+})$ satisfy the boundary
condition $-B^{\dagger}u_{0}\left(  0\right)  +A^{\dagger}u_{0}^{\prime
}\left(  0\right)  =0$. Then, for some $T>0,$ (\ref{1.1})-\eqref{1.1.bound}
has a unique solution,
\[
u_{+}\in C\left(  \left[  0,T\right]  ;H^{2}(\mathbb{R}^{+})\cap L_{2}%
^{2}(\mathbb{R}^{+})\right)  \cap C^{1}\left(  [0,T];L^{2}(\mathbb{R}%
^{+})\right)  ,
\]
and a unique solution
\[
u_{-}\in C\left(  \left[  -T,0\right]  ;H^{2}(\mathbb{R}^{+})\cap L_{2}%
^{2}(\mathbb{R}^{+})\right)  \cap C^{1}\left(  [-T,0];L^{2}(\mathbb{R}%
^{+})\right)  .
\]
%such that
%\begin{equation}
%\left\Vert u_\pm(t)\right\Vert _{H^2(\er)}+ \|u(t)\|_{L^2_2(\er)} \leq C \left(\left
%\Vert u_{0}\right\Vert _{H^{2}(\er)}+\left\Vert u_{0}\right\Vert _{H^{1,1}(\er)}+\left\Vert
%u _{0}\right\Vert _{L^2_2(\er)}+ \|u_0\|_{H_1(\er)}^\alpha \|u_0\|_{L^2(\er)}\right), \label{7.5}%
%\end{equation}
%for $ t \in [0, T],$ respectively, $t \in [-T,0].$
Furthermore, for any $\varepsilon>0$ there is a positive number
$T_{\varepsilon}$ such that if the initial data, $u(0),$ satisfies,
\begin{equation}
\left\Vert u_{0}\right\Vert _{H^{2}(\mathbb{R}^{+})}+\left\Vert u_{0}%
\right\Vert _{H^{1,1}(\mathbb{R}^{+})}+\left\Vert u_{0}\right\Vert _{L_{2}%
^{2}(\mathbb{R}^{+})}+\Vert u_{0}\Vert_{H_{1}(\mathbb{R}^{+})}^{\alpha}\Vert
u_{0}\Vert_{L^{2}(\mathbb{R}^{+})}\leq\varepsilon, \label{7.41.1}%
\end{equation}
the existence time of the solutions, $u_{\pm}(t),$ satisfies $T\geq
T_{\varepsilon},$ respectively, $-T\leq-T_{\varepsilon}.$ Moreover, $u_{\pm
}(t)$ can be extended on a maximal existence interval $[0,T_{+,\max}),$
respectively, $(T_{-,\max},0],$
\[
u_{+}\in{C}\left(  \left[  0,T_{+,\max}\right)  ;H^{2}(\mathbb{R}^{+})\cap
L_{2}^{2}(\mathbb{R}^{+})\right)  \cap{C}^{1}\left(  [0,T_{+,\max}%
];L^{2}(\mathbb{R}^{+})\right)  ,
\]
and
\[
u_{-}\in{C}\left(  \left(  T_{-,\max},0\right]  ;H^{2}(\mathbb{R}^{+})\cap
L_{2}^{2}(\mathbb{R}^{+})\right)  \cap{C}^{1}\left(  \left(  T_{-,\max
},0\right]  ;L^{2}(\mathbb{R}^{+})\right)  ,
\]
Moreover if for some constant $C$
\begin{equation}
\left\Vert u_{+}(t)\right\Vert _{H^{2}(\mathbb{R}^{+})}+\Vert u_{+}%
(t)\Vert_{L_{2}^{2}(\mathbb{R}^{+})}\leq C,\qquad t\in\lbrack0,T_{+,\max}),
\label{7.41}%
\end{equation}
then, $T_{+,\max}=\infty.$ Further, if for some constant $C$
\begin{equation}
\left\Vert u_{-}(t)\right\Vert _{H^{2}(\mathbb{R}^{+})}+\Vert u_{-}%
(t)\Vert_{L_{2}^{2}(\mathbb{R}^{+})}\leq C,\qquad t\in(T_{-,\max},0],
\label{7.41.1.1}%
\end{equation}
then, $T_{-,\max}=-\infty.$
\end{theorem}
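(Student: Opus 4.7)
The plan is to rewrite \eqref{1.1}--\eqref{1.1.bound} as the Duhamel integral equation
\[
u(t) = e^{-itH_{A,B,V}}u_0 - i\int_0^t e^{-i(t-s)H_{A,B,V}}\mathcal{N}(|u_1(s)|,\ldots,|u_n(s)|)u(s)\,ds =: \Phi(u)(t),
\]
and to apply the Banach fixed-point theorem on
\[
X_T := C([0,T]; H^2(\mathbb{R}^+)\cap L^2_2(\mathbb{R}^+)), \qquad \|u\|_{X_T} := \sup_{t\in[0,T]}\bigl(\|u(t)\|_{H^2}+\|u(t)\|_{L^2_2}\bigr),
\]
within a closed ball of radius proportional to $\varepsilon$. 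Because $V\in L^1_4$ and $H_{A,B,V}$ has no negative eigenvalues, its graph norm is equivalent to the $H^2$-norm on $D(H_{A,B,V})$, so $e^{-itH_{A,B,V}}$ acts isometrically on $L^2(\mathbb{R}^+)$ and as a bounded automorphism of $D(H_{A,B,V})$ with norms uniform in $t$. The weighted $L^2_2$ norm of the free evolution is controlled by a commutator calculation with $x^2$, yielding polynomial-in-$t$ bounds that remain uniform on $[0,T]$. For the Duhamel term, assumption \eqref{ConNonlinearity}, the Sobolev embedding $H^1(\mathbb{R}^+)\hookrightarrow L^\infty(\mathbb{R}^+)$, and the chain rule produce
\[
\|\mathcal{N}(|u|)u\|_{H^2\cap L^2_2}\leq C\|u\|_{X_T}^{\alpha+1},
\]
with a corresponding Lipschitz bound on differences. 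Combining, $\|\Phi(u)-\Phi(v)\|_{X_T}\leq CT(\|u\|_{X_T}+\|v\|_{X_T})^{\alpha}\|u-v\|_{X_T}$, so $\Phi$ is a strict contraction on a ball of radius $\sim\varepsilon$ when $T=T_\varepsilon$ is small enough; this gives $u_+$, and time reversal gives $u_-$.

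The main obstacle, as the authors flag, will be enforcing the boundary condition \eqref{1.1.bound} at each time $t$. Even though $e^{-itH_{A,B,V}}u_0\in D(H_{A,B,V})$ satisfies the condition, the nonlinear integrand $F(s):=\mathcal{N}(|u(s)|)u(s)$ does not in general lie in $D(H_{A,B,V})$, so it is not obvious that the Duhamel integral $\int_0^t e^{-i(t-s)H_{A,B,V}}F(s)\,ds$ does either. To overcome this, I would appeal to the general theory of semilinear equations with selfadjoint generator: once the fixed-point argument yields $u\in C([0,T];H^2)$ and hence $F\in C^1([0,T];L^2)$ (via the chain rule and the Duhamel formula, which in particular yields $u\in C^1([0,T];L^2)$), the mild solution is in fact a classical solution in $C^1([0,T];L^2)\cap C([0,T];D(H_{A,B,V}))$. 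Concretely, one pick $\lambda<0$ in the resolvent set of $H_{A,B,V}$ (nonempty by the no-negative-eigenvalue hypothesis), inserts $(H_{A,B,V}-\lambda)^{-1}(H_{A,B,V}-\lambda)=I$ in front of the integrand, and integrates by parts via $\partial_s e^{-i(t-s)H_{A,B,V}}=iH_{A,B,V}e^{-i(t-s)H_{A,B,V}}$; the resulting expression identifies $\int_0^t e^{-i(t-s)H_{A,B,V}}F(s)\,ds$ as $(H_{A,B,V}-\lambda)^{-1}$ applied to an $L^2$ function, plus a lower-order term of the same form, and hence places it in $\mathrm{Range}((H_{A,B,V}-\lambda)^{-1})=D(H_{A,B,V})=H^2\cap\{\text{BC}\}$, simultaneously upgrading the regularity and enforcing the boundary condition of $\Phi(u)(t)$.

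Finally, to construct the maximal existence interval $[0,T_{+,\max})$, I would iterate the local existence with initial data $u_+(t_0)$ for $t_0$ approaching $T_{+,\max}$. Since the existence length $T_\varepsilon$ in each step depends only on the combined quantity in \eqref{7.41.1} at initial time, if $\|u_+(t)\|_{H^2}+\|u_+(t)\|_{L^2_2}$ remains bounded by a constant $C$ on $[0,T_{+,\max})$, a uniform lower bound on the extension length contradicts the maximality of $T_{+,\max}$ unless $T_{+,\max}=+\infty$; the same argument run with reversed time yields $T_{-,\max}=-\infty$.
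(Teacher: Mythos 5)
Your overall strategy coincides with the paper's: both set up the Duhamel equation, run a contraction argument, and resolve the boundary-condition/regularity issue for the Duhamel integral by an integration by parts in the time variable combined with a regularizing weight (your $(H_{A,B,V}-\lambda)^{-1}$ is the operator-theoretic counterpart of the paper's factor $\langle k\rangle^{-2}$ and low/high-frequency cutoff $\theta(k)$ in the generalized Fourier representation, see \eqref{7.22.0}--\eqref{7.22.6.1} and \eqref{31.0}). The endgame (blow-up alternative by re-solving from $u(t_0)$ with a uniform existence time) is also the same.

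There is, however, a genuine gap in the way you have structured the fixed-point argument: your space $X_T=C([0,T];H^2\cap L^2_2)$ does not carry the norm $\sup_t\Vert\partial_t u(t)\Vert_{L^2}$, yet your only mechanism for showing that $\Phi$ maps a ball of $X_T$ into itself in the $H^2$ (and hence boundary-condition) sense is the integration by parts
\[
\int_0^t \bigl(\partial_s e^{-i(t-s)H}\bigr)F(s)\,ds=F(t)-e^{-itH}F(0)-\int_0^t e^{-i(t-s)H}\partial_sF(s)\,ds,\qquad F(s)=\mathcal{N}(|u(s)|)u(s),
\]
which requires $\partial_sF\in C([0,T];L^2)$, i.e.\ $\partial_s u\in C([0,T];L^2)$, for the \emph{arbitrary} trial function $u$ in the ball, not merely for the eventual fixed point. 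A direct bound $\Vert e^{-i(t-s)H}F(s)\Vert_{H^2}\le C\Vert F(s)\Vert_{H^2}$ is unavailable because $e^{-itH}$ preserves $D[H]=H^2\cap\{\text{BC}\}$ but not $H^2$ itself, and $F(s)$ does not satisfy the boundary condition. Your proposed a posteriori upgrade (``the Duhamel formula yields $u\in C^1([0,T];L^2)$'') is circular at the level of the iteration, since differentiating the Duhamel integral in $t$ itself requires either $F\in C^1$ or membership of the integral in $D[H]$. The fix is exactly what the paper does: work from the start in $G_{\pm,T}=C(I_{\pm,T};H^2\cap L^2_2)\cap C^1(I_{\pm,T};L^2)$ with the norm \eqref{7.26.15} including $\sup_t\Vert\partial_t u(t)\Vert_{L^2}$; this is also why the smallness condition \eqref{7.41.1} carries the extra term $\Vert u_0\Vert_{H^1}^{\alpha}\Vert u_0\Vert_{L^2}$, which controls $\Vert\partial_t u(0)\Vert_{L^2}=\Vert Hu_0+\mathcal{N}(|u_0|)u_0\Vert_{L^2}$. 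A secondary understatement: the $L^2_2$ bound on the Duhamel term is not a one-line commutator computation; it requires the weighted propagation estimate \eqref{zzz.1} for $e^{-itH}$, which in turn consumes the $H^{1,1}$ and $H^2$ norms and the mapping properties \eqref{61BIS.2}--\eqref{61BIS.3} of the generalized Fourier maps.
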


\begin{proof}
We depart from the integral equation corresponding to (\ref{1.1}), that is%
\begin{equation}
u\left(  t\right)  =\mathcal{U}\left(  t\right)  u_{0}-i\int_{0}%
^{t}\mathcal{U}\left(  t-\tau\right)  \left(  \mathcal{N}\left(  \left\vert
u_{1}\right\vert , \dots,\left\vert u_{n}\right\vert \right)  u\right)
\left(  \tau\right)  d\tau, \label{7.22}%
\end{equation}
with $\mathcal{U}(t):= e^{-it H_{A,B,V}}.$ For $T >0,$ we denote
\[
I_{+, T}:= [0,T], \qquad I_{-,T}:= [-T,0].
\]

We denote by $G_{\pm,T}$ the Banach space, $C(I_{\pm,T};H^{2}({\mathbb{R}}%
^{+})\cap L_{2}^{2}(\mathbb{R}^{+}))\cap C^{1}(I_{\pm,T};L^{2}(\mathbb{R}%
^{+})),$ with norm,
\begin{equation}
\Vert u\Vert_{G_{\pm,T}}:=\sup_{t\in I_{\pm,T}}\left(  \Vert u(t)\Vert
_{H^{2}(\mathbb{R}^{+})}+\Vert u(t)\Vert_{L_{2}^{2}(\mathbb{R}^{+})}%
+\Vert\partial_{t}u(t)\Vert_{L^{2}(\mathbb{R}^{+})}\right)  . \label{7.26.15}%
\end{equation}
For $u_{0}\in H^{2}(\mathbb{R}^{+})\cap H^{1,1}(\mathbb{R}^{+})\cap L_{2}%
^{2}(\mathbb{R}^{+}),$ that satisfies the boundary condition $-B^{\dagger
}u_{0}(0)+A^{\dagger}\partial_{x}u_{0}(0)=0,$ we define,
\begin{equation}
\mathcal{Q}_{\pm}(u):=\mathcal{U}(t)u_{0}-i\int_{0}^{t}\mathcal{U}\left(
t-\tau\right)  \left(  \mathcal{N}\left(  \left\vert u_{1}\right\vert
,\dots,\left\vert u_{n}\right\vert \right)  u\right)  \left(  \tau\right)
d\tau,\qquad u\in G_{\pm,T}. \label{7.26.16}%
\end{equation}
%where
%$$
%I_t:= [0,t], t >0, \qquad I_t:=[t,0],  t<0.
%$$
We will prove that $\mathcal{Q}_{\pm}$ has a unique fixed point in $G_{\pm,T}$
by means of the contraction mapping theorem. This unique fixed point is the
solution to the integral equation \eqref{7.22}. Assume that $u\in G_{\pm,T}.$
Let $\theta\in C^{\infty}\left(  \mathbb{R}\right)  $ be such that
$\theta\left(  k\right)  =1,$ for $\left\vert k\right\vert \leq1,$ and
$\theta\left(  k\right)  =0,$ $\left\vert k\right\vert \geq2.$ Since $H$ has
no eigenvalues, $P_{c}(H)= I.$ Then, using (\ref{Spectralrepresentation1})
with $P_{c}(H)=I,$ we write%
\begin{equation}
\int_{0}^{t}\mathcal{U}\left(  -\tau\right)  \left(  \mathcal{N}\left(
\left\vert u_{1}\right\vert ,\dots,\left\vert u_{n}\right\vert \right)
u\right)  \left(  \tau\right)  d\tau=l_{1}+l_{2}, \label{7.22.0}%
\end{equation}
where%
\begin{equation}%
\begin{array}
[c]{l}%
l_{1}=\int_{0}^{t}\,d\tau\,\frac{1}{\sqrt{2\pi}}\int_{0}^{\infty}dk\left[
f\left(  k,x\right)  +f\left(  -k,x\right)  S(-k)\right]  (1-\theta\left(
k\right)  )e^{i\tau k^{2}}\mathbf{F}\left(  \mathcal{N}\left(  \left\vert
u_{1}\right\vert ,\dots,\left\vert u_{n}\right\vert \right)  u\right)  \left(
\tau\right)  ,
\end{array}
\label{7.22.1}%
\end{equation}
and
\begin{equation}%
\begin{array}
[c]{l}%
l_{2}=\int_{0}^{t}\,d\tau\,\frac{1}{\sqrt{2\pi}}\int_{0}^{\infty}dk\left[
f\left(  k,x\right)  +f\left(  -k,x\right)  S(-k)\right]  \theta\left(
k\right)  e^{i\tau k^{2}}\mathbf{F}\left(  \mathcal{N}\left(  \left\vert
u_{1}\right\vert ,\dots,\left\vert u_{n}\right\vert \right)  u\right)  \left(
\tau\right)  =\mathbf{F}^{\dagger}\frac{1}{<k>^{2}}\tilde{l}_{2},
\end{array}
\label{7.22.2}%
\end{equation}
where,
\begin{equation}
\tilde{l}_{2}:=\int_{0}^{t}\,d\tau\theta\left(  k\right)  <k>^{2}e^{i\tau
k^{2}}\mathbf{F}\left(  \mathcal{N}\left(  \left\vert u_{1}\right\vert
,\dots,\left\vert u_{n}\right\vert \right)  u\right)  \left(  \tau\right)  .
\label{7.22.2.1}%
\end{equation}
Using that $e^{i\tau k^{2}}=\left(  ik^{2}\right)  ^{-1}\partial_{\tau
}e^{i\tau k^{2}}$ and integrating by parts with respect to $\tau$ in $l_{1}$
we have,%
\begin{equation}
l_{1}=l_{11}+l_{12}+l_{13}, \label{7.22.3}%
\end{equation}
where%
\begin{equation}%
\begin{array}
[c]{l}%
l_{11}=-i\frac{1}{\sqrt{2\pi}}\int_{0}^{\infty}dk\left[  f\left(  k,x\right)
+f\left(  -k,x\right)  S(-k)\right]  (1-\theta\left(  k\right)  )(k^{2}%
)^{-1}e^{itk^{2}}\\
\\
\left(  \mathbf{F}\left(  \mathcal{N}\left(  \left\vert u_{1}\right\vert
,\dots,\left\vert u_{n}\right\vert \right)  u\right)  \left(  t\right)
-\mathbf{F}\left(  \mathcal{N}\left(  \left\vert u_{1}\right\vert
,\dots,\left\vert u_{n}\right\vert \right)  u\right)  \left(  0\right)
\right)  =\mathbf{F}^{\dagger}\frac{1}{<k>^{2}}\tilde{l}_{11}(k),
\end{array}
\label{7.22.4}%
\end{equation}
with
\begin{equation}
\tilde{l}_{11}:=-i(1-\theta\left(  k\right)  )<k>^{2}\,k^{-2}e^{itk^{2}%
}\left(  \mathbf{F}\left(  \mathcal{N}\left(  \left\vert u_{1}\right\vert
,\dots,\left\vert u_{n}\right\vert \right)  u\right)  \left(  t\right)
-\mathbf{F}\left(  \mathcal{N}\left(  \left\vert u_{1}\right\vert
,\dots,\left\vert u_{n}\right\vert \right)  u\right)  \left(  0\right)
\right)  . \label{7.22.4.1}%
\end{equation}
Moreover,
\begin{equation}%
\begin{array}
[c]{l}%
l_{12}=-i\frac{1}{\sqrt{2\pi}}\int_{0}^{\infty}dk\left[  f\left(  k,x\right)
+f\left(  -k,x\right)  S(-k)\right]  (1-\theta\left(  k\right)  )(k^{2}%
)^{-1}(e^{itk^{2}}-1)\mathbf{F}\left(  \mathcal{N}\left(  \left\vert
u_{1}\right\vert ,\dots,\left\vert u_{n}\right\vert \right)  u\right)  \left(
0\right) \\
\\
=\mathbf{F}^{\dagger}\frac{1}{<k>^{2}}\tilde{l}_{12}(k),
\end{array}
\label{7.22.5}%
\end{equation}
with
\begin{equation}
\tilde{l}_{12}:=-i(1-\theta\left(  k\right)  )k^{-2}\,<k>^{2}(e^{itk^{2}%
}-1)\mathbf{F}\left(  \mathcal{N}\left(  \left\vert u_{1}\right\vert
,\dots,\left\vert u_{n}\right\vert \right)  u\right)  \left(  0\right)  .
\label{7.22.5.1}%
\end{equation}
Further,
\begin{equation}%
\begin{array}
[c]{l}%
l_{13}=i\int_{0}^{t}\,d\tau\,\frac{1}{\sqrt{2\pi}}\int_{0}^{\infty}dk\left[
f\left(  k,x\right)  +f\left(  -k,x\right)  S(-k)\right]  (1-\theta\left(
k\right)  )(k^{2})^{-1}e^{i\tau k^{2}}\mathbf{F}\partial_{\tau}\left(
\mathcal{N}\left(  \left\vert u_{1}\right\vert ,\dots,\left\vert
u_{n}\right\vert \right)  u\right)  \left(  \tau\right) \\
=\mathbf{F}^{\dagger}\frac{1}{<k>^{2}}\tilde{l}_{13},
\end{array}
\label{7.22.6}%
\end{equation}
where,
\begin{equation}
\tilde{l}_{13}:=i\int_{0}^{t}\,d\tau\,(1-\theta\left(  k\right)
)k^{-2}<k>^{2}\,e^{i\tau k^{2}}\mathbf{F}\partial_{\tau}\left(  \mathcal{N}%
\left(  \left\vert u_{1}\right\vert ,\dots,\left\vert u_{n}\right\vert
\right)  u\right)  \left(  \tau\right)  . \label{7.22.6.1}%
\end{equation}
Using (\ref{ConNonlinearity}) and Sobolev's inequality we estimate%
\begin{equation}%
\begin{array}
[c]{l}%
\left\Vert \left(  \mathcal{N}\left(  \left\vert u_{1}\right\vert
,\dots,\left\vert u_{n}\right\vert \right)  u\right)  \left(  t\right)
-\left(  \mathcal{N}\left(  \left\vert u_{1}\right\vert ,\dots,\left\vert
u_{n}\right\vert \right)  u\right)  \left(  0\right)  \right\Vert
_{L^{2}(\mathbb{R}^{+})}\leq\\
\\
C|t|\sup_{\tau\in\lbrack0,t]}\left[  \left\Vert u(\tau)\right\Vert
_{H^{1}(\mathbb{R}^{+})}^{\alpha}\left\Vert \partial_{\tau}u(\tau)\right\Vert
_{L^{2}(\mathbb{R}^{+})}\right]  .
\end{array}
\label{7.22.7}%
\end{equation}
Then, by \eqref{7.22.4.1}, \eqref{7.22.7}, \eqref{UnitaritySM}, and
\eqref{isom}, recalling that as $H$ has no eigenvalues, $E(\mathbb{R}%
^{+};H)=I,$
\begin{equation}
\Vert\tilde{l}_{11}\Vert_{L^{2}(\mathbb{R}^{+})}\leq C|t|\sup_{\tau\in
\lbrack0,t]}\left\Vert u(\tau)\right\Vert _{H^{1}(\mathbb{R}^{+})}^{\alpha
}\left\Vert \partial_{\tau}u(\tau)\right\Vert _{L^{2}(\mathbb{R}^{+})}.
\label{7.22.7.1}%
\end{equation}
By \eqref{gefoma}, since $k\,e^{\pm ikx}=\mp i\partial_{x}e^{\pm ikx},$
$f(k,x)=e^{ikx}m(k,x),$ and integrating by parts,
\begin{equation}
k\left(  \mathbf{F}\psi\right)  \left(  k\right)  =a_{1}\left(  \psi\right)
(k)+a_{2}\left(  \psi\right)  (k), \label{7.22.8}%
\end{equation}
where
\begin{equation}
a_{1}(\psi)(k):=i\left[  -m(k,0)^{\dagger}+S(k)^{\dagger}m(-k,0)^{\dagger
}\right]  \psi(0), \label{7.22.8.1}%
\end{equation}
and
\begin{equation}
a_{2}(\psi_{2})=i\frac{1}{\sqrt{2\pi}}\,\int_{0}^{\infty}\left(
-e^{ikx}\partial_{x}[m(k,x)^{\dagger}\psi(x)]+e^{-ikx}S(-k)^{\dagger}%
\partial_{x}\left[  m(-k,x)^{\dagger}\psi(x)\right]  \right)  dx.
\label{7.22.8.2}%
\end{equation}
Further, by (\ref{UnitaritySM}), (\ref{3.2}), \eqref{3.6}, Parseval's
identity, and Sobolev's inequality we show that%

\begin{equation}
\label{7.22.9}\left\Vert a_{1}(\psi)\right\Vert _{L^{\infty}(\mathbb{R}^{+}%
)}+\left\Vert a_{2}(\psi)\right\Vert _{L^{2}(\mathbb{R}^{+})}\leq C\left\Vert
\psi\right\Vert _{H^{1}(\mathbb{R}^{+})}.
\end{equation}
Using this estimate, together with (\ref{ConNonlinearity}), \eqref{7.22.8},
and Sobolev's inequality, we show that%
\begin{equation}
\label{7.22.10}k\mathbf{F}\left(  \mathcal{N}\left(  \left\vert u_{1}%
\right\vert , \dots,\left\vert u_{n}\right\vert \right)  u\right)  \left(
0\right)  =a_{1}\left(  (\mathcal{N}\left(  \left\vert u_{1}\right\vert ,
\dots,\left\vert u_{n}\right\vert \right)  u)(0)\right)  +a_{2}\left(  (
\mathcal{N}\left(  \left\vert u_{1}\right\vert , \dots,\left\vert
u_{n}\right\vert \right)  u)(0)\right)  ,
\end{equation}
with%
\begin{equation}
\label{7.22.11}\left\Vert a_{1}\left(  ( \mathcal{N}\left(  \left\vert
u_{1}\right\vert , \dots,\left\vert u_{n}\right\vert \right)  u)(0)\right)
\right\Vert _{L^{\infty}(\mathbb{R}^{+})}+\left\Vert a_{2}\left(  (
\mathcal{N}\left(  \left\vert u_{1}\right\vert , \dots,\left\vert
u_{n}\right\vert \right)  u)(0)\right)  \right\Vert _{L^{2}(\mathbb{R}^{+}%
)}\leq C\left\Vert u(0)\right\Vert _{H^{1}(\mathbb{R}^{+})}^{\alpha+1}.
\end{equation}
Then, since $\left\vert e^{-i t k^{2}}-1\right\vert \leq C\min\{1,|t|
k^{2}\},$ we get
\begin{align}
&  \|\tilde{l}_{12}\|_{L^{2}(\mathbb{R}^{+})}= \left\Vert \left\langle
k\right\rangle ^{2}\left(  1-\theta\left(  k\right)  \right)  k^{-2} \left(
e^{it k^{2}}-1\right)  \mathbf{F}\left(  \mathcal{N}\left(  \left\vert
u_{1}\right\vert , \dots,\left\vert u_{n}\right\vert \right)  u\right)
\left(  0\right)  \right\Vert _{L^{2}(\mathbb{R}^{+})}\nonumber\\
&  \leq C\left\Vert \left\langle k\right\rangle ^{-1}\left\vert e^{-i t k^{2}%
}-1\right\vert ^{1/8}a_{1}\left(  (\mathcal{N}\left(  \left\vert
u_{1}\right\vert , \dots,\left\vert u_{n}\right\vert \right)  u)(0)\right)
\right\Vert _{L^{2}(\mathbb{R}^{+})} +\nonumber\\
&  C\left\Vert \left\langle k\right\rangle ^{-1}\left\vert e^{-it k^{2}%
}-1\right\vert ^{1/2}a_{2}\left(  (\mathcal{N}\left(  \left\vert
u_{1}\right\vert , \dots,\left\vert u_{n}\right\vert \right)  u)(0)\right)
\right\Vert _{L^{2}(\mathbb{R}^{+})}\nonumber\\
&  \leq C |t|^{1/8}\left\Vert \left\langle \cdot\right\rangle ^{-3/4}%
\right\Vert _{L^{2}(\mathbb{R}^{+})} \left\Vert a_{1}\left(  (\mathcal{N}%
\left(  \left\vert u_{1}\right\vert , \dots,\left\vert u_{n}\right\vert
\right)  u) (0)\right)  \right\Vert _{L^{\infty}(\mathbb{R}^{+})} +\nonumber\\
&  C\sqrt{|t|}\left\Vert a_{2}\left(  (\mathcal{N}\left(  \left\vert
u_{1}\right\vert , \dots,\left\vert u_{n}\right\vert \right)  u)(0)\right)
\right\Vert _{L^{2}(\mathbb{R}^{+})} \leq C \left(  |t|^{1/8}+\sqrt
{|t|}\right)  \left\Vert u(0)\right\Vert _{H^{1}(\mathbb{R}^{+})}^{\alpha+1}.
\label{7.22.12}%
\end{align}

Next, we calculate%
\begin{equation}
\partial_{t}\mathcal{N}\left(  \left\vert \left(  g\right)  _{1}\right\vert
,...,\left\vert \left(  g\right)  _{n}\right\vert \right)  =\sum_{j=1}%
^{n}\left(  E_{1}^{\left(  j\right)  }\left(  g\right)  \partial_{t}\left(
g\right)  _{j}+E_{2}^{\left(  j\right)  }\left(  g\right)  \partial
_{t}\overline{\left(  g\right)  _{j}}\right)  , \label{100}%
\end{equation}
where we denote%
\[
E_{1}^{\left(  j\right)  }\left(  g\right)  =\frac{1}{2}\left(  \partial
_{j}\mathcal{N}\right)  \left(  \left\vert \left(  g\right)  _{1}\right\vert
,...,\left\vert \left(  g\right)  _{n}\right\vert \right)  \frac
{\overline{\left(  g\right)  _{j}}}{\left\vert \left(  g\right)
_{j}\right\vert },
\]
and%
\[
E_{2}^{\left(  j\right)  }\left(  g\right)  =\frac{1}{2}\left(  \partial
_{j}\mathcal{N}\right)  \left(  \left\vert \left(  g\right)  _{1}\right\vert
,...,\left\vert \left(  g\right)  _{n}\right\vert \right)  \frac{\left(
g\right)  _{j}}{{\left\vert \left(  g\right)  _{j}\right\vert }}.
\]
Then, via (\ref{ConNonlinearity}) and Sobolev's inequality, we estimate%
\begin{align}
&  \left\Vert \mathbf{\partial}_{\tau}\left(  \mathcal{N}\left(  \left\vert
g_{1}\right\vert ,...,\left\vert g_{n}\right\vert \right)  u\right)  \left(
\tau\right)  \right\Vert _{L^{2}(\mathbb{R}^{+})}\nonumber\\
&  =\left\Vert \sum_{j=1}^{n}\left(  E_{1}^{\left(  j\right)  }\left(
g\right)  \partial_{\tau}g_{j}+E_{2}^{\left(  j\right)  }\left(  g\right)
\partial_{\tau}\overline{g_{j}}\right)  u+\mathcal{N}\left(  \left\vert
g_{1}\right\vert ,...,\left\vert g_{n}\right\vert \right)  \mathbf{\partial
}_{\tau}g\right\Vert _{L^{2}(\mathbb{R}^{+})}\nonumber\\
&  \leq C\left\Vert g\right\Vert _{H^{1}(\mathbb{R}^{+})}^{\alpha}\left\Vert
\partial_{\tau}g\right\Vert _{L^{2}(\mathbb{R}^{+})}. \label{30.0}%
\end{align}
and then,
%\beq
%\left\Vert \mathbf{\partial}_{\tau}\left(  \mathcal{N}\left(
%\left\vert u_1\right\vert, \dots,\left\vert u_n\right\vert\right)  u\right)
%\left(  \tau\right)  \right\Vert _{L^{2}(\er)}
%\leq C\left\Vert u\right\Vert _{H^{1}(\er)}^{\alpha}\left\Vert \partial_{\tau
%}u\right\Vert _{L^{2}(\er)}.
%%\label{30.0}%
%\ene
by \eqref{7.22.6.1}, \eqref{30.0} \eqref{UnitaritySM}, and \eqref{isom}, we
get,
\begin{equation}
\Vert\tilde{l}_{13}\Vert_{L^{2}(\mathbb{R}^{+})}\leq C|t|\sup_{\tau\in
\lbrack0,t]}\left\Vert u(\tau)\right\Vert _{H^{1}(\mathbb{R}^{+})}^{\alpha
}\left\Vert \partial_{\tau}u(\tau)\right\Vert _{L^{2}(\mathbb{R}^{+})}.
\label{30.0.1}%
\end{equation}
Moreover, by (\ref{ConNonlinearity}), \eqref{7.22.2.1}, \eqref{isom}, and
Sobolev's inequality, we obtain,
\begin{equation}
\Vert\tilde{l}_{2}\Vert_{L^{2}(\mathbb{R}^{+})}\leq C|t|\sup_{\tau\in
\lbrack0,t]}\left\Vert u(\tau)\right\Vert _{H^{1}(\mathbb{R}^{+})}^{\alpha
}\left\Vert u(\tau)\right\Vert _{L^{2}(\mathbb{R}^{+})}. \label{30.0.2}%
\end{equation}
We define,
\begin{equation}
\tilde{l}=\tilde{l}_{11}+\tilde{l}_{12}+\tilde{l}_{13}+\tilde{l}_{2}.
\label{30.0.3}%
\end{equation}
Hence, by \eqref{7.22.0}, \eqref{7.22.2}, \eqref{7.22.3}, \eqref{7.22.4},
\eqref{7.22.5}, \eqref{7.22.6}, \eqref{7.22.7.1}, \eqref{7.22.12},
\eqref{30.0.1} and \eqref{30.0.2},
\begin{equation}
\int_{0}^{t}\mathcal{U}\left(  -\tau\right)  \left(  \mathcal{N}\left(
\left\vert u_{1}\right\vert ,\dots,\left\vert u_{n}\right\vert \right)
u\right)  \left(  \tau\right)  d\tau=\mathbf{F}^{\dagger}\left\langle
k\right\rangle ^{-2}\tilde{l}, \label{31.0}%
\end{equation}
with%
\begin{equation}%
\begin{array}
[c]{l}%
\left\Vert \tilde{l}\right\Vert _{L^{^{2}}(\mathbb{R}^{+})}\leq C\left(
|t|^{1/8}+|t|\right)  \sup_{\tau\in\lbrack0,T]}\left(  \left\Vert
u(\tau)\right\Vert _{H^{1}(\mathbb{R}^{+})}^{\alpha+1}+\left\Vert
u(\tau)\right\Vert _{H^{1}(\mathbb{R}^{+})}^{\alpha}\left\Vert \partial_{\tau
}u(\tau)\right\Vert _{L^{2}(\mathbb{R}^{+})}\right)  .
\end{array}
\label{30.0.4}%
\end{equation}
Since the domain of $H$ is contained in $H^{2}(\mathbb{R}^{+}),$ it follows
from \eqref{31.0}, \eqref{30.0.4}, and \eqref{spectralrepr},%

\begin{equation}%
\begin{array}
[c]{l}%
\Vert\int_{0}^{t}\mathcal{U}\left(  t-\tau\right)  \left(  \mathcal{N}\left(
\left\vert u_{1}\right\vert ,\dots,\left\vert u_{n}\right\vert \right)
u\right)  \left(  \tau\right)  d\tau\Vert_{H^{2}(\mathbb{R}^{+})}\leq\\
\\
C\left[  \Vert H\int_{0}^{t}\mathcal{U}\left(  t-\tau\right)  \left(
\mathcal{N}\left(  \left\vert u_{1}\right\vert ,\dots,\left\vert
u_{n}\right\vert \right)  u\right)  \left(  \tau\right)  d\tau\Vert
_{L^{2}(\mathbb{R}^{+})}\right.  +\\
\\
\left.  \Vert\int_{0}^{t}\mathcal{U}\left(  t-\tau\right)  \left(
\mathcal{N}\left(  \left\vert u_{1}\right\vert ,\dots,\left\vert
u_{n}\right\vert \right)  u\right)  \left(  \tau\right)  d\tau\Vert
_{L^{2}(\mathbb{R}^{+})}\right] \\
\leq C\left(  |t|^{1/8}+|t|\right)  \sup_{\tau\in\lbrack0,t]}\left(
\left\Vert u(\tau)\right\Vert _{H^{1}(\mathbb{R}^{+})}^{\alpha}\left\Vert
u(\tau)\right\Vert _{H^{2}(\mathbb{R}^{+})}\right. \\
\\
\left.  +\left\Vert u(\tau)\right\Vert _{H^{1}(\mathbb{R}^{+})}^{\alpha
}\left\Vert \partial_{\tau}u(\tau)\right\Vert _{L^{2}(\mathbb{R}^{+})}\right)
.
\end{array}
\label{30.0.5}%
\end{equation}
Next, integration by parts yields%
\begin{equation}%
\begin{array}
[c]{l}%
\partial_{t}\int_{0}^{t}\mathcal{U}\left(  t-\tau\right)  \left(
\mathcal{N}\left(  \left\vert u_{1}\right\vert ,\dots,\left\vert
u_{n}\right\vert \right)  u\right)  \left(  \tau\right)  d\tau=\mathcal{U}%
(t)\left(  \mathcal{N}\left(  \left\vert u_{1}\right\vert ,\dots,\left\vert
u_{n}\right\vert \right)  u\right)  \left(  0\right)  +\\
\int_{0}^{t}\mathcal{U}\left(  t-\tau\right)  \partial_{\tau}\left(  \left(
\mathcal{N}\left(  \left\vert u_{1}\right\vert ,\dots,\left\vert
u_{n}\right\vert \right)  u\right)  \left(  \tau\right)  \right)  d\tau.
\end{array}
\end{equation}
Then, using (\ref{30.0}) we get%
\begin{equation}%
\begin{array}
[c]{l}%
\left\Vert \partial_{t}\int_{0}^{t}\mathcal{U}\left(  t-\tau\right)  \left(
\mathcal{N}\left(  \left\vert u_{1}\right\vert ,\dots,\left\vert
u_{n}\right\vert \right)  u\right)  \left(  \tau\right)  d\tau\ \right\Vert
_{L^{2}(\mathbb{R}^{+})}\leq C\left\Vert u(0)\right\Vert _{H^{1}%
(\mathbb{R}^{+})}^{\alpha}\Vert u(0)\Vert_{L^{2}(\mathbb{R}^{+})}\\
\\
+C|t|\sup_{\tau\in\lbrack0,t]}\left\Vert u(\tau)\right\Vert _{H^{1}%
(\mathbb{R}^{+})}^{\alpha}\left\Vert \partial_{\tau}u(\tau)\right\Vert
_{L^{2}(\mathbb{R}^{+})}.
\end{array}
\label{7.26}%
\end{equation}
By \eqref{31.0} and \eqref{30.0.4}
\begin{equation}
\int_{0}^{t}\mathcal{U}\left(  -\tau\right)  \left(  \mathcal{N}\left(
\left\vert u_{1}\right\vert ,\dots,\left\vert u_{n}\right\vert \right)
u\right)  \left(  \tau\right)  d\tau\in D[H], \label{7.26.1}%
\end{equation}
where $D[H]$ denotes the domain of $H.$ Then, by \eqref{31.0}, \eqref{30.0.4}
and \eqref{zzz.1}
\begin{equation}%
\begin{array}
[c]{l}%
\left\Vert x^{2}\int_{0}^{t}\mathcal{U}\left(  t-\tau\right)  \left(
\mathcal{N}\left(  \left\vert u_{1}\right\vert ,\dots,\left\vert
u_{n}\right\vert \right)  u\right)  \left(  \tau\right)  d\tau\right\Vert
_{L^{2}(\mathbb{R}^{+})}\leq C|t|(1+|t|^{3})\sup_{\tau\in\lbrack0,t]}\left[
\left\Vert \mathbf{F}^{\dagger}\left\langle k\right\rangle ^{-2}\tilde{l}%
(\tau)\right\Vert _{L_{2}^{2}(\mathbb{R}^{+})}\right. \\
\\
\left.  +\left\Vert \mathbf{F}^{\dagger}\left\langle k\right\rangle
^{-2}\tilde{l}(\tau)\right\Vert _{H^{1,1}(\mathbb{R}^{+})}+\left\Vert
\mathbf{F}^{\dagger}\left\langle k\right\rangle ^{-2}\tilde{l}(\tau
)\right\Vert _{H^{2}(\mathbb{R}^{+})}\right]  .
\end{array}
\label{7.26.2}%
\end{equation}
By \eqref{7.22.4.1} and \eqref{fad}
\begin{equation}%
\begin{array}
[c]{l}%
x^{2}\mathbf{F}^{\dagger}\frac{1}{<k>^{2}}\tilde{l}_{11}=i\frac{1}{\sqrt{2\pi
}}\int_{0}^{\infty}\left[  (\partial_{k}^{2}e^{ikx})m(k,x)+(\partial_{k}%
^{2}e^{-ikx})m(-k,x)S(-k)\right]  (1-\theta\left(  k\right)  )\,k^{-2}%
e^{itk^{2}}\\
\\
\left(  \mathbf{F}\left(  \mathcal{N}\left(  \left\vert u_{1}\right\vert
,\dots,\left\vert u_{n}\right\vert \right)  u\right)  \left(  t\right)
-\mathbf{F}\left(  \mathcal{N}\left(  \left\vert u_{1}\right\vert
,\dots,\left\vert u_{n}\right\vert \right)  u\right)  \left(  0\right)
\right)  \,dk.
\end{array}
\label{7.26.3}%
\end{equation}
Integrating by parts in \eqref{7.26.3}, and using \eqref{UnitaritySM},
\eqref{3.2}, \eqref{3.5}, \eqref{3.5bis},
\eqref{scatmatrixderiv},\eqref{scatmatrixsecondderiv}, and Parseval's identity
we obtain,
\begin{equation}%
\begin{array}
[c]{l}%
\Vert x^{2}\mathbf{F}^{\dagger}\frac{1}{<k>^{2}}\tilde{l}_{11}\Vert
_{L^{2}(\mathbb{R}^{+})}\leq C(1+t^{2})\left[  \Vert\left(  \mathbf{F}\left(
\mathcal{N}\left(  \left\vert u_{1}\right\vert ,\dots,\left\vert
u_{n}\right\vert \right)  u\right)  \left(  t\right)  -\mathbf{F}\left(
\mathcal{N}\left(  \left\vert u_{1}\right\vert ,\dots,\left\vert
u_{n}\right\vert \right)  u\right)  \left(  0\right)  \right)  \Vert
_{L^{2}(\mathbb{R}^{+})}\right. \\
\\
\left.  +\Vert\partial_{k}\left(  \mathbf{F}\left(  \mathcal{N}\left(
\left\vert u_{1}\right\vert ,\dots,\left\vert u_{n}\right\vert \right)
u\right)  \left(  t\right)  -\mathbf{F}\left(  \mathcal{N}\left(  \left\vert
u_{1}\right\vert ,\dots,\left\vert u_{n}\right\vert \right)  u\right)  \left(
0\right)  \right)  \Vert_{L^{2}(\mathbb{R}^{+})}+\right. \\
\\
\left.  \Vert\partial_{k}^{2}\left(  \mathbf{F}\left(  \mathcal{N}\left(
\left\vert u_{1}\right\vert ,\dots,\left\vert u_{n}\right\vert \right)
u\right)  \left(  t\right)  -\mathbf{F}\left(  \mathcal{N}\left(  \left\vert
u_{1}\right\vert ,\dots,\left\vert u_{n}\right\vert \right)  u\right)  \left(
0\right)  \right)  \Vert_{L^{2}(\mathbb{R}^{+})}\right]  .
\end{array}
\label{7.26.4}%
\end{equation}
Moreover, by \eqref{ConNonlinearity}, \eqref{isom} and Sobolev's inequality,
\begin{equation}%
\begin{array}
[c]{l}%
\Vert\left(  \mathbf{F}\left(  \mathcal{N}\left(  \left\vert u_{1}\right\vert
,\dots,\left\vert u_{n}\right\vert \right)  u\right)  \left(  t\right)
-\right. \\
\\
\left.  \mathbf{F}\left(  \mathcal{N}\left(  \left\vert u_{1}\right\vert
,\dots,\left\vert u_{n}\right\vert \right)  u\right)  \left(  0\right)
\right)  \Vert_{L^{2}(\mathbb{R}^{+})}\leq C|t|\sup_{\tau\in\ [0,t]}\left\Vert
u(\tau)\right\Vert _{H^{1}(\mathbb{R}^{+})}^{\alpha}\left\Vert \partial_{\tau
}u(\tau)\right\Vert _{L^{2}(\mathbb{R}^{+})}.
\end{array}
\label{7.26.5}%
\end{equation}
Further, by \eqref{ConNonlinearity}, \eqref{61BIS.2}, and Sobolev's
inequality,
\begin{equation}%
\begin{array}
[c]{l}%
\Vert\partial_{k}\left(  \mathbf{F}\left(  \mathcal{N}\left(  \left\vert
u_{1}\right\vert ,\dots,\left\vert u_{n}\right\vert \right)  u\right)  \left(
t\right)  -\mathbf{F}\left(  \mathcal{N}\left(  \left\vert u_{1}\right\vert
,\dots,\left\vert u_{n}\right\vert \right)  u\right)  \left(  0\right)
\right)  \Vert_{L^{2}(\mathbb{R}^{+})}\\
\\
\leq C|t|\sup_{\tau\in\lbrack0,T]}\left\Vert u(\tau)\right\Vert _{H^{1}%
(\mathbb{R}^{+})}^{\alpha-1}\Vert u(\tau)\Vert_{L_{1}^{2}(\mathbb{R}^{+}%
)}\left\Vert \partial_{\tau}u(\tau)\right\Vert _{L^{2}(\mathbb{R}^{+})}.
\end{array}
\label{7.26.6}%
\end{equation}
Moreover, by \eqref{ConNonlinearity}, \eqref{61BIS.3}, and Sobolev's
inequality,
\begin{equation}%
\begin{array}
[c]{l}%
\Vert\partial_{k}^{2}\left(  \mathbf{F}\left(  \mathcal{N}\left(  \left\vert
u_{1}\right\vert ,\dots,\left\vert u_{n}\right\vert \right)  u\right)  \left(
t\right)  -\mathbf{F}\left(  \mathcal{N}\left(  \left\vert u_{1}\right\vert
,\dots,\left\vert u_{n}\right\vert \right)  u\right)  \left(  0\right)
\right)  \Vert_{L^{2}(\mathbb{R}^{+})}\leq\\
\\
C|t|\sup_{\tau\in\lbrack0,t]}\left\Vert u(\tau)\right\Vert _{H^{1}%
(\mathbb{R}^{+})}^{\alpha-1}\Vert u(\tau)\Vert_{L_{2}^{2}(\mathbb{R}^{+}%
)}\left\Vert \partial_{\tau}u(\tau)\right\Vert _{L^{2}(\mathbb{R}^{+})}.
\end{array}
\label{7.26.7}%
\end{equation}
By \eqref{7.26.4}, \eqref{7.26.5}, \eqref{7.26.6} and \eqref{7.26.7},
\begin{equation}%
\begin{array}
[c]{l}%
\Vert x^{2}\mathbf{F}^{\dagger}\frac{1}{<k>^{2}}\tilde{l}_{11}\Vert
_{L^{2}(\mathbb{R}^{+})}\leq C(1+t^{2})\,|t|\sup_{\tau\in\lbrack0,T]}\left[
\left\Vert u(\tau)\right\Vert _{H^{1}(\mathbb{R}^{+})}^{\alpha}\left\Vert
\partial_{\tau}u(\tau)\right\Vert _{L^{2}(\mathbb{R}^{+})}\right. \\
\\
\left.  +\left\Vert u(\tau)\right\Vert _{H^{1}(\mathbb{R}^{+})}^{\alpha
-1}\Vert u(\tau)\Vert_{L_{2}^{2}(\mathbb{R}^{+})}\left\Vert \partial_{\tau
}u(\tau)\right\Vert _{L^{2}(\mathbb{R}^{+})}\right]  .
\end{array}
\label{7.26.8}%
\end{equation}
As in the proof of \eqref{7.26.8} we show,
\begin{equation}
\Vert x^{2}\mathbf{F}^{\dagger}\frac{1}{<k>^{2}}\tilde{l}_{12}\Vert
_{L^{2}(\mathbb{R}^{+})}\leq C|t|(1+|t|)\left\Vert u(0)\right\Vert
_{H^{1}(\mathbb{R}^{+})}^{\alpha}\left\Vert u(0)\right\Vert _{L_{2}%
^{2}(\mathbb{R}^{+})}, \label{7.26.9}%
\end{equation}
and,
\begin{equation}%
\begin{array}
[c]{l}%
\Vert x^{2}\mathbf{F}^{\dagger}\frac{1}{<k>^{2}}\tilde{l}_{13}\Vert
_{L^{2}(\mathbb{R}^{+})}\leq C(1+t^{2})\,|t|\sup_{\tau\in\lbrack0,T]}\left[
\left\Vert u(\tau)\right\Vert _{H^{1}(\mathbb{R}^{+})}^{\alpha}\left\Vert
\partial_{\tau}u(\tau)\right\Vert _{L^{2}(\mathbb{R}^{+})}\right. \\
\\
+\left.  \left\Vert u(\tau)\right\Vert _{H^{1}(\mathbb{R}^{+})}^{\alpha
-1}\Vert u(\tau)\Vert_{L_{2}^{2}(\mathbb{R}^{+})}\left\Vert \partial_{\tau
}u(\tau)\right\Vert _{L^{2}(\mathbb{R}^{+})}\right]  .
\end{array}
\label{7.26.10}%
\end{equation}
Since $H$ has no eigenvalues, $P_{c}(H)= I,$ and then, by \eqref{projector},
$\mathbf{F}^{\dagger}\mathbf{F}=I.$ Hence, it follows from \eqref{7.22.2.1},
\begin{equation}%
\begin{array}
[c]{l}%
x^{2}\mathbf{F}^{\dagger}\frac{1}{<k>^{2}}\tilde{l}_{2}=x^{2}\mathbf{F}%
^{\dagger}\int_{0}^{t}\,d\tau\left[  \theta\left(  k\right)  e^{i\tau k^{2}%
}-1\right]  \mathbf{F}\left(  \mathcal{N}\left(  \left\vert u_{1}\right\vert
,\dots,\left\vert u_{n}\right\vert \right)  u\right)  \left(  \tau\right)  +\\
\\
x^{2}\int_{0}^{t}\,d\tau\left(  \mathcal{N}\left(  \left\vert u_{1}\right\vert
,\dots,\left\vert u_{n}\right\vert \right)  u\right)  \left(  \tau\right)  .
\end{array}
\label{7.26.11}%
\end{equation}
Then, as in the proof of \eqref{7.26.8} we obtain,
\begin{equation}%
\begin{array}
[c]{l}%
\Vert x^{2}\mathbf{F}^{\dagger}\frac{1}{<k>^{2}}\tilde{l}_{2}\Vert
_{L^{2}(\mathbb{R}^{+})}\leq C(1+t^{2})\,|t|\sup_{\tau\in\lbrack0,T]}\Vert
u(\tau)\Vert_{H_{1}(\mathbb{R}^{+})}^{\alpha}\Vert u(\tau)\Vert_{L_{2}%
^{2}(\mathbb{R}^{+})}.
\end{array}
\label{7.26.12}%
\end{equation}
By \eqref{30.0.3}, \eqref{7.26.8}, \eqref{7.26.9}, \eqref{7.26.10} and
\eqref{7.26.12}
\begin{equation}%
\begin{array}
[c]{l}%
\Vert x^{2}\mathbf{F}^{\dagger}\frac{1}{<k>^{2}}\tilde{l}\Vert_{L^{2}%
(\mathbb{R}^{+})}\leq C|t|(1+t^{2})\sup_{\tau\in\lbrack0,T]}\left[  \left(
\left\Vert u(\tau)\right\Vert _{H^{1}(\mathbb{R}^{+})}^{\alpha}\right.
\right. \\
\\
\left.  \left.  +\left\Vert u(\tau)\right\Vert _{H^{1}(\mathbb{R}^{+}%
)}^{\alpha-1}\Vert u(\tau)\Vert_{L_{2}^{2}(\mathbb{R}^{+})}\right)  \left\Vert
\partial_{\tau}u(\tau)\right\Vert _{L^{2}(\mathbb{R}^{+})}\right. \\
\\
\left.  +\Vert u(\tau)\Vert_{H_{1}(\mathbb{R}^{+})}^{\alpha}\Vert u(\tau
)\Vert_{L_{2}^{2}(\mathbb{R}^{+})}\right]  .
\end{array}
\label{7.26.13}%
\end{equation}
As in the proof of \eqref{7.26.13}, we show,
\begin{equation}
\Vert\mathbf{F}^{\dagger}\frac{1}{<k>^{2}}\tilde{l}\Vert_{L^{2}(\mathbb{R}%
^{+})}\leq C|t|\sup_{\tau\in\lbrack0,T]}\left\Vert u(\tau)\right\Vert
_{H^{1}(\mathbb{R}^{+})}^{\alpha}\left[  \left\Vert \partial_{\tau}%
u(\tau)\right\Vert _{L^{2}(\mathbb{R}^{+})}+\Vert u(\tau)\Vert_{L^{2}%
(\mathbb{R}^{+})}\right]  . \label{7.26.13.1}%
\end{equation}
By \eqref{7.26.13} and \eqref{7.26.13.1}
\begin{equation}%
\begin{array}
[c]{l}%
\Vert x^{2}\mathbf{F}^{\dagger}\frac{1}{<k>^{2}}\tilde{l}\Vert_{L_{2}%
^{2}(\mathbb{R}^{+})}\leq C|t|(1+t^{2})\sup_{\tau\in\lbrack0,T]}\left[
\left(  \left\Vert u(\tau)\right\Vert _{H^{1}(\mathbb{R}^{+})}^{\alpha
}\right.  \right. \\
\\
\left.  \left.  +\left\Vert u(\tau)\right\Vert _{H^{1}(\mathbb{R}^{+}%
)}^{\alpha-1}\Vert u(\tau)\Vert_{L_{2}^{2}(\mathbb{R}^{+})}\right)  \left\Vert
\partial_{\tau}u(\tau)\right\Vert _{L^{2}(\mathbb{R}^{+})}\right. \\
\\
\left.  +\Vert u(\tau)\Vert_{H_{1}(\mathbb{R}^{+})}^{\alpha}\Vert u(\tau
)\Vert_{L_{2}^{2}(\mathbb{R}^{+})}\right]  .
\end{array}
\label{7.26.13.2}%
\end{equation}
Moreover, as in the proof of \eqref{7.26.13} we get,
\begin{equation}%
\begin{array}
[c]{l}%
\Vert x\mathbf{F}^{\dagger}\frac{1}{<k>^{2}}\tilde{l}\Vert_{L^{2}%
(\mathbb{R}^{+})}\leq C|t|(1+|t|)\sup_{\tau\in\lbrack0,T]}\left[  \left(
\left\Vert u(\tau)\right\Vert _{H^{1}(\mathbb{R}^{+})}^{\alpha}\right.
\right. \\
\\
\left.  \left.  +\left\Vert u(\tau)\right\Vert _{H^{1}(\mathbb{R}^{+}%
)}^{\alpha-1}\Vert u(\tau)\Vert_{L_{1}^{2}(\mathbb{R}^{+})}\right)  \left\Vert
\partial_{\tau}u(\tau)\right\Vert _{L^{2}(\mathbb{R}^{+})}\right.  \left.
+\Vert u(\tau)\Vert_{H_{1}(\mathbb{R}^{+})}^{\alpha}\Vert u(\tau)\Vert
_{L_{1}^{2}(\mathbb{R}^{+})}\right]  .
\end{array}
\label{7.26.13.3}%
\end{equation}
Further, as in the proof of \eqref{7.26.13}, and using also \eqref{3.6} we
prove,
\begin{equation}
\Vert\partial_{x}\mathbf{F}^{\dagger}\frac{1}{<k>^{2}}\tilde{l}\Vert
_{L^{2}(\mathbb{R}^{+})}\leq C(\sqrt{|t|}+|t|)\sup_{t\in\lbrack0,T]}%
\,\left\Vert u(\tau)\right\Vert _{H^{1}(\mathbb{R}^{+})}^{\alpha}\left[  \Vert
u(\tau)\Vert_{L^{2}(\mathbb{R}^{+})}+\Vert\partial_{\tau}u(\tau)\Vert
_{L^{2}(\mathbb{R}^{+})}\right]  , \label{7.26.13.4}%
\end{equation}
and, furthermore, using also \eqref{3.7} we obtain,%

\begin{equation}%
\begin{array}
[c]{l}%
\Vert x\partial_{x}\mathbf{F}^{\dagger}\frac{1}{<k>^{2}}\tilde{l}\Vert
_{L^{2}(\mathbb{R}^{+})}\leq C(\sqrt{|t|}+|t|)\sup_{t\in\lbrack0,T]}\,\left[
\left\Vert u(\tau)\right\Vert _{H^{1}(\mathbb{R}^{+})}^{\alpha}\left(  \Vert
u(\tau)\Vert_{L^{2}(\mathbb{R}^{+})}+\Vert\partial_{\tau}u(\tau)\Vert
_{L^{2}(\mathbb{R}^{+})}\right)  \right. \\
\\
\left.  +\left\Vert u(\tau)\right\Vert _{H^{1}(\mathbb{R}^{+})}^{\alpha
-1}\Vert u(\tau)\Vert_{L_{1}^{2}(\mathbb{R}^{+})}\left(  \Vert u(\tau
)\Vert_{L^{2}(\mathbb{R}^{+})}+\Vert\partial_{\tau}u(\tau)\Vert_{L^{2}%
(\mathbb{R}^{+})}\right)  \right]  .
\end{array}
\label{7.26.13.5}%
\end{equation}
By \eqref{7.26.13.1}, \eqref{7.26.13.3}, \eqref{7.26.13.4} and
\eqref{7.26.13.5},
\begin{equation}%
\begin{array}
[c]{l}%
\Vert\mathbf{F}^{\dagger}\frac{1}{<k>^{2}}\tilde{l}\Vert_{H^{1,1}%
(\mathbb{R}^{+})}\leq C(\sqrt{|t|}+|t|)\sup_{t\in\lbrack0,T]}\,\left[
\left\Vert u(\tau)\right\Vert _{H^{1}(\mathbb{R}^{+})}^{\alpha}\left(  \Vert
u(\tau)\Vert_{L^{2}(\mathbb{R}^{+})}+\Vert\partial_{\tau}u(\tau)\Vert
_{L^{2}(\mathbb{R}^{+})}\right)  \right.  \left.  +\left\Vert u(\tau
)\right\Vert _{H^{1}(\mathbb{R}^{+})}^{\alpha-1}\Vert u(\tau)\Vert_{L_{1}%
^{2}(\mathbb{R}^{+})}\left(  \Vert u(\tau)\Vert_{L^{2}(\mathbb{R}^{+})}%
+\Vert\partial_{\tau}u(\tau)\Vert_{L^{2}(\mathbb{R}^{+})}\right)  \right]  .
\end{array}
\label{7.26.13.6}%
\end{equation}
Moreover, as $D[H]\subset H^{2}(\mathbb{R}^{+}),$ it follows from
\eqref{30.0.4} and \eqref{spectralrepr},
\begin{equation}%
\begin{array}
[c]{l}%
\label{7.26.13.7}\Vert\mathbf{F}^{\dagger}\frac{1}{<k>^{2}}\tilde{l}%
\Vert_{H^{2}(\mathbb{R}^{+})}\leq C\left[  \Vert H\mathbf{F}^{\dagger}\frac
{1}{<k>^{2}}\tilde{l}\Vert_{L^{2}(\mathbb{R}^{+})}+\Vert\mathbf{F}^{\dagger
}\frac{1}{<k>^{2}}\tilde{l}\Vert_{L^{2}(\mathbb{R}^{+})}\right]  \leq\\
\\
C\left(  t|^{1/8}+|t|\right)  \sup_{\tau\in\lbrack0,T]}\left(  \left\Vert
u(\tau)\right\Vert _{H^{1}(\mathbb{R}^{+})}^{\alpha+1}\right.  \left.
+\left\Vert u(\tau)\right\Vert _{H^{1}(\mathbb{R}^{+})}^{\alpha}\left\Vert
\partial_{\tau}u(\tau)\right\Vert _{L^{2}(\mathbb{R}^{+})}\right)  .
\end{array}
\end{equation}
By \eqref{7.26.2}, \eqref{7.26.13.2}, \eqref{7.26.13.6}, and
\eqref{7.26.13.7}
\begin{equation}%
\begin{array}
[c]{l}%
\left\Vert \int_{0}^{t}\mathcal{U}\left(  t-\tau\right)  \left(
\mathcal{N}\left(  \left\vert u_{1}\right\vert ,\dots,\left\vert
u_{n}\right\vert \right)  u\right)  \left(  \tau\right)  d\tau\right\Vert
_{L_{2}^{2}(\mathbb{R}^{+})}\leq C|t|(|t|^{1/8}+|t|^{5})\sup_{\tau\in
\lbrack0,T]}\left[  \left(  \left\Vert u(\tau)\right\Vert _{H^{1}%
(\mathbb{R}^{+})}^{\alpha}\right.  \right. \\
\\
\left.  \left.  +\left\Vert u(\tau)\right\Vert _{H^{1}(\mathbb{R}^{+}%
)}^{\alpha-1}\Vert u(\tau)\Vert_{L_{2}^{2}(\mathbb{R}^{+})}\right)  \left\Vert
\partial_{\tau}u(\tau)\right\Vert _{L^{2}(\mathbb{R}^{+})}\right. \\
\left.  +\Vert u(\tau)\Vert_{H_{1}(\mathbb{R}^{+})}^{\alpha}\Vert u(\tau
\Vert_{L_{2}^{2}(\mathbb{R}^{+})}+\Vert u(\tau)\Vert_{H_{1}(\mathbb{R}^{+}%
)}^{\alpha+1}\right]  .
\end{array}
\label{7.26.14}%
\end{equation}
Since by \eqref{domainh2} $u_{0}$ belongs to the domain of $H,$ and as the
domain of $H$ is contained in $H^{2}(\mathbb{R}^{+}),$
\begin{equation}
\Vert\mathcal{U}(t)u_{0}\Vert_{H^{2}(\mathbb{R}^{+})}\leq C\left[  \Vert
H\mathcal{U}(t)u_{0}\Vert_{L^{2}(\mathbb{R}^{+})}+\Vert\mathcal{U}%
(t)u_{0}\Vert_{L^{2}(\mathbb{R}^{+})}\right]  \leq C\Vert u_{0}\Vert
_{H^{2}(\mathbb{R}^{+})}, \label{7.26.17}%
\end{equation}
and, further, by \eqref{zzz.1}
\begin{equation}
\Vert\mathcal{U}(t)u_{0}\Vert_{L_{2}^{2}(\mathbb{R}^{+})}\leq C(1+|t|^{3}%
)\left(  \Vert u_{0}\Vert_{L_{2}^{2}(\mathbb{R}^{+})}+\Vert u_{0}%
\Vert_{H^{1,1}(\mathbb{R}^{+})}+\Vert u_{0}\Vert_{H^{2}(\mathbb{R}^{+}%
)}\right)  . \label{7.26.18}%
\end{equation}
Moreover,
\begin{equation}
\Vert\partial_{t}\mathcal{U}(t)u_{0}\Vert_{L^{2}(\mathbb{R}^{+})}%
=\Vert\mathcal{U}(t)Hu_{0}\Vert_{L^{2}(\mathbb{R}^{+})}\leq C\Vert u_{0}%
\Vert_{H^{2}(\mathbb{R}^{+})}. \label{7.26.19}%
\end{equation}
Then by \eqref{7.26.16}, \eqref{30.0.5}, \eqref{7.26.14}, \eqref{7.26.17},
\eqref{7.26.18}, and \eqref{7.26.19}
\begin{equation}%
\begin{array}
[c]{l}%
\Vert\mathcal{Q}_{\pm}(u)\Vert_{G_{\pm,T}}\leq C\left[  (1+T^{3})\left(  \Vert
u_{0}\Vert_{L_{2}^{2}(\mathbb{R}^{+})}+\Vert u_{0}\Vert_{H^{1,1}%
(\mathbb{R}^{+})}+\Vert u_{0}\Vert_{H^{2}(\mathbb{R}^{+})}\right)  \right. \\
\\
\left.  +\Vert u_{0}\Vert_{H_{1}(\mathbb{R}^{+})}^{\alpha}\Vert u_{0}%
\Vert_{L^{2}(\mathbb{R}^{+})}+(1+T)(T^{1/8}+T^{5})\Vert u\Vert_{G_{\pm,T}%
}^{\alpha+1}\right]  .
\end{array}
\label{7.26.20}%
\end{equation}
Denote,
\begin{equation}
\mathcal{M}:=2C\left[  \Vert u_{0}\Vert_{L_{2}^{2}(\mathbb{R}^{+})}+\Vert
u_{0}\Vert_{H^{1,1}(\mathbb{R}^{+})}+\Vert u_{0}\Vert_{H^{2}(\mathbb{R}^{+}%
)}\right. \label{7.26.21}\\
\\
\left.  +\Vert u_{0}\Vert_{H_{1}(\mathbb{R}^{+})}^{\alpha}\Vert u_{0}%
\Vert_{L^{2}(\mathbb{R}^{+})}\right]  .
\end{equation}
Let $G_{\pm,T}(\mathcal{M})$ be the ball of center zero and radius
$\mathcal{M}$ in $G_{\pm,T}.$ Then, by \eqref{7.26.20} we can take $T$ so
small that,
\begin{equation}
\Vert\mathcal{Q}_{\pm}(u)\Vert_{G_{\pm,T}}\leq\mathcal{M},\qquad u\in
G_{\pm,T}(\mathcal{M}). \label{7.26.22}%
\end{equation}
By a slight modification of the arguments that we used to prove
\eqref{7.26.22} we obtain that we can take $T$ so that also,
\begin{equation}
\Vert\mathcal{Q}_{\pm}(u)-\mathcal{Q}_{\pm}(v)\Vert_{G_{\pm,T}}<\Vert
u-v\Vert_{G_{\pm,T}},\qquad u,v\in G_{\pm,T}(\mathcal{M}). \label{7.26.23}%
\end{equation}
Then, by the contraction mapping theorem (see Theorem V.18 in page 151 of
\cite{rs.2}), the map $\mathcal{Q}_{\pm}$ has a unique fixed point, $u_{\pm
}(t)$ in $G_{I_{\pm,T}}(\mathcal{M})$ that is a solution to \eqref{7.22}.
Moreover, as $u_{0}\in D[H],$ it follows from \eqref{7.26.1} that $u_{\pm
}(t)\in D[H],$ and then, $u_{\pm}(t)$ satisfies the boundary condition
$-B^{\dagger}u(t,0)+A^{\dagger}(\partial_{x}u)(t,0)=0.$
%Furthermore, by \eqref{7.26.21} equation \eqref{7.5} holds.
Further, for any initial state $u_{0}$ that satisfies \eqref{7.41.1} we have
that,
\begin{equation}
\mathcal{M}\leq C(\varepsilon+\varepsilon^{\alpha+1}). \label{76.26.23.aaxxaa}%
\end{equation}
Then, by \eqref{7.26.20} we can take a fixed $T_{\varepsilon}$ such that,
\begin{equation}
\Vert\mathcal{Q}_{\pm}(u)\Vert_{G_{\pm,T_{\varepsilon}}}\leq\mathcal{M},\qquad
u\in G_{\pm,T_{\varepsilon}}(\mathcal{M}) \label{7.26.23.1}%
\end{equation}
for all initial data $u_{0}$ that satisfy \eqref{7.41.1}. In a similar way, we
prove that,
\begin{equation}
\Vert\mathcal{Q}_{\pm}(u)-\mathcal{Q}_{\pm}(v)\Vert_{G_{\pm,T_{\varepsilon}}%
}<\Vert u-v\Vert_{G_{\pm,T_{\varepsilon}}},\qquad u,v\in G_{\pm,T_{\varepsilon
}}(\mathcal{M}), \label{7.26.23.2}%
\end{equation}
for all initial data $u_{0}$ that satisfy \eqref{7.41.1}. Hence, the existence
time, $T,$ of all the solutions $u_{\pm}(t)$ with initial data, $u_{0},$ that
satisfy \eqref{7.41.1} is bounded below by $T_{\varepsilon},$ i.e., $T\geq
T_{\varepsilon}.$ Moreover, suppose that \eqref{7.41}, respectively
\eqref{7.41.1.1} holds and that $T_{+,\mathrm{max}}<\infty,$ respectively,
$T_{-,\mathrm{max}}<\infty.$ Hence,
\begin{equation}
\Vert u_{\pm}(t)\Vert_{H^{2}(\mathbb{R}^{+})}+\Vert u_{\pm}(t)\Vert_{L_{2}%
^{2}(\mathbb{R}^{+})}\leq C,\qquad t\in\lbrack0,T_{+,\mathrm{max}%
}),\,\text{respectively, }\,t\in(T_{-,\text{max}},0]. \label{7.26.24}%
\end{equation}
Further, taking the derivative with respect to $t$ of both sides of
\eqref{7.22} we obtain that \eqref{1.1} holds, and then,
\begin{equation}
\Vert\partial_{t}u_{\pm}(t)\Vert_{L^{2}(\mathbb{R}^{+})}\leq C\Vert u_{\pm
}(t)\Vert_{H^{2}(\mathbb{R}^{+})}\left(  1+\Vert u_{\pm}(t)\Vert
_{H^{2}(\mathbb{R}^{+})}^{\alpha}\right)  \leq C, \label{7.26.25}%
\end{equation}
where we used \eqref{ConNonlinearity} and Sobolev's inequality. Then, by
\eqref{7.26.20} we can take the limit as $t\rightarrow T_{+,\mathrm{max}}$ in
both sides of \eqref{7.22} and define,
\begin{equation}
u_{\pm}(T_{\pm,\mathrm{max}})=\lim_{T\rightarrow T_{\pm,\mathrm{max}}}u_{\pm
}(t)=\mathcal{U}\left(  T_{\pm,\mathrm{max}}\right)  u_{0}-i\int_{0}%
^{T_{\pm,\mathrm{max}}}\mathcal{U}\left(  T_{\pm,\mathrm{max}}-\tau\right)
\left(  \mathcal{N}\left(  \left\vert (u_{\pm})_{1}\right\vert ,\dots,
\left\vert (u_{\pm})_{n}\right\vert \right)  u_{\pm}\right)  \left(
\tau\right)  d\tau. \label{7.26.26}%
\end{equation}
Moreover, by \eqref{31.0}, \eqref{30.0.5}, \eqref{7.26.13.2},
\eqref{7.26.13.3}, \eqref{7.26.13.4}, \eqref{7.26.13.5}, \eqref{7.26.17},
\eqref{7.26.26}, \eqref{Uweightderiv}, \eqref{zzz}, and \eqref{zzz.2},
\begin{equation}%
\begin{array}
[c]{l}%
\Vert u_{\pm}(T_{\pm,\mathrm{max}})\Vert_{H^{1,1}(\mathbb{R}^{+})}\leq
C(1+T_{\pm,\mathrm{max}}^{2})\left[  \Vert u(0)\Vert_{H^{2}(\mathbb{R}^{+}%
)}+\Vert u(0)\Vert_{H^{1,1}(\mathbb{R}^{+})}+\Vert u_{0}\Vert_{L_{1}%
^{2}(\mathbb{R}^{+})}\right. \\
\\
\left.  +C(T_{\pm,\mathrm{max}}^{1/8}+T_{\pm,\mathrm{max}}^{5})\Vert u_{\pm
}\Vert_{G_{T_{\pm,\mathrm{max}}}}^{\alpha+1}\right]  .
\end{array}
\label{7.26.27}%
\end{equation}
Then, we can solve the integral equation
\[
u\left(  t\right)  =\mathcal{U}\left(  t-T_{\pm,\mathrm{max}}\right)  u_{\pm
}(T_{\pm,\max})-i\int_{T_{\pm,\mathrm{max}}}^{t}\mathcal{U}\left(
t-\tau\right)  \left(  \mathcal{N}\left(  \left\vert u_{1}\right\vert
,\dots,\left\vert u_{n}\right\vert \right)  u\right)  \left(  \tau\right)
d\tau,
\]
in an interval $t\in\lbrack T_{+,\mathrm{max}},T_{1}],$ respectively,
$[T_{2},T_{-,\mathrm{max}}],$ and extend the solution $u_{\pm}(t),$ to the
interval $[0,T_{1}],T_{1}>T_{\mathrm{max}}$, respectively, $[T_{2}%
,T_{-,\mathrm{max}}],$ in contradiction with the definition of
$T_{+,\mathrm{max}}, $ respectively, $T_{-,\mathrm{max}}.$ This completes the
proof of the theorem.
\end{proof}

\subsection{Global solutions}

\label{global} In this subsection we prove that for small initial data we can
extend the local solutions given by Theorem ~\ref{local} into global
solutions, i.e. solutions with $T_{\pm,\mathrm{max}}= \infty.$ For this
purpose, we find it convenient to go to a interaction representation in
momentum space (Fourier space). We suppose that $H=H_{A,B,V}$ does not have
negative eigenvalues. Then, by Theorem~\ref{theospec} $H$ is absolutely
continuous, $P_{c}(H)=I,$ and the generalized Fourier maps $\mathbf{F}^{\pm}$
are unitary, and in particular $\left(  \mathbf{F}^{\pm}\right)  ^{\dagger
}\mathbf{F}^{\pm}= \mathbf{F}^{\pm}\left(  \mathbf{F}^{\pm}\right)  ^{\dagger
}= I.$ Below we use $\mathbf{F}^{+},$ but we could use $\mathbf{F}^{-}$ as
well. For simplicity we denote $\mathbf{F}:= \mathbf{F}^{+}.$ Let $u_{\pm}$ be
the local solution to \eqref{1.1} given by Theorem~\ref{local}. We define,
\begin{equation}
\label{g.1}w_{\pm}(k):= \left\{
\begin{array}
[c]{l}%
w_{\pm}(k):= (\mathbf{F }e^{it H} u_{\pm})(k), \qquad k \geq0\\
\\
w_{\pm}(k)= S(k) w_{\pm}(-k), \qquad k \leq0.
\end{array}
\right.
\end{equation}
Recall that $u_{\pm}\in D[H] \subset H^{2}(\mathbb{R}^{+}).$ Then,
\begin{equation}
\label{g.2}\| e^{itH }u_{\pm}(t)\|_{H^{2}(\mathbb{R}^{+})} \leq C \left(
\|e^{itH} H u_{\pm}(t)\|_{L^{2}(\mathbb{R}^{+})}+ \|e^{itH}u_{\pm}%
(t)\|_{L^{2}(\mathbb{R}^{+})}\right)  \leq C \|u_{\pm}\|_{H^{2}(\mathbb{R}%
^{+})}.
\end{equation}
Further, by \eqref{7.22}, \eqref{31.0}, and \eqref{7.26.13.2}
\begin{equation}
\label{g.3}\| e^{itH}u_{\pm}(t)\|_{L^{2}_{2}(\mathbb{R}^{+})}\leq C\left[
\|u_{0}\|_{L^{2}_{2}(\mathbb{R}^{+})}+ T (1+T^{2}) \|u_{\pm}\|_{ G_{\pm, T}%
}^{\alpha+1}\right]  .
\end{equation}
Hence, by \eqref{g.2}, \eqref{g.3} and Lemma~\ref{ext},
\begin{equation}
\label{g.4}%
\begin{array}
[c]{l}%
w_{+} \in C([0,T]; H^{2}( {\mathbb{R}})\cap L^{2}_{2}( {\mathbb{R}})),\\
\\
w_{-} \in C([-T,0]; H^{2}( {\mathbb{R}})\cap L^{2}_{2}( {\mathbb{R}})).
\end{array}
\end{equation}
Further, by \eqref{g.1},
\begin{equation}
\label{g.4.1}\partial_{t} w_{\pm}(t,k)= \mathbf{F }\left(  i e^{it H} H
u_{\pm}(t) + e^{it H} \partial_{t} u_{\pm}(t)\right)  (k), \qquad k \geq0.
\end{equation}
Then, by \eqref{g.1} and \eqref{g.4.1}
\begin{equation}
\label{g.4.2}w_{+} \in C^{1}([0,T]; L^{2}( {\mathbb{R}})), \qquad w_{-} \in
C^{1}([-T,0]; L^{2}( {\mathbb{R}})).
\end{equation}

In order to derive the equation for $w_{\pm}\left(  t\right)  ,$ we apply the
operator $\mathbf{F}e^{itH}$ to equation (\ref{1.1}). Since $e^{-itH}$ is the
linear evolution group, we find%
\begin{equation}
\label{g.5}i\partial_{t}\left(  \mathbf{F}e^{itH}u_{\pm}\right)
=\mathbf{F}e^{itH}\left(  \mathcal{N}\left(  \left\vert (u_{\pm}%
)_{1}\right\vert , \dots,\left\vert (u_{\pm})_{n}\right\vert \right)  u_{\pm
}\right)  .
\end{equation}

Let us denote by $\tilde{w}_{\pm}(t,k)$ the restriction of $w_{\pm}(t,k)$ to
$k\in\mathbb{R}^{+}.$ Then, since by \eqref{g.1}, we have,
\begin{equation}
u_{\pm}\left(  t\right)  =e^{-itH}\mathbf{F}^{\dagger}\tilde{w}_{\pm},
\label{g.6}%
\end{equation}
substituting \eqref{g.6} into \eqref{g.5} we obtain the equation for
$\tilde{w}_{\pm}$
\begin{equation}
i\partial_{t}\tilde{w}_{\pm}=\mathbf{F}e^{itH}\left(  \mathcal{N}\left(
\left\vert \left(  e^{-itH}\mathbf{F}^{\dagger}\tilde{w}_{\pm}\right)
_{1}\right\vert ,\dots,\left\vert \left(  e^{-itH}\mathbf{F}^{\dagger}%
\tilde{w}_{\pm}\right)  _{n}\right\vert \right)  e^{-itH}\mathbf{F}^{\dagger
}\tilde{w}_{\pm}\right)  . \label{32}%
\end{equation}
By \eqref{ap.65} with $P_{c}(H)=I,$ and \eqref{sp7.ll} we get,%

\begin{align*}
&  \mathbf{F}e^{itH}\left(  \mathcal{N}\left(  \left\vert \left(
e^{-itH}\mathbf{F}^{\dagger}\tilde{w}_{\pm}\right)  _{1}\right\vert ,\dots,
\left\vert \left(  e^{-itH}\mathbf{F}^{\dagger}\tilde{w}_{\pm}\right)
_{n}\right\vert \right)  e^{-itH}\mathbf{F}^{\dagger}\tilde{w}_{\pm}\right) \\
&  =\mathcal{Q}^{-1}( t) \mathcal{D}_{t}^{-1}\overline{M}\left(
\mathcal{N}\left(  \left\vert ( M \mathcal{D}_{t}\mathcal{Q} \tilde{w}_{\pm
})_{1}\right\vert ,\dots, \left\vert ( M \mathcal{D}_{t}\mathcal{Q} \tilde
{w}_{\pm})_{n} \right\vert \right)  M\mathcal{D}_{t} \mathcal{Q}(t) \tilde
{w}_{\pm}\right) \\
&  =\mathcal{Q}^{-1}\left(  t\right)  \left(  \mathcal{N}\left(  \left\vert
\left(  t^{-1/2} \mathcal{Q}\left(  t\right)  \tilde{w}_{\pm}\right)
_{1}\right\vert , \dots, \left\vert \left(  t^{-1/2} \mathcal{Q}\left(
t\right)  \tilde{w}_{\pm}\right)  _{n} \right\vert \right)  \mathcal{Q}\left(
t\right)  \tilde{w}_{\pm}\right)  .
\end{align*}
Hence, we obtain the following equation for $\tilde{w}_{\pm}$
\begin{equation}
i\partial_{t}\tilde{w}_{\pm}=\mathcal{Q}^{-1}\left(  t\right)  \left(
\mathcal{N}\left(  \left\vert \left(  t^{-1/2} \mathcal{Q}\left(  t\right)
\tilde{w}_{\pm}\right)  _{1}\right\vert ,\dots, \left\vert \left(  t^{-1/2}
\mathcal{Q}\left(  t\right)  \tilde{w}_{\pm}\right)  _{n} \right\vert \right)
\mathcal{Q}\left(  t\right)  \tilde{w}_{\pm}\right)  . \label{eqw.1}%
\end{equation}
The quantities $M$ and $\mathcal{D}_{t}$ are defined in \eqref{final1} and
\eqref{final2}, and $\mathcal{Q}$ is defined in \eqref{sp7} (see also
\eqref{sp3} and \eqref{sp5}). By \eqref{sp7} and \eqref{sp10}, equation
\eqref{eqw.1} can equivalently be written as,
\begin{equation}
\label{eqw.2.1}i\partial_{t}w_{\pm}=\widehat{\mathcal{W}}\left(  t\right)
\left(  \mathcal{N}\left(  \left\vert \left(  t^{-1/2} \mathcal{W}\left(
t\right)  w_{\pm}\right)  _{1}\right\vert ,\dots, \left\vert \left(  t^{-1/2}
\mathcal{W}\left(  t\right)  w_{\pm}\right)  _{n}\right\vert \right)
\mathcal{W}\left(  t\right)  w_{\pm}\right)  , \qquad k \in\mathbb{R}^{+}.
\end{equation}
For the definition of $\widehat{\mathcal{W}}\left(  t\right)  $ see
\eqref{sp9}. Moreover, by \eqref{g.1}, \eqref{UnitaritySM} and \eqref{sp10},
equation \eqref{eqw.2.1} is also satisfied for $k \in\mathbb{R}^{-},$ and
then, we have,%

\begin{equation}
i\partial_{t}w_{\pm}=\widehat{\mathcal{W}}\left(  t\right)  \left(
\mathcal{N}\left(  \left\vert (t^{-1/2}\mathcal{W}\left(  t\right)  w_{\pm
})_{1}\right\vert ,\dots,\left\vert (t^{-1/2}\mathcal{W}\left(  t\right)
w_{\pm})_{n}\right\vert \right)  \mathcal{W}\left(  t\right)  w_{\pm}\right)
,\qquad k\in{\mathbb{R}}. \label{eqw}%
\end{equation}
Furthermore the solution $w_{\pm}$ to \eqref{eqw} satisfies the symmetry,
\begin{equation}
S(k)w_{\pm}(t,-k)=w_{\pm}(t,k),\qquad|t|>0,k\in{\mathbb{R}}. \label{eqw.2}%
\end{equation}
Moreover, integrating \eqref{eqw} from $a>0,$ to $t>a,$ and from $t<-a<0,$ to
$-a,$ we obtain,
\begin{equation}%
\begin{array}
[c]{l}%
w_{\pm}(t)=w(\pm a)\pm\frac{1}{i}\,\int_{\mathcal{T}_{\pm}(a,t)}%
\widehat{\mathcal{W}}\left(  \tau\right)  \left(  \mathcal{N}\left(
\left\vert (\tau^{-1/2}\mathcal{W}\left(  \tau\right)  w_{\pm})_{1}\right\vert
,\dots,\left\vert (\tau^{-1/2}\mathcal{W}\left(  \tau\right)  w_{\pm}%
)_{n}\right\vert \right)  \mathcal{W}\left(  \tau\right)  w_{\pm}\right)
\,d\tau.
\end{array}
\label{eqw.3}%
\end{equation}
From \eqref{eqw.3} we see that $w_{\pm}$ satisfies the symmetry \eqref{eqw.2}
for $t\geq a,$ respectively $t\leq-a,$ if and only if it satisfies it for
$t=\pm a.$ Summing up (see Lemma ~\ref{ext}), solving problem \eqref{1.1} with
a solution $u_{\pm}(t)\in H^{2}(\mathbb{R}^{+})\cap L_{2}^{2}(\mathbb{R}^{+})$
for $t\in\lbrack a,\infty),$ respectively, for $t\in(-\infty,-a],$ $a>0,$ is
equivalent to solving equation \eqref{eqw} with a solution $w_{\pm}(t)\in
H^{2}({\mathbb{R}})\cap L_{2}^{2}({\mathbb{R}})$ for $t\in\lbrack a,\infty),$
respectively, for $t\in(-\infty,-a],a>0,$ that satisfies the symmetry,
$S(k)w_{\pm}(-k)=w(k),k\in{\mathbb{R}},$ where $S(k),k\in{\mathbb{R}},$ is the
scattering matrix for the linear matrix Sch\"{o}dinger equation defined in
\eqref{Scatteringmatrix}. We find it convenient to consider \eqref{eqw.3} in
the full line, for the purpose of constructing global solutions to
\eqref{1.1}, and in order to study their asymptotic behaviour for large times.

We recall that the quantities $\mathcal{T}_{\pm}(a,T), 0 \leq a <T \leq
\infty,$ were defined in \eqref{final3}. For $a >0,$ and a measurable function
$w(t)$ of $t\in\mathcal{T}_{\pm}(a,T),$ with values in $H^{2}({\mathbb{R}})$
we define,
\begin{equation}
\Lambda_{\pm}\left(  t;w\right)  =\left\Vert w(t)\right\Vert _{H^{1}%
({\mathbb{R}})}^{\alpha}\left(  \left\Vert w(t)\right\Vert _{H^{1}%
({\mathbb{R}})}+\left\langle t\right\rangle ^{-1/2}\left\Vert w(t)\right\Vert
_{H^{2}({\mathbb{R}})}\right)  ,\qquad\pm t\geq a. \label{II.1}%
\end{equation}

%\beq\label{ll.1}
%\Lambda_\pm\left(  t;w\right)  =\sup_{\tau\in\ \mathcal T_\pm (a,t)}\left(  \left\Vert
%w(\tau)\right\Vert _{H^{1}(\ere)}^{\alpha}\left(  \left\Vert w(\tau)\right\Vert _{H^{1}(\ere)%
%}+\left\Vert w(\tau)\right\Vert _{H^{1}(\ere)}^{\alpha+1}+\left\langle \tau\right\rangle
%^{-1/2}\left\Vert w(\tau)\right\Vert _{H^{2}(\ere)}\right)  \right), \qquad \pm t \geq a.
%\ene
%}
The main estimates that we need are presented in the following lemma, which we
announce here but defer its proof to Section \ref{Lprincipal} below. We recall
that $P_{\pm}$ are, respectively, the orthogonal projectors onto the
eigenvalues plus and minus one of the scattering matrix at zero energy, $S(0).
$ We recall that $\mathcal{T}_{\pm}(a,t)$ was defined in \eqref{eqw.3}.
%\textcolor{red}{that $V\in L^{1}_{5+\delta}\left(  \mathbb{R}%
%\right)  ,$ for some $\delta>0,$ and  that $V$ admits a regular decomposition (see
%Definition \ref{Def1}).}
%Given $T>a>0$ let
%\[
%\mathcal{T}_{+}\left(  a,T\right)  =\left[  a,T\right)  \text{ and
%}\mathcal{T}_{-}\left(  a,T\right)  =\left(  -T,-a\right]  .
%\]

\begin{lemma}
\label{LemmaPrincipal}Suppose that the potential $V$ is selfadjoint. Further,
assume that the boundary matrices $A,B$, satisfy \eqref{wcon1}, \eqref{wcon2},
and that $H_{A,B,V}$ does not have negative eigenvalues. Moreover, suppose
that the nonlinearity $\mathcal{N}$ fulfills \eqref{ConNonlinearity}. Let
$a,T$ be positive numbers, $\infty\geq T>a>0.$ Then, for some constant $C>0,$
that is independent of $T,$ and is uniform for $a\geq a_{0}>0,$ the following
estimates are true. For any measurable function, $w_{\pm}(t),$ of
$t\in\mathcal{T}_{\pm}\left(  a,T\right)  ,$ with values in $H^{2}%
({\mathbb{R}})$ we have:

\begin{enumerate}
\item Suppose that $V \in L^{1}_{2+\delta}(\mathbb{R}^{+}), \delta>0.$ Then,
\begin{equation}%
\begin{array}
[c]{l}%
\label{18.2.3} \left\Vert \int_{\mathcal{T}_{\pm}(a,t)} \widehat{\mathcal{W}%
}\left(  \tau\right)  \left(  \mathcal{N}\left(  \left\vert ( t^{-1/2}
\mathcal{W}\left(  \tau\right)  w_{\pm})_{1}\right\vert , \dots, \left\vert (
t^{-1/2} \mathcal{W}\left(  \tau\right)  w_{\pm})_{1}\right\vert \right)
\mathcal{W}\left(  \tau\right)  w_{\pm}\right)  d\tau\right\Vert _{L^{2}(
{\mathbb{R}})}\\
\\
+ \left\Vert \int_{\mathcal{T}_{\pm}(a,t)} \widehat{\mathcal{W}}\left(
\tau\right)  \left(  \mathcal{N}\left(  \left\vert ( t^{-1/2} \mathcal{W}%
\left(  \tau\right)  w_{\pm})_{1}\right\vert , \dots, \left\vert ( t^{-1/2}
\mathcal{W}\left(  \tau\right)  w_{\pm})_{n}\right\vert \right)
\mathcal{W}\left(  \tau\right)  w_{\pm}\right)  d\tau\right\Vert _{L^{\infty}(
{\mathbb{R}})}\\
\\
\leq C \frac{|t\mp a|}{\langle t \rangle} \sup_{\tau\in\mathcal{T}_{\pm}(a,t)}
\left\Vert w_{\pm}(\tau) \right\Vert _{H^{1}( {\mathbb{R}})}^{\alpha+1},
\qquad\pm t \geq a.
\end{array}
\end{equation}

\item If, moreover, $V\in L_{7/2+\tilde{\delta}}^{1}(\mathbb{R}^{+}),$ for
some $\tilde{\delta}>0,$ $V$ admits a regular decomposition with $\delta>3/2,$
the nonlinearity $\mathcal{N}$ commutes with the projector $P_{-}$ onto the
eigenspace of the scattering matrix $S(0)$ corresponding to the eigenvalue
$-1,$ i.e.
\[
\mathcal{N}\left(  \left\vert \mu_{1}\right\vert ,\dots,\left\vert \mu
_{n}\right\vert \right)  P_{-}=P_{-}\mathcal{N}\left(  \left\vert \mu
_{1}\right\vert ,\dots,\left\vert \mu_{n}\right\vert \right)  ,\text{ }\mu
\in\mathbb{R}^{n},
\]
$w_{\pm}$ satisfies (\ref{eqw}), and $w_{\pm}(k)=S(k)w_{\pm}(-k),k\in
{\mathbb{R}},$ the following estimates hold,%

\begin{equation}
\label{18.0}%
\begin{array}
[c]{l}%
\left\Vert \langle k\rangle\int_{\mathcal{T}_{\pm}(a,t)}\widehat{\mathcal{W}%
}\left(  \tau\right)  \left(  \mathcal{N}\left(  \left\vert (\tau^{-1/ 2}
\mathcal{W}\left(  \tau\right)  w_{\pm}(\tau))_{1}\right\vert , \dots,
\left\vert (\tau^{-1/ 2} \mathcal{W}\left(  \tau\right)  w_{\pm}(\tau
))_{n}\right\vert \right)  \mathcal{W}\left(  \tau\right)  w_{\pm}\right)
d\tau\right\Vert _{L^{2}( {\mathbb{R}})}\\
\leq C \sup_{\tau\in\mathcal{T}_{\pm}(a,t)} \left(  \left\Vert w_{\pm
}\right\Vert _{H^{1}( {\mathbb{R}})}^{\alpha}+1\right)  \Lambda_{\pm}\left(
t, w_{\pm}\right)  , \qquad\pm t \geq a,
\end{array}
\end{equation}
\begin{equation}
\label{18.1}%
\begin{array}
[c]{l}%
\left\Vert \partial_{k} \int_{\mathcal{T}_{\pm}(a,t)}\widehat{\mathcal{W}%
}\left(  \tau\right)  \left(  \mathcal{N}\left(  \left\vert (\tau^{-1/ 2}
\mathcal{W}\left(  \tau\right)  w_{\pm}(\tau))_{1}\right\vert , \dots,
\left\vert (\tau^{-1/ 2} \mathcal{W}\left(  \tau\right)  w_{\pm}(\tau
))_{n}\right\vert \right)  \mathcal{W}\left(  \tau\right)  w_{\pm}\right)
d\tau\right\Vert _{L^{2}( {\mathbb{R}})}\\
\leq C \sup_{\tau\in\mathcal{T}_{\pm}(a,t)} \left(  \left\Vert w_{\pm
}\right\Vert _{H^{1}( {\mathbb{R}})}^{\alpha}+1\right)  \Lambda_{\pm}\left(
t, w_{\pm}\right)  , \qquad\pm t \geq a,
\end{array}
\end{equation}
and, if in addition $V\in L^{1}_{4}(\mathbb{R}^{+}),$ and $V$ admits a regular
decomposition with $\delta=2,$ we have,
\begin{equation}
\label{18.0.1.1}%
\begin{array}
[c]{l}%
\left\Vert \langle{k}\rangle^{2} \int_{\mathcal{T}_{\pm}(a,t)}%
\widehat{\mathcal{W}}\left(  \tau\right)  \left(  \mathcal{N}\left(
\left\vert (\tau^{-1/ 2} \mathcal{W}\left(  \tau\right)  w_{\pm}(\tau
))_{1}\right\vert , \dots, \left\vert (\tau^{-1/ 2} \mathcal{W}\left(
\tau\right)  w_{\pm}(\tau))_{n}\right\vert \right)  \mathcal{W}\left(
\tau\right)  w_{\pm}\right)  d\tau\right\Vert _{L^{2}( {\mathbb{R}})}\\
\leq C\sqrt{|t|} \sup_{\tau\in\mathcal{T}_{\pm}(a,t)} \left(  1+\left\Vert
w_{\pm}\right\Vert _{H^{1}( {\mathbb{R}})}^{\alpha}\right)  \left(
\Lambda_{\pm}(\tau; w_{\pm})+\left\Vert w_{\pm}\right\Vert _{H^{1}(
{\mathbb{R}})}^{2\alpha+1}\right)  , \qquad\pm t \geq a,
\end{array}
\end{equation}
and
\begin{equation}
\label{18.1.1}%
\begin{array}
[c]{l}%
\left\Vert \partial_{k}^{2} \int_{\mathcal{T}_{\pm}(a,t)}\widehat{\mathcal{W}%
}\left(  \tau\right)  \left(  \mathcal{N}\left(  \left\vert (\tau^{-1/ 2}
\mathcal{W}\left(  \tau\right)  w_{\pm}(\tau))_{1}\right\vert , \dots,
\left\vert (\tau^{-1/ 2} \mathcal{W}\left(  \tau\right)  w_{\pm}(\tau
))_{n}\right\vert \right)  \mathcal{W}\left(  \tau\right)  w_{\pm}\right)
d\tau\right\Vert _{L^{2}( {\mathbb{R}})}\\
\leq C\sqrt{|t|} \sup_{\tau\in\mathcal{T}_{\pm}(a,t)} \left(  1+\left\Vert
w_{\pm}\right\Vert _{H^{1}( {\mathbb{R}})}^{\alpha}\right)  \left(
\Lambda_{\pm}(\tau; w_{\pm})+\left\Vert w_{\pm}\right\Vert _{H^{1}(
{\mathbb{R}})}^{2\alpha+1}\right)  , \qquad\pm t \geq a.
\end{array}
\end{equation}

\end{enumerate}
\end{lemma}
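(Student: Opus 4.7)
The plan is to transfer every bound to the spectral side via the factorization $u_{\pm}(\tau)=M\mathcal{D}_{\tau}\mathcal{W}(\tau)w_{\pm}(\tau)$, and then exploit uniform-in-$\tau$ mapping properties of $\mathcal{W}(\tau)$ and $\widehat{\mathcal{W}}(\tau)$ on $L^{2}(\mathbb{R})$ and $H^{m}(\mathbb{R})$ that are collected in Section~\ref{AuxEst} as consequences of the Jost-function and scattering-matrix estimates of Appendix~\ref{App1}. The common building block is the pointwise bound $\|\mathcal{W}(\tau)w\|_{L^{\infty}}\lesssim \|w\|_{H^{1}}$, obtained by combining the uniform $H^{1}\to H^{1}$ boundedness of $\mathcal{W}(\tau)$ with Sobolev embedding. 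From this, assumption~\eqref{ConNonlinearity} yields the $\tau^{-\alpha/2}$ time-decay of $\mathcal{N}(|\tau^{-1/2}\mathcal{W}(\tau)w|)\,\mathcal{W}(\tau)w$ in $L^{2}$, which is the mechanism enabling every $\tau$-integral below to converge once $\alpha>2$.

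For item~1, I would apply the uniform $L^{2}\to L^{2}$ bound on $\widehat{\mathcal{W}}(\tau)$ under the integral and reduce to $\|\mathcal{N}(\cdot)\mathcal{W}(\tau)w_{\pm}\|_{L^{2}}\le C\tau^{-\alpha/2}\|w_{\pm}\|_{H^{1}}^{\alpha}\|w_{\pm}\|_{L^{2}}$. Integrating over $\mathcal{T}_{\pm}(a,t)$ and absorbing the $L^{2}$ factor into the $H^{1}$ norm gives the bound with prefactor $|t\mp a|/\langle t\rangle$, where the ratio comes from combining the trivial length bound $|t\mp a|$ (for small $|t\mp a|$) with the convergent integral $\int^{\infty}\tau^{-\alpha/2}\,d\tau$ (for large $t$). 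The $L^{\infty}$ part is identical modulo replacing $L^{2}\to L^{2}$ boundedness of $\widehat{\mathcal{W}}(\tau)$ by the corresponding $L^{2}\to L^{\infty}$ bound provided in Section~\ref{AuxEst}. For \eqref{18.0} and \eqref{18.0.1.1}, the multipliers $\langle k\rangle$ and $\langle k\rangle^{2}$ can be commuted through $\widehat{\mathcal{W}}(\tau)$ at the cost of $H^{1}$, respectively $H^{2}$, norms of $\mathcal{N}(\cdot)\mathcal{W}(\tau)w_{\pm}$. Using the Leibniz rule~\eqref{100}, the chain rule, \eqref{ConNonlinearity}, and the uniform $H^{2}\to H^{2}$ bound on $\mathcal{W}(\tau)$, these reduce cleanly to $\Lambda_{\pm}(t;w_{\pm})$ up to factors of $\|w_{\pm}\|_{H^{1}}^{\alpha}$, with the $\sqrt{|t|}$ in \eqref{18.0.1.1} arising from the single $\tau^{1/2}$ factor that appears when the second derivative lands on the $e^{ix^{2}/4\tau}$ phase inside $\mathcal{D}_{\tau}$.

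The genuine obstacle is \eqref{18.1}--\eqref{18.1.1}: a $k$-derivative on $\widehat{\mathcal{W}}(\tau)e^{i\tau k^{2}}[\cdots]$ naively produces a factor $\tau$, which would ruin the uniform bound \eqref{18.1} and give $\tau$ instead of $\sqrt{\tau}$ in \eqref{18.1.1}. Following the outline of Subsection~\ref{remaining}, I would decompose the integrand into a \emph{free} part $\widehat{\mathcal{V}}(\tau)f^{\mathrm{fr}}_{1,\pm}$ (corresponding to replacing the Jost solution $f(k,x)$ by $e^{ikx}$) and a \emph{perturbed} part $\theta_{\pm}$ (encoding the correction $m(k,x)-I$). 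For the free part, analyzed in Section~\ref{ProofLema34}, one integrates by parts in $\tau$ using $e^{i\tau k^{2}}=(ik^{2})^{-1}\partial_{\tau}e^{i\tau k^{2}}$: boundary contributions at $\tau=\pm a$ and $\tau=t$ are $L^{2}$-controlled, and the interior term has $\partial_{\tau}$ either on $w_{\pm}$ (where equation \eqref{eqw} turns it back into a quadratic-in-$w$ nonlinearity of the required form) or on $\mathcal{W}(\tau)$ itself, producing an additional $\tau^{-1}$ factor that tames the spurious $\tau$. The hypothesis $\mathcal{N}P_{-}=P_{-}\mathcal{N}$ is used precisely here: the factor $k^{-2}$ produced by the integration by parts is singular at $k=0$, and $I-S(0)=2P_{-}$ shows that only the $P_{-}$-component of the symbol develops the dangerous singularity, which is cancelled by the commutation. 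For the perturbed part $\theta_{\pm}$, handled in Section~\ref{Ltheta}, the regular decomposition with $\delta>3/2$ (respectively $\delta=2$) gives enough decay and regularity of $\partial_{k}m(k,x)$ to absorb the derivative without producing any power of $\tau$, so $\partial_{k}\theta_{\pm}$ is controlled directly by $\Lambda_{\pm}(\tau;w_{\pm})$. The hardest step, which is the whole point of the paper's technique and the main obstacle I expect to face, is precisely this scheme: verifying that every byproduct of the $\tau$-integration by parts — boundary terms, the $\partial_{\tau}w$ term, and the $P_{-}$-cancellation at $k=0$ — can be bounded by the right-hand sides of \eqref{18.1} and \eqref{18.1.1} with no residual $\sqrt{\tau}$ factor in \eqref{18.1} and with exactly one such factor in \eqref{18.1.1}.
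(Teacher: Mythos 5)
Your overall architecture matches the paper's: item 1 via the uniform $L^{2}$ and $L^{\infty}$ bounds on $\widehat{\mathcal{W}}(\tau)f_{1,\pm}$ (that is \eqref{EST20}), the weighted estimates \eqref{18.0} and \eqref{18.0.1.1} reduced to the derivative estimates through the identity $it(k\pm x/2)e^{it(k\pm x/2)^{2}}=\tfrac12\partial_{k}e^{it(k\pm x/2)^{2}}$, and the decomposition $\widehat{\mathcal{W}}(\tau)f_{1,\pm}=\theta_{\pm}+\widehat{\mathcal{V}}(\tau)f_{1,\pm}^{\operatorname*{fr}}$ with time-oscillation arguments for the derivatives. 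However, there is a genuine gap in your treatment of the perturbed part. You claim that the regular decomposition gives enough decay of $\partial_{k}m$ so that $\partial_{k}\theta_{\pm}$ is ``controlled directly'' by $\Lambda_{\pm}(\tau;w_{\pm})$ with no power of $\tau$. This is false: differentiating $\widehat{\mathcal{W}}(\tau)$ and $\widehat{\mathcal{V}}(\tau)$ produces the multipliers $2i\tau k$ and $-4\tau^{2}k^{2}$ (see \eqref{pp.15}--\eqref{pp.18}), and the difference structure of $\theta_{\pm}$ only buys an extra $|\tau|^{-1/2}$ from $m(k,\tau x)-I$ (cf. \eqref{th}, which gives $\|\theta_{\pm}\|_{L^{2}}\lesssim|\tau|^{-1/2-\alpha/2}$). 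Multiplying by $\tau k$ or $\tau^{2}k^{2}$ leaves terms that are not integrable in $\tau$ for $\alpha$ near $2$ — precisely the range the lemma is designed to cover. The paper must therefore integrate by parts in $\tau$ on these terms as well, via $1=e^{-i\tau k^{2}}\partial_{\tau}(\tau e^{i\tau k^{2}})/(1+i\tau k^{2})$ applied to $\Omega_{1,\pm}=\|\int\tau k\theta_{\pm}\,d\tau\|$, $\Omega_{2,\pm}$, $\Omega_{3,\pm}$ (Lemma \ref{Lema2}); without this your bound on $\int\partial_{k}\theta_{\pm}\,d\tau$ fails for $2<\alpha\le 3$.

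A second, smaller inaccuracy concerns where $\mathcal{N}P_{-}=P_{-}\mathcal{N}$ enters. The paper's $\tau$-integration by parts uses the denominator $1+i\tau k^{2}$, which is nonsingular at $k=0$, so there is no $k^{-2}$ singularity to cancel. The actual issue is the boundary term $\sqrt{\tau}\,e^{i\tau k^{2}}f_{1,\pm}^{\operatorname*{fr}}(0)$ produced when $\partial_{k}$ acting on $\mathcal{V}_{\pm}(\tau)$ is converted into $\partial_{x}$ and integrated by parts in $x$ (see \eqref{130}, \eqref{130bis}): in the combination $P_{+}(\mathcal{V}_{+}+\mathcal{V}_{-})$ these boundary terms cancel, while in $P_{-}(\mathcal{V}_{-}-\mathcal{V}_{+})$ they add, and one needs the symmetry $w_{\pm}(k)=S(k)w_{\pm}(-k)$ together with the commutation of $\mathcal{N}$ with $P_{-}$ to gain the decay $|P_{-}f_{1,\pm}^{\operatorname*{fr}}(0)|\lesssim|\tau|^{-\rho/2-\alpha/2}$ (equations \eqref{71}--\eqref{140}). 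Relatedly, the $\sqrt{|t|}$ in \eqref{18.1.1} does not come from a derivative hitting the phase in $\mathcal{D}_{t}$ (which does not appear in $\widehat{\mathcal{W}}(\tau)f_{1,\pm}$ at all), but from the $\tau^{3/2}ke^{i\tau k^{2}}$-type boundary contributions in the second-derivative computation.
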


We now give the proof of Theorem~\ref{Theorem 1.1}

\begin{proof}
[Proof of Theorem~ \ref{Theorem 1.1}]By Theorem~\ref{local} we know that
problem \eqref{1.1} has unique solutions
\[
u_{\pm}(t)\in C\left(  \mathcal{T}_{\pm}\left(  0,T_{\pm,\text{max}}\right)
;H^{2}(\mathbb{R}^{+})\cap L_{2}^{2}(\mathbb{R}^{+})\right)  \cap C^{1}\left(
\mathcal{T}_{\pm}\left(  0,T_{\pm,\text{max}}\right)  ;L^{2}(\mathbb{R}%
^{+})\right)  ,
\]
and that if $\varepsilon$ in \eqref{18.2} is small enough $T_{\pm,\text{max}%
}\geq T_{\varepsilon}>0.$ Further, we know that $w_{\pm}(t)$ defined in
\eqref{g.1} satisfies
\[
w_{\pm}(t)\in C\left(  \mathcal{T}_{\pm}\left(  a, T_{\pm,\text{max}}\right)
;H^{2}(\mathbb{R}^{+})\cap L_{2}^{2}(\mathbb{R}^{+})\right)  \cap C^{1}\left(
\mathcal{T}_{\pm}\left(  a, T_{\pm,\text{max}}\right) ; L^{2}(\mathbb{R}%
^{+})\right) , \qquad T_{\pm, \text{max}} \geq T_{\varepsilon}>a>0.
\]
Moreover, $w_{\pm}$ is a solution to \eqref{eqw.2} and to \eqref{eqw.3}, for
$t\in\mathcal{T}_{\pm}\left(  a, T_{\pm,\text{max}}\right)  $, with
$T_{\pm,\text{max}}\geq T_{\varepsilon}>a>0.$

Using that,
\[
\partial_{k} (k^{2} \partial_{k} w_{\pm}\overline{w} )= k^{2} \partial_{k}
w_{\pm}\partial_{k}\overline{ w_{\pm}}+ 2k \partial_{k} w_{\pm}\overline{
w_{\pm}} + k^{2} \partial_{k}^{2} w_{\pm}\overline{w_{\pm}},
\]
and approximating $w_{\pm}$ by functions in $C^{\infty}_{0}( {\mathbb{R}})$ in
the norm of $H^{2}( {\mathbb{R}})\cap L^{2}_{2}( {\mathbb{R}}),$ we obtain,
\[
\int_{ {\mathbb{R}}} k^{2} \partial_{k} w_{\pm}\partial_{k} \overline{w_{\pm}%
}\, dk= -2 \int_{ {\mathbb{R}}} k \partial_{k} w_{\pm}\overline{w_{\pm}} dk+
\int_{ {\mathbb{R}}} k^{2} \partial^{2}_{k} w_{\pm}\overline{w_{\pm}}\, dk.
\]
Then, by the Cauchy-Schwartz inequality,
\[
\left\Vert k \partial_{k} w_{\pm}\right\Vert ^{2}_{L^{2}( {\mathbb{R}})} \leq2
\left\Vert w_{\pm}\right\Vert _{H^{1}( {\mathbb{R}})} \left\Vert kw_{\pm
}\right\Vert _{L^{2}_{1}( {\mathbb{R}})} + \left\Vert w_{\pm}\right\Vert
_{H^{2}( {\mathbb{R}})} \left\Vert w_{\pm}\right\Vert _{L^{2}_{2}(
{\mathbb{R}})},
\]
and hence,
\begin{equation}
\label{kpartialk}\left\Vert k \partial_{k} w_{\pm}\right\Vert _{L^{2}(
{\mathbb{R}})} \leq C \left[  \left\Vert w_{\pm}\right\Vert _{H^{2}(
{\mathbb{R}})} + \left\Vert w_{\pm}\right\Vert _{L^{2}_{2}( {\mathbb{R}})}
\right]  .
\end{equation}
Note that by \eqref{g.1} and \eqref{spectralrepr},
\begin{equation}
\label{pppccc}e^{-it k^{2}} w_{\pm}= \mathbf{F }u_{\pm}.
\end{equation}
Then, by \eqref{76.26.23.aaxxaa}, \eqref{7.26.23.1}, \eqref{kpartialk},
\eqref{ab.2.1.1}, and \eqref{ab.2.1.2}
%\beq\label{nnmm}
%\left\Vert u_\pm\right\Vert_{H^2(\er)} \leq C \sqrt{|t|}  \tilde{\varepsilon},  \qquad t \in [a,T_{+,\rm max}), \, \text{respectively}, \, t \in (-T_{-,\rm max}, -a].
%\ene
%Moreover, by \eqref{18.2.11.xx}, \eqref{pppccc},\eqref{kpartialk}, and \eqref{ab.2.1.2},
%% Further, using   \eqref{g.1}, \eqref{18.2.11.xx}, \eqref{kpartialk}, and \eqref{spectralrepr},
%\beq \label{nn.mm.1}\begin{array}{l}
%\left\Vert u_\pm\right\Vert_{L^2_2(\ere)}\leq C \left\Vert e^{-it k^2}w_\pm \right\Vert_{H^2(\ere)} \leq C |t|^2 \left[  \left\Vert w_\pm\right\Vert_{L^2_2(\ere)}+ \left\Vert w_\pm\right\Vert_{H^2_2(\ere)}  \right]\\ \leq
%C |t|^3 \tilde{\varepsilon}, \qquad t \in [a,T_{+,\rm max}), \, \text{respectively}, \, t \in (-T_{-,\rm max}, -a].
%\end{array}
%\ene

%Observe that  by  \eqref{7.26.23.1}, \eqref{g.1}, and Lemma~\ref{ext},%
\begin{equation}
\label{18.2.1}\|w_{\pm}(a)\|_{H^{2}( {\mathbb{R}})}+ \| w_{\pm}(a)\|_{L^{2}%
_{2}( {\mathbb{R}})} \leq C \tilde{\varepsilon}, \, \text{\textrm{with}}\,
\tilde{\varepsilon}:= \varepsilon(1+ \varepsilon^{\alpha}).
\end{equation}
We will obtain an a priori bound for $w_{\pm}(t),$ $t\in\mathcal{T}_{\pm
}\left(  a, T_{\pm, \text{max}}\right) .$ For this purpose, we introduce the
following notation,
\begin{equation}
\Gamma_{\pm}(t):=\sup_{\tau\in\mathcal{T}_{\pm}(a,t)}\left[  \Vert w_{\pm
}(\tau)\Vert_{H^{1}({\mathbb{R}})}+\frac{1}{\sqrt{\langle\tau\rangle}}\Vert
w_{\pm}(\tau)\Vert_{H^{2}({\mathbb{R}})}\right]  ,\qquad\pm t \geq a .
\label{18.2.2}%
\end{equation}

We first observe that using \eqref{eqw.3}, \eqref{18.2.3}, and \eqref{18.2.1}
we have the estimate
\begin{equation}
\left\Vert w_{\pm}\left(  t\right)  \right\Vert _{L^{2}({\mathbb{R}})}\leq
C\,\tilde{\varepsilon}+C\Gamma_{\pm}(t)^{\alpha+1},\qquad\pm t\geq a.
\label{18.2.4}%
\end{equation}
Moreover, by \eqref{eqw.3}, \eqref{18.1} and \eqref{18.2.1}
\begin{equation}
\left\Vert \partial_{k}w_{\pm}\left(  t\right)  \right\Vert _{L^{2}%
({\mathbb{R}})}\leq C\tilde{\varepsilon}+C(1+\Gamma_{\pm}(t)^{\alpha}%
)\Gamma_{\pm}(t)^{\alpha+1},\qquad\pm t\geq a, \label{18.2.5}%
\end{equation}
and, using \eqref{18.1.1} instead of \eqref{18.1},
\begin{equation}
\left\Vert \partial_{k}^{2}w_{\pm}\left(  t\right)  \right\Vert _{L^{2}%
({\mathbb{R}})}\leq C\tilde{\varepsilon}+\sqrt{|t|}\,C(1+\Gamma_{\pm
}(t)^{\alpha})\Gamma_{\pm}(t)^{\alpha+1},\qquad\pm t\geq a. \label{18.2.6}%
\end{equation}
It follows from \eqref{eqw}, \eqref{eqw.3}, \eqref{18.0.1.1}, and \eqref{18.2.1},%

\begin{equation}
\label{18.2.7}\left\Vert w_{\pm}\left(  t\right)  \right\Vert _{L^{2}_{2}(
{\mathbb{R}})} \leq C \tilde{\varepsilon}+ C \sqrt{|t|} (1+\Gamma_{\pm
}(t)^{\alpha}) \Gamma_{\pm}(t)^{\alpha+1}, \qquad\pm t \geq a, \qquad\pm t
\geq a.
\end{equation}

%+ \sqrt{|\tau|} \,\Gamma_\pm(\tau)
%^{2\alpha+1}+ \Gamma_\pm(\tau)^{\alpha-1}\|w_\pm(\tau)\|_{L^2_2(\ere)}^2\right.+ \\\left. \sqrt{|\tau|}  \Gamma_\pm(\tau)^\alpha
%\|w_\pm(\tau)\|_{L^2_2(\ere)} \right],
%\end{array}
%\ene
Let us designate,
\begin{equation}
\beta_{\pm}(t):=\Gamma_{\pm}(t)+\sup_{\tau\in\mathcal{T}_{\pm}(a,t)}\frac
{1}{|\tau|^{1/2}}\left\Vert w_{\pm}\left(  \tau\right)  \right\Vert
_{L_{2}^{2}({\mathbb{R}})}, \qquad\pm t \geq a. \label{18.2.8}%
\end{equation}
%Then, by \eqref{18.2.4},
Then, by \eqref{18.2.4}, \eqref{18.2.5}, \eqref{18.2.6} and \eqref{18.2.7}
\begin{equation}
\beta_{\pm}(t)\leq6\,C\tilde{\varepsilon}+C\,\left(  \beta^{\alpha}+1\right)
\beta^{\alpha+1},\qquad\pm t\geq a. \label{18.2.9}%
\end{equation}
Take $\tilde{\varepsilon}$ so small that,
\begin{equation}
C\left(  (7C\tilde{\varepsilon})^{\alpha}+1\right)  (7C\tilde{\varepsilon
})^{\alpha+1}<\frac{1}{2}C\tilde{\varepsilon},\qquad\pm t\geq a.
\label{18.2.10}%
\end{equation}
Suppose that for some $\theta_{\pm}$, with $a\leq\theta_{+}<T_{+,\mathrm{max}%
},$ respectively, $T_{-,\mathrm{max}}<\theta_{-}\leq-a,$
\begin{equation}
\beta_{\pm}(\theta_{\pm})=7C\tilde{\varepsilon}. \label{18.2.11.0}%
\end{equation}
Then, by \eqref{18.2.9}, \eqref{18.2.10}, \eqref{18.2.11.0} we reach the
contradiction,
\[
7C\tilde{\varepsilon}\leq\left(  6+\frac{1}{2}\right)  C\tilde{\varepsilon}.
\]
Then for all $\theta_{\pm}$ either $\beta_{\pm}(\theta_{\pm})<7C\tilde
{\varepsilon}$ or $\beta_{\pm}(\theta)>7C\tilde{\varepsilon}.$ However, since
$\beta_{\pm}(\pm a)\leq6\,C\,\tilde{\varepsilon},$ if for some $\theta_{\pm
},\beta_{\pm}(\theta_{\pm})>7C\tilde{\varepsilon},$ by continuity, for some
other $\theta_{1,\pm},$ $\beta_{\pm}(\theta_{1,\pm})=7C\tilde{\varepsilon},$
and this is not possible. Hence, we have proved the following a priori bound,
\begin{equation}
\beta_{\pm}(t)<7C\tilde{\varepsilon},\qquad t\in\mathcal{T}_{\pm}\left( a,
T_{\pm,\text{max}}\right)  . \label{18.2.11.xx}%
\end{equation}
%By \eqref{g.1} and \eqref{spectralrepr},
%\beq\label{pppccc}
%\mathbf F u_\pm= e^{-it k^2} w_\pm.
%\ene
Then, by \eqref{pppccc}, \eqref{18.2.11.xx}, and \eqref{ab.2.1.1},
\begin{equation}
\left\Vert u_{\pm}\right\Vert _{H^{2}(\mathbb{R}^{+})}\leq C\sqrt{|t|}%
\tilde{\varepsilon},\qquad t\in\mathcal{T}_{\pm}\left(  a, T_{\pm,\text{max}%
}\right)  , \label{nnmm}%
\end{equation}
and using \eqref{kpartialk}, and \eqref{ab.2.1.2} instead of \eqref{ab.2.1.1},
we get,
%Further, using   \eqref{g.1}, \eqref{18.2.11.xx}, \eqref{kpartialk}, and \eqref{spectralrepr},%
\begin{equation}%
\begin{array}
[c]{l}%
\left\Vert u_{\pm}\right\Vert _{L_{2}^{2}({\mathbb{R}})}\leq C\left\Vert
e^{-itk^{2}}w_{\pm}\right\Vert _{H^{2}({\mathbb{R}})}\leq C|t|^{2}\left[
\left\Vert w_{\pm}\right\Vert _{L_{2}^{2}({\mathbb{R}})}+\left\Vert w_{\pm
}\right\Vert _{H^{2}({\mathbb{R}})}\right] \\
\leq C|t|^{5/2}\,\tilde{\varepsilon},\qquad t\in\mathcal{T}_{\pm}\left( a,
T_{\pm,\text{max}}\right)  .
\end{array}
\label{nn.mm.1}%
\end{equation}
%
%It follows from \eqref{18.2.11.xx}, \eqref{nnmm}, and \eqref{nn.mm.1},
%\beq\label{18.2.11.b}
%\|u_\pm(t)\|_{L^2_2(\er)} \leq C |t|^3\, \tilde{\varepsilon} , \|u_\pm(t)\|_{H^2(\er)} \leq C \, \sqrt{|t| }\, \tilde{\varepsilon },  a \leq t < T_{+,\rm max},    \, \text{respectively}, \, t \in (-T_{-,\rm max}, -a].
%\ene
Hence, if $T_{\pm,\mathrm{max}}<\infty,$
\[
\sup_{t\in\mathcal{T}_{\pm}\left( a, T_{\pm,\text{max}}\right) } \left(
\left\Vert u_{\pm}(t)\right\Vert _{H^{2}(\mathbb{R}^{+})}+\Vert u_{\pm
}(t)\Vert_{L_{2}^{2}(\mathbb{R}^{+})}\right)  \leq C<\infty.
\]
However, this is in contradiction with Theorem~\ref{local}. In consequence,
$T_{\pm,\mathrm{max}}=\pm\infty.$
%Further, by \eqref{18.2.11.xx},  \eqref{ab.2.1.1.0},    \eqref{ab.2.1.1}, and \eqref{ab.2.1.2} we have that \eqref{18.2.z} holds.
%I

%Finally, by \eqref{18.2.11},  \eqref{ab.2.1.1.0},    \eqref{ab.2.1.1}, and \eqref{ab.2.1.2} we have that \eqref{18.2.z} holds.
%Moreover, by  \eqref{18.2.1},
%\eqref{18.2.4}, \eqref{18.2.5}, and \eqref{18.2.6},
%\beq \label{rrrr}
%\Gamma_\pm(t) \leq 5 \,  C  \tilde{\varepsilon} +  C \left( \Gamma_\pm^\alpha+1 \right) \, \Gamma^{\alpha+1}, \qquad \pm t \geq a.
%\ene
%Take $ C \tilde{\varepsilon}$ so small that,
%\beq\label{18.2.1.bbbb}
%C \,  \ \left( (6 C \tilde{\varepsilon})^\alpha+1 \right) \, (6 C \tilde{\varepsilon})^{\alpha+1} < \frac{1}{2} C \tilde{\varepsilon}.
%\ene
%Assume that for some $ t_0 \in J_{a, \pm \infty},$  $\gamma_\pm(t_{0,\pm})= 6 C \tilde{\varepsilon}.$ Hence, by \eqref{rrrr} and \eqref{18.2.1.bbbb} we reach the contradiction,
%$$
%6 C \tilde{\varepsilon} \leq  \left(5+\frac{1}{2}\right) C \tilde{\varepsilon}.
%$$
%But, as $\Gamma_\pm(\pm a)\leq  5\, C \, \tilde{\varepsilon},$ if for some $t_{0,\pm},   \beta_\pm(\theta_\pm)> 6 C\tilde{\varepsilon},$ by continuity, for some other $t_{1,\pm},$  $\Gamma_\pm(t_{1,\pm})= 6 C \tilde{\varepsilon},$ and this is not possible. Hence, we have proved the following a priori bound,
%\beq\label{18.2.11}
%\Gamma_\pm(t)  < 6 \, C\, \tilde{\varepsilon}, \qquad \pm t \geq a.
%\ene
It only remains to prove \eqref{1.3-1}, \eqref{1.3-2}, and \eqref{1.3}. Using
(\ref{eqw.3}), (\ref{18.2.11.xx}), and \eqref{EST20}, we show that
\begin{equation}
\left\Vert w_{\pm}\left(  t\right)  -w_{\pm}\left(  s\right)  \right\Vert
_{L^{2}( {\mathbb{R}})}+\left\Vert w_{\pm}\left(  t\right)  -w_{\pm}\left(
s\right)  \right\Vert _{L^{\infty}( {\mathbb{R}})}\leq C\tilde{\varepsilon
}^{\alpha+1}|s|^{-\left(  \alpha/2-1\right)  }, \qquad|t| >|s|. \label{43}%
\end{equation}
Thus, $w_{\pm}\left(  t\right)  $ is a Cauchy sequence, and hence, there
exists a limit $v_{\pm}\in L^{2}( {\mathbb{R}})\cap L^{\infty}( {\mathbb{R}%
}),$
\begin{equation}
\label{43.1}v_{\pm}:= \lim_{t \to\pm\infty} w_{\pm}(t),
\end{equation}
and
\begin{equation}
\label{43.1.2}\|w_{\pm}(t)- v_{\pm}\|_{L^{2}( {\mathbb{R}})}+ \|w_{\pm}(t)-
v_{\pm}\|_{L^{\infty}( {\mathbb{R}})} \leq C\tilde{\varepsilon}^{\alpha
+1}|t|^{-\left(  \alpha/2-1\right)  }.
\end{equation}
Moreover, by \eqref{18.2.11.xx}%

\begin{equation}
\Vert v_{\pm}\Vert_{H^{1}(\mathbb{R}^{+})}\leq7\,C\,\tilde{\varepsilon}.
\label{43.1.2.1}%
\end{equation}
We denote by $w_{\pm\infty}\in L^{2}(\mathbb{R}^{+})\cap L^{\infty}%
(\mathbb{R}^{+}),$ the restriction to $\mathbb{R}^{+}$ of $v_{\pm},$ i.e.,
\begin{equation}
w_{\pm\infty}(k):=v_{\pm}(k),\qquad k\in\mathbb{R}^{+}. \label{43.1.4}%
\end{equation}
Since $H$ has no eigenvalues $P_{c}(H)=I.$ Hence, by \eqref{g.1} and
\eqref{ap.65} we get%
\begin{equation}
u_{\pm}\left(  t\right)  =M\mathcal{D}_{t}\mathcal{W}\left(  t\right)  w_{\pm
}. \label{43.2}%
\end{equation}
Therefore, using \eqref{18.2.11.xx}, \eqref{43.1.2}, \eqref{2.11}, and
\eqref{L2} we get,%
\begin{equation}%
\begin{array}
[c]{l}%
\left\Vert u_{\pm}\left(  t\right)  -M\mathcal{D}_{t}\mathcal{V}\left(
t\right)  v_{\pm}\right\Vert _{L^{2}(\mathbb{R}^{+})}\leq\left\Vert \left(
\mathcal{W}\left(  t\right)  -\mathcal{V}\left(  t\right)  \right)  w_{\pm
}\right\Vert _{L^{2}(\mathbb{R}^{+})}+\left\Vert M\mathcal{D}_{t}%
\mathcal{V}\left(  t\right)  \left(  w_{\pm}-v_{\pm}\right)  \right\Vert
_{L^{2}(\mathbb{R}^{+})}\label{43.2.1}\\
\\
\ \leq C\left(  \tilde{\varepsilon}^{\alpha+1}\,|t|^{-\left(  \frac{\alpha}%
{2}-1\right)  }+\tilde{\varepsilon}|t|^{-1/2}\right)  ,\qquad|t|\geq a.
\end{array}
\end{equation}
Moreover, by \eqref{18.2.11.xx}, \eqref{43.1.2}, \eqref{43.2}, \eqref{3.2} and
\eqref{Linf},
\begin{equation}%
\begin{array}
[c]{l}%
\left\Vert u_{\pm}\left(  t\right)  -\frac{1}{\sqrt{2}}M\mathcal{D}_{t}\left(
m\left(  \frac{x}{2},tx\right)  w_{\pm\infty}\left(  \frac{x}{2}\right)
\right)  \right\Vert _{L^{\infty}(\mathbb{R}^{+})}\leq\left\Vert u_{\pm
}\left(  t\right)  -\frac{1}{\sqrt{2}}M\mathcal{D}_{t}\left(  m\left(
\frac{x}{2},tx\right)  w_{\pm}\left(  t,\frac{x}{2}\right)  \right)
\right\Vert _{L^{\infty}(\mathbb{R}^{+})}\\
\\
+\left\Vert \ \frac{1}{\sqrt{2}}M\mathcal{D}_{t}\left(  m\left(  \frac{x}%
{2},tx\right)  (w_{\pm}\left(  t,\frac{x}{2}\right)  -w_{\pm\infty}\left(
\frac{x}{2}\right)  \right)  \right\Vert _{L^{\infty}(\mathbb{R}^{+})}\leq
C|t|^{-1/2}\left(  \tilde{\varepsilon}^{\alpha+1}|t|^{-\left(  \frac{\alpha
}{2}-1\right)  }+\tilde{\varepsilon}\,|t|^{-1/4}\right)  ,|t|>a.
\end{array}
\label{43.1.3}%
\end{equation}
Equation \eqref{43.1.3} proves \eqref{1.3-2}. Observe that \eqref{43.1.2.1},
\eqref{43.1.4}, \eqref{43.1.3}, and \eqref{3.2} imply the $L^{\infty
}(\mathbb{R}^{+})$ decay estimate (\ref{1.3}). It follows from \eqref{g.1} and
\eqref{43.1}, that
\begin{equation}
v_{\pm}(k)=S(k)v_{\pm}(-k). \label{43.1.4.1}%
\end{equation}
Further, by \eqref{sp4.0} and\eqref{2.5.1}%

\begin{equation}
\label{43.1.5}%
\begin{array}
[c]{l}%
M\mathcal{D}_{t} \mathcal{V}(t)v_{\pm}-e^{-it H_{0}} \mathbf{F}_{0}^{\dagger
}w_{\pm\infty}= M \mathcal{D}_{t} \sqrt{\frac{it}{2\pi}} \int_{-\infty}^{0}
e^{-it(k- \frac{x}{2})^{2}} (S(k)-S_{0}(k)) v_{\pm}(-k) dk\\
\\
= M \mathcal{D}_{t} \sqrt{\frac{it}{2\pi}} \int_{0}^{\infty} e^{-it(k+\frac
{x}{2})^{2}} (S(-k)-S_{0}(-k)) v_{\pm}(k) dk.
\end{array}
\end{equation}
We have,
\begin{equation}
\label{43.1.6}e^{-it\left(  k+\frac{x}{2}\right)  ^{2}}=\frac{\partial
_{k}\left(  \left(  k+\frac{x}{2}\right)  e^{-it\left(  k+\frac{x}{2}\right)
^{2}}\right)  }{1-2it\left(  k+\frac{x}{2}\right)  ^{2}}.
\end{equation}
Using \eqref{43.1.6} and integrating by parts we obtain,
\begin{equation}
\label{43.1.7}%
\begin{array}
[c]{l}%
\sqrt{\frac{it}{2\pi}}\int_{0}^{\infty} e^{-it(k+\frac{x}{2})^{2}}
(S(-k)-S_{0}(-k)) v_{\pm}(k) dk=\displaystyle - \sqrt{\frac{it}{2\pi}}
\frac{x}{2} e^{-it\left(  \frac{x}{2}\right)  ^{2}} \frac{1}{1-2it\left(
\frac{x}{2}\right)  ^{2}} (S(0)-S_{0}(0)) v_{\pm}(0)\\
\\
- \sqrt{\frac{i|t|}{2\pi}} \int_{0}^{\infty} e^{-it(k+\frac{x}{2})^{2}} 4 i t
\frac{\left(  k+\frac{x}{2}\right)  ^{2}}{\left(  1-2it\left(  k+\frac{x}%
{2}\right)  ^{2}\right)  ^{2}} (S(-k)-S_{0}(-k)) v_{\pm}(k) dk\\
\\
- \sqrt{\frac{i|t|}{2\pi}} \int_{0}^{\infty} e^{-it(k+\frac{x}{2})^{2}} \frac{
k+\frac{x}{2}}{1-2it\left(  k+\frac{x}{2}\right)  ^{2}} \partial_{k}\left[
(S(-k)-S_{0}(-k)) v_{\pm}(k)\right]  dk.
\end{array}
\end{equation}
Note that,
\[%
\begin{array}
[c]{l}%
\left|  \sqrt{\frac{it}{2\pi}} \int_{0}^{\infty} e^{-it(k+\frac{x}{2})^{2}} 4
i t \frac{\left(  k+\frac{x}{2}\right)  ^{2}}{\left(  1-2it\left(  k+\frac
{x}{2}\right)  ^{2}\right)  ^{2}} (S(-k)-S_{0}(-k)) v_{\pm}(k) dk \right| \\
\\
\leq C \int_{\sqrt{|t|}x/2}^{\infty} \frac{z^{2}}{\left(  1+z^{2}\right)
^{2}} v_{\pm}(k) dk \|v_{\pm}\|_{L^{\infty}(\mathbb{R}^{+})} \leq C \frac
{1}{1+ \sqrt{|t|}x} \|v_{\pm}\|_{L^{\infty}(\mathbb{R}^{+})}.
\end{array}
\]
Moreover,
\[%
\begin{array}
[c]{l}%
\left|  \sqrt{\frac{it}{2\pi}} \int_{0}^{\infty} e^{-it(k+\frac{x}{2})^{2}}
\frac{ k+\frac{x}{2}}{1-2it\left(  k+\frac{x}{2}\right)  ^{2}} \partial
_{k}\left[  (S(-k)-S_{0}(-k)) v_{\pm}(k)\right]  dk\right| \\
\\
\leq\frac{1}{\sqrt{2\pi}} \frac{1}{1+\sqrt{t}x} \int_{\mathbb{R}^{+}} \left|
\partial_{k}\left[  (S(-k)-S_{0}(-k)) v_{\pm}(k)\right]  \right|  dk
\leq\displaystyle C \frac{1}{1+\sqrt{|t|}x} \|v_{\pm}\|_{L^{2}(\mathbb{R}%
^{+})}.
\end{array}
\]
Hence, by \eqref{43.1.2.1}, \eqref{UnitaritySM}, \eqref{scatmatrixderiv}, and
Sobolev's inequality,
\begin{equation}
\label{43.1.8}%
\begin{array}
[c]{l}%
\left\|  \sqrt{\frac{it}{2\pi}} \int_{0}^{\infty} e^{-it(k+\frac{x}{2})^{2}}
(S(-k)-S_{0}(-k)) v_{\pm}(k) dk \right\|  _{L^{2}(\mathbb{R}^{+})} \leq C
\tilde{\varepsilon} \left[  \sqrt{|t|}\left\|  \frac{x}{1+t x ^{2}} \right\|
_{L^{2}(\mathbb{R}^{+})} \right. \\
\\
\left.  + \left\|  \frac{1}{1+ \sqrt{|t|x}}\right\|  _{L^{2}(\mathbb{R}^{+})}
\right]  .
\end{array}
\end{equation}
Further, by \eqref{43.2.1}, \eqref{43.1.5}, and \eqref{43.1.8}
\begin{equation}
\label{43.1.9}\left\Vert u_{\pm}\left(  t\right)  - e^{-it H_{0}}
\mathbf{F}_{0}^{\dagger}w_{\pm\infty}\right\Vert _{L^{2}(\mathbb{R}^{+})}\leq
C\left(  \tilde{\varepsilon}^{\alpha+1}\, |t|^{-\left(  \frac{\alpha}%
{2}-1\right)  }+ \tilde{\varepsilon} |t|^{-1/4}\right)  , \qquad|t| \geq a.
\end{equation}
This proves \eqref{1.3-1}, and completes the proof of Theorem
\ref{Theorem 1.1}.
\end{proof}

\begin{remark}{\rm
\label{bounds}\textrm{By \eqref{18.2.8} and \eqref{18.2.11.xx} with $T_{\pm,
\text{max}}= \infty,$ }

\textrm{%
\begin{equation}
\label{bound.1}\left\Vert w_{\pm}\right\Vert _{H^{1}( {\mathbb{R}})}+ \frac
{1}{\sqrt{|t|}} \left\Vert w_{\pm}\right\Vert _{H^{2}( {\mathbb{R}})}+
\frac{1}{\sqrt{|t|}} \left\Vert w_{\pm}\right\Vert _{L^{2}_{2}( {\mathbb{R}})}
\leq C \tilde{\varepsilon} , \qquad\pm t \geq a,
\end{equation}
and by \eqref{18.0}, and \eqref{bound.1},
\begin{equation}
\label{bound.2}\left\Vert w_{\pm}\right\Vert _{L^{2}_{1}( {\mathbb{R}})} \leq
C \tilde{\varepsilon} , \qquad\pm t \geq a.
\end{equation}
}

\textrm{Further, by \eqref{76.26.23.aaxxaa}, \eqref{7.26.23.1},\eqref{nnmm}
with $T_{\pm, \text{max}}= \infty,$ we have,
\begin{equation}
\label{bound.3}\left\Vert u_{\pm}\right\Vert _{H^{2}( {\mathbb{R}})}\leq C
\tilde{\varepsilon} \langle\sqrt{|t|}\rangle, \qquad\pm t \geq0.
\end{equation}
Moreover, by \eqref{76.26.23.aaxxaa}, \eqref{7.26.23.1}, \eqref{pppccc},
\eqref{bound.2}, and \eqref{ab.2.1.1.0},
\begin{equation}
\label{bound.4}\left\Vert u_{\pm}\right\Vert _{H^{1}( {\mathbb{R}})}\leq C
\tilde{\varepsilon}, \qquad\pm t \geq0.
\end{equation}
}}
\end{remark}

\section{\bigskip Auxiliary estimates.\label{AuxEst}}

In this section we prove the estimates that we need. We relay on the results
in Appendix ~\ref{App1}. Let $w_{\pm}$ be functions from $t\in\mathcal{T}%
_{\pm}\left(  a,\infty\right)  ,$ $a>0,$ with values in $H^{1}({\mathbb{R}}).
$ We denote
\begin{equation}
f_{1,\pm}:=\mathcal{N}\left(  \left\vert (t^{-1/2}\mathcal{W}\left(  t\right)
w_{\pm})_{1}\right\vert ,\dots,\left\vert (t^{-1/2}\mathcal{W}\left(
t\right)  w_{\pm})_{n}\right\vert \right)  \mathcal{W}\left(  t\right)
w_{\pm}, \label{nnn.1}%
\end{equation}
and%
\begin{equation}
f_{1,\pm}^{\operatorname*{fr}}:=\mathcal{N}\left(  \left\vert (t^{-1/2}%
\mathcal{V}\left(  t\right)  w_{\pm})_{1}\right\vert ,\dots\left\vert
(t^{-1/2}\mathcal{V}\left(  t\right)  w_{\pm})_{n}\right\vert \right)
\mathcal{V}\left(  t\right)  w_{\pm}. \label{nnn.2}%
\end{equation}

\begin{lemma}
Assume that the nonlinear function $\mathcal{N}$ satisfy
(\ref{ConNonlinearity}). Let $w_{\pm}$ be functions from $t\in\mathcal{T}%
_{\pm}\left(  a,\infty\right)  ,$ $a>0,$ with values in $H^{1}({\mathbb{R}}).$
Then, there is a constant $C$ with the following properties.

\begin{enumerate}
\item Suppose that the $V\in L_{2+\delta}^{1}(\mathbb{R}^{+}),$ for some
$\delta>0.$ Then
\begin{equation}
\left\Vert f_{1,\pm}\right\Vert _{L^{2}({\mathbb{R}})}\leq C|t|^{-\alpha
/2}\left\Vert w_{\pm}\right\Vert _{H^{1}({\mathbb{R}})}^{\alpha+1},\qquad\pm
t\geq a, \label{12}%
\end{equation}%
\begin{equation}
\left\Vert f_{1,\pm}\right\Vert _{L^{\infty}({\mathbb{R}})}\leq C|t|^{-\alpha
/2}\left\Vert w_{\pm}\right\Vert _{H^{1}({\mathbb{R}})}^{\alpha+1},\qquad\pm
t\geq a. \label{12.0.0}%
\end{equation}

\item
\begin{equation}
\left\Vert f_{1,\pm}^{\operatorname*{fr}}\right\Vert _{L^{2}( {\mathbb{R}}%
)}\leq|t|^{-\alpha/2} C\left\Vert w_{\pm}\right\Vert _{H^{1}( {\mathbb{R}}%
)}^{\alpha+1}, \qquad\pm t\geq a, \label{12.1}%
\end{equation}
\begin{equation}
\label{12.1.00}\left\Vert f_{1,\pm}^{\operatorname*{fr}}\right\Vert
_{L^{\infty}( {\mathbb{R}})} \leq C |t|^{-\alpha/2} \left\Vert w_{\pm
}\right\Vert _{H^{1}( {\mathbb{R}})}^{\alpha+1}, \qquad\pm t \geq a,
\end{equation}

\begin{equation}
\left\Vert \partial_{x}^{j}f_{1,\pm}^{\operatorname*{fr}}\right\Vert _{L^{2}(
{\mathbb{R}})}\leq C |t|^{- \alpha/2} \left\Vert w_{\pm}\right\Vert _{H^{1}(
{\mathbb{R}})}^{\alpha}\left\Vert w_{\pm}\right\Vert _{H^{j}( {\mathbb{R}})},
\qquad j= 1,2, \pm t \geq a. \label{derivativeffr}%
\end{equation}
Here, for $j=2,$ we assume that $w_{\pm}(t, \cdot) \in H^{2}( {\mathbb{R}}).$
\end{enumerate}
\end{lemma}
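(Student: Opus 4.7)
The five estimates follow by combining the pointwise growth estimate for $\mathcal N$ in \eqref{ConNonlinearity} with mapping properties of $\mathcal W(t)$ and $\mathcal V(t)$ on $L^2$, $L^\infty$, and $H^1$ (respectively $H^2$) that are available from Appendix~\ref{App1}. The overall strategy is: peel off the nonlinearity pointwise to convert $f_{1,\pm}$ and $f_{1,\pm}^{\operatorname{fr}}$ into powers of $|\mathcal W(t)w_\pm|$ or $|\mathcal V(t)w_\pm|$, then absorb $\alpha$ copies into an $L^\infty$ factor (giving the decay $|t|^{-\alpha/2}$) and the remaining copy into either $L^2$ or $L^\infty$ as needed. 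The absence of any extra time growth is then forced by the fact that $\mathcal W(t)$ and $\mathcal V(t)$ are uniformly bounded (in $t$) on $L^2$ and on $H^1$.

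\medskip

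\textbf{Step 1: pointwise bound on the nonlinearity.} From \eqref{ConNonlinearity} with $j=0$ we have
\[
\bigl|\mathcal N(|\mu_1|,\dots,|\mu_n|)\bigr|\leq C|\mu|^{\alpha},\qquad \mu\in\mathbb R^n.
\]
Plugging $\mu = t^{-1/2}\mathcal W(t)w_\pm$ into the definition \eqref{nnn.1} yields
\[
|f_{1,\pm}(t,x)|\leq C|t|^{-\alpha/2}|\mathcal W(t)w_\pm(t,x)|^{\alpha+1},
\]
and analogously $|f_{1,\pm}^{\operatorname{fr}}|\leq C|t|^{-\alpha/2}|\mathcal V(t)w_\pm|^{\alpha+1}$. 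Taking the $L^\infty$ norm and using the Sobolev embedding $H^1(\mathbb R)\hookrightarrow L^\infty(\mathbb R)$ together with the $H^1$--boundedness of $\mathcal W(t)$ and $\mathcal V(t)$ supplied by Appendix~\ref{App1} gives at once \eqref{12.0.0} and \eqref{12.1.00}. For the $L^2$ estimates \eqref{12}, \eqref{12.1} I would split one factor of $\mathcal W(t)w_\pm$ (respectively $\mathcal V(t)w_\pm$) in $L^2$ and the remaining $\alpha$ factors in $L^\infty$: since $\mathcal W(t)$ is (uniformly in $t$) bounded on $L^2$ by its construction through the generalized Fourier transform $\mathbf F$ in \eqref{sp5}, and analogously for $\mathcal V(t)$, Sobolev's inequality yields the bound $\|w_\pm\|_{H^1}^\alpha\|w_\pm\|_{L^2}\leq\|w_\pm\|_{H^1}^{\alpha+1}$.

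\medskip

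\textbf{Step 2: derivatives of $f_{1,\pm}^{\operatorname{fr}}$.} For the derivative estimate \eqref{derivativeffr} I use the product/chain rule identity already recorded in \eqref{100} (applied in the variable $x$ instead of $t$): writing $g:=\mathcal V(t)w_\pm$, we have
\[
\partial_{x}f_{1,\pm}^{\operatorname{fr}}
=\sum_{j=1}^{n}\bigl(E_{1}^{(j)}(t^{-1/2}g)\,t^{-1/2}\partial_{x}g_{j}
+E_{2}^{(j)}(t^{-1/2}g)\,t^{-1/2}\partial_{x}\overline{g_{j}}\bigr)g
+\mathcal N(|t^{-1/2}g_1|,\dots)\,\partial_{x}g,
\]
where $|E_{i}^{(j)}(\mu)|\leq C|\mu|^{\alpha-1}$ by \eqref{ConNonlinearity} with $j=1$. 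Consequently
\[
|\partial_{x}f_{1,\pm}^{\operatorname{fr}}|
\leq C|t|^{-\alpha/2}|g|^{\alpha}|\partial_{x}g|.
\]
Putting the factor $|g|^{\alpha}$ in $L^{\infty}$ via $H^{1}\hookrightarrow L^{\infty}$ and $\partial_{x}g$ in $L^{2}$ using the $H^{1}$--boundedness of $\mathcal V(t)$ from Appendix~\ref{App1} gives \eqref{derivativeffr} for $j=1$. For $j=2$, a second application of the chain rule produces terms controlled by $|g|^{\alpha-1}|\partial_x g|^2$ and $|g|^\alpha|\partial_x^2 g|$; the first is handled by placing $|g|^{\alpha-1}\in L^\infty$, one $\partial_x g$ in $L^\infty$ (again via $H^1\hookrightarrow L^\infty$, which costs $\|w_\pm\|_{H^2}$ through the $H^2$--boundedness of $\mathcal V(t)$), and the other $\partial_x g$ in $L^2$; the second term is handled analogously with $\partial_x^2 g$ in $L^2$. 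Both ultimately produce the claimed bound $C|t|^{-\alpha/2}\|w_\pm\|_{H^1}^\alpha\|w_\pm\|_{H^2}$.

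\medskip

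\textbf{Main obstacle.} The analytical content of the estimates is routine nonlinear Hölder/Sobolev bookkeeping; the real work has already been done elsewhere, namely in establishing that $\mathcal W(t)$ and $\mathcal V(t)$ are bounded uniformly in $t\geq a>0$ from $H^{1}(\mathbb R)$ (respectively $H^{2}(\mathbb R)$) into themselves with constants independent of $t$. I expect that the only delicate point is invoking the right lemma from Appendix~\ref{App1} for $\mathcal W(t)$, whose boundedness on $H^{1}$ and $H^{2}$ depends on the regularity of the Jost multiplier $m(k,x)$, and thus on the integrability/regular--decomposition hypotheses on $V$. Once those mapping properties are in hand, the five inequalities reduce to applying Sobolev's embedding and Hölder's inequality to the pointwise bounds obtained in Steps~1--2.
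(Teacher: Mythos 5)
Your overall bookkeeping (peel off the nonlinearity pointwise, put $\alpha$ factors in $L^{\infty}$ and one factor in $L^{2}$ or $L^{\infty}$) is exactly the paper's strategy, and your treatment of the free part $f_{1,\pm}^{\operatorname*{fr}}$ in Step~2 is correct: $\mathcal{V}(t)$ really is uniformly bounded on $H^{j}({\mathbb{R}})$ (this is \eqref{2.11} and \eqref{EST5fr}, obtained by converting $\partial_{x}$ of the quadratic phase into $\partial_{k}$ and integrating by parts), so the chain-rule computation together with $H^{1}\hookrightarrow L^{\infty}$ yields \eqref{12.1}, \eqref{12.1.00} and \eqref{derivativeffr} as you describe.

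There is, however, a genuine gap in Step~1 for the perturbed quantities \eqref{12} and \eqref{12.0.0}: the premise that $\mathcal{W}(t)$ is bounded on $H^{1}({\mathbb{R}})$ uniformly in $t$ is false, and no such lemma appears in Appendix~\ref{App1}. The best available bound is \eqref{EST5}, namely $\left\Vert \partial_{x}\mathcal{W}(t)\phi\right\Vert _{L^{2}({\mathbb{R}})}\leq C\sqrt{\langle|t|\rangle}\,\left\Vert \phi\right\Vert _{H^{1}({\mathbb{R}})}$, and the $\sqrt{|t|}$ growth is unavoidable: when $\partial_{x}$ falls on the Jost multiplier $m(k,tx)$ it produces $t\,(\partial_{x}m)(k,tx)$, whose $L^{2}_{x}$ norm contributes a factor $t\cdot|t|^{-1/2}$. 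Consequently the chain $\left\Vert \mathcal{W}(t)w_{\pm}\right\Vert _{L^{\infty}}\leq C\left\Vert \mathcal{W}(t)w_{\pm}\right\Vert _{H^{1}}\leq C\left\Vert w_{\pm}\right\Vert _{H^{1}}$ breaks down, and with it your derivation of both the $L^{\infty}$ factors in \eqref{12} and the whole of \eqref{12.0.0}. The repair is to bypass $\left\Vert \mathcal{W}(t)w_{\pm}\right\Vert _{H^{1}}$ entirely and use the direct oscillatory-integral estimate \eqref{EST4} (proved by integrating by parts in $k$ against $e^{-it(k-x/2)^{2}}$), which gives $\left\Vert \mathcal{W}(t)w_{\pm}\right\Vert _{L^{\infty}({\mathbb{R}})}\leq C\left(  \left\Vert w_{\pm}\right\Vert _{L^{\infty}}+|t|^{-1/4}\left\Vert \partial_{k}w_{\pm}\right\Vert _{L^{2}}\right)  \leq C\left\Vert w_{\pm}\right\Vert _{H^{1}({\mathbb{R}})}$ for $|t|\geq a$; combined with the $L^{2}$ bound \eqref{l2.1} and Sobolev's inequality this yields \eqref{12} and \eqref{12.0.0}. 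The rest of your argument then goes through unchanged.
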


\begin{proof}
Equation \eqref{12} follows from \eqref{l2.1}, \eqref{12.0.0} follows from
\eqref{EST4}. Equation \eqref{12.1} follows from \eqref{2.11}, and
\eqref{12.1.00} follows from \eqref{89.xxxx}. Further, \eqref{derivativeffr}
is a consequence of \eqref{ConNonlinearity}, \eqref{EST5fr}, and
\eqref{89.xxxx}. Moreover, to prove each one of these equations we use
Sobolev's inequality.
\end{proof}

We also need the following:

\begin{lemma}
Suppose that $V\in L_{2+\delta}^{1}(\mathbb{R}^{+}),$ for some $\delta>0.$ Let
the nonlinear function $\mathcal{N}$ satisfy (\ref{ConNonlinearity}). Let
$a,T$ be positive numbers, $\infty\geq T>a>0.$ Then, for some constant $C>0,$
that is independent of $T,$ and is uniform for $a\geq a_{0}>0,$ the following
estimates are true. For any measurable function, $w_{\pm}(t),$ of
$t\in\mathcal{T}_{\pm}\left(  a,T\right)  ,$ with values in $H^{1}%
({\mathbb{R}})$ we have
\begin{equation}
\left\Vert \widehat{\mathcal{W}}\left(  t\right)  f_{1,\pm}\right\Vert
_{L^{2}({\mathbb{R}})}+\left\Vert \widehat{\mathcal{W}}\left(  t\right)
f_{1,\pm}\right\Vert _{L^{\infty}({\mathbb{R}})}\leq C|t|^{-\alpha
/2}\left\Vert w_{\pm}\right\Vert _{H^{1}({\mathbb{R}})}^{\alpha+1},\qquad\pm
t\geq a. \label{EST20}%
\end{equation}

\end{lemma}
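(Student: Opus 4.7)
The plan is to reduce the estimate on $\widehat{\mathcal W}(t)f_{1,\pm}$ to the already-established pointwise bounds on $f_{1,\pm}$ together with mapping properties of the operator $\widehat{\mathcal W}(t)$. The key observation is that the estimates \eqref{12} and \eqref{12.0.0} already give
\[
\|f_{1,\pm}(t)\|_{L^2(\mathbb R)}+\|f_{1,\pm}(t)\|_{L^\infty(\mathbb R)}\le C|t|^{-\alpha/2}\|w_{\pm}\|_{H^1(\mathbb R)}^{\alpha+1},\qquad \pm t\ge a,
\]
so the whole problem becomes: the operator $\widehat{\mathcal W}(t)$ is bounded, uniformly in $t$ with $|t|\ge a>0$, from $L^2(\mathbb R)$ into $L^2(\mathbb R)$ and from $L^\infty(\mathbb R)$ into $L^\infty(\mathbb R)$, with operator norm bounded by a constant depending only on $a_0$ (for $a\ge a_0$).

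First I would recall, from the definition \eqref{sp9} together with \eqref{sp5} and \eqref{sp7}, that $\widehat{\mathcal W}(t)$ is constructed from $\mathcal D_t$, the multiplier $M$, the generalized Fourier map $\mathbf F$, and the Jost-factor $m(k,x)$; schematically $\widehat{\mathcal W}(t)=\mathcal Q(t)^{-1}\mathcal D_t^{-1}\overline M$ applied in the appropriate variable, so the new equation \eqref{eqw} becomes $i\partial_t w_\pm=\widehat{\mathcal W}(t)f_{1,\pm}$. For the $L^2$ bound I would use that $\mathcal D_t$ and $M$ are isometries of $L^2$, and that $\mathbf F$ is unitary (since $H$ has no eigenvalues, $P_c(H)=I$ in Appendix~\ref{App1}), so only the Jost-multiplier part $m(k,x)$ and the scattering matrix $S(k)$ survive; these are uniformly bounded on $\mathbb R$ by \eqref{UnitaritySM} and \eqref{3.2}. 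Collecting these contributions gives $\|\widehat{\mathcal W}(t)g\|_{L^2(\mathbb R)}\le C\|g\|_{L^2(\mathbb R)}$ uniformly in $t$, hence
\[
\|\widehat{\mathcal W}(t)f_{1,\pm}\|_{L^2(\mathbb R)}\le C\|f_{1,\pm}\|_{L^2(\mathbb R)}\le C|t|^{-\alpha/2}\|w_\pm\|_{H^1(\mathbb R)}^{\alpha+1}.
\]

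For the $L^\infty$ bound I would argue the same way. The dilation $\mathcal D_t^{-1}$ is an $L^\infty$-isometry up to the factor $|t|^{1/2}$ which is compensated by the $|t|^{-1/2}$ coming from $\mathcal D_t$ appearing in $\widehat{\mathcal W}$ (as in the factorization $u_\pm=M\mathcal D_t\mathcal W(t)w_\pm$), the phase $\overline M$ is unimodular, and the inverse generalized Fourier transform followed by multiplication by the Jost factor $m$ (uniformly bounded, by \eqref{3.2}) and by $S(\pm k)$ (unitary, by \eqref{UnitaritySM}) is $L^\infty$-bounded; this boundedness is essentially the stationary-phase/Jost expansion used already to derive \eqref{12.1.00} and \eqref{Linf}. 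Combined with \eqref{12.0.0} one gets
\[
\|\widehat{\mathcal W}(t)f_{1,\pm}\|_{L^\infty(\mathbb R)}\le C\|f_{1,\pm}\|_{L^\infty(\mathbb R)}\le C|t|^{-\alpha/2}\|w_\pm\|_{H^1(\mathbb R)}^{\alpha+1}.
\]

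The main obstacle here is the uniform-in-$t$ $L^\infty$-boundedness of $\widehat{\mathcal W}(t)$; this is not as automatic as the $L^2$-boundedness. The control has to be extracted from the Jost-function structure and the behavior of $S(k)$ near $k=0$, which is exactly what Appendix~\ref{App1} provides under the assumption $V\in L^1_{2+\delta}(\mathbb R^+)$. Once the operator bounds on $\widehat{\mathcal W}(t)$ are in place, no extra work is needed: adding the two inequalities and using that the right-hand sides coincide completes \eqref{EST20}.
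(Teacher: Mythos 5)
The $L^2$ half of your argument is correct and is exactly what the paper does: $\widehat{\mathcal W}(t)$ is bounded on $L^2$ uniformly in $t$ (this is \eqref{l2.3}, which follows from \eqref{EST8} together with the unitarity \eqref{UnitaritySM} of $S(k)$), and combining this with the bound \eqref{12} on $\|f_{1,\pm}\|_{L^2}$ gives the first term in \eqref{EST20}.

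The gap is in the $L^\infty$ half. You reduce everything to the claim that $\widehat{\mathcal W}(t)$ is bounded from $L^\infty$ into $L^\infty$ uniformly for $|t|\ge a$, but no such bound holds and none is proved in the paper. By \eqref{sp8}--\eqref{sp9}, $\widehat{\mathcal W}(t)\phi$ is a combination of terms $\sqrt{\tfrac{t}{2\pi i}}\int_0^\infty e^{it(k\pm x/2)^2}m^\dagger(\mp k,tx)\phi(x)\,dx$; for a general bounded $\phi$ this integral does not even converge, and its explicit $\sqrt{|t|}$ prefactor can only be beaten by exploiting the oscillation of $e^{it(k\pm x/2)^2}$, i.e.\ by integrating by parts as in \eqref{89.2}--\eqref{90.1}, which costs one derivative of the input. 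That is precisely why the paper's $L^\infty$ estimate for $\widehat{\mathcal W}(t)$, equation \eqref{89.1}, is conditional: besides $\|\phi\|_{L^\infty}$ it requires a decomposition $\partial_x\phi=\phi_1+\phi_2$ with $\phi_j\in L^{q_j}$, and the bound carries the extra terms $|t|^{-1/(2p_j)}\|\phi_j\|_{L^{q_j}}$. Consequently, to obtain the second term of \eqref{EST20} one must, in addition to \eqref{12.0.0}, control $\partial_x f_{1,\pm}=\partial_x\bigl(\mathcal N(\cdots)\,\mathcal W(t)w_\pm\bigr)$ in suitable Lebesgue norms --- which brings in $\partial_x\mathcal W(t)w_\pm$ through \eqref{2} and \eqref{EST5} and the Jost-function derivative estimates \eqref{3.5}, \eqref{3.6} --- and then check that the resulting powers of $t$ still combine to $|t|^{-\alpha/2}$. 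None of this appears in your proposal, and you also misplace the difficulty: it is not the behavior of $S(k)$ near $k=0$ (which enters only through \eqref{UnitaritySM}) but the unintegrated $\sqrt{|t|}$ in the oscillatory integral. A further inaccuracy: on $k\ge0$ one has $\widehat{\mathcal W}(t)=\mathcal Q^{-1}(t)=\mathbf F e^{itH}M\mathcal D_t$, not $\mathcal Q(t)^{-1}\mathcal D_t^{-1}\overline M$, and $\mathbf F$, being an integral over the half-line against a merely bounded kernel, is not $L^\infty$-bounded, so the ``collect the isometries'' heuristic that works for $L^2$ does not transfer to $L^\infty$.
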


\begin{proof}
Equation \eqref{EST20} follows from \eqref{l2.1}, \eqref{EST4} and Sobolev's inequality.
\end{proof}

\begin{lemma}
Suppose that $V\in L_{2+\delta}^{1}(\mathbb{R}^{+}),$ for some $\delta>0,$
that the boundary matrices $A,B$, satisfy \eqref{wcon1}, \eqref{wcon2}, and
that $H_{A,B,V}$ does not have negative eigenvalues. Let the nonlinear
function $\mathcal{N}$ satisfy (\ref{ConNonlinearity}). Let $\zeta\in
C^{2}\left(  \mathbb{R}\right)  $ such that%
\begin{align}
\left\vert \partial^{j}\left(  \zeta-1\right)  \right\vert  &  \leq
C\left\langle k\right\rangle ^{-1},\text{ }j=0,1,\label{zeta.0}\\
\text{\ }\left\vert \partial^{2}\left(  \zeta-1\right)  \right\vert  &  \leq
C. \label{zeta}%
\end{align}
Then, for some constant $C>0,$ that is independent of $T,$ and is uniform for
$a\geq a_{0}>0,$ the following estimates are true. Assume that $w_{\pm}$
satisfies (\ref{eqw}) for $t\in\mathcal{T}_{\pm}\left(  a,\infty\right)  ,$
$a>0.$ Then, the following is true:
\begin{equation}
\partial_{t}\left(  \mathcal{V}\left(  t\right)  \left(  \zeta w_{\pm}\right)
\right)  =\mathcal{A}_{1,\pm}(t)+\mathcal{A}_{2,\pm}(t), \label{98}%
\end{equation}
where%
\begin{equation}
\left\Vert \mathcal{A}_{1,\pm}(t)\right\Vert _{L^{2}({\mathbb{R}})}\leq
C|t|^{-3/2}\left(  |t|^{-1/2}\left\Vert w_{\pm}\right\Vert _{H^{2}%
({\mathbb{R}})}+\Lambda_{\pm}\left(  t;w_{\pm}\right)  \right)  ,\qquad\pm
t\geq a, \label{98.1}%
\end{equation}
and%
\begin{equation}
\left\Vert \mathcal{A}_{2,\pm}(t)\right\Vert _{L^{\infty}({\mathbb{R}}%
)}+|t|^{-1/2}\left\Vert \mathcal{A}_{2,\pm}(t)\right\Vert _{H^{1}({\mathbb{R}%
})}\leq C|t|^{-1}\Lambda_{\pm}\left(  t;w_{\pm}\right)  ,\qquad\pm t\geq a.
\label{98.2}%
\end{equation}

\end{lemma}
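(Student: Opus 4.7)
The plan is to apply the product rule to $\partial_t(\mathcal{V}(t)(\zeta w_\pm))$ and use the equation $i\partial_t w_\pm = \widehat{\mathcal{W}}(t)f_{1,\pm}$ satisfied by $w_\pm$ (which is (\ref{eqw}) with $f_{1,\pm}$ as in (\ref{nnn.1})). This gives
\[
\partial_t(\mathcal{V}(t)(\zeta w_\pm))=(\partial_t \mathcal{V}(t))(\zeta w_\pm) - i\,\mathcal{V}(t)\bigl(\zeta\,\widehat{\mathcal{W}}(t)f_{1,\pm}\bigr),
\]
and the two terms will be regrouped into $\mathcal{A}_{1,\pm}$ and $\mathcal{A}_{2,\pm}$ according to whether they contribute $|t|^{-3/2}$ with $H^2$ control (going into $\mathcal{A}_{1,\pm}$) or only $|t|^{-1}$ with $L^\infty$/$H^1$ control (going into $\mathcal{A}_{2,\pm}$).

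For the nonlinear piece $\mathcal{V}(t)(\zeta\widehat{\mathcal{W}}(t)f_{1,\pm})$ I would use the pointwise bound (\ref{EST20}) on $\widehat{\mathcal{W}}(t)f_{1,\pm}$, which gives the decay $|t|^{-\alpha/2}\|w_\pm\|_{H^1}^{\alpha+1}$. Since $\alpha>2$, this yields at least $|t|^{-1}\|w_\pm\|_{H^1}^{\alpha+1}\le |t|^{-1}\Lambda_\pm(t;w_\pm)$, and the extra application of $\mathcal{V}(t)$ together with $\zeta$ bounded preserves the $L^\infty$ and weighted $H^1$ norms in the required way (using the relation between $\mathcal{V}$ and $\mathcal{W}$ from (\ref{2.11}) for the $H^1$ piece, after commuting $\zeta$ through $\widehat{\mathcal{W}}(t)$ modulo commutator terms that are small by (\ref{zeta.0}) and (\ref{zeta})). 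This whole contribution will be absorbed into $\mathcal{A}_{2,\pm}$.

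The real work is $(\partial_t \mathcal{V}(t))(\zeta w_\pm)$. Using the explicit representation of $\mathcal{V}(t)$ as an oscillatory integral with quadratic phase $e^{-it(k-x/2)^2}$ (as in (\ref{43.1.5})), time-differentiation produces an $i(k-x/2)^2$ factor. I would then integrate by parts in $k$ with the identity used in (\ref{43.1.6}), namely $e^{-it(k-x/2)^2}=\partial_k[(k-x/2)e^{-it(k-x/2)^2}]/[1-2it(k-x/2)^2]$, which trades the bad $(k-x/2)^2$ growth for a $|t|^{-1}$ decay and a $k$-derivative on $\zeta w_\pm$. This yields a main term controlled by $|t|^{-1}\Lambda_\pm(t;w_\pm)$ in $L^\infty$ and $|t|^{-1/2}\Lambda_\pm(t;w_\pm)$ in $H^1$ (using the hypotheses (\ref{zeta.0})--(\ref{zeta}) on $\zeta$ to estimate $\partial_k(\zeta w_\pm)$ via $\|w_\pm\|_{H^1}$), forming $\mathcal{A}_{2,\pm}$. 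A second integration by parts on the remainders produces $|t|^{-3/2}$ factors together with $\partial_k^2(\zeta w_\pm)$, which by (\ref{zeta}) is bounded in $L^2$ by $\|w_\pm\|_{H^2}+\|w_\pm\|_{H^1}$, contributing to $\mathcal{A}_{1,\pm}$ via the $|t|^{-2}\|w_\pm\|_{H^2}$ and $|t|^{-3/2}\Lambda_\pm$ pieces.

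The main obstacles I anticipate are threefold. First, the boundary term at $k=0$ generated by the integration by parts in $k$: here one must use the zero-energy structure of $\mathcal{V}$ (or the precise form of the free scattering matrix $S_0(0)$) to verify the boundary contribution is harmless, or absorb it into $\mathcal{A}_{1,\pm}$ using the pointwise factor $x/\langle t x\rangle^{1/2}$ as in the treatment of (\ref{43.1.7}). Second, the commutators $[\partial_k,\zeta]$ must be handled using $|\partial(\zeta-1)|\le C\langle k\rangle^{-1}$, which is exactly what allows the $\|w_\pm\|_{H^1}$ control to suffice (if $\zeta$ were a bounded function without the decay of $\zeta-1$, one could not close the $\mathcal{A}_{2,\pm}$ estimate). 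Third, bookkeeping the decomposition so that precisely the $\|w_\pm\|_{H^2}$ contributions land in $\mathcal{A}_{1,\pm}$ with the extra $|t|^{-1/2}$ factor, while the $\|w_\pm\|_{H^1}$ contributions land in $\mathcal{A}_{2,\pm}$, is delicate and must be organized carefully, splitting each integration-by-parts remainder according to whether it sees one or two $k$-derivatives of $\zeta w_\pm$.
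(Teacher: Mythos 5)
Your reduction via the product rule and the equation \eqref{eqw}, and your treatment of the linear term, essentially reproduce the paper's first step: the representation \eqref{sp11.2.1} gives the exact identity $\partial_{t}\left(\mathcal{V}\left(t\right)\phi\right)=\frac{i}{4t^{2}}\mathcal{V}\left(t\right)\partial_{k}^{2}\phi+\mathcal{V}\left(t\right)\partial_{t}\phi$ (this is \eqref{99} with $\phi=\zeta w_{\pm}$), so the whole contribution of $(\partial_{t}\mathcal{V}(t))(\zeta w_{\pm})$ is $\frac{i}{4t^{2}}\mathcal{V}(t)\partial_{k}^{2}(\zeta w_{\pm})$, which by \eqref{2.11} goes entirely into $\mathcal{A}_{1,\pm}$ with norm $C|t|^{-2}\left\Vert w_{\pm}\right\Vert _{H^{2}({\mathbb{R}})}$. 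The boundary term at $k=0$ you worry about does not arise: $\mathcal{V}(t)$ in \eqref{sp5.xxx} integrates over all of ${\mathbb{R}}$, so the integrations by parts in $k$ produce no boundary contributions.

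The genuine gap is in the nonlinear term $\mathcal{V}(t)\zeta\widehat{\mathcal{W}}(t)f_{1,\pm}$. You propose to bound $\widehat{\mathcal{W}}(t)f_{1,\pm}$ by \eqref{EST20} and then assert that applying $\mathcal{V}(t)$ with $\zeta$ bounded ``preserves the $L^{\infty}$ and weighted $H^{1}$ norms.'' It does not: $\mathcal{V}(t)$ is an oscillatory integral operator, and both its $L^{\infty}$ bound \eqref{89.xxxx} and the $H^{1}$ bound \eqref{EST5fr} require control of $\partial_{k}$ of the argument. But $\partial_{k}\widehat{\mathcal{W}}(t)f_{1,\pm}$ contains the terms $2itk\,\widehat{\mathcal{W}}(t)f_{1,\pm}$ and $it\,\mathcal{W}_{\pm}(t)(xf_{1,\pm})$ (see \eqref{pp.15}), which grow linearly in $t$; the resulting estimates are of order $|t|^{1-\alpha/2}\left\Vert w_{\pm}\right\Vert _{H^{1}({\mathbb{R}})}^{\alpha+1}$ and fail to reach the required $|t|^{-1}\Lambda_{\pm}$ for $L^{\infty}$ and $|t|^{-1/2}\Lambda_{\pm}$ for $H^{1}$ throughout the lower part of the supercritical range $2<\alpha<3$, which is exactly the regime this paper is about. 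The idea you are missing is the inversion identity \eqref{305}, $\mathcal{W}(t)\widehat{\mathcal{W}}(t)f_{1,\pm}=f_{1,\pm}$ (valid because $H_{A,B,V}$ has no negative eigenvalues, so $\mathcal{Q}(t)\mathcal{Q}^{-1}(t)=I$). This is what allows the decomposition \eqref{304}: $\mathcal{V}(t)\zeta\widehat{\mathcal{W}}(t)f_{1,\pm}=f_{1,\pm}$ plus four difference terms, each carrying extra smallness from exactly one of $\mathcal{W}-\mathcal{V}$, $\widehat{\mathcal{W}}-\widehat{\mathcal{V}}$, $f_{1,\pm}-f_{1,\pm}^{\operatorname{fr}}$, or $\zeta-1$. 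The leading term $f_{1,\pm}$ then sits in $\mathcal{A}_{2,\pm}$ with no oscillatory operator in front and is estimated directly by \eqref{12.0.0} and \eqref{15}; the difference terms go to $\mathcal{A}_{1,\pm}$, except for $(\mathcal{V}(t)(\zeta-1))(\widehat{\mathcal{V}}(t)f_{1,\pm}^{\operatorname{fr}})$, which is where the decay \eqref{zeta.0} of $\zeta-1$ is actually used. Without \eqref{305}, or an equivalent cancellation, the composition $\mathcal{V}(t)\zeta\widehat{\mathcal{W}}(t)$ cannot be estimated with the stated decay, so your argument does not close.
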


\begin{proof}
To simplify the notation, in the calculations below we denote by $f_{1,\pm}$,
and $f^{\mathrm{fr}}_{1,\pm},$ respectively, the restriction of $f_{1,\pm}, $
and of $f^{\mathrm{fr}}_{1,\pm},$ to $k \geq0.$ Using that $w_{\pm}$ satisfies
(\ref{eqw}) and \eqref{sp11.2.1} we calculate%
\begin{align}
\partial_{t}\left(  \mathcal{V}\left(  t\right)  \left(  \zeta w_{\pm}\right)
\right)   &  =\frac{i}{4t^{2}}\mathcal{V}\left(  t\right)  \partial_{k}%
^{2}\left(  \zeta w_{\pm}\right)  +\mathcal{V}\left(  t\right)  \zeta
\partial_{t}w_{\pm}\nonumber\\
&  =\frac{i}{4t^{2}}\mathcal{V}\left(  t\right)  \partial_{k}^{2}\left(  \zeta
w_{\pm}\right)  + \mathcal{V}\left(  t\right)  \zeta\left(
\widehat{\mathcal{W}}\left(  t\right)  f_{1,\pm} \right)  . \label{99}%
\end{align}
First, by (\ref{2.11})%
\begin{equation}
\label{99.1}\left\Vert \mathcal{V}\left(  t\right)  \partial_{k}^{2}\left(
\zeta w_{\pm}\right)  \right\Vert _{L^{2}(\mathbb{R}^{+})}\leq C \|w_{\pm
}\|_{H^{2}( {\mathbb{R}})}.
\end{equation}
Observe that by \eqref{sp11.1}
\begin{equation}
\label{99.2}\widehat{\mathcal{W}}(t)f_{1,\pm}(k) = \mathcal{E}\left(
\chi_{(0,\infty)}(k) \widehat{\mathcal{W}}(t) f_{1}(k)\right)  , \qquad k
\in{\mathbb{R}}.
\end{equation}

Next, since $H_{A,B,V}$ does not have negative eigenvalues, it follows from
\eqref{99.2}, \eqref{sp7} and \eqref{sp10},
\begin{equation}
\mathcal{W}(t)\widehat{\mathcal{W}}(t)f_{1,\pm}=\mathcal{W}(t)\mathcal{E}%
\left(  \chi_{(0,\infty)}(k)\widehat{\mathcal{W}}(t)f_{1,\pm}(k)\right)
=\mathcal{Q}(t)\mathcal{Q}^{-1}(t)f_{1,\pm}=f_{1,\pm}. \label{305}%
\end{equation}
Then, we decompose%
\begin{align}
\mathcal{V}\left(  t\right)  \zeta\left(  \widehat{\mathcal{W}}\left(
t\right)  f_{1,\pm}\right)   &  =\left(  \mathcal{V}\left(  t\right)
\zeta-\mathcal{W}\left(  t\right)  \right)  \left(  \left(
\widehat{\mathcal{W}}\left(  t\right)  -\widehat{\mathcal{V}}\left(  t\right)
\right)  f_{1,\pm}\right) \nonumber\\
&  +\left(  \mathcal{V}\left(  t\right)  \zeta-\mathcal{W}\left(  t\right)
\right)  \left(  \widehat{\mathcal{V}}\left(  t\right)  \left(  f_{1,\pm
}-f_{1,\pm}^{\operatorname*{fr}}\right)  \right) \nonumber\\
&  +\left(  \mathcal{V}\left(  t\right)  \left(  \zeta-1\right)  \right)
\left(  \widehat{\mathcal{V}}\left(  t\right)  f_{1,\pm}^{\operatorname*{fr}%
}\right) \nonumber\\
&  +\left(  \mathcal{V}\left(  t\right)  -\mathcal{W}\left(  t\right)
\right)  \left(  \widehat{\mathcal{V}}\left(  t\right)  f_{1\pm,}%
^{\operatorname*{fr}}\right)  +f_{1,\pm}. \label{304}%
\end{align}
Using (\ref{12}), (\ref{l2.1}), (\ref{2.11}), and \eqref{EST12} with $n=0,$ we
show%
\begin{equation}
\left\Vert \left(  \mathcal{V}\left(  t\right)  \zeta-\mathcal{W}\left(
t\right)  \right)  \left(  \widehat{\mathcal{W}}\left(  t\right)
-\widehat{\mathcal{V}}\left(  t\right)  \right)  f_{1,\pm,}\right\Vert
_{L^{2}({\mathbb{R}})}\leq C_{a}|t|^{-1/2}\left\Vert f_{1,\pm}\right\Vert
_{L^{\infty}(\mathbb{R}^{+})}\leq C_{a}\frac{1}{|t|^{\frac{1+\alpha}{2}}%
}\left\Vert w_{\pm}\right\Vert _{H^{1}({\mathbb{R}})}^{\alpha+1},\qquad\pm
t\geq a. \label{95}%
\end{equation}
By (\ref{ConNonlinearity}), (\ref{EST4}) and (\ref{EST11}) we get
\begin{equation}
\left\Vert f_{1,\pm}-f_{1,\pm}^{\operatorname*{fr}}\right\Vert _{L^{2}%
({\mathbb{R}})}\leq C_{a}\frac{1}{|t|^{\frac{\alpha}{2}}}\left\Vert
(\mathcal{W}\left(  t\right)  -\mathcal{V}\left(  t\right)  )w_{\pm
}\right\Vert _{L^{2}({\mathbb{R}})}\left\Vert w_{\pm}\right\Vert
_{H^{1}({\mathbb{R}})}^{\alpha}\leq C_{a}\frac{1}{|t|^{\frac{1+\alpha}{2}}%
}\left\Vert w_{\pm}\right\Vert _{H^{1}({\mathbb{R}})}^{\alpha+1},\qquad\pm
t\geq a. \label{101}%
\end{equation}
Further, it follows from (\ref{l2.1}), (\ref{2.11}), (\ref{isomafr}), and
\eqref{101},
\begin{align}
\left\Vert \left(  \mathcal{V}\left(  t\right)  \zeta-\mathcal{W}\left(
t\right)  \right)  \widehat{\mathcal{V}}\left(  t\right)  \left(  f_{1,\pm
}-f_{1,\pm}^{\operatorname*{fr}}\right)  \right\Vert _{L^{2}({\mathbb{R}})}
&  \leq C\left\Vert f_{1,\pm}-f_{1,\pm}^{\operatorname*{fr}}\right\Vert
_{L^{2}(\mathbb{R}^{+})}\nonumber\\
&  \leq C_{a}\frac{1}{|t|^{\frac{1+\alpha}{2}}}\left\Vert w_{\pm}\right\Vert
_{H^{1}({\mathbb{R}})}^{\alpha+1},\qquad\pm t\geq a. \label{96}%
\end{align}
From (\ref{sp8.1}), integrating by parts, for $\phi\in H^{1}(\mathbb{R}^{+}),$
and using,
\begin{equation}
\displaystyle\partial_{k}e^{it(k\pm x/2)^{2}}=\pm2\partial_{x}e^{it(k\pm
x/2)^{2}}, \label{96.1}%
\end{equation}
we have,%
\begin{equation}
\partial_{k}\mathcal{V}_{\pm}\left(  t\right)  \phi=\mp2\sqrt{\frac{t}{2\pi
i}}e^{itk^{2}}\phi\left(  0\right)  \mp2\mathcal{V}_{\pm}\left(  t\right)
\left(  \partial_{x}\phi\right)  . \label{130}%
\end{equation}
Further, by \eqref{12.1}, \eqref{12.1.00}, \eqref{derivativeffr},
\eqref{zeta.0}, \eqref{130}, (\ref{UnitaritySM}), (\ref{scatmatrixderiv})
\eqref{sp9.1}, \eqref{isomafr}, \eqref{89.xx} and \eqref{89.xxxx},
\begin{equation}%
\begin{array}
[c]{l}%
\left\Vert \mathcal{V}\left(  t\right)  \left(  \zeta-1\right)
\widehat{\mathcal{V}}\left(  t\right)  f_{1,\pm}^{\operatorname*{fr}%
}\right\Vert _{L^{\infty}({\mathbb{R}})}\leq C\sum_{\sigma=\pm}\left(
\left\Vert \mathcal{V}_{\sigma}\left(  t\right)  f_{1,\pm}^{\operatorname*{fr}%
}\right\Vert _{L^{\infty}({\mathbb{R}})}+|t|^{\frac{1}{2}-\frac{1}{2p_{1}}%
}\left\Vert \left\langle k\right\rangle ^{-1}\right\Vert _{L^{q_{1}}%
}\left\Vert f_{1,\pm}^{\operatorname*{fr}}\right\Vert _{L^{\infty}%
(\mathbb{R}^{+})}\right. \\
\\
\left.  +|t|^{-\frac{1}{4}}\left\Vert \mathcal{V}_{\sigma}\left(  t\right)
\partial_{k}f_{1,\pm}^{\operatorname*{fr}}\right\Vert _{L^{2}({\mathbb{R}}%
)}+|t|^{-\frac{1}{4}}\left\Vert \mathcal{V}_{\sigma}\left(  t\right)
f_{1,\pm}^{\operatorname*{fr}}\right\Vert _{L^{2}({\mathbb{R}})}\right)  \leq
C_{a}|t|^{-\alpha/2}\,|t|^{\frac{1}{2}-\frac{1}{2p_{1}}}\,\Vert w_{\pm}%
\Vert_{H^{1}({\mathbb{R}})}^{\alpha+1},\qquad\pm t\geq a.
\end{array}
\label{130.1}%
\end{equation}
Moreover, \eqref{12.1}, \eqref{12.1.00}, \eqref{derivativeffr}, \eqref{130},
(\ref{UnitaritySM}), (\ref{scatmatrixderiv}) \eqref{sp9.1}, \eqref{EST8.0},
\eqref{89.xx}, \eqref{89.xxxx}, and \eqref{EST11}, we have,
\begin{equation}%
\begin{array}
[c]{l}%
\left\langle t\right\rangle ^{1/2}\left\Vert \left(  \mathcal{V}\left(
t\right)  -\mathcal{W}\left(  t\right)  \right)  \widehat{\mathcal{V}}\left(
t\right)  f_{1,\pm}^{\operatorname*{fr}}\right\Vert _{L^{2}({\mathbb{R}})}\leq
C\sum_{\sigma=\pm}\left(  \left\Vert \mathcal{V}_{\sigma}\left(  t\right)
f_{1,\pm}^{\operatorname*{fr}}\right\Vert _{L^{\infty}({\mathbb{R}}%
)}+|t|^{\frac{1}{2}-\frac{1}{2p_{1}}}\left\Vert \left\langle k\right\rangle
^{-1}\right\Vert _{L^{q_{1}}}\left\Vert f_{1,\pm}^{\operatorname*{fr}%
}\right\Vert _{L^{\infty}({\mathbb{R}})}\right. \\
\\
\left.  +|t|^{-\frac{1}{4}}\left\Vert \mathcal{V}_{\sigma}\left(  t\right)
\partial_{k}f_{1,\pm}^{\operatorname*{fr}}\right\Vert _{L^{2}({\mathbb{R}}%
)}+|t|^{-\frac{1}{4}}\left\Vert \mathcal{V}_{\sigma}\left(  t\right)
f_{1,\pm}^{\operatorname*{fr}}\right\Vert _{L^{2}({\mathbb{R}})}\right) \\
\\
\leq C_{a}|t|^{-\alpha/2}\,|t|^{\frac{1}{2}-\frac{1}{2p_{1}}}\,\Vert w_{\pm
}\Vert_{H^{1}({\mathbb{R}})}^{\alpha+1},\qquad\pm t\geq a.\label{300}%
\end{array}
\end{equation}
By (\ref{12.0.0}), (\ref{derivativeffr}), (\ref{130}), and (\ref{89.xx}), we
get
\begin{equation}
\left\Vert \partial_{k}\mathcal{V}_{\pm}\left(  t\right)  f_{1,\pm
}^{\operatorname*{fr}}\right\Vert _{L^{\infty}({\mathbb{R}})}\leq C_{a}%
\sqrt{|t|}\,|t|^{-\alpha/2}\,\Lambda_{\pm}\left(  t;w_{\pm}\right)  ,\qquad\pm
t\geq a. \label{103}%
\end{equation}
Then, using \eqref{12}, \eqref{zeta.0}, (\ref{103}), \eqref{2.11}, and
\eqref{EST8.0} we have%
\begin{align}
\left\Vert \mathcal{V}\left(  t\right)  \left(  \zeta-1\right)  \mathcal{V}%
_{\pm}\left(  t\right)  f_{1,\pm}^{\operatorname*{fr}}\right\Vert
_{H^{1}({\mathbb{R}})}  &  \leq C\left\Vert \mathcal{V}_{\pm}\left(  t\right)
f_{1,\pm}^{\operatorname*{fr}}\right\Vert _{L^{2}({\mathbb{R}})}+C\left\Vert
\left(  \zeta-1\right)  \right\Vert _{L^{2}({\mathbb{R}})}\left\Vert
\partial_{k}\mathcal{V}_{\pm}\left(  t\right)  f_{1,\pm}^{\operatorname*{fr}%
}\right\Vert _{L^{\infty}({\mathbb{R}})}\nonumber\\
&  \leq C_{a}\sqrt{|t|}\,|t|^{-\alpha/2}\,\Lambda_{\pm}(t;w_{\pm}),\qquad\pm
t\geq a. \label{303}%
\end{align}
We denote (see \eqref{100}),
\begin{equation}%
\begin{array}
[c]{l}%
\mathcal{A}_{1,\pm}:=\frac{i}{4t^{2}}\mathcal{V}\left(  t\right)  \partial
_{k}^{2}\left(  \zeta w_{\pm}\right)  +\left(  \mathcal{V}\left(  t\right)
\zeta-\mathcal{W}\left(  t\right)  \right)  \left(  \left(
\widehat{\mathcal{W}}\left(  t\right)  -\widehat{\mathcal{V}}\left(  t\right)
\right)  f_{1,\pm}\right)  \nonumber\\
+\left(  \mathcal{V}\left(  t\right)  \zeta-\mathcal{W}\left(  t\right)
\right)  \left(  \widehat{\mathcal{V}}\left(  t\right)  \left(  f_{1,\pm
}-f_{1,\pm}^{\operatorname*{fr}}\right)  \right)  \nonumber+\left(
\mathcal{V}\left(  t\right)  -\mathcal{W}\left(  t\right)  \right)  \left(
\widehat{\mathcal{V}}\left(  t\right)  f_{1\pm,}^{\operatorname*{fr}}\right)
,\nonumber\\
\end{array}
\label{303.1}%
\end{equation}%
\begin{equation}
\mathcal{A}_{2,\pm}:=\left(  \mathcal{V}\left(  t\right)  \left(
\zeta-1\right)  \right)  \left(  \widehat{\mathcal{V}}\left(  t\right)
f_{1,\pm}^{\operatorname*{fr}}\right)  +f_{1,\pm}. \label{303.2}%
\end{equation}
Therefore, since $\alpha>2,$ and using \eqref{99}, \eqref{304}, \eqref{99.1}
(\ref{95}), (\ref{96}), \eqref{130.1}, \eqref{300}, (\ref{303}), and taking
$p_{1}>1$ sufficiently close to $1,$ we prove (\ref{98}), \eqref{98.1}, and \eqref{98.2}.
\end{proof}

\begin{lemma}
Let the nonlinear function $\mathcal{N}$ satisfy (\ref{ConNonlinearity}). Let
$w_{\pm}$ be functions from $t\in\mathcal{T}_{\pm}\left(  a,\infty\right)  ,$
$a>0,$ with values in $H^{2}({\mathbb{R}}).$ Then, the following is true:

\begin{enumerate}
\item
\begin{equation}
\left\Vert \partial_{t}f_{1,\pm}^{\operatorname*{fr}}\right\Vert
_{L^{2}({\mathbb{R}})}\leq C|t|^{-\alpha/2}\left[  \left\Vert w_{\pm
}\right\Vert _{H^{1}({\mathbb{R}})}^{\alpha}\left(  \left\Vert w_{\pm
}\right\Vert _{H^{2}({\mathbb{R}})}+\left\Vert \partial_{t}w_{\pm}\right\Vert
_{L^{2}({\mathbb{R}})}\right)  \right]  ,\qquad\pm t\geq a.
\label{EST14frL2rough}%
\end{equation}

\item Suppose that $V\in L_{2+\delta}^{1}(\mathbb{R}^{+}),$ for some
$\delta>0,$ and that $H_{A,B,V}$ does not have negative eigenvalues. Assume
that $w_{\pm}$ satisfies (\ref{eqw}) for $t\in\mathcal{T}_{\pm}\left(
a,\infty\right)  ,$ $a>0.$ Then, we have
\begin{equation}
\partial_{t}f_{1}^{\operatorname*{fr}}=\mathcal{A}_{1,\pm}^{\left(  1\right)
}+\mathcal{A}_{2,\pm}^{\left(  1\right)  },\qquad\pm t\geq a, \label{303.3}%
\end{equation}

where%
\begin{equation}
\left\Vert \mathcal{A}_{1,\pm}^{\left(  1\right)  }(t)\right\Vert
_{L^{2}({\mathbb{R}})}\leq C\Vert w_{\pm}\Vert_{H^{1}({\mathbb{R}})}^{\alpha
}|t|^{-\alpha/2}\,|t|^{-3/2}\left(  |t|^{-1/2}\left\Vert w_{\pm}\right\Vert
_{H^{2}({\mathbb{R}})}+\Lambda_{\pm}\left(  t;w_{\pm}\right)  \right)
,\qquad\pm t\geq a, \label{303.4}%
\end{equation}
and,%

\begin{equation}%
\begin{array}
[c]{l}%
\left\Vert \mathcal{A}_{2,\pm}^{\left(  1\right)  }(t)\right\Vert _{L^{\infty
}({\mathbb{R}})}\leq C\Vert w_{\pm}\Vert_{H^{1}({\mathbb{R}})}^{\alpha
}|t|^{-\alpha/2}|t|^{-1}\left(  |t|^{-1/2}\Vert w_{\pm}\Vert_{H^{2}%
(\mathbb{R}^{+})}\right.  +\\
\left.  \Lambda_{\pm}\left(  t;w_{\pm}\right)  \right)  +C|t|^{-1-\alpha
/2}\Vert w_{\pm}\Vert_{H^{(1)}({\mathbb{R}})}^{\alpha+1},\qquad\pm t\geq a.\\
\\
\left\Vert \mathcal{A}_{2,\pm}^{(1)}(t)\right\Vert _{H^{1}({\mathbb{R}})}\leq
C\Vert w_{\pm}\Vert_{H^{1}({\mathbb{R}})}^{\alpha}|t|^{-\alpha/2}%
|t|^{-1/2}\left(  |t|^{-1/2}\Vert w_{\pm}\Vert_{H^{2}(\mathbb{R}^{+})}\right.
+\\
\left.  \Lambda_{\pm}\left(  t;w_{\pm}\right)  \right)  +C|t|^{-1-\alpha
/2}\Vert w_{\pm}\Vert_{H^{(1)}({\mathbb{R}})}^{\alpha+1},\qquad\pm t\ \geq a.
\end{array}
\label{303.5}%
\end{equation}

\item Suppose that $V\in L_{3+\tilde{\delta}}^{1}(\mathbb{R}^{+}),$ for some
$\tilde{\delta}>1/2,$ and that it admits a regular decomposition (see
Definition~\ref{Def1}) with $\delta>0,$ that $w_{\pm}$ satisfies (\ref{eqw})
for $t\in\mathcal{T}_{\pm}\left(  a,\infty\right)  ,$ $a>0.$ Then, we have,%

\begin{equation}
\partial_{t}f_{1,\pm}=\mathcal{A}_{1,\pm}^{\left(  2\right)  }+\mathcal{A}%
_{2,\pm}^{\left(  2\right)  }, \label{EST14}%
\end{equation}
where,%
\begin{equation}
\left\Vert \mathcal{A}_{1,\pm,}^{\left(  2\right)  }\right\Vert _{L^{2}%
({\mathbb{R}})}\leq C|t|^{-\alpha/2}\,|t|^{-3/2}\left\Vert w_{\pm}\right\Vert
_{H^{1}({\mathbb{R}})}^{\alpha}\left[  |t|^{-1/2}\left\Vert w_{\pm}\right\Vert
_{H^{2}({\mathbb{R}})}+\Lambda_{\pm}(t;w_{\pm})\right]  ,\qquad\pm t\geq a.
\label{303.6}%
\end{equation}
and
\begin{equation}
\left\Vert \mathcal{A}_{2,\pm,}^{\left(  2\right)  }\right\Vert _{L^{\infty
}({\mathbb{R}})}\leq C|t|^{-\alpha/2}|t|^{-1}\,\Lambda_{\pm}(t;w_{\pm}%
),\qquad\pm t\geq a. \label{303.7}%
\end{equation}

\end{enumerate}
\end{lemma}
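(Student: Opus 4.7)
All three parts of the lemma are proved by explicitly computing the time derivatives of the nonlinear expressions $f_{1,\pm}^{\operatorname*{fr}}$ and $f_{1,\pm}$ via the chain-rule identity \eqref{100} and the time-derivative formula \eqref{sp11.2.1} for $\mathcal V(t)$ (together with its analogue for $\mathcal W(t)$). The hypothesis \eqref{ConNonlinearity} provides polynomial bounds of orders $\alpha$ and $\alpha-1$ on $\mathcal N$ and its first derivatives, respectively, and the $|t|^{-\alpha/2}$ prefactor that appears in every bound arises from the $\alpha$ hidden powers of $t^{-1/2}$ buried inside the argument of $\mathcal N$.

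For the rough bound \eqref{EST14frL2rough}, I would apply the product rule to $f_{1,\pm}^{\operatorname*{fr}} = \mathcal N(|g_1|,\dots,|g_n|)\,\mathcal V(t)w_\pm$ with $g = t^{-1/2}\mathcal V(t)w_\pm$. Using \eqref{100} together with the decomposition $\partial_t(\mathcal V(t)w_\pm) = \tfrac{i}{4t^{2}}\mathcal V(t)\partial_k^2 w_\pm + \mathcal V(t)\partial_t w_\pm$, obtained from combining \eqref{sp11.2.1} with the product rule, one obtains a finite sum of products of $\alpha+1$ factors, each of the form $\mathcal V(t)\phi$ for some $\phi \in \{w_\pm,\partial_k^2 w_\pm,\partial_t w_\pm\}$, multiplied by an $E_k^{(j)}$ or $\mathcal N$ coefficient from \eqref{100}. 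Each term is then estimated in $L^2$ by placing exactly one $\mathcal V(t)$-factor in $L^2$, using the isometry contained in \eqref{2.11}, and bounding the remaining $\alpha$ factors in $L^\infty$ via \eqref{89.xxxx} and Sobolev's inequality. The extra negative powers of $t$ produced by $\partial_t$ are better than required and are absorbed.

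For the decomposition \eqref{303.3}--\eqref{303.5}, the key observation is that, since $w_\pm$ solves \eqref{eqw}, the term $t^{-1/2}\mathcal V(t)\partial_t w_\pm$ from the previous step becomes $-it^{-1/2}\mathcal V(t)\widehat{\mathcal W}(t)f_{1,\pm}$. To handle this, I would reproduce on $\mathcal V(t)\widehat{\mathcal W}(t)f_{1,\pm}$ the four-term splitting \eqref{304}, comparing $\mathcal V(t)$ with $\mathcal W(t)$, $\widehat{\mathcal W}(t)$ with $\widehat{\mathcal V}(t)$, and $f_{1,\pm}$ with $f_{1,\pm}^{\operatorname*{fr}}$. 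The three ``comparison'' pieces each gain an additional factor $|t|^{-1/2}$ from \eqref{95}, \eqref{96} and \eqref{300}; together with the $\tfrac{i}{4t^{2}}\mathcal V(t)\partial_k^2 w_\pm$ and $-\tfrac{1}{2t}g_j$ contributions (which are already $L^2$-controlled with additional time decay) they constitute $\mathcal A_{1,\pm}^{(1)}$ and obey \eqref{303.4}. The remaining parts, namely $\mathcal V(t)(\zeta-1)\widehat{\mathcal V}(t)f_{1,\pm}^{\operatorname*{fr}}$ and $f_{1,\pm}$ itself, which are not small in $L^2$ but admit good $L^\infty$ and $H^1$ bounds through \eqref{130.1}, \eqref{303} and \eqref{12.0.0}, form $\mathcal A_{2,\pm}^{(1)}$ and yield \eqref{303.5}. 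Multiplying throughout by the outer nonlinearity $\mathcal N(|g|)$, bounded in $L^\infty$ by $C|t|^{-\alpha/2}\|w_\pm\|_{H^1}^\alpha$, produces the stated prefactor.

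Finally, for the decomposition \eqref{EST14}--\eqref{303.7} of $\partial_t f_{1,\pm}$, the same strategy applies with $\mathcal V(t)$ replaced by $\mathcal W(t)$ throughout. The already established decomposition \eqref{98}--\eqref{98.2} (applied with a suitable choice of $\zeta$) delivers the needed splitting of $\partial_t(\mathcal W(t)w_\pm)$ into an $L^2$-piece with extra $|t|^{-3/2}$ decay and an $L^\infty$-piece with $|t|^{-1}$ decay; substituting $\partial_t w_\pm = -i\widehat{\mathcal W}(t)f_{1,\pm}$ from \eqref{eqw} and multiplying by $\mathcal N(|g|)$ then gives the stated form. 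The principal technical obstacle lies in this part 3: to make the decomposition \eqref{98}--\eqref{98.2} applicable one needs derivative and commutator estimates on the Jost solutions and the scattering matrix that rest on the stronger decay $V \in L^1_{3+\tilde\delta}$ and the regular decomposition with $\delta > 0$, inherited from the spectral and scattering results of Appendix~\ref{App1}; without these hypotheses, no uniform control of $\partial_t(\mathcal W(t)w_\pm)$ of the required quality is available, and the fine decomposition breaks down.
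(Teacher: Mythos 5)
Your treatment of parts 1 and 2 follows the paper's own route: part 1 is exactly the paper's combination of the chain rule \eqref{100} with the identity $\partial_t(\mathcal V(t)w_\pm)=\tfrac{i}{4t^2}\mathcal V(t)\partial_k^2 w_\pm+\mathcal V(t)\partial_t w_\pm$ (the middle equality of \eqref{99} with $\zeta=1$) and the $L^2$/$L^\infty$ splitting via \eqref{2.11}, \eqref{89.xxxx} and Sobolev; part 2 is the paper's substitution of the equation \eqref{eqw} followed by the already-established decomposition \eqref{98}--\eqref{98.2} with $\zeta=1$, so your re-derivation of the four-term splitting is just an unrolled version of the same argument (with $\zeta=1$ the $(\zeta-1)$ piece you list vanishes, which is harmless).

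There is, however, a concrete misstep in your part 3. You propose to obtain the splitting of $\partial_t(\mathcal W(t)w_\pm)$ by applying \eqref{98}--\eqref{98.2} ``with a suitable choice of $\zeta$.'' No choice of $\zeta$ can do this: \eqref{98} concerns $\partial_t\bigl(\mathcal V(t)(\zeta w_\pm)\bigr)$ where $\zeta$ is a multiplier in the $k$-variable only, whereas $\mathcal W(t)$ differs from $\mathcal V(t)$ by the factor $m(k,tx)$ inside the integral in \eqref{sp5}, which depends on $x$ through $tx$ and in particular is hit by the $t$-derivative. The correct computation is the paper's \eqref{94bis}: differentiating $\mathcal W(t)w_\pm$ directly and using \eqref{305} and \eqref{sp11.2.0} yields $\partial_t(\mathcal W(t)w_\pm)=\tfrac{i}{4t^2}\mathcal V(t)\partial_k^2\bigl(m(k,tx)w_\pm\bigr)+\mathcal V(t)\,x(\partial_x m)(k,tx)\,w_\pm-if_{1,\pm}$, and the first two terms are bounded in $L^2$ by $C|t|^{-2}\Vert w_\pm\Vert_{H^2}$ using the Jost-function derivative estimates \eqref{3.5}, \eqref{3.6}, \eqref{3.5bis} --- this is precisely where $V\in L^1_{3+\tilde\delta}$ and the regular decomposition enter. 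Your closing remark correctly anticipates that such estimates are the crux, but the mechanism you describe would not produce the extra terms $\partial_k^2 m$ and $x\,\partial_x m$ that actually have to be controlled, so as written part 3 does not go through without this repair.
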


\begin{proof}
From the middle equality in (\ref{99}) with $\zeta=1,$ and (\ref{2.11}) we
obtain%
\begin{equation}
\left\Vert \partial_{t}\left(  \mathcal{V}\left(  t\right)  w\right)
\right\Vert _{L^{2}({\mathbb{R}})}\leq C\left(  \left\Vert w_{\pm}\right\Vert
_{H^{2}({\mathbb{R}})}+\left\Vert \partial_{t}w_{\pm}\right\Vert
_{L^{2}({\mathbb{R}})}\right)  ,\qquad\pm t\geq a. \label{5.0}%
\end{equation}
Then, using \eqref{ConNonlinearity}, (\ref{30.0}) with $g=t^{-1/2}%
\mathcal{V}(t)w_{\pm},$ (\ref{89.xxxx}), and Sobolev's inequality we prove
(\ref{EST14frL2rough}). We denote,
\begin{equation}%
\begin{array}
[c]{l}%
\mathcal{A}_{l,\pm}^{(1)}:=\sum_{j=1}^{n}\left[  E_{1}^{(j)}\left(
t^{-1/2}\mathcal{V}(t)w_{\pm}\right)  t^{-1/2}(\mathcal{A}_{l,\pm})_{j}%
+E_{2}^{(j)}\left(  t^{-1/2}\mathcal{V}(t)w_{\pm}\right)  t^{-1/2}%
\overline{(\mathcal{A}_{l,\pm})_{j}}\right]  w_{\pm}\\
\\
+\mathcal{N}\left(  t^{-1/2}\left\vert \left(  \mathcal{V}(t)w_{\pm}\right)
_{1}\right\vert ,\dots,\left\vert t^{-1/2}\left(  \mathcal{V}(t)w_{\pm
}\right)  _{n}\right\vert \right)  \,\mathcal{A}_{l,\pm}-\\
\displaystyle\frac{1}{2}|t|^{-3/2}\sum_{j=1}^{n}\left[  E_{1}^{(j)}\left(
t^{-1/2}\mathcal{V}(t)w_{\pm}\right)  \mathcal{V}(t)w_{\pm}+E_{2}^{(j)}\left(
t^{-1/2}\mathcal{V}(t)w_{\pm}\right)  \overline{\mathcal{V}(t)w_{\pm}%
(t)}\right]  ,\qquad l=1,2,
\end{array}
\label{5.0.1}%
\end{equation}
where we denote by $(\mathcal{A}_{l,\pm})_{j},j=1,...,n,$ the components of
the vector $A_{l,\pm}.$ Further, using (\ref{ConNonlinearity}), (\ref{100})
with $g=t^{-1/2}\mathcal{V}\left(  t\right)  w$, (\ref{98}) with $\zeta=1,$
and \eqref{89.xxxx}, we prove \eqref{303.3}, \eqref{303.4}, \eqref{303.5}.
Similarly to (\ref{99}) with $\zeta=1,$ using \eqref{305} and
\eqref{sp11.2.0}, we calculate%
\begin{equation}
\partial_{t}\left(  \mathcal{W}\left(  t\right)  w_{\pm}\right)  =\frac
{i}{4t^{2}}\mathcal{V}\left(  t\right)  \partial_{k}^{2}\left(  m\left(
k,tx\right)  w_{\pm}\right)  +\mathcal{V}\left(  t\right)  x\left(
\partial_{x}m\right)  \left(  k,tx\right)  w_{\pm}-if_{1,\pm}. \label{94bis}%
\end{equation}
Using (\ref{3.5}), \eqref{3.6}, (\ref{3.5bis}), and \eqref{l2.1}, we estimate%
\begin{equation}
\left\Vert \frac{i}{4t^{2}}\mathcal{V}\left(  t\right)  \partial_{k}%
^{2}\left(  m\left(  k,tx\right)  w_{\pm}\right)  )\right\Vert _{L^{2}%
({\mathbb{R}})}+\left\Vert \mathcal{V}\left(  t\right)  x\left(  \partial
_{x}m\right)  \left(  k,tx\right)  w_{\pm}\right\Vert _{L^{2}({\mathbb{R}}%
)}\leq C\frac{1}{|t|^{2}}\Vert w_{\pm}\Vert_{H^{2}({\mathbb{R}})},\qquad\pm
t>a. \label{94bis.1}%
\end{equation}
%Moreover, by \eqref{3.6}
%\beq\label{94bis.2}
%\left\Vert \mathcal{V}\left(  t\right)  x\left(  \partial_{x}m\right)  \left(
%k,tx\right)  w_\pm\right\Vert _{L^{2}(\ere)}\leq C\left\Vert x\left\langle
%tx\right\rangle ^{-2-\delta}\right\Vert _{L^{2}(\ere)}\left\Vert w_\pm\right\Vert
%_{L^2(\ere)}\leq C|t|^{-3/2}\left\Vert w_\pm\right\Vert _{L^2(\ere)}, \qquad \pm t \geq a.
%\ene
We denote,
\begin{equation}%
\begin{array}
[c]{l}%
\mathcal{A}_{1,\pm}^{(2)}:=\frac{1}{|t|^{1/2}}\sum_{j=1}^{n}\left[
E_{1}^{(j)}\left(  t^{-1/2}\mathcal{W}(t)w_{\pm}\right)  (\frac{i}{4t^{2}%
}\mathcal{V}\left(  t\right)  (\partial_{k}^{2}m\left(  k,tx\right)  w_{\pm
})+\mathcal{V}\left(  t\right)  x\left(  \partial_{x}m\right)  \left(
k,tx\right)  w_{\pm})\right. \\
\\
\left.  +E_{2}^{(j)}\left(  t^{-1/2}\mathcal{W}(t)w_{\pm}\right)
\overline{(\frac{i}{4t^{2}}\mathcal{V}\left(  t\right)  (\partial_{k}%
^{2}m\left(  k,tx\right)  w_{\pm})+\mathcal{V}\left(  t\right)  x\left(
\partial_{x}m\right)  \left(  k,tx\right)  )w_{\pm})}\right]  w_{\pm}\\
\\
+\mathcal{N}\left(  t^{-1/2}\left\vert \left(  \mathcal{V}(t)w_{\pm}\right)
_{1}\right\vert ,\dots,\left\vert t^{-1/2}\left(  \mathcal{V}(t)w_{\pm
}\right)  _{n}\right\vert \right)  \,(\frac{i}{4t^{2}}\mathcal{V}\left(
t\right)  (\partial_{k}^{2}m\left(  k,tx\right)  +(\mathcal{V}\left(
t\right)  x\left(  \partial_{x}m\right)  \left(  k,tx\right)  )w_{\pm}),
\end{array}
\label{94bis.3}%
\end{equation}
and
\begin{equation}%
\begin{array}
[c]{l}%
\mathcal{A}_{2,\pm}^{(2)}:=-i\sum_{j=1}^{n}\left[  E_{1}^{(j)}\left(
t^{-1/2}\mathcal{W}(t)w_{\pm}\right)  f_{1,\pm})\right.  \left.  -E_{2}%
^{(j)}\left(  t^{-1/2}\mathcal{W}(t)w_{\pm}\right)  \overline{f_{1,\pm}%
)}\right]  w_{\pm}\\
\\
-i\mathcal{N}\left(  t^{-1/2}\left\vert \left(  \mathcal{W}(t)w_{\pm}\right)
_{1}\right\vert ,\dots,\left\vert t^{-1/2}\left(  \mathcal{W}(t)w_{\pm
}\right)  _{n}\right\vert \right)  \,f_{1,\pm})\pm\\
\\
-\displaystyle\frac{1}{2}|t|^{-3/2}\sum_{j=1}^{n}\left[  E_{1}^{(j)}\left(
t^{-1/2}\mathcal{W}(t)w_{\pm}\right)  \mathcal{W}(t)w_{\pm}+E_{2}^{(j)}\left(
t^{-1/2}\mathcal{W}(t)w_{\pm}\right)  \overline{\mathcal{W}(t)w_{\pm}%
(t)}\right]  .
\end{array}
\label{94bis.4}%
\end{equation}
Further, using (\ref{ConNonlinearity}), (\ref{100}) with $g=t^{-1/2}%
\mathcal{W}\left(  t\right)  w_{\pm}$, \eqref{94bis}, \eqref{94bis.1},
\eqref{94bis.3}, \eqref{94bis.4}, and \eqref{EST4}, we obtain \eqref{EST14},
\eqref{303.6} and \eqref{303.7}.
\end{proof}

We denote%
\begin{equation}
\label{94bis.4.1}f_{2,\pm}=f_{1,\pm}-f_{1,\pm}^{\operatorname*{fr}}.
\end{equation}
Also, we introduce%
\begin{equation}
\label{94.bis.4.2}N\left(  f\right)  =\left\Vert f\right\Vert _{L^{1}(
{\mathbb{R}})}+|t|^{-1/2}\left\Vert f\right\Vert _{L^{2}( {\mathbb{R}})}.
\end{equation}
Observe that for $\delta>n+1$%
\begin{equation}
N\left(  x^{n}\left\langle tx\right\rangle ^{-\delta}\right)  \leq
C|t|^{-n-1}. \label{n}%
\end{equation}

\begin{lemma}
Suppose that $V\in L_{3+\tilde{\delta}}^{1}(\mathbb{R}^{+}),$ for some
$\tilde{\delta}>n+1/2.$ In addition, assume that $V$ admits a regular
decomposition (see Definition \ref{Def1}) with $\delta=\min[n-1,0].$ Further,
assume that the boundary matrices $A,B$, satisfy \eqref{wcon1},
\eqref{wcon2},and that $H_{A,B,V}$ does not have negative eigenvalues. Let the
nonlinear function $\mathcal{N}$ satisfy (\ref{ConNonlinearity}). Let $w_{\pm
}$ be functions from $t\in\mathcal{T}_{\pm}\left(  a,\infty\right)  , $ $a>0,$
with values in $H^{2}({\mathbb{R}}).$ Then, the following is true.

\begin{enumerate}
\item
\begin{equation}
\left\Vert x^{n}\partial_{t}f_{2,\pm}\right\Vert _{L^{2}( {\mathbb{R}})}\leq C
|t|^{1/2-n-\alpha/2}\left\Vert w_{\pm}\right\Vert _{H^{1}( {\mathbb{R}}%
)}^{\alpha}\left(  \left\Vert w_{\pm}\right\Vert _{H^{2}( {\mathbb{R}}%
)}+\left\Vert \partial_{t}w\right\Vert _{L^{2}( {\mathbb{R}})}\right)  ,
\qquad\pm t \geq a. \label{EST16rough}%
\end{equation}

\item Assume, moreover, that $V\in L^{1}_{4+\tilde{\delta}}( {\mathbb{R}}),$
for some $\tilde{\delta}>n,$ and that $V$ admits a regular decomposition with
$\delta>n.$ Then, if $w_{\pm}$ satisfies (\ref{eqw}) for $\pm t \geq a,$ the
following is true%
\begin{equation}
\left\Vert x^{n}\partial_{t}f_{2,\pm}\right\Vert _{L^{1}( {\mathbb{R}}%
)}+|t|^{-1/2-n}\left\Vert x^{n}\partial_{t}f_{2,\pm}\right\Vert _{L^{2}(
{\mathbb{R}})}\leq C|t|^{-n-2-\alpha/2}\left(  \left\Vert w_{\pm}\right\Vert
_{H^{1}( {\mathbb{R}})}^{\alpha}+1\right)  \Lambda_{\pm}\left(  t;w_{\pm
}\right)  ,\qquad\pm t \geq a . \label{EST16}%
\end{equation}

\end{enumerate}
\end{lemma}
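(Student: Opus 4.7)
The plan is to treat both estimates simultaneously by writing $\partial_t f_{2,\pm} = \partial_t f_{1,\pm} - \partial_t f_{1,\pm}^{\mathrm{fr}}$ and expanding each term via the chain rule as in \eqref{100}. Since the two expansions share the same algebraic structure, the difference can be reorganized so that every term contains at least one factor of $(\mathcal{W}(t)-\mathcal{V}(t))w_\pm$, or one factor of $\partial_t[(\mathcal{W}(t)-\mathcal{V}(t))w_\pm]$, or one factor of the difference of the nonlinear coefficients $\mathcal{N}(|t^{-1/2}\mathcal{W}(t)w_\pm|)-\mathcal{N}(|t^{-1/2}\mathcal{V}(t)w_\pm|)$. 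The point is that each of these three factors is already known to carry extra decay in $|t|$ coming from the factor $m(k,tx)-1$, whose $L^p$-moments are controlled by the assumed moments of $V$ and its regular decomposition. For the $x^n$-weighted version, one uses that $|x^n(m(k,tx)-1)|\le C|t|^{-1}x^{n-1}$ plus analogous bounds on derivatives of $m$, all accessible from Appendix~\ref{App1} under the stated hypothesis $V\in L^1_{3+\tilde\delta}$ with $\tilde\delta>n+1/2$ and $\delta=\min[n-1,0]$.

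For the rough estimate \eqref{EST16rough}, I would follow the derivation of \eqref{EST14frL2rough}, replacing $\partial_t(\mathcal{V}(t)w_\pm)$ by $\partial_t[(\mathcal{W}(t)-\mathcal{V}(t))w_\pm]$ and using the middle equality in \eqref{99} together with \eqref{94bis} to express this derivative in terms of $\partial_k^2$, $(\partial_x m)(k,tx)$, and $\partial_t w_\pm$; crucially, I would \emph{not} invoke equation \eqref{eqw} at this stage, so that $\partial_t w_\pm$ appears only through its $L^2$ norm, as stated in the conclusion. Bounding each piece with the weight $x^n$ using \eqref{3.5}, \eqref{3.6}, \eqref{3.5bis}, the $x^n$-weighted analogue of \eqref{EST11}, and Sobolev's inequality, produces the gain $|t|^{1/2-n-\alpha/2}$.

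For the refined estimate \eqref{EST16}, $w_\pm$ is assumed to solve \eqref{eqw}, so I would substitute $\partial_t w_\pm = -i\widehat{\mathcal{W}}(t)f_{1,\pm}$ and then rerun the argument that produced \eqref{98}--\eqref{98.2}, splitting $\widehat{\mathcal{W}}(t)f_{1,\pm}$ via the telescoping decomposition of \eqref{304} into four pieces containing $(\mathcal{W}-\mathcal{V})$, $(\widehat{\mathcal{W}}-\widehat{\mathcal{V}})$, $(f_{1,\pm}-f_{1,\pm}^{\mathrm{fr}})$ and the "purely free" piece $\widehat{\mathcal{V}}f_{1,\pm}^{\mathrm{fr}}$. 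Each of the first three carries an extra factor $|t|^{-1/2}$ because of the difference of wave operators or nonlinearities, while the free piece requires integration by parts in $k$ (as in the treatment of $f_{1,\pm}^{\mathrm{fr}}$ in the lemma on $\partial_t(\mathcal{V}(\zeta w_\pm))$) to extract an $x^n$. The $L^1$ half of \eqref{EST16} is then obtained from the $L^2$ half by Cauchy--Schwarz against the weight $\langle tx\rangle^{-\delta}$ and by invoking \eqref{n}; this is precisely the step that forces the stronger hypothesis $V\in L^1_{4+\tilde\delta}$ with $\tilde\delta>n$ and a regular decomposition with $\delta>n$, since \eqref{n} requires $\delta>n+1$ on the weight $x^n\langle tx\rangle^{-\delta}$.

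The main obstacle will be bookkeeping: the chain rule for $\partial_t f_{2,\pm}$ generates a proliferation of cross-terms, and one must check that in every term the $x^n$ weight can be paired with a matching decay factor from $m(k,tx)-1$, $\partial_k m$, or $x\partial_x m$, so that the result is controlled by $\Lambda_\pm(t;w_\pm)$ plus lower-order contributions. The stricter moment hypotheses in part~2 of the lemma are dictated by the $L^1$-estimate and by the need to integrate by parts one extra time in $k$ to recover the $x^n$ weight without losing the required decay in $|t|$.
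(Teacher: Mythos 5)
Your proposal follows essentially the same route as the paper: the telescoping chain-rule expansion of $\partial_t f_{2,\pm}$ so that every term carries a difference factor is exactly the paper's \eqref{110}; keeping $\partial_t w_\pm$ abstract (no use of \eqref{eqw}) for the rough bound and substituting $\partial_t w_\pm=-i\widehat{\mathcal W}(\tau)f_{1,\pm}$ with the decomposition of $\widehat{\mathcal W}f_{1,\pm}$ into $(\widehat{\mathcal W}-\widehat{\mathcal V})f_{1,\pm}+\widehat{\mathcal V}(f_{1,\pm}-f_{1,\pm}^{\operatorname{fr}})+\widehat{\mathcal V}f_{1,\pm}^{\operatorname{fr}}$ for the refined one matches \eqref{110.1}, \eqref{107} and \eqref{106}; and the pairing of $x^{n}$ with the $\langle tx\rangle^{-\delta}$ decay of $m-1$, $\partial_x m$, via the functional $N$ and \eqref{n} is how the paper gets the $L^1$ half. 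The only minor imprecision is citing \eqref{94bis} (which already has the equation substituted) for the rough estimate, but the intended computation is the correct one.
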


\begin{proof}
Using (\ref{100}) with $g= t^{-1/2} \mathcal{W}\left(  t\right)  w_{\pm}$ and
$g= t^{-1/2}\mathcal{V}\left(  t\right)  w_{\pm}$ we have%
\begin{align}
\partial_{t}f_{2,\pm}  &  =\frac{1}{|t|^{1/2}} \sum_{j=1}^{n}\left[  \left(
E_{1}^{\left(  j\right)  }\left(  t^{-1/2} \mathcal{W}\left(  t\right)
w_{\pm}\right)  \mathcal{W}\left(  t\right)  w_{\pm}\right)  \partial
_{t}\left(  \left(  \mathcal{W}\left(  t\right)  w_{\pm}\right)  _{j}-\left(
\mathcal{V}\left(  t\right)  w_{\pm}\right)  _{j}\right)  \right. \nonumber\\
&  +\left(  E_{1}^{\left(  j\right)  }\left(  t^{-1/2} \mathcal{W}\left(
t\right)  w_{\pm}\right)  \mathcal{W}\left(  t\right)  w_{\pm}-E_{1}^{\left(
j\right)  }\left(  t^{-1/2} \mathcal{V}\left(  t\right)  w_{\pm}\right)
\mathcal{V}\left(  t\right)  w_{\pm}\right)  \partial_{t}\left(
\mathcal{V}\left(  t\right)  w_{\pm}\right)  _{j}\nonumber\\
&  +\left(  E_{2}^{\left(  j\right)  }\left(  t^{-1/2} \mathcal{W}\left(
t\right)  w_{\pm}\right)  \mathcal{W}\left(  t\right)  w_{\pm}\right)
\partial_{t}\left(  \overline{\left(  \mathcal{W}\left(  t\right)  w_{\pm
}\right)  _{j}}-\overline{\left(  \mathcal{V}\left(  t\right)  w_{\pm}\right)
_{j}}\right) \label{110}\\
&  \left.  +\left(  E_{2}^{\left(  j\right)  }\left(  t^{-1/2} \mathcal{W}%
\left(  t\right)  w_{\pm}\right)  \mathcal{W}\left(  t\right)  w_{\pm}%
-E_{2}^{\left(  j\right)  }\left(  t^{-1/2} \mathcal{V}\left(  t\right)
w_{\pm}\right)  \mathcal{V}\left(  t\right)  w_{\pm}\right)  \partial
_{t}\overline{\left(  \mathcal{V}\left(  t\right)  w_{\pm}\right)  _{j}%
}\right] \nonumber\\
&  +\mathcal{N}\left(  \left\vert \left(  t^{-1/2} \mathcal{W}\left(
t\right)  w_{\pm}\right)  _{1}\right\vert ,...,\left\vert \left(  t^{-1/2}
\mathcal{W}\left(  t\right)  w_{\pm}\right)  _{n}\right\vert \right)  \left(
\partial_{t}\mathcal{W}\left(  t\right)  w_{\pm}-\partial_{t}\mathcal{V}%
\left(  t\right)  w_{\pm}\right) \nonumber\\
&  +\left(  \mathcal{N}\left(  \left\vert t^{-1/2} \left(  \mathcal{W}\left(
t\right)  w_{\pm}\right)  _{1}\right\vert ...,\left\vert \left(  t^{-1/2}
\mathcal{W}\left(  t\right)  \right)  w_{\pm}\right)  _{n}\right\vert \right.
\nonumber\\
&  -\mathcal{N}\left.  \left(  \left\vert \left(  t^{-1/2} \mathcal{V}\left(
t\right)  w_{\pm}\right)  _{1}\right\vert ,...,\left\vert t^{-1/2} \left(
\mathcal{V}\left(  t\right)  w_{\pm}\right)  _{n}\right\vert \right)
\partial_{t}\mathcal{V}\left(  t\right)  \right)  w_{\pm}. \pm\nonumber\\
&  \frac{1}{|t|^{3/2}} \sum_{j=1}^{n}\left[  \left(  E_{1}^{\left(  j\right)
}\left(  t^{-1/2} \mathcal{W}\left(  t\right)  w_{\pm}\right)  \mathcal{W}%
\left(  t\right)  w_{\pm}\right)  \left(  \left(  \mathcal{W}\left(  t\right)
w_{\pm}\right)  _{j}-\left(  \mathcal{V}\left(  t\right)  w_{\pm}\right)
_{j}\right)  \right. \nonumber\\
&  +\left(  E_{1}^{\left(  j\right)  }\left(  t^{-1/2} \mathcal{W}\left(
t\right)  w_{\pm}\right)  \mathcal{W}\left(  t\right)  w_{\pm}-E_{1}^{\left(
j\right)  }\left(  t^{-1/2} \mathcal{V}\left(  t\right)  w_{\pm}\right)
\mathcal{V}\left(  t\right)  w_{\pm}\right)  \left(  \mathcal{V}\left(
t\right)  w_{\pm}\right)  _{j}\nonumber\\
&  +\left(  E_{2}^{\left(  j\right)  }\left(  t^{-1/2} \mathcal{W}\left(
t\right)  w_{\pm}\right)  \mathcal{W}\left(  t\right)  w_{\pm}\right)  \left(
\overline{\left(  \mathcal{W}\left(  t\right)  w_{\pm}\right)  _{j}}%
-\overline{\left(  \mathcal{V}\left(  t\right)  w_{\pm}\right)  _{j}}\right)
\nonumber\\
&  \left.  +\left(  E_{2}^{\left(  j\right)  }\left(  t^{-1/2} \mathcal{W}%
\left(  t\right)  w_{\pm}\right)  \mathcal{W}\left(  t\right)  w_{\pm}%
-E_{2}^{\left(  j\right)  }\left(  t^{-1/2} \mathcal{V}\left(  t\right)
w_{\pm}\right)  \mathcal{V}\left(  t\right)  w_{\pm}\right)  \overline{\left(
\mathcal{V}\left(  t\right)  w_{\pm}\right)  _{j}}\right] .\nonumber
\end{align}
Using \eqref{sp11.2.0}, we calculate%
\begin{align}
\label{110.1}\partial_{t}\left(  \mathcal{W}\left(  t\right)  w_{\pm
}-\mathcal{V}\left(  t\right)  w_{\pm}\right)   &  =\frac{i}{4t^{2}%
}\mathcal{V}\left(  t\right)  \left(  \partial_{k}^{2}\left(  m\left(
k,tx\right)  w_{\pm}\right)  -\partial_{k}^{2}w_{\pm}\right)  +\mathcal{V}%
\left(  t\right)  x\left(  \partial_{x}m\right)  \left(  k,tx\right)  w_{\pm
}\nonumber\\
&  +\left(  \mathcal{W}\left(  t\right)  -\mathcal{V}\left(  t\right)
\right)  \left(  \partial_{t}w_{\pm}\right)  .
\end{align}
Using (\ref{3.2}), (\ref{3.5}) and (\ref{3.5bis}) we show%
\begin{equation}%
\begin{array}
[c]{l}%
\label{110.2} \left\Vert x^{n}\mathcal{V}\left(  t\right)  \left(
\partial_{k}^{2}\left(  m\left(  k,tx\right)  w_{\pm}\right)  -\partial
_{k}^{2}w_{\pm}\right)  \right\Vert _{L^{\infty}( {\mathbb{R}})}\leq
C\sqrt{|t|}\left\Vert x^{n}\left\langle tx\right\rangle ^{-\tilde{\delta}%
}\right\Vert _{L^{\infty}( {\mathbb{R}})}\left\Vert w_{\pm}\right\Vert
_{H^{2}( {\mathbb{R}})}\\
\\
\leq C|t|^{1/2-n}\left\Vert w_{\pm}\right\Vert _{H^{2}( {\mathbb{R}})},
\qquad\pm t \geq a.
\end{array}
\end{equation}
Moreover, by (\ref{3.6}) we obtain,
\begin{equation}
\label{110.3}%
\begin{array}
[c]{l}%
\left\Vert x^{n}\mathcal{V}\left(  t\right)  x\left(  \partial_{x}m\right)
\left(  k,tx\right)  w_{\pm}\right\Vert _{L^{\infty}( {\mathbb{R}})}\leq
C\sqrt{|t|}\left\Vert x^{n+1}\left\langle tx\right\rangle ^{-2-\delta
}\right\Vert _{L^{\infty}( {\mathbb{R}})}\left\Vert w_{\pm}\right\Vert
_{H^{1}( {\mathbb{R}})}\\
\\
\leq C|t|^{-n-1/2}\left\Vert w_{\pm}\right\Vert _{H^{1}( {\mathbb{R}})}, \pm t
\geq a.
\end{array}
\end{equation}
From (\ref{EST11b}) it follows that%
\begin{equation}
\label{110.4}\left\Vert x^{n}\left(  \mathcal{W}\left(  t\right)
-\mathcal{V}\left(  t\right)  \right)  \left(  \partial_{t}w_{\pm}\right)
\right\Vert _{L^{\infty}( {\mathbb{R}})}\leq C\sqrt{|t|}\left\Vert
x^{n}\left\langle tx\right\rangle ^{-2-\tilde{\delta} }\right\Vert
_{L^{\infty}( {\mathbb{R}})}\left\Vert \partial_{t}w_{\pm}\right\Vert _{L^{2}(
{\mathbb{R}})}\leq C|t|^{-n+1/2}\left\Vert \partial_{t}w_{\pm}\right\Vert
_{L^{2}( {\mathbb{R}})},
\end{equation}
for $\pm t \geq a.$ Thus, by \eqref{110.1}, \eqref{110.2}, \eqref{110.3} and
\eqref{110.4}, we estimate%
\begin{equation}
\left\Vert \partial_{t}\left(  \mathcal{W}\left(  t\right)  w_{\pm
}-\mathcal{V}\left(  t\right)  w_{\pm}\right)  \right\Vert _{L^{\infty}(
{\mathbb{R}})}\leq C|t|^{1/2-n}\left(  \left\Vert w_{\pm}\right\Vert _{H^{2}(
{\mathbb{R}})}+\left\Vert \partial_{t}w_{\pm}\right\Vert _{L^{2}( {\mathbb{R}%
})}\right)  , \qquad\pm t \geq a. \label{4.0}%
\end{equation}
Therefore, using (\ref{ConNonlinearity}), (\ref{5.0}), \eqref{110.4} with
$w_{\pm}$ instead of $\partial_{t} w_{\pm},$ (\ref{4.0}), (\ref{EST4}),
\eqref{89.xxxx}, (\ref{EST11}), in (\ref{110}), we deduce (\ref{EST16rough}).

Next, we prove (\ref{EST16}). By (\ref{99}) with $\zeta=1,$ and (\ref{94bis})
we get%
\begin{align}
\partial_{t}\left(  \mathcal{W}\left(  t\right)  w_{\pm}-\mathcal{V}\left(
t\right)  w_{\pm}\right)   &  =\frac{i}{4t^{2}}\mathcal{V}\left(  t\right)
\left(  \partial_{k}^{2}\left(  m\left(  k,tx\right)  w_{\pm}\right)
-\partial_{k}^{2}w_{\pm}\right)  +\mathcal{V}\left(  t\right)  x\left(
\partial_{x}m\right)  \left(  k,tx\right)  w_{\pm}\nonumber\\
&  -i\left(  \mathcal{W}\left(  t\right)  -\mathcal{V}\left(  t\right)
\right)  \widehat{ \mathcal{W}}\left(  t\right)  f_{1,\pm} . \label{107}%
\end{align}
Using (\ref{n}), (\ref{3.2}), (\ref{3.5}), and (\ref{3.5bis}) we show%
\begin{equation}
N\left(  x^{n}\mathcal{V}\left(  t\right)  \left(  \partial_{k}^{2}\left(
m\left(  k,tx\right)  w_{\pm}\right)  -\partial_{k}^{2}w_{\pm}\right)
\right)  \leq C|t|^{-n-1/2}\left\Vert w_{\pm}\right\Vert _{H^{2}( {\mathbb{R}%
})}, \qquad\pm t \geq a. \label{108}%
\end{equation}
Moreover, using (\ref{3.6}) we show
\begin{equation}
N\left(  x^{n}\mathcal{V}\left(  t\right)  x\left(  \partial_{x}m\right)
\left(  k,tx\right)  w_{\pm}\right)  \leq C|t|^{-n-2}\left\Vert w\right\Vert
_{L^{2}( {\mathbb{R}})},\qquad\pm t \geq a. \label{108b}%
\end{equation}
Next, we decompose%
\begin{align}
\widehat{\mathcal{W}}\left(  t\right)  f_{1,\pm}  &  =\left(
\widehat{\mathcal{W}}\left(  t\right)  -\widehat{\mathcal{V}}\left(  t\right)
\right)  f_{1,\pm}\nonumber\\
&  +\widehat{\mathcal{V}}\left(  t\right)  \left(  f_{1,\pm} -f_{1,\pm
}^{\operatorname*{fr}} \right)  +\widehat{\mathcal{V}}\left(  t\right)
f_{1,\pm}^{\operatorname*{fr}} . \label{106}%
\end{align}
Using \eqref{12.0.0}, (\ref{101}), (\ref{isomafr}), and (\ref{EST12}) we show%
\[
\left\Vert \left(  \widehat{ \mathcal{W}}\left(  t\right)
-\widehat{\mathcal{V}}\left(  t\right)  \right)  f_{1,\pm} \right\Vert
_{L^{2}( {\mathbb{R}})}+\left\Vert \widehat{\mathcal{V}}\left(  t\right)
\left(  f_{1,\pm} -f_{1,\pm}^{\operatorname*{fr}} \right)  \right\Vert
_{L^{2}( {\mathbb{R}})}\leq C |t|^{-\alpha/2} |t|^{-1/2}\left\Vert w_{\pm
}\right\Vert _{H^{1}( {\mathbb{R}})}^{\alpha+1}, \qquad\pm t \geq a.
\]
Further, from \eqref{101}, \eqref{2.11}, (\ref{EST11b}), and \eqref{EST12} we
deduce that
\begin{align}
&  N\left(  x^{n}\left(  \left(  \mathcal{W}\left(  t\right)  -\mathcal{V}%
\left(  t\right)  \right)  \left(  \widehat{\mathcal{W}}\left(  t\right)
-\widehat{\mathcal{V}}\left(  t\right)  \right)  \right)  f_{1,\pm}\right)
\nonumber\\
&  +N\left(  x^{n}\left(  \mathcal{W}\left(  t\right)  -\mathcal{V}\left(
t\right)  \right)  \widehat{\mathcal{V}}\left(  t\right)  \left(  f_{1,\pm}
-f_{1,\pm}^{\operatorname*{fr}} \right)  \right)  \leq C|t|^{-n-1}\left\Vert
w_{\pm}\right\Vert _{H^{1}( {\mathbb{R}})}^{\alpha+1}, \qquad\pm t \geq a.
\label{104}%
\end{align}
By (\ref{EST11})%
\begin{align*}
&  N\left(  x^{n}\left(  \mathcal{W}\left(  t\right)  -\mathcal{V}\left(
t\right)  \right)  \widehat{\mathcal{V}}\left(  t\right)  f_{1,\pm
}^{\operatorname*{fr}} \right) \\
&  \leq C \frac{1}{|t|^{n+1}}\,\left(  \left\Vert \widehat{\mathcal{V}}\left(
t\right)  f_{1,\pm}^{\operatorname*{fr}} \right\Vert _{L^{\infty}(
{\mathbb{R}})}+|t|^{-\frac{1}{2p}}\left\Vert \left\langle k\right\rangle
^{-1}\partial_{k}\widehat{\mathcal{V}} f_{1,}^{\operatorname*{fr}} \right\Vert
_{L^{q}( {\mathbb{R}})}\right)  , \qquad\pm t \geq a \text{.}%
\end{align*}
Then, it follows from (\ref{12}), (\ref{103}), (\ref{UnitaritySM}),
(\ref{scatmatrixderiv}), and (\ref{89.xxx}),that%
\begin{equation}
N\left(  x^{n}\left(  \mathcal{W}\left(  t\right)  -\mathcal{V}\left(
t\right)  \right)  \widehat{\mathcal{W}}\left(  t\right)  f_{1,\pm
}^{\operatorname*{fr}} \right)  \leq C|t|^{-n-1}|t|^{-\alpha/2} |t|^{\frac
{1}{2}\left(  1-\frac{1}{p}\right)  }\Lambda_{\pm}\left(  t;w_{\pm}\right)  ,
\qquad\pm t \geq a . \label{105}%
\end{equation}
Therefore, using (\ref{104}) and (\ref{105}) with $p$ close enough to one, in
(\ref{106}) we obtain,%
\begin{equation}
N\left(  x^{n}\left(  \mathcal{W}\left(  t\right)  -\mathcal{V}\left(
t\right)  \right)  \widehat{\mathcal{W}}\left(  t\right)  f_{1,\pm} \right)
\leq C|t|^{-n-2}\Lambda_{\pm}\left(  t;w\right)  ,\qquad\pm t \geq a .
\label{109}%
\end{equation}
Hence, by (\ref{107}), (\ref{108}), (\ref{108b}) and (\ref{109}) we obtain%
\begin{equation}
N\left(  x^{n}\partial_{t}\left(  \mathcal{W}\left(  t\right)  w_{\pm
}-\mathcal{V}\left(  t\right)  w_{\pm}\right)  \right)  \leq C|t|^{-n-2}%
\left(  \left\Vert w_{\pm}\right\Vert _{L^{2}( {\mathbb{R}})}+|t|^{-1/2}%
\left\Vert w_{\pm}\right\Vert _{H^{2}( {\mathbb{R}})}+\Lambda_{\pm}\left(
t;w_{\pm}\right)  \right)  ,\qquad\pm t \geq a . \label{111}%
\end{equation}
Therefore, using (\ref{ConNonlinearity}),(\ref{98}),\eqref{98.1},
\eqref{98.2}, (\ref{111}), (\ref{EST4}), (\ref{EST11}), and \eqref{89.xxxx} in
(\ref{110}), we deduce (\ref{EST16}).
\end{proof}

\begin{lemma}
Suppose that $V\in L_{2+\tilde{\delta}}^{1}(\mathbb{R}^{+}),$ for some
$\tilde{\delta}\geq n,$ if $n>0,$ and $\tilde{\delta}>0,$ if $n=0.$ Further,
assume that $H_{A,B,V}$ does not have negative eigenvalues. In addition,
suppose that $V$ admits a regular decomposition (see Definition \ref{Def1})
with $\delta=\min[n-1,0].$ Let the nonlinear function $\mathcal{N}$ satisfy
(\ref{ConNonlinearity}). Let $w_{\pm}$ be functions from $t\in\mathcal{T}%
_{\pm}\left(  a,\infty\right)  ,$ $a>0,$ with values in $H^{1}({\mathbb{R}}).$
Then, the inequality%
\begin{equation}
\left\Vert x^{n}\partial_{x}f_{2,\pm}\right\Vert _{L^{2}({\mathbb{R}})}\leq
C|t|^{1-n-\alpha/2}\left\Vert w\right\Vert _{H^{1}({\mathbb{R}})}^{\alpha
+1},\qquad\pm t\geq a, \label{EST15rough}%
\end{equation}
is true. In addition, if $V$ admits a regular decomposition for some
$\delta\geq n-1+1/2,$ if $n>0,$ and $\delta=0,$ for $n=0,$ the estimate%
\begin{equation}
\left\Vert x^{n}\partial_{x}f_{2,\pm}\right\Vert _{L^{2}({\mathbb{R}})}\leq
C|t|^{1/2-n-\alpha/2}\left\Vert w\right\Vert _{H^{1}({\mathbb{R}})}^{\alpha
+1},\qquad\pm t\geq a, \label{EST15}%
\end{equation}
holds.
\end{lemma}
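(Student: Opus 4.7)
The plan is to imitate the proof of \eqref{EST16rough}-\eqref{EST16}, with spatial differentiation replacing time differentiation. I would first apply the product and chain rules to $\partial_x f_{2,\pm}=\partial_x f_{1,\pm}-\partial_x f_{1,\pm}^{\mathrm{fr}}$ using the decomposition \eqref{100} with $\partial_t$ replaced by $\partial_x$, thereby obtaining an expression analogous to \eqref{110}. Every summand there is a product of three factors in which exactly one carries a difference $\mathcal{W}(t)w_{\pm}-\mathcal{V}(t)w_{\pm}$, while the spatial derivative hits either an undifferentiated kernel or such a difference.

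For every term in which the difference $\mathcal{W}(t)w_{\pm}-\mathcal{V}(t)w_{\pm}$ appears \emph{undifferentiated}, I would place the weight $x^{n}$ on this factor and estimate it in $L^{\infty}(\mathbb{R})$ by means of \eqref{EST11} (in the weaker regularity case, which yields \eqref{EST15rough}) or \eqref{EST11b} (under the sharper hypothesis $\delta\geq n-1+1/2$, which yields \eqref{EST15}), combined with \eqref{ConNonlinearity} and Sobolev's inequality applied to the Lipschitz-type coefficients $E_{i}^{(j)}$. The remaining $L^{2}$ factor $\partial_{x}\mathcal{V}(t)w_{\pm}$ is controlled through \eqref{2.11}, using \eqref{130} to trade the $\partial_{x}$ acting on $\mathcal{V}(t)w_{\pm}$ for a $\partial_{k}$ inside plus a boundary contribution.

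The genuinely new piece is the term in which $\partial_{x}$ hits the difference: $\Vert x^{n}\partial_{x}(\mathcal{W}(t)-\mathcal{V}(t))w_{\pm}\Vert_{L^{2}(\mathbb{R})}$. I would exploit the identity $(\mathcal{W}(t)-\mathcal{V}(t))w_{\pm}=\mathcal{V}(t)[(m(k,tx)-1)w_{\pm}(k)]$, combine \eqref{130} (with $\phi=(m(k,tx)-1)w_{\pm}$) with the decay estimates \eqref{3.2}, \eqref{3.5}, \eqref{3.5bis}, \eqref{3.6} on $m$ and $\partial_{x}m$ at the argument $(k,tx)$, and then invoke the $L^{p}$-estimate \eqref{EST12} exactly as in the derivation of \eqref{105} and \eqref{111}. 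Since $\partial_{x}m(k,tx)=t(\partial_{2}m)(k,tx)$ produces a factor of $t$, and $\partial_{2}m$ enjoys decay $\langle tx\rangle^{-2-\delta}$, the resulting bound is $|t|^{1-n-\alpha/2}$ under the weaker hypothesis and $|t|^{1/2-n-\alpha/2}$ under the sharper one; assembling the pieces with \eqref{ConNonlinearity} and Sobolev's inequality, every remaining $L^{\infty}$ factor is absorbed into a power of $\Vert w_{\pm}\Vert_{H^{1}(\mathbb{R})}$, giving \eqref{EST15rough} and \eqref{EST15}.

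The main obstacle is the bookkeeping of the $\delta$-thresholds. The spatial derivative brings out a power of $|t|$ that must be absorbed by decay of $m-1$ (in $\langle tx\rangle^{-\delta}$) or of $\partial_{2}m$ (in $\langle tx\rangle^{-2-\delta}$); simultaneously, the weight $x^{n}$ has to be distributed among the factors so that enough of the available $\langle tx\rangle$-decay remains integrable and the $L^{p}$-exponents entering \eqref{EST12} can be chosen close to the natural endpoint. The split between the two $\delta$-regimes in the statement reflects precisely how much of the $x^{n}$-weight the decay of $m$ can still accommodate after $\partial_{x}$ has been applied.
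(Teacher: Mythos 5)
Your proposal matches the paper's argument: the paper also differentiates the analogue of \eqref{110} in $x$, isolates $x^{n}\partial_{x}(\mathcal{W}(t)-\mathcal{V}(t))w_{\pm}$, converts $\partial_{x}$ of the phase into $\partial_{k}$ and integrates by parts (via \eqref{9}, giving \eqref{nn.3}; note this identity, rather than \eqref{130}, is the relevant one for operators integrating in $k$), and the extra term $t\partial_{x}m(k,tx)$ with decay $\langle tx\rangle^{-2-\hat\delta}$ is exactly what produces the two regimes, via the $L^{\infty}$ versus $L^{2}$ norm of $tx^{n}\langle tx\rangle^{-1-\delta}$ in \eqref{113} and \eqref{113.1}. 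This is essentially the paper's proof.
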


\begin{proof}
We first prepare an estimate that we need for the proof of this lemma, and
also for later purposes. Since,
\begin{equation}
e^{-it\left(  k\pm\frac{x}{2}\right)  ^{2}}=\frac{\partial_{k}\left(  \left(
k\pm\frac{x}{2}\right)  e^{-it\left(  k\pm\frac{x}{2}\right)  ^{2}}\right)
}{1-2it\left(  k\pm\frac{x}{2}\right)  ^{2}}, \label{nn.1}%
\end{equation}
integrating by parts we have%
\[
\int_{{\mathbb{R}}}e^{-it\left(  k\pm\frac{x}{2}\right)  ^{2}}g\left(
k\right)  dk=-\int_{{\mathbb{R}}}\frac{2it\left(  k\pm\frac{x}{2}\right)
^{2}e^{it\left(  k\pm\frac{x}{2}\right)  ^{2}}}{\left(  1-2it\left(  k\pm
\frac{x}{2}\right)  ^{2}\right)  ^{2}}g\left(  k\right)  dk-\int_{{\mathbb{R}%
}}\frac{\left(  \left(  k\pm\frac{x}{2}\right)  e^{it\left(  k\pm\frac{x}%
{2}\right)  ^{2}}\right)  }{1-2it\left(  k\pm\frac{x}{2}\right)  ^{2}}%
\partial_{x}g\left(  k\right)  dk.
\]
Then, by H\"{o}lder's and Sobolev's inequalities we get%
\begin{equation}%
\begin{array}
[c]{l}%
\left\vert \int_{{\mathbb{R}}}e^{-it\left(  k\pm\frac{x}{2}\right)  ^{2}%
}g\left(  k\right)  dk\right\vert \leq C\left\Vert \frac{1}{1+|t|k^{2}%
}\right\Vert _{L^{1}({\mathbb{R}})}\left\Vert g\right\Vert _{L^{\infty
}({\mathbb{R}})}+C\left\Vert \frac{x}{1+|t|k^{2}}\right\Vert _{L^{p}%
({\mathbb{R}})}\left\Vert g_{1}\right\Vert _{L^{q}({\mathbb{R}})}\\
+C\left\Vert \frac{k}{1+|t|k^{2}}\right\Vert _{L^{p_{1}}({\mathbb{R}}%
)}\left\Vert g_{2}\right\Vert _{L^{q_{2}}({\mathbb{R}})}\leq\frac{C}%
{\sqrt{|t|}}\left\Vert g\right\Vert _{L^{\infty}({\mathbb{R}})}+\frac
{C}{|t|^{\frac{1}{2}\left(  1+\frac{1}{p}\right)  }}\left\Vert g_{1}%
\right\Vert _{L^{q}({\mathbb{R}})}+\frac{C}{|t|^{\frac{1}{2}\left(  1+\frac
{1}{p_{1}}\right)  }}\left\Vert g_{2}\right\Vert _{L^{q_{1}}({\mathbb{R}}%
)},\label{90}%
\end{array}
\end{equation}
for $\frac{1}{p}+\frac{1}{q}=\frac{1}{p_{1}}+\frac{1}{q_{1}}=1$ and
$p,p_{1}>1,$ where $\partial_{x}g=g_{1}+g_{2}.$ In order to prove
(\ref{EST15rough}), we use (\ref{110}) with $\partial_{t}$ replaced by
$\partial_{x}.$ Let us first estimate $x^{n}\partial_{x}\left(  \mathcal{W}%
\left(  t\right)  -\mathcal{V}\left(  t\right)  \right)  w.$ By \eqref{sp5},
\eqref{9}, \eqref{sp5.xxx} and integrating by parts,
\begin{equation}
\partial_{x}\left(  \mathcal{W}\left(  t\right)  -\mathcal{V}\left(  t\right)
\right)  w_{\pm}=\frac{1}{2}\sqrt{\frac{it}{2\pi}}\int_{-\infty}^{\infty
}e^{-it\left(  k-\frac{x}{2}\right)  ^{2}}\left[  \partial_{k}\left(
(m\left(  k,tx\right)  -I)w_{\pm}\left(  k\right)  \right)  +t\partial
_{x}m(k,tx)w_{\pm}(k)\right]  dk. \label{nn.3}%
\end{equation}
By (\ref{3.5}) and (\ref{3.6}),
\begin{equation}
\left\Vert \frac{1}{2}x^{n}\sqrt{\frac{it}{2\pi}}\int_{{\mathbb{R}}%
}e^{-it\left(  k-\frac{x}{2}\right)  ^{2}}\partial_{k}\left(  (m\left(
k,tx\right)  -I)w_{\pm}\left(  k\right)  \right)  dk\right\Vert _{L^{\infty
}({\mathbb{R}})}\leq\left(  t^{1/2-n}\left\Vert w\right\Vert _{H^{1}%
({\mathbb{R}})}\right)  \qquad\pm t\geq a. \label{nn.4}%
\end{equation}
Using (\ref{90}) with $g=x^{n}\partial_{x}m(k,tx)w_{\pm}(k),$ $g_{1}%
=g_{2}=\partial_{k}x^{n}\partial_{x}m(k,tx),$ and $p=q=2,$ (\ref{3.6}), and
(\ref{3.7}), we get,
\begin{align}
&  \left\Vert \frac{1}{2}\sqrt{\frac{it}{2\pi}}\int_{{\mathbb{R}}%
}e^{-it\left(  k-\frac{x}{2}\right)  ^{2}}t\partial_{x}m(k,tx)w_{\pm
}(k)dk\right\Vert _{L^{\infty}({\mathbb{R}})}\nonumber\\
&  \leq C\left\Vert tx^{n}\left\langle tx\right\rangle ^{-1-\delta}\right\Vert
_{L^{\infty}({\mathbb{R}})}\left\Vert w\right\Vert _{H^{1}({\mathbb{R}}%
)}\nonumber\\
&  \leq C\left(  |t|^{1-n}\left\Vert w\right\Vert _{H^{1}({\mathbb{R}}%
)}\right)  ,\qquad\pm t\geq a. \label{113}%
\end{align}
Then, estimate (\ref{EST15rough}) follows from (\ref{ConNonlinearity}),
(\ref{110}) with $\partial_{t},$ replaced by $\partial_{x},$ \eqref{nn.3},
\eqref{nn.4}, \eqref{113} \eqref{l2.1}, (\ref{EST4}), (\ref{2.11}), and
(\ref{89.xxxx}).

Further, if $V$ admits a regular decomposition for some $\delta\geq n-1 +1/2,
$ if $n>0,$ and $\delta=0,$ for $n=0,$ using (\ref{90}) with $g= x^{n}%
\partial_{x} m(k,tx) w_{\pm}(k),$ $g_{1}=g_{2}= \partial_{k} x^{n}
\partial_{x} m(k,tx), $ and $p=q=2,$ (\ref{3.6}), and (\ref{3.7}), we obtain,
\begin{align}
&  \left\Vert \frac{1}{2} \sqrt{\frac{it}{2\pi}}\int_{ {\mathbb{R}}%
}e^{-it\left(  k-\frac{x}{2}\right)  ^{2}} t \partial_{x} m(k,tx) w_{\pm}(k)
dk \right\Vert _{L^{2}( {\mathbb{R}})}\nonumber\\
&  \leq C \left\Vert tx^{n}\left\langle tx\right\rangle ^{-1-\delta
}\right\Vert _{L^{2}( {\mathbb{R}})}\left\Vert w\right\Vert _{H^{1}(
{\mathbb{R}})}\nonumber\\
&  \leq C\left(  |t|^{1/2-n}\left\Vert w\right\Vert _{H^{1}( {\mathbb{R}}%
)}\right)  , \qquad\pm t \geq a . \label{113.1}%
\end{align}
Finally, estimate (\ref{EST15}) follows from (\ref{ConNonlinearity}),
(\ref{110}) with $\partial_{t},$ replaced by $\partial_{x},$ \eqref{nn.3},
\eqref{nn.4}, \eqref{113.1} \eqref{l2.1}, (\ref{EST4}), (\ref{2.11}), and
(\ref{89.xxxx}).
\end{proof}

\section{Proof of Lemma \ref{LemmaPrincipal}.\label{Lprincipal}}

This Section is devoted to the proof of the core estimates presented in Lemma
\ref{LemmaPrincipal}. We divide the proof in several parts.

\subsection{Prof of \eqref{18.2.3}, \eqref{18.0}, and \eqref{18.0.1.1}}

\label{primeras}

\begin{proof}
[Proof of \eqref{18.2.3}]\label{Lprincipal.1} \noindent Equation
\eqref{18.2.3} follows from \eqref{EST20}.
\end{proof}

We now proceed to prove \eqref{18.0}.

\begin{proof}
[Proof of \eqref{18.0}]Note that,
\begin{equation}
\label{14}it (k\pm x/2)\, e^{it(k\pm x/2)^{2}} = \pm\partial_{x} e^{it(k\pm
x/2)^{2}} = \frac{1}{2} \partial_{k} e^{it(k\pm x/2)^{2}} .
\end{equation}

By (\ref{14}) we have%
\begin{equation}
k\widehat{\mathcal{W}}\left(  \tau\right)  f_{1,\pm}=\frac{1}{2}\left[
\partial_{k}\widehat{\mathcal{W}}\left(  \tau\right)  f_{1,\pm}+D_{1,\pm
}\right]  , \label{400}%
\end{equation}
where
\begin{align}
\label{400.1} &  D_{1,\pm}=-\left(  i\tau\right)  ^{-1}\left(  \partial
_{k}S\left(  k\right)  \right)  \mathcal{V}_{ + }\left(  \tau\right)  \left(
m^{\dagger}\left(  -k,\tau x\right)  f_{1,\pm}\right) \nonumber\\
&  +\left(  i\tau\right)  ^{-1}S\left(  k\right)  \mathcal{V}_{+}\left(
\tau\right)  \left(  \left(  \partial_{k}m^{\dagger}\right)  \left(  -k,\tau
x\right)  f_{1,\pm}\right) \nonumber\\
&  -\left(  i\tau\right)  ^{-1}\left(  \mathcal{V}_{- }\left(  \tau\right)
\right)  \left(  \left(  \partial_{k}m^{\dagger}\right)  \left(  k,\tau
x\right)  f_{1,\pm}\right) \\
&  -S\left(  k\right)  \mathcal{V}_{ + }\left(  \tau\right)  \left(  x\left(
m^{\dagger}\left(  -k,\tau x\right)  f_{1,\pm}\right)  \right)  +\mathcal{V}_{
- }\left(  \tau\right)  \left(  xm^{\dagger}\left(  k,\tau x\right)  f_{1,\pm
}\right)  .\nonumber
\end{align}

By \eqref{ConNonlinearity}, \eqref{UnitaritySM}, \eqref{3.2}, \eqref{3.5},
\eqref{scatmatrixderiv}, \eqref{EST8},\eqref{EST4}, \eqref{EST8.0}, and
\eqref{fin.1},
\begin{equation}
\left\Vert D_{1,\pm}\right\Vert _{L^{2}({\mathbb{R}})}\leq\left\Vert \langle
x\rangle f_{1,\pm}\right\Vert _{L^{2}({\mathbb{R}})}\leq C|\tau|^{-\alpha
/2}\,\left\Vert w_{\pm}\right\Vert _{H^{1}({\mathbb{R}})}^{\alpha}\left(
\left\Vert w_{\pm}\right\Vert _{H^{1}({\mathbb{R}})}+\left\Vert w_{\pm
}\right\Vert _{L_{1}^{2}({\mathbb{R}})}\right)  ,\qquad\pm\tau\geq a.
\label{400.3}%
\end{equation}
Then, \eqref{18.0} follows from \eqref{400}, \eqref{400.3}, and \eqref{18.1}.
\end{proof}

\begin{proof}
[Proof of \eqref{18.0.1.1}]By (\ref{400})%
\begin{align}
k^{2}\widehat{\mathcal{W}}\left(  \tau\right)  f_{1,\pm}  &  =\frac{1}{2}
\left[  k\partial_{k}\widehat{\mathcal{W}}\left(  \tau\right)  f_{1,\pm
}+kD_{1,\pm}\right] \nonumber\\
&  =\frac{1}{2}\left[  \partial_{k}\left(  k\widehat{\mathcal{W}}\left(
\tau\right)  f_{1,\pm}\right)  -\widehat{\mathcal{W}}\left(  \tau\right)
f_{1,\pm}+kD_{1,\pm}\right] \label{400.2}\\
&  =\frac{1}{2}\left[  \frac{1}{2}\partial_{k}^{2}\widehat{\mathcal{W}}\left(
\tau\right)  f_{1,\pm}+\frac{1}{2}\partial_{k}D_{1,\pm}-\widehat{\mathcal{W}%
}\left(  \tau\right)  f_{1,\pm}+kD_{1,\pm}\right]  .\nonumber
\end{align}
By \eqref{ConNonlinearity}, \eqref{kpartialk},\eqref{12}, \eqref{130},
\eqref{UnitaritySM}, \eqref{3.2}, \eqref{3.5}, \eqref{3.6}, \eqref{3.7},
\eqref{3.5bis}, \eqref{scatmatrixderiv}, \eqref{scatmatrixsecondderiv},
\eqref{EST8},\linebreak\eqref{EST4}, \eqref{EST8.0}, and \eqref{fin.1},%

\begin{equation}
\label{400.3.1}\left\Vert \partial_{k}D_{1,\pm}\right\Vert _{L^{2}
{\mathbb{R}}} C |\tau|^{-\alpha/2} \, \left\Vert w_{\pm}\right\Vert _{H^{1}(
{\mathbb{R}})}^{\alpha}\left(  \left\Vert w_{\pm}\right\Vert _{H^{2}(
{\mathbb{R}})}+\left\Vert w_{\pm}\right\Vert _{L^{2}_{2}( {\mathbb{R}}%
)}\right)  , \qquad\pm\tau\geq a,
\end{equation}
and using also \eqref{14}, and \eqref{fin.2}, we get,
\begin{equation}
\label{400.4}\left\Vert k D_{1,\pm}\right\Vert _{L^{2} {\mathbb{R}}} C
|\tau|^{-\alpha/2} \, \left\Vert w_{\pm}\right\Vert _{H^{1}( {\mathbb{R}}%
)}^{\alpha}\left(  \left\Vert w_{\pm}\right\Vert _{H^{2}( {\mathbb{R}}%
)}+\left\Vert w_{\pm}\right\Vert _{L^{2}_{2}( {\mathbb{R}})}\right)  ,
\qquad\pm\tau\geq a,
\end{equation}
%
%By (\ref{14}), using (\ref{UnitaritySM}), (\ref{scatmatrixderiv}),
%(\ref{scatmatrixsecondderiv}), (\ref{3.2}), (\ref{3.5}), (\ref{3.6}),
%(\ref{3.7}), (\ref{EST8}) and (\ref{derivativeffr}), since $\left\Vert
%x\partial_{x}g\right\Vert _{L^{2}}\leq C\left(  \left\Vert w\right\Vert
%_{H^{2}}+\left\Vert w\right\Vert _{L^{2,2}}\right)  $ and $\left\Vert
%x^{j}f_{1}\right\Vert _{L^{2}}\leq C\left\Vert w\right\Vert _{H^{1}}^{\alpha
%}\left(  \left\Vert w\right\Vert _{H^{j}}+\left\Vert w\right\Vert _{L^{2,j}%
%}\right)  ,$ $j=1,2,$ we show%
%and%
%\[
%\left\Vert \partial_{k}D_{1}\right\Vert _{L^{2}}+\left\Vert kD_{1}\right\Vert
%_{L^{2}}\leq C\left\Vert w\right\Vert _{H^{1}}^{\alpha}\left(  \left\Vert
%w\right\Vert _{H^{2}}+\left\Vert w\right\Vert _{L^{2,2}}\right)  .
%\]
Finally, (\ref{18.0.1.1}) follows from \eqref{18.1.1}, \eqref{400.2},
\eqref{400.3.1}, and \eqref{400.4}.
\end{proof}

\subsection{Proof of \eqref{18.1}, and \eqref{18.1.1}}

\label{remaining} In this subsection we prove the remaining estimates of Lemma
\ref{LemmaPrincipal}, namely, \eqref{18.1}, and \eqref{18.1.1}. Let us define,%
\begin{equation}
\label{dd.1}\theta_{\pm}:=\widehat{\mathcal{W}}\left(  \tau\right)  f_{1,\pm
}-\widehat{\mathcal{V}}\left(  \tau\right)  f_{1,\pm}^{\operatorname*{fr}}.
\end{equation}
We decompose
\begin{equation}
\label{dd.2}\widehat{\mathcal{W}}\left(  \tau\right)  f_{1,\pm}=\theta_{\pm
}+\widehat{\mathcal{V}}\left(  \tau\right)  f_{1,\pm}^{\operatorname*{fr}}.
\end{equation}
Then, Lemma \ref{LemmaPrincipal} is consequence of the following estimates for
the derivatives $\partial_{k}^{j}\theta_{\pm}$ and $\partial_{k}%
^{j}\widehat{\mathcal{W}}\left(  \tau\right)  f_{1,\pm}^{\operatorname*{fr}},$
$j=1,2.$

\begin{lemma}
\label{Lth}Suppose that $V\in L_{5+\tilde{\delta}}^{1}({\mathbb{R}}),$ for
some $\tilde{\delta}>0$ and that $H_{A,B,V}$ does not have negative
eigenvalues. In addition assume that $V$ admits a regular decomposition with
$\delta>1$ (see Definition \ref{Def1}). Let the nonlinear function
$\mathcal{N}$ satisfy (\ref{ConNonlinearity}). Suppose that $w_{\pm}$
satisfies (\ref{eqw}) for $t\in\mathcal{T}_{\pm}\left(  a,\infty\right)  ,$
$a>0.$ Then, for some $C>0,$ uniformly for $a\geq a_{0}>0,$ we have,%

\begin{equation}
\left\Vert \int_{\mathcal{T}_{\pm}(a,t)}\partial_{k}\theta_{\pm}%
d\tau\right\Vert _{L^{2}( {\mathbb{R}})}\leq C \sup_{\tau\in\mathcal{T}_{\pm
}(a,t)} \left(  \left\Vert w_{\pm}\right\Vert _{H^{1}( {\mathbb{R}})}^{\alpha
}+1\right)  \Lambda_{\pm}\left(  \tau;w_{\pm}\right)  , \qquad\pm t \geq a,
\label{Lth1}%
\end{equation}
and%
\begin{equation}
\left\Vert \int_{\mathcal{T}_{\pm}(a,t)}\partial_{k}^{2}\theta_{\pm}%
d\tau\right\Vert _{L^{2}( {\mathbb{R}})}\leq C\sqrt{|t|} \sup_{\tau
\in\mathcal{T}_{\pm}(a,t)} \left(  \left\Vert w_{\pm}\right\Vert _{H^{1}(
{\mathbb{R}})}^{\alpha}+1 \right)  \Lambda_{\pm}\left(  \tau;w_{\pm}\right)  ,
\qquad\pm t \geq a. \label{Lth2}%
\end{equation}

\end{lemma}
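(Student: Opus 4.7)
\textbf{Plan of proof of Lemma~\ref{Lth}.} The strategy is to exploit the time oscillations of $\widehat{\mathcal{W}}(\tau)$ and $\widehat{\mathcal{V}}(\tau)$, decompose $\theta_{\pm}$ into pieces that each isolate a ``small'' factor, and then integrate by parts in $\tau$ to gain the time decay needed to feed the auxiliary estimates of Section~\ref{AuxEst} for $\partial_{\tau}f_{1,\pm}$, $\partial_{\tau}f_{1,\pm}^{\mathrm{fr}}$ and $\partial_{\tau}f_{2,\pm}$.

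First I would split
\[
\theta_{\pm}=\bigl(\widehat{\mathcal{W}}(\tau)-\widehat{\mathcal{V}}(\tau)\bigr)f_{1,\pm}^{\mathrm{fr}}+\widehat{\mathcal{W}}(\tau)f_{2,\pm},
\]
so each summand carries at least one factor that is small, either through the Jost expansion $f(\pm k,x)=e^{\pm ikx}m(\pm k,x)$ with $m-I$ producing the $\langle\tau x\rangle^{-\delta}$-type decay of \eqref{3.2}--\eqref{3.7}, or through $f_{2,\pm}=f_{1,\pm}-f_{1,\pm}^{\mathrm{fr}}$ which contains $\mathcal{W}(\tau)-\mathcal{V}(\tau)$. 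Applying $\partial_{k}$ to the oscillatory integral representations of $\widehat{\mathcal{W}}$, $\widehat{\mathcal{V}}$ from \eqref{sp9}, \eqref{130} either brings down a factor $\tau(k\pm x/2)$ from the phase, equivalently converts into $\pm 2\partial_{x}$, or differentiates $S(-k)$ and $m(\mp k,\tau x)$; the weight bounds \eqref{scatmatrixderiv}, \eqref{scatmatrixsecondderiv}, \eqref{3.5}--\eqref{3.7} keep each resulting term under control, and \eqref{EST15rough}, \eqref{EST15}, \eqref{derivativeffr} handle the $x$-weighted $\partial_{x}$-derivatives of the sources.

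To recover the precise $\Lambda_{\pm}$ structure of \eqref{Lth1}, I would integrate by parts in $\tau$ using $e^{i\tau(k\pm x/2)^{2}}=\bigl(i(k\pm x/2)^{2}\bigr)^{-1}\partial_{\tau}e^{i\tau(k\pm x/2)^{2}}$, which gains a factor $\tau^{-1}$ and transfers $\partial_{\tau}$ onto $f_{1,\pm}^{\mathrm{fr}}$ or $f_{2,\pm}$. The $L^{2}$-parts of these derivatives are furnished by \eqref{303.4}, \eqref{303.6}, \eqref{EST16rough}, \eqref{EST16}, decaying like $\tau^{-3/2-\alpha/2}\bigl[\tau^{-1/2}\|w_{\pm}\|_{H^{2}}+\Lambda_{\pm}\bigr]$; the $L^{\infty}$/$H^{1}$-parts are controlled by \eqref{303.5}, \eqref{303.7}. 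Combined with the $L^{2}$-boundedness of $\widehat{\mathcal{V}}(\tau)$ and $\widehat{\mathcal{W}}(\tau)$ from Appendix~\ref{App1}, the $\tau$-integral over $\mathcal{T}_{\pm}(a,t)$ becomes absolutely convergent uniformly in $t$, giving \eqref{Lth1}. Boundary contributions from the integration by parts at $\tau=\pm a$ and $\tau=t$ are estimated directly via \eqref{EST4} and \eqref{EST20}.

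For \eqref{Lth2} I would run the same scheme, noting that $\partial_{k}^{2}$ now produces either one or two extra $x$ weights on the source. These are absorbed by the $n=0,1$ instances of \eqref{EST15}, \eqref{EST16} together with the $L_{2}^{2}$-component of $w_{\pm}$ appearing in the allowed growth $\Lambda_{\pm}(\tau;w_{\pm})+\|w_{\pm}\|_{H^{1}}^{2\alpha+1}$; the single factor $\sqrt{|t|}$ on the right-hand side of \eqref{Lth2} arises from integrating one $\tau^{-1/2}$-decay corresponding precisely to this $L_{2}^{2}$-control. The main obstacle will be taming the singular factor $(k\pm x/2)^{-2}$ produced by the $\tau$-integration by parts near the stationary-phase line $k=\mp x/2$. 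Its resolution relies on the low-energy structure $S(0)=I-2P_{-}$ together with the hypothesis $\mathcal{N}P_{-}=P_{-}\mathcal{N}$ and the scattering symmetry $w_{\pm}(k)=S(k)w_{\pm}(-k)$ from \eqref{eqw.2}: acting in concert these force the integrand to vanish to sufficient order at $k\pm x/2=0$ for the apparent singularity to be integrable, with the regularity of $S$ and $m$ needed for the expansion guaranteed by $V\in L^{1}_{5+\tilde{\delta}}(\mathbb{R}^{+})$ and by the regular decomposition with $\delta>1$.
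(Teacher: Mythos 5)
Your overall architecture (split $\theta_{\pm}$ into a piece carrying $m-I$ and a piece carrying $f_{2,\pm}$, use the explicit $\partial_k$-formulas for the oscillatory integrals, then integrate by parts in $\tau$ and feed the resulting $\partial_\tau$-derivatives into the estimates \eqref{303.4}--\eqref{303.7}, \eqref{EST15}, \eqref{EST16}) matches the paper's in spirit, and the paper indeed uses exactly those auxiliary bounds. But the central analytic step is wrong. You propose to integrate by parts in $\tau$ via $e^{i\tau(k\pm x/2)^{2}}=\bigl(i(k\pm x/2)^{2}\bigr)^{-1}\partial_{\tau}e^{i\tau(k\pm x/2)^{2}}$, which produces the denominator $(k\pm x/2)^{-2}$, singular along the entire moving set $\{k=\mp x/2:\,x>0\}$, not just at $k=0$. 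Your proposed cure — vanishing forced by $S(0)=I-2P_{-}$, the commutation $\mathcal{N}P_{-}=P_{-}\mathcal{N}$, and the symmetry $w_{\pm}(k)=S(k)w_{\pm}(-k)$ — cannot work here for two reasons: first, neither the $P_{-}$-commutation hypothesis nor the symmetry of $w_{\pm}$ is assumed in Lemma~\ref{Lth} (they enter only in Lemmas~\ref{Lema3} and~\ref{Lema4}, for the free part $\widehat{\mathcal{V}}(\tau)f_{1,\pm}^{\operatorname*{fr}}$); second, even granting them, any cancellation they produce is localized at $k=0$ and gives no vanishing at a generic stationary point $k=\mp x/2$ with $x>0$. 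So the singular factor is not tamed and the argument breaks.

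The paper's route around this is different and is the key idea you are missing. After applying $\partial_k$ (resp.\ $\partial_k^{2}$) one does \emph{not} integrate by parts against the full phase $e^{i\tau(k\pm x/2)^{2}}$. Instead, the explicit formulas \eqref{pp.15}--\eqref{pp.18} isolate the only genuinely dangerous contributions as the combinations $\tau k\,\theta_{\pm}$, $\tau^{2}k\,\theta_{\pm}^{(j)}$ and $\tau^{2}k^{2}\theta_{\pm}$ (the quantities $\Omega_{1,\pm},\Omega_{2,\pm},\Omega_{3,\pm}$); everything else is bounded pointwise in $\tau$ by Lemma~\ref{Lema1}. For the $\Omega$'s one uses the regularized non-stationary-phase identity \eqref{2.20}, $1=e^{-i\tau k^{2}}\,\partial_{\tau}\bigl(\tau e^{i\tau k^{2}}\bigr)/(1+i\tau k^{2})$, and integrates by parts in $\tau$ against $e^{i\tau k^{2}}$ alone. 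The resulting weights $\tau k^{m}/(1+i\tau k^{2})$ are uniformly bounded (by $C\sqrt{|\tau|}$ for $m=1$ and $C$ for $m=2$), so no singular denominator ever appears; and the derivative $\partial_{\tau}\bigl(e^{-i\tau k^{2}}e^{i\tau(k\pm x/2)^{2}}\bigr)$ brings down only the polynomial weight $i(\pm kx+x^{2}/4)$, which is absorbed by the $x$-weighted source estimates \eqref{EST14}, \eqref{EST15}, \eqref{EST16} together with \eqref{3.5}--\eqref{3.7}. Without replacing your stationary-phase integration by parts with this regularized version (or an equivalent device), the proof does not close.
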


\begin{proof}
See Subsection \ref{Ltheta}.
\end{proof}

\begin{lemma}
\label{Lema3}Suppose that $V\in L_{7/2+\tilde{\delta}}^{1}(\mathbb{R}^{+}),$
for some $\tilde{\delta}>0$ and that $H_{A,B,V}$ does not have negative
eigenvalues. In addition, suppose that $V$ admits a regular decomposition with
$\delta>3/2$ (see Definition \ref{Def1}). Let the nonlinear function
$\mathcal{N}$ satisfy (\ref{ConNonlinearity}), and assume that it commutes
with the projectors $P_{-}$ onto the eigenspace of the scattering matrix
$S(0)$ corresponding to the eigenvalue $-1$ (see Appendix~ \ref{App1}) . Then,
for some $C=C\left(  a\right)  >0,$ uniformly for $a\geq a_{0}>0,$ and for any
measurable function, $w_{\pm}(t),$ of $t\in\mathcal{T}_{\pm}\left(
a,\infty\right)  ,$ with values in $H^{2}({\mathbb{R}})$ that satisfies
(\ref{eqw}), and $w_{\pm}(k)=S(k)w_{\pm}(-k),$ $k\in{\mathbb{R}},$ we have
\begin{equation}
\left\Vert \partial_{k}\int_{\mathcal{T}_{\pm}(a,t)}\widehat{\mathcal{V}%
}\left(  \tau\right)  f_{1,\pm}^{\operatorname*{fr}}d\tau\right\Vert
_{L^{2}({\mathbb{R}})}\leq C\sup_{\tau\in\mathcal{T}_{\pm}(a,t)}\left(
\left\Vert w_{\pm}\right\Vert _{H^{1}({\mathbb{R}})}^{\alpha}+1\right)
\Lambda_{\pm}\left(  t,w_{\pm}\right)  ,\qquad\pm t\geq a. \label{Lema3F1}%
\end{equation}

\end{lemma}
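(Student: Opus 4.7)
The plan is to distribute the $k$-derivative on $\widehat{\mathcal{V}}(\tau)f_{1,\pm}^{\operatorname*{fr}}$ and convert it to an $x$-derivative via the identity $\partial_{k}e^{i\tau(k\mp x/2)^{2}}=\mp 2\partial_{x}e^{i\tau(k\mp x/2)^{2}}$ from \eqref{96.1}. Integrating by parts in $x$ over $[0,\infty)$ produces two kinds of terms: (i) a bulk contribution $\widehat{\mathcal{V}}(\tau)\partial_{x}f_{1,\pm}^{\operatorname*{fr}}$ together with subordinate pieces from $\partial_{k}S(\mp k)$ acting in the scattering half of $\widehat{\mathcal{V}}$, and (ii) a boundary contribution at $x=0$ of the schematic form
\begin{equation*}
\mathcal{B}(\tau,k)\;:=\;c\,\sqrt{\tau}\,e^{i\tau k^{2}}\,[I-S(-k)]\,f_{1,\pm}^{\operatorname*{fr}}(\tau,0).
\end{equation*}
After the $\tau$-integral, the $L^{2}(\mathbb{R})$ norm of each ingredient must be majorized by $(1+\|w_{\pm}\|_{H^{1}}^{\alpha})\,\Lambda_{\pm}(\tau;w_{\pm})$.

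The bulk contribution is routine: by the $L^{2}$-boundedness \eqref{isomafr} of $\widehat{\mathcal{V}}(\tau)$ combined with \eqref{derivativeffr}, one finds $\|\widehat{\mathcal{V}}(\tau)\partial_{x}f_{1,\pm}^{\operatorname*{fr}}\|_{L^{2}(\mathbb{R})}\leq C|\tau|^{-\alpha/2}\|w_{\pm}\|_{H^{1}}^{\alpha+1}$, whose $\tau$-integral on $\mathcal{T}_{\pm}(a,t)$ is $O(\sup_{\tau}\|w_{\pm}\|_{H^{1}}^{\alpha+1})$ since $\alpha>2$. The subordinate pieces carrying $\partial_{k}S(\mp k)$ are bounded analogously using \eqref{scatmatrixderiv}, \eqref{12.1}, and \eqref{isomafr}.

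The boundary contribution is the core of the proof. Split $k\in\mathbb{R}$ with a smooth cutoff $\theta(k)$ supported in a neighborhood of $0$. On $\operatorname*{supp}(1-\theta)$ we integrate by parts in $\tau$ through $e^{i\tau k^{2}}=(ik^{2})^{-1}\partial_{\tau}e^{i\tau k^{2}}$: the resulting endpoint terms at $\tau=\pm a$ and $\tau=t$ are controlled by \eqref{12.1} and \eqref{12.1.00}, while the interior integral, which now carries the factor $k^{-2}$ multiplied by $\partial_{\tau}[\sqrt{\tau}\,[I-S(-k)]\,f_{1,\pm}^{\operatorname*{fr}}(\tau,0)]$, is bounded using the decomposition \eqref{303.3}--\eqref{303.5} of $\partial_{\tau}f_{1,\pm}^{\operatorname*{fr}}$ into its $L^{2}$-small and $L^{\infty}\cap H^{1}$-small pieces. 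On $\operatorname*{supp}\theta$, the structural cancellations of the lemma are invoked: write $I-S(-k)=2P_{-}+k\,R(k)$ with $R$ smooth (justified by \eqref{scatmatrixderiv} and the regularity $V\in L^{1}_{7/2+\tilde{\delta}}$, $\delta>3/2$). The commutativity $\mathcal{N}P_{-}=P_{-}\mathcal{N}$ then reduces the leading term to $2\,\mathcal{N}(|\cdot|)\,P_{-}\mathcal{V}(\tau)w_{\pm}(\tau,0)$; the symmetry $w_{\pm}(k)=S(k)w_{\pm}(-k)$ at $k=0$ forces $P_{-}w_{\pm}(0)=0$; and a single stationary-phase integration by parts in $k$ in the oscillatory integral defining $\mathcal{V}(\tau)w_{\pm}$ at $x=0$ yields $\|P_{-}\mathcal{V}(\tau)w_{\pm}(\tau,0)\|\leq C|\tau|^{-1/2}\|w_{\pm}\|_{H^{1}}$, which exactly compensates the $\sqrt{\tau}$ growth of $\mathcal{B}$. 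The residual $k\,R(k)\,f_{1,\pm}^{\operatorname*{fr}}(\tau,0)$ carries an extra factor $k$ that removes the low-$k$ singularity after the $\tau$-integration by parts.

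The main obstacle is the low-$k$ cancellation in the boundary term, where the $P_{-}$-commutativity of $\mathcal{N}$ and the symmetry of $w_{\pm}$ are jointly indispensable: the former lets $P_{-}$ pass through $\mathcal{N}$ so that the projector lands on $\mathcal{V}(\tau)w_{\pm}$, while the latter, combined with a stationary-phase argument, upgrades $P_{-}\mathcal{V}(\tau)w_{\pm}(\tau,0)$ from $O(1)$ to $O(|\tau|^{-1/2})$. This is precisely the point at which the general-exceptional-potential case is controlled without any parity assumption on $w_{\pm}$, and it is the analytic novelty of the argument; the remaining steps are careful bookkeeping on the oscillatory integrals.
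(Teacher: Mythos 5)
Your high-level strategy is the paper's: convert $\partial_{k}$ into $\partial_{x}$, isolate the $x=0$ boundary term carrying $[I-S]$, split off $2P_{-}$ using $I-S(0)=2P_{-}$ and $S(k)-S(0)=O(k)$, and kill the $P_{-}$ piece with the commutativity of $\mathcal{N}$ and the symmetry of $w_{\pm}$. But two steps as you describe them do not close. First, your order of operations (take $\partial_{k}$ first, producing the boundary value $f_{1,\pm}^{\operatorname*{fr}}(\tau,0)$, then integrate by parts in $\tau$) forces you to bound $\partial_{\tau}f_{1,\pm}^{\operatorname*{fr}}(\tau,0)$ \emph{pointwise at} $x=0$. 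The decomposition \eqref{303.3}--\eqref{303.5} only controls $\mathcal{A}_{1,\pm}^{(1)}$ in $L^{2}$, and this cannot be upgraded: $\partial_{\tau}(\mathcal{V}(\tau)w_{\pm})(0)$ contains $\tau^{-2}(\mathcal{V}(\tau)\partial_{k}^{2}w_{\pm})(0)$, whose pointwise control would require $\partial_{k}^{2}w_{\pm}\in L^{1}$ or $w_{\pm}\in H^{3}$ — neither is available. The paper avoids this by performing the $\tau$-integration by parts on $\psi(k)\mathcal{V}_{+}(\tau)(\chi f_{1,\pm}^{\operatorname*{fr}})$ \emph{before} differentiating in $k$ (Lemma \ref{Lema5}), and by using \eqref{80} to keep the combination $2i\tau\mathcal{V}_{+}(\tau)\bigl((k+\tfrac{x}{2})\chi\mathcal{A}_{1,\pm}^{(1)}\bigr)$ unsplit, so that $\mathcal{A}_{1,\pm}^{(1)}$ is only ever tested in $L^{2}(dx)$; only $\mathcal{A}_{2,\pm}^{(1)}$, which has an $L^{\infty}\cap H^{1}$ bound, is evaluated at $x=0$. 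Relatedly, near $k=0$ your residual term $\sqrt{\tau}\,kR(k)f_{1,\pm}^{\operatorname*{fr}}(\tau,0)$ is of size $\tau^{1/2-\alpha/2}$ before any $\tau$-integration by parts, which is not $\tau$-integrable for $2<\alpha\leq 3$; the integration by parts you would need there must use the nonsingular identity \eqref{2.20} (the weight $\tau/(1+i\tau k^{2})$), not $(ik^{2})^{-1}$, and then you are again facing the Lemma \ref{Lema5} machinery and the same pointwise-at-$x=0$ obstruction.

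Second, the quantitative heart of the $P_{-}$ cancellation is not what you state. The pointwise identity $P_{-}w_{\pm}(\tau,0)=0$ (which does follow from $w=S(0)w$ at $k=0$) combined with ``a single stationary-phase integration by parts'' does not give $|P_{-}(\mathcal{V}(\tau)w_{\pm})(0)|\leq C|\tau|^{-1/2}\|w_{\pm}\|_{H^{1}}$: with $w_{\pm}\in H^{1}$ you only get $|P_{-}w_{\pm}(k)|\leq C|k|^{1/2}\|w_{\pm}\|_{H^{1}}$, which yields at best $|\tau|^{-1/4}$, and the resulting boundary contribution $\tau^{1/4-\alpha/2}\|w_{\pm}\|_{H^{1}}^{\alpha+1}$ is not $\tau$-integrable for $2<\alpha\leq 5/2$ (using $\|w_{\pm}\|_{H^{2}}$ instead costs a factor $\sqrt{\tau}$ relative to $\Lambda_{\pm}$ and fails for $2<\alpha\leq3$). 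What actually works — and what the paper does in \eqref{129}, \eqref{70}--\eqref{71}, \eqref{76}, \eqref{140} — is to use the symmetry to rewrite $(\mathcal{V}(\tau)w_{\pm})(0)=\sqrt{\tfrac{i\tau}{2\pi}}\int_{0}^{\infty}e^{-i\tau k^{2}}(S(-k)+I)w_{\pm}(k)\,dk$ and exploit that the \emph{matrix symbol} $P_{-}(S(-k)+I)=P_{-}(S(-k)-S(0))$ vanishes \emph{linearly} at $k=0$ by \eqref{S-1}; this gives $|\tau|^{-\rho/2}$ for every $\rho<1$ with only $\|w_{\pm}\|_{H^{1}}$ on the right, which, together with the $\tau^{-\alpha/2}$ from the nonlinearity, is summable for all $\alpha>2$. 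So the cancellation must be extracted from the symbol $P_{-}(S(-k)+I)$, not from the single value $P_{-}w_{\pm}(0)$. These are the two places where your outline, as written, would fail in the range $2<\alpha\leq 5/2$ and for $w_{\pm}$ merely in $H^{2}$.
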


\begin{proof}
See Subsection \ref{ProofLema34}.
\end{proof}

\begin{lemma}
\label{Lema4}Suppose that $V\in L_{4}^{1}(\mathbb{R}^{+})$ and that
$H_{A,B,V}$ does not have negative eigenvalues. In addition, suppose that $V$
admits a regular decomposition with $\delta=2$ (see Definition \ref{Def1}).
Assume that the nonlinear function $\mathcal{N}$ satisfy
(\ref{ConNonlinearity}) and that it commutes with the projector $P_{-}$ onto
the eigenspace of the scattering matrix $S(0)$ corresponding to the eigenvalue
$-1$ (see Appendix \ref{App1}). Then, for some $C>0,$ uniformly for $a\geq
a_{0}>0,$ and for any measurable function, $w_{\pm}(t),$of $t\in
\mathcal{T}_{\pm}\left(  a,\infty\right)  ,$ with values in $H^{2}%
({\mathbb{R}})$ that satisfies (\ref{eqw}), and $w_{\pm}(k)=S(k)w_{\pm}(-k),$
$k\in{\mathbb{R}},$ we have
\begin{equation}
\left\Vert \partial_{k}^{2}\int_{\mathcal{T}_{\pm}(a,t)}\widehat{\mathcal{V}%
}\left(  \tau\right)  f_{1,\pm}^{\operatorname*{fr}}d\tau\right\Vert
_{L^{2}({\mathbb{R}})}\leq C\sqrt{|t|}\sup_{\tau\in\mathcal{T}_{\pm}%
(a,t)}\left(  1+\left\Vert w_{\pm}\right\Vert _{H^{1}({\mathbb{R}})}^{\alpha
}\right)  \left(  \Lambda_{\pm}(\tau;w_{\pm})+\left\Vert w_{\pm}\right\Vert
_{H^{1}({\mathbb{R}})}^{2\alpha+1}\right)  , \label{Lema3F2}%
\end{equation}
for $\pm t\geq a.$
\end{lemma}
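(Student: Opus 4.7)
The plan is to proceed in parallel with the proof of Lemma \ref{Lema3}, which controls $\partial_{k}\int \widehat{\mathcal{V}}(\tau) f_{1,\pm}^{\operatorname{fr}}\, d\tau$, but to push the argument to second order in $k$, accepting a loss of $\sqrt{|t|}$ (in place of a uniform-in-time bound) and tracking the extra factor $\|w_{\pm}\|_{H^{1}}^{2\alpha+1}$ produced by the second-order chain rule applied to $f_{1,\pm}^{\operatorname{fr}}$.

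First I would expand $\partial_{k}^{2}\widehat{\mathcal{V}}(\tau) f_{1,\pm}^{\operatorname{fr}}$ by writing $\widehat{\mathcal{V}}$ as the oscillatory integral $\sqrt{\tau/(2\pi i)}\int e^{i\tau(k\pm x/2)^{2}}[\cdots]\, dk$, so that each $\partial_{k}$ falling on the phase produces a factor $2i\tau(k\pm x/2)$ which, by the identity \eqref{14}, equals $\pm\partial_{x}$ applied to the exponential. Integrating those $\partial_{x}$'s by parts transfers one or two $x$-derivatives onto $f_{1,\pm}^{\operatorname{fr}}$; these contributions are controlled by \eqref{derivativeffr}, yield decay of order $|\tau|^{-\alpha/2}\|w_{\pm}\|_{H^{1}}^{\alpha}\|w_{\pm}\|_{H^{2}}$, and are therefore integrable in $\tau$ (no $\sqrt{|t|}$ loss is needed on these pieces). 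The only term that does not submit to this treatment is the purely kinetic piece carrying two factors $\tau(k\pm x/2)$ with the unshifted $f_{1,\pm}^{\operatorname{fr}}$, so a separate treatment is required.

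For that residual term I would use the equation \eqref{eqw} satisfied by $w_{\pm}$ to integrate by parts in $\tau$, exploiting $e^{i\tau k^{2}}=(ik^{2})^{-1}\partial_{\tau}e^{i\tau k^{2}}$ on the region $|k|\geq \varepsilon$, and a Taylor expansion at $k=0$ on $|k|\leq\varepsilon$. The key low-energy point is that on $\operatorname{Ran}(P_{-})$ the scattering matrix satisfies $S(0)=-I+2P_{+}$, so $S(k)-S(0)=O(k)$ on $\operatorname{Ran}(P_{-})$; combined with the symmetry $w_{\pm}(k)=S(k)w_{\pm}(-k)$ and the commutation $\mathcal{N}P_{-}=P_{-}\mathcal{N}$, this yields an extra factor of $k$ near zero that neutralizes the $1/k^{2}$ singularity introduced by the $\tau$-integration by parts, exactly as in Lemma \ref{Lema3}. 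After the $\tau$-integration by parts the boundary contribution at $\tau=t$ produces the announced $\sqrt{|t|}$ factor through the free dispersion bound \eqref{130}, while the bulk contribution from $\partial_{\tau}f_{1,\pm}^{\operatorname{fr}}$ is controlled via the decomposition \eqref{303.3}--\eqref{303.5}, splitting $\partial_{\tau}f_{1,\pm}^{\operatorname{fr}}$ into an $L^{2}$ piece (bounded by $\mathcal{A}_{1,\pm}^{(1)}$) and an $L^{\infty}\cap H^{1}$ piece (bounded by $\mathcal{A}_{2,\pm}^{(1)}$), each already carrying time decay faster than $|\tau|^{-1}$. Uniformity in $a\geq a_{0}>0$ is preserved because $\tau$ stays bounded away from zero throughout.

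The main obstacle is the low-energy analysis: whereas Lemma \ref{Lema3} needed to cancel a $1/k$ weight, the second derivative produces effectively a $1/k^{2}$ weight at an intermediate step, since one factor $\tau(k\pm x/2)$ is converted into $\partial_{\tau}^{-1}$ and another must be matched by vanishing of $S(k)-S(0)$. Extracting sufficient vanishing requires expanding $S(k)$ to one more order at $k=0$, which is why the hypotheses $V\in L^{1}_{4}$ and $\delta=2$ in the regular decomposition appear here but not in Lemma \ref{Lema3}; these hypotheses feed into the sharper asymptotics for $S$, $\partial_{k}S$, and $\partial_{k}^{2}S$ collected in Appendix \ref{App1} (compare \eqref{scatmatrixderiv}--\eqref{scatmatrixsecondderiv}). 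The bookkeeping of these expansions, combined with the $\|w_{\pm}\|_{H^{1}}^{2\alpha+1}$ term generated when two $x$-derivatives land on $\mathcal{N}$ via the chain rule in \eqref{100}, is the technical heart of the proof.
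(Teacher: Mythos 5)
Your sketch correctly identifies two of the mechanisms the paper actually uses (conversion of the phase factors $\tau(k\pm x/2)$ into $\partial_{x}$ via \eqref{14}, and a $\tau$-integration by parts combined with the low-energy smallness coming from $P_{-}$, the symmetry $w_{\pm}(k)=S(k)w_{\pm}(-k)$ and the commutation $\mathcal{N}P_{-}=P_{-}\mathcal{N}$), but it has a genuine gap at the heart of the argument: the boundary terms at $x=0$. Since $\mathcal{V}_{\pm}$ integrates over $(0,\infty)$, each integration by parts in $x$ produces boundary contributions, and the worst one in $\partial_{k}^{2}\mathcal{V}_{\pm}(\tau)\varphi$ is $\mp4i\tau\sqrt{\tau/2\pi i}\,k\,e^{i\tau k^{2}}\varphi(0)$ (see \eqref{54} and \eqref{54bis}), carrying an explicit $\tau^{3/2}$. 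Your low-energy cancellation rescues only the $P_{-}$ projection of this term, because $P_{-}(S(-k)+I)=O(k)$ makes $P_{-}(\mathcal{V}(\tau)w_{\pm})(0)$ decay like $|\tau|^{-\rho/2}$ (\eqref{129}, \eqref{71}, \eqref{76}); the $P_{+}$ projection of the trace at $x=0$ is \emph{not} small, since $P_{+}(S(-k)+I)\rightarrow 2P_{+}\neq0$ as $k\rightarrow0$. The paper disposes of the $P_{+}$ part by a device your proposal does not contain: in the $P_{+}$ combination $\mathcal{V}_{+}+\mathcal{V}_{-}$ the $k\tau^{3/2}\varphi(0)$ boundary terms cancel by parity, and taking the cutoff $\chi$ even one rewrites $\mathcal{V}_{+}(\tau)(\chi f_{1,\pm}^{\mathrm{fr}})+\mathcal{V}_{-}(\tau)(\chi f_{1,\pm}^{\mathrm{fr}})$ as a full-line integral $\widetilde{\mathcal{V}}(\tau)(\chi f_{1,\pm}^{\mathrm{fr}})$ (no boundary at all) plus $\widetilde{\mathcal{W}}(\tau)(\chi a_{1})$ with $a_{1}=f_{1,\pm}^{\mathrm{fr}}(w_{\pm}(-\cdot))-f_{1,\pm}^{\mathrm{fr}}(w_{\pm})$ odd, so that $a_{1}(0)=0$ kills the dangerous boundary term and only $P_{+}\partial_{x}a_{1}(0)$ survives, which is then small because $\mathcal{V}(\tau)\left((S(-k)-S(0))w_{\pm}\right)$ is. In the $P_{-}$ combination $\mathcal{V}_{-}-\mathcal{V}_{+}$ the boundary terms add (this is \eqref{qpqpqp}), which is exactly why the two projections need separate treatments; without the $P_{+}$ mechanism your argument cannot close.

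Two smaller points. First, your diagnosis that one must ``expand $S(k)$ to one more order at $k=0$'' to neutralize a $1/k^{2}$ singularity is not how the paper proceeds and would not work as stated: $S(k)-S(0)$ is generically only $O(k)$, and the paper never uses second-order vanishing. Instead it integrates by parts in $\tau$ with the regularized identity $1=e^{-i\tau k^{2}}\partial_{\tau}\left(\tau e^{i\tau k^{2}}\right)/(1+i\tau k^{2})$ of \eqref{2.20}, which introduces no singularity at $k=0$, and it isolates the genuinely singular region with the cutoffs $\chi(k)$ and $\zeta(\cdot/k)$, handling the $\psi(k)$-weighted pieces (with $\psi=S(k)-S(0)$ or $\psi=1-\chi$, both vanishing at $k=0$ to first order only) through the separate Lemma \ref{Lema5}. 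Second, the claim that the $\partial_{x}$-transferred pieces require ``no $\sqrt{|t|}$ loss'' is off for $2<\alpha<3$: they are bounded by $|\tau|^{-\alpha/2}\Vert w_{\pm}\Vert_{H^{2}}\leq C|\tau|^{(1-\alpha)/2}\Lambda_{\pm}$, whose time integral is of order $|t|^{(3-\alpha)/2}\leq\sqrt{|t|}$, so the $\sqrt{|t|}$ in \eqref{Lema3F2} is consumed on those terms as well.
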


\begin{proof}
See Subsection \ref{ProofLema34}.
\end{proof}

Therefore, we see that the proof of \eqref{18.1} is reduced to proving Lemmas
\ref{Lth}, \ref{Lema3} and \ref{Lema4}. We prove these lemmas in the next subsections.

\subsection{Estimates for the derivatives of $\theta_{\pm}.$ Proof of Lemma
\ref{Lth} \label{Ltheta}}

This Section is devoted to the proof of Lemma \ref{Lth}. In order to prove
this result, we prepare several estimates. Let us denote%
\begin{equation}
\label{pp.1}\theta_{\pm}^{(1)}:=\mathcal{W}_{+}\left(  \tau\right)  \left(
xf_{1,\pm}\right)  -\mathcal{V}_{ + }\left(  \tau\right)  \left(  xf_{1,\pm
}^{\operatorname*{fr}}\right)  , \theta_{\pm}^{(2)}:=\mathcal{W}_{-}\left(
\tau\right)  \left(  xf_{1,\pm}\right)  -\mathcal{V}_{ - }\left(  \tau\right)
\left(  xf_{1,\pm}^{\operatorname*{fr}}\right) ,
\end{equation}
and%
\begin{equation}
\label{pp.2}\tilde{\theta}_{\pm}:=\widehat{\mathcal{W}}\left(  \tau\right)
\left(  x^{2}f_{1,\pm}\right)  -\widehat{\mathcal{V}}\left(  \tau\right)
\left(  x^{2}f_{1,\pm}^{\operatorname*{fr}}\right)  .
\end{equation}
We define%
\begin{equation}
\label{pp.3}R_{1}^{\pm}\left(  \phi\right)  :=\mp\sqrt{\frac{\tau}{2\pi i}%
}\int_{0}^{\infty}e^{i\tau\left(  k\pm\frac{x}{2}\right)  ^{2}}\left(
\partial_{k}m^{\dagger}\right)  \left(  \mp k,\tau x\right)  \phi\left(
x\right)  dx,
\end{equation}
and%
\begin{equation}
\label{pp.4}R_{2}^{\pm}\left(  \phi\right)  :=R_{21}^{\pm}\left(  \phi\right)
+R_{22}^{\pm}\left(  \phi\right)  ,
\end{equation}
with%
\begin{equation}
\label{pp.5}R_{21}^{\pm}\left(  \phi\right)  :=\mp4i \tau\left(  \sqrt
{\frac{\tau}{2\pi i}}\int_{0}^{\infty}e^{i\tau\left(  k\pm\frac{x}{2}\right)
^{2}}\left(  \partial_{k}m^{\dagger}\right)  \left(  \mp k,\tau x\right)
\left(  k\pm\frac{x}{2}\right)  \phi\left(  x\right)  dx\right)  ,
\end{equation}
and%
\begin{equation}
\label{pp.6}R_{22}^{\pm}\left(  \phi\right)  :=\sqrt{\frac{\tau}{2\pi i}}%
\int_{0}^{\infty}e^{i\tau\left(  k\pm\frac{x}{2}\right)  ^{2}}\left(
\partial_{k}^{2}m^{\dagger}\right)  \left(  \mp k,\tau x\right)  \phi\left(
x\right)  dx.
\end{equation}
Also, we set%
\begin{equation}
\label{pp.7}\Phi_{1,\pm}:=|\tau|^{1+\alpha/2}\left\Vert \theta_{\pm}%
^{(1)}\right\Vert _{L^{2}( {\mathbb{R}})}+|\tau|^{1+\alpha/2}\left\Vert
\theta_{\pm}^{(2)}\right\Vert _{L^{2}( {\mathbb{R}})},
\end{equation}%
\begin{equation}
\label{pp.8}%
\begin{array}
[c]{l}%
\Phi_{2,\pm}:=|\tau|^{\alpha/2}\left[  \left\Vert R_{1}^{+}\left(  f_{1,\pm
}\right)  \right\Vert _{L^{2}( {\mathbb{R}})}+\left\Vert R_{1}^{-}\left(
f_{1,\pm}\right)  \right\Vert _{L^{2}( {\mathbb{R}})}+\right. \\
\left.  \left\Vert \left(  \partial_{k}S\left(  k\right)  \right)
\mathcal{W}_{+}\left(  \tau\right)  f_{1,\pm}\right\Vert _{L^{2}( {\mathbb{R}%
})}+\left\Vert \left(  \partial_{k}S\left(  k\right)  \right)  \mathcal{V}%
{_{+}} \left(  \tau\right)  f_{1,\pm}^{\operatorname*{fr}}\right\Vert _{L^{2}(
{\mathbb{R}})}\right]  ,
\end{array}
\end{equation}%
\begin{equation}
\label{pp.9}\Phi_{3,\pm}:= |\tau|^{\alpha/2}| \left\Vert \tau\theta_{\pm
}\right\Vert _{L^{2}( {\mathbb{R}})}+\left\Vert \tau^{2}\tilde{\theta}_{\pm
}\right\Vert _{L^{2}( {\mathbb{R}})},
\end{equation}
\begin{equation}
\label{pp.10}%
\begin{array}
[c]{l}%
\Phi_{4,\pm}:=|\tau|^{\alpha/2}\left[  \left\Vert R_{2}^{+}\left(  f_{1,\pm
}\right)  \right\Vert _{L^{2} {\mathbb{R}})}+\left\Vert R_{2}^{-}\left(
f_{1,\pm}\right)  \right\Vert _{L^{2}( {\mathbb{R}})}\right]  ,
\end{array}
\end{equation}

\begin{equation}
\Phi_{5,\pm}:=|\tau|^{\alpha/2}\left[  \left\Vert \left(  \partial_{k}S\left(
k\right)  \right)  \left(  \partial_{k}\mathcal{W}_{+}\left(  \tau\right)
\right)  f_{1,\pm}\right\Vert _{L^{2}({\mathbb{R}})}+\left\Vert \left(
\partial_{k}S\left(  k\right)  \right)  \left(  \partial_{k}\mathcal{V}%
_{\left(  +\right)  }\left(  \tau\right)  \right)  f_{1,\pm}%
^{\operatorname*{fr}}\right\Vert _{L^{2}({\mathbb{R}})}\right]  ,
\label{pp.11}%
\end{equation}%
\begin{equation}
\Phi_{6,\pm}:=|\tau|^{\alpha/2}\left[  \left\Vert \left(  \partial_{k}%
^{2}S\left(  k\right)  \right)  \mathcal{W}_{+}\left(  \tau\right)  f_{1,\pm
}\right\Vert _{L^{2}({\mathbb{R}})}+\left\Vert \left(  \partial_{k}%
^{2}S\left(  k\right)  \right)  \mathcal{V}_{+}\left(  \tau\right)  f_{1,\pm
}^{\operatorname*{fr}}\right\Vert _{L^{2}({\mathbb{R}})}\right]  ,
\label{pp.12}%
\end{equation}%
\begin{equation}
\Omega_{1,\pm}:=\left\Vert \int_{\mathcal{T}_{\pm}(a,t)}\tau k\theta_{\pm
}d\tau\right\Vert _{L^{2}({\mathbb{R}})},\Omega_{2,\pm}:=\left\Vert
\int_{\mathcal{T}_{\pm}(a,t)}\tau^{2}k\theta_{\pm}^{(1)}d\tau\right\Vert
_{L^{2}({\mathbb{R}})}+\left\Vert \int_{\mathcal{T}_{\pm}(a,t)}\tau^{2}%
k\theta_{\pm}^{(2)}d\tau\right\Vert _{L^{2}({\mathbb{R}})}, \label{pp.13}%
\end{equation}
and%
\begin{equation}
\Omega_{3,\pm}:=\left\Vert \int_{\mathcal{T}_{\pm}(a,t)}\tau^{2}k^{2}%
\theta_{\pm}d\tau\right\Vert _{L^{2}({\mathbb{R}})}. \label{pp.14}%
\end{equation}
We begin with the following:

\begin{lemma}
Suppose that $V\in L_{4}^{1}(\mathbb{R}^{+}).$ Let the nonlinear function
$\mathcal{N}$ satisfy (\ref{ConNonlinearity}). Then, for some $C>0,$ uniformly
for $a\geq a_{0}>0,$ and for any measurable function, $w_{\pm}(t), $ of
$t\in\mathcal{T}_{\pm}\left(  a,\infty\right)  ,$ with values in
$H^{1}({\mathbb{R}}),$ we have
\begin{equation}
\left\Vert \int_{\mathcal{T}_{\pm}(a,t)}\partial_{k}\theta_{\pm}%
d\tau\right\Vert _{L^{2}({\mathbb{R}})}\leq C\left(  \Omega_{1,\pm}+\sup
_{\tau\in\mathcal{T}_{\pm}(a,t)}\left(  \Phi_{1,\pm}+\Phi_{2,\pm}\right)
\right)  ,\qquad\pm t\geq a, \label{114}%
\end{equation}
and%
\begin{equation}
\left\Vert \int_{\mathcal{T}_{\pm}(a,t)}\partial_{k}^{2}\theta_{\pm}%
d\tau\right\Vert _{L^{2}({\mathbb{R}})}\leq C\left(  \Omega_{2,\pm}%
+\Omega_{3,\pm}+\sup_{\tau\in\mathcal{T}_{\pm}(a,t)}\left(  \sum_{j=3}^{6}%
\Phi_{j\pm,}\right)  \right)  ,\qquad\pm t\geq a. \label{115}%
\end{equation}

\end{lemma}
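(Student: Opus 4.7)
The two inequalities are purely algebraic identities plus $L^2$ bounds; nothing depends on the equation for $w_\pm$. My plan is therefore to \emph{compute} $\partial_k\theta_\pm$ and $\partial_k^2\theta_\pm$ pointwise in $\tau$, identify each resulting term with one of the quantities $\theta_\pm^{(1,2)}$, $\tilde\theta_\pm$, $R_1^\pm$, $R_2^\pm$, $(\partial_k S)\mathcal{W}_+ f_{1,\pm}$, etc.\ that appear in the definitions of $\Phi_{j,\pm}$ and $\Omega_{j,\pm}$, then integrate in $\tau$ and apply the triangle inequality. The starting point is the representation
\[
\widehat{\mathcal W}(\tau)f_{1,\pm}=S(k)^\dagger\,\mathcal V_+(\tau)\bigl(m^\dagger(-k,\tau x)f_{1,\pm}\bigr)+\mathcal V_-(\tau)\bigl(m^\dagger(k,\tau x)f_{1,\pm}\bigr),
\]
with the analogous formula for $\widehat{\mathcal V}(\tau)f_{1,\pm}^{\operatorname{fr}}$ obtained by taking $m\equiv I$ and $S\equiv S_0$. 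Differentiation in $k$ hits the exponential, the Jost factor $m^\dagger$, or the scattering matrix $S(k)^\dagger$; the key identity $\partial_k e^{i\tau(k\pm x/2)^2}=2i\tau(k\pm x/2)e^{i\tau(k\pm x/2)^2}$ lets me split the exponential contribution as $2i\tau k$ times the integrand plus $i\tau x$ times the integrand.

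For \eqref{114} I will therefore write
\[
\partial_k\theta_\pm=2i\tau k\,\theta_\pm+i\tau\bigl(\theta_\pm^{(1)}-\theta_\pm^{(2)}\bigr)+\bigl(R_1^+(f_{1,\pm})-R_1^+(f_{1,\pm}^{\operatorname{fr}})\bigr)-\bigl(R_1^-(f_{1,\pm})-R_1^-(f_{1,\pm}^{\operatorname{fr}})\bigr)+(\partial_k S)\bigl(\mathcal W_+(\tau)f_{1,\pm}-\mathcal V_+(\tau)f_{1,\pm}^{\operatorname{fr}}\bigr),
\]
up to signs and to an obvious decomposition in which the $\mathcal V$-side contributions $R_1^\pm(f_{1,\pm}^{\operatorname{fr}})$ vanish since $\partial_k I=0$. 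The first summand, once integrated in $\tau$, gives exactly $\Omega_{1,\pm}$. The second contributes $\sup_\tau\Phi_{1,\pm}$ after using $\int_{\mathcal T_\pm(a,t)}|\tau|^{-1-\alpha/2}d\tau\le C$ (because $\alpha>2$). The remaining two contribute the $R_1$-piece and the $(\partial_k S)$-piece of $\Phi_{2,\pm}$, again with an absolutely convergent $\tau$-integral.

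For \eqref{115} I will carry out the same procedure one more time on $\partial_k\theta_\pm$. Each $\partial_k$ again acts on exponential, $m^\dagger$, or $S^\dagger$, so there are nine groups of terms; several collapse by symmetry. The two-derivatives-on-exponential group produces $(2i\tau k)^2\theta_\pm$ (giving $\Omega_{3,\pm}$ after $\tau$-integration), cross terms of the form $(2i\tau k)(i\tau)(\theta_\pm^{(1)}\mp\theta_\pm^{(2)})$ (giving $\Omega_{2,\pm}$), and pure $x^2$-terms yielding $\tau^2\tilde\theta_\pm$ plus $\tau\theta_\pm$ (from $\partial_x^2$ in the integration by parts when $\partial_x$ hits the $x$ already inserted), both appearing in $\Phi_{3,\pm}$. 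Two derivatives on $m^\dagger$ give $R_2^\pm$, i.e.\ $\Phi_{4,\pm}$; two derivatives on $S^\dagger$ give $\Phi_{6,\pm}$; one derivative on each of $S^\dagger$ and $m^\dagger$ or on $S^\dagger$ and the exponential gives the mixed quantities in $\Phi_{5,\pm}$. Assembling these and using the triangle inequality yields \eqref{115}.

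The main obstacle is the bookkeeping: keeping $\widehat{\mathcal W}(\tau)f_{1,\pm}$ and $\widehat{\mathcal V}(\tau)f_{1,\pm}^{\operatorname{fr}}$ glued together at every step so that the large-$\tau$ decay (uniform in $k$) survives each integration by parts. Individually the pieces of $\widehat{\mathcal W}(\tau)f_{1,\pm}$ decay only like $|\tau|^{-\alpha/2}$, which is \emph{not} integrable for $\alpha$ close to $2$, while the difference $\theta_\pm$ gains an additional factor coming from $m-I$ and $S-S_0$, inherited from \eqref{2.11} and related dispersive estimates, so that the time integrals controlling $\Phi_{j,\pm}$ with their $|\tau|^{\alpha/2}$ or $|\tau|^{1+\alpha/2}$ prefactors are convergent. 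Ensuring this cancellation at each of the nine terms produced by $\partial_k^2$, and recognising each surviving summand as one of the prescribed $\Phi_{j,\pm}$ or $\Omega_{j,\pm}$ with the correct sign, is the only genuinely delicate point; once the formulas are written out carefully the $L^2$ bounds are immediate from the definitions.
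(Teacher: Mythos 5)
Your proposal is correct and follows essentially the same route as the paper: differentiate the explicit integral representations of $\widehat{\mathcal W}(\tau)f_{1,\pm}$ and $\widehat{\mathcal V}(\tau)f_{1,\pm}^{\operatorname{fr}}$ via $\partial_k e^{i\tau(k\pm x/2)^2}=2i\tau(k\pm x/2)e^{i\tau(k\pm x/2)^2}$, group the resulting terms into the predefined quantities $\theta_\pm$, $\theta_\pm^{(1)},\theta_\pm^{(2)}$, $\tilde\theta_\pm$, $R_1^\pm$, $R_2^\pm$ and the $\partial_k^jS$ terms, and conclude by the triangle inequality, keeping the time integral for the $\Omega$'s and using the integrability of $|\tau|^{-\alpha/2}$ on $[a,\infty)$ (since $\alpha>2$, uniformly for $a\ge a_0>0$) to pull the supremum out for the $\Phi$'s. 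The only bookkeeping points to watch are that the matrix $S(k)$ sits inside $\mathcal W_+(\tau)\bigl(x\,S(k)\phi\bigr)$ and $R_1^+\bigl(S(k)\phi\bigr)$ rather than factoring out (harmless since $|S(k)|=1$), and that the leftover second-derivative terms $2i\tau k\,R_1^\pm(\phi)\pm i\tau R_1^\pm(x\phi)$ recombine into $R_{21}^\pm$ and are therefore absorbed into $\Phi_{4,\pm}$ --- both of which your plan already accommodates.
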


\begin{proof}
We calculate%
\[
\partial_{k}\mathcal{W}_{\pm}\left(  \tau\right)  \phi=2i\tau k\mathcal{W}%
_{\pm}\left(  \tau\right)  \phi\pm i\tau\mathcal{W}_{\pm}\left(  \tau\right)
\left(  x\phi\left(  x\right)  \right)  +R_{1}^{\pm}\left(  \phi\right)  ,
\]
and
\begin{align*}
\partial_{k}^{2}\mathcal{W}_{\pm}\left(  \tau\right)  \phi &  =2i\tau
\mathcal{W}_{\pm}\left(  \tau\right)  \phi-4\tau^{2}k^{2}\mathcal{W}_{\pm
}\left(  \tau\right)  \phi\\
&  \mp4\tau^{2}k\mathcal{W}_{\pm}\left(  \tau\right)  \left(  x\phi\left(
x\right)  \right)  -\tau^{2}\mathcal{W}_{\pm}\left(  \tau\right)  \left(
x^{2}\phi\left(  x\right)  \right)  +R_{2}^{\pm}\left(  \phi\right)  + 2 i
\tau k R^{\pm}_{1}(\phi)\pm i \tau R^{\pm}_{1}(x \phi) .
\end{align*}
Then,
\begin{align}
\partial_{k}\widehat{\mathcal{W}}\left(  \tau\right)  \phi &  =2i\tau
k\widehat{\mathcal{W}}\left(  \tau\right)  \phi-i\tau\mathcal{W}_{-}\left(
\tau\right)  \left(  x\phi\left(  x\right)  \right)  +i\tau\mathcal{W}%
_{+}\left(  \tau\right)  \left(  xS\left(  k\right)  \phi\left(  x\right)
\right) \label{pp.15}\\
&  +R_{1}^{-}\left(  \phi\right)  +R_{1}^{+}\left(  S\left(  k\right)
\phi\right)  +\left(  \partial_{k}S\left(  k\right)  \right)  \mathcal{W}%
_{+}\left(  \tau\right)  \phi,\nonumber
\end{align}
and%
\begin{align}
\label{pp.16}\partial_{k}^{2}\widehat{\mathcal{W}}\left(  \tau\right)  \phi &
=2i\tau\widehat{\mathcal{W}}\left(  \tau\right)  \phi-4\tau^{2}k^{2}%
\widehat{\mathcal{W}}\left(  \tau\right)  \phi-\tau^{2}\widehat{\mathcal{W}%
}\left(  \tau\right)  \left(  x^{2}\phi\left(  x\right)  \right) \nonumber\\
&  +4\tau^{2}k\mathcal{W}_{-}\left(  \tau\right)  \left(  x\phi\left(
x\right)  \right)  -4\tau^{2}kS\left(  k\right)  \mathcal{W}_{+}\left(
\tau\right)  \left(  x\phi\left(  x\right)  \right) \nonumber\\
&  +R_{2}^{-}\left(  \phi\right)  +R_{2}^{+}\left(  S\left(  k\right)
\phi\right) \\
&  +2\partial_{k}S\left(  k\right)  \left(  \partial_{k}\mathcal{W}_{+}\left(
\tau\right)  \right)  \phi+\partial_{k}^{2}S\left(  k\right)  \mathcal{W}%
_{+}\left(  \tau\right)  \phi+ 2i \tau k \left[  R^{-}_{1}(\phi)+ S(k)
R^{+}_{1}(\phi) \right]  +\nonumber\\
&  i \tau\left[  - R^{-}_{1}(x\phi)+ S(k) R^{+}_{1}(x\phi) \right]  .\nonumber
\end{align}
Using the above relations replacing the function $m$ by $1,$ we get

%\partial_{k}\mathcal{V}_{ \pm  }\left(  \tau\right)  \phi=2i\tau
%k\mathcal{V}_{  \pm  }\left(  \tau\right)  \phi\pm i\tau
%\mathcal{V}_{  \pm  }\left(  \tau\right)  \left(  x\phi\left(
%x\right)  \right)
%$$%
%\begin{align*}
%\partial_{k}^{2}\mathcal{V}_{  \pm  }\left(  \tau\right)  \phi &
%=2i\tau\mathcal{V}_{  \pm  }\left(  \tau\right)  \phi-4\tau
%^{2}k^{2}\mathcal{V}_{  \pm  }\left(  \tau\right)  \phi  \\
%&  \mp4\tau^{2}k\mathcal{V}_{ \pm  }\left(  \tau\right)  \left(
%x\phi\left(  x\right)  \right)  -\tau^{2}\mathcal{V}_{  \pm
%}\left(  \tau\right)  \left(  x^{2}\phi\left(  x\right)  \right)  ,
%\end{align*}%
\begin{align}
\label{pp.17}\partial_{k}\widehat{\mathcal{V}}\left(  \tau\right)  \phi &
=2i\tau k\widehat{\mathcal{V}}\left(  \tau\right)  \phi-i\tau\mathcal{V}_{ -
}\left(  \tau\right)  \left(  x\phi\left(  x\right)  \right) \nonumber\\
&  +i\tau\mathcal{V}_{ + }\left(  \tau\right)  \left(  xS\left(  k\right)
\phi\left(  x\right)  \right)  +\left(  \partial_{k}S\left(  k\right)
\right)  \mathcal{V}_{+}\left(  \tau\right)  \phi,
\end{align}
and%
\begin{align}
\label{pp.18}\partial_{k}^{2}\widehat{\mathcal{V}}\left(  \tau\right)  \phi &
=2i\tau\widehat{\mathcal{V}}\left(  \tau\right)  \phi-4\tau^{2}k^{2}%
\widehat{\mathcal{V}}\left(  \tau\right)  \phi-\tau^{2}\widehat{\mathcal{V}%
}\left(  \tau\right)  \left(  x^{2}\phi\left(  x\right)  \right) \nonumber\\
&  +4\tau^{2}k\mathcal{V}_{ - }\left(  \tau\right)  \left(  x\phi\left(
x\right)  \right)  -4\tau^{2}kS\left(  k\right)  \mathcal{V}_{ + }\left(
\tau\right)  \left(  x\phi\left(  x\right)  \right) \\
&  +2\partial_{k}S\left(  k\right)  \left(  \partial_{k}\mathcal{V}_{ +
}\left(  \tau\right)  \right)  \phi+\partial_{k}^{2}S\left(  k\right)
\mathcal{V}_{ + }\left(  \tau\right)  \phi.\nonumber
\end{align}
Then, \eqref{114} and \eqref{115} follow from \eqref{pp.1}-\eqref{pp.18},
\eqref{scatmatrixderiv} and \eqref{scatmatrixsecondderiv}.
\end{proof}

\begin{lemma}
\label{Lema1}Suppose that $V\in L_{4}^{1}(\mathbb{R}^{+}),$ and that it admits
a regular decomposition with $\delta>0$ (see Definition \ref{Def1}). Let the
nonlinear function $\mathcal{N}$ satisfy (\ref{ConNonlinearity}). Then, for
some $C>0,$ uniformly for $a\geq a_{0}>0,$ and for any measurable function,
$w_{\pm}(t),$ of $t\in\mathcal{T}_{\pm}\left(  a,\infty\right)  , $ with
values in $H^{1}({\mathbb{R}}),$ we have%

\begin{equation}
\sqrt{|\tau|} \Phi_{1,\pm}(\tau)+\Phi_{2,\pm}(\tau)\leq C \left\Vert w_{\pm
}(\tau)\right\Vert _{H^{1}( {\mathbb{R}})}^{\alpha+1}, \qquad\pm\tau\geq a,
\label{Lema1F1}%
\end{equation}
and%
\begin{equation}
\left(  \sum_{j=3}^{5}\Phi_{j,\pm}(\tau)\right)  + \sqrt{|\tau|}\Phi_{6,\pm}
\leq C \sqrt{|\tau|} \left\Vert w_{\pm}(\tau)\right\Vert _{H^{1}( {\mathbb{R}%
})}^{\alpha+1}, \qquad\pm\tau\geq a. \label{Lema1F2}%
\end{equation}

\end{lemma}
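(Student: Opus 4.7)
The plan is to bound each of the six quantities $\Phi_{j,\pm}$ individually, combining three families of ingredients from Appendix~\ref{App1}: (i) the pointwise bounds on $m(k,x)-I$ and its $k$- and $x$-derivatives (equations (3.2), (3.5), (3.5bis), (3.6), (3.7)); (ii) the $L^\infty$ bounds on $\partial_k S$ and $\partial_k^2 S$ from (UnitaritySM), (scatmatrixderiv), (scatmatrixsecondderiv); and (iii) the $L^p\to L^q$ and weighted smoothing estimates (2.11), (EST4), (EST11), (EST11b), (EST12). These will be married with the nonlinearity estimates of Section~\ref{AuxEst}, notably (12), (12.0.0), (101), and (derivativeffr).

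For the $\theta$-type pieces inside $\Phi_{1,\pm}$, and for the $\tau\theta_\pm$, $\tau^2\tilde\theta_\pm$ pieces of $\Phi_{3,\pm}$, I will use the splitting
\[
\theta_\pm^{(j)}=(\mathcal{W}_\pm-\mathcal{V}_\pm)(\tau)(xf_{1,\pm})+\mathcal{V}_\pm(\tau)\bigl(x(f_{1,\pm}-f_{1,\pm}^{\operatorname*{fr}})\bigr),
\]
together with the analogous $x^2$-weighted decomposition for $\tilde\theta_\pm$. The first summand is bounded by (EST11)/(EST11b), which furnishes a $|\tau|^{-1/2}$ gain from replacing $m$ by $I$, multiplied by $\|xf_{1,\pm}\|_{L^2}$; the latter is of order $|\tau|^{-\alpha/2}\|w_\pm\|_{H^1}^{\alpha+1}$ by (fin.1), (ConNonlinearity) and Sobolev embedding. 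The second summand is handled by (2.11) together with (101), whose weighted variant follows by expanding the nonlinearity polynomially and inserting (EST11) termwise. This yields $\|\theta_\pm^{(j)}\|_{L^2}\lesssim |\tau|^{-(1+\alpha)/2}\|w_\pm\|_{H^1}^{\alpha+1}$ and $\|\tilde\theta_\pm\|_{L^2}\lesssim |\tau|^{-(3+\alpha)/2}\|w_\pm\|_{H^1}^{\alpha+1}$, exactly matching the $|\tau|^{1+\alpha/2}$ and $|\tau|^{2+\alpha/2}$ prefactors in $\Phi_{1,\pm}$ and $\Phi_{3,\pm}$.

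The remaining oscillatory pieces involve derivatives of the Jost factor. For $R_1^\pm$ in $\Phi_{2,\pm}$ the symbol $(\partial_k m^\dagger)(\mp k,\tau x)$ is controlled pointwise by (3.5), so $R_1^\pm$ behaves like a $\mathcal{V}_\pm$-type operator acting on a weighted function, and (2.11) together with the $L^2$ bound (12) on $f_{1,\pm}$ delivers the target $|\tau|^{-\alpha/2}\|w_\pm\|_{H^1}^{\alpha+1}$. For $R_2^\pm$ in $\Phi_{4,\pm}$ I will split $R_2^\pm=R_{21}^\pm+R_{22}^\pm$: $R_{22}^\pm$ is treated as $R_1^\pm$ using (3.5bis), and in $R_{21}^\pm$ I will integrate by parts in $x$ via the identity $(k\pm x/2)e^{i\tau(k\pm x/2)^2}=(i\tau)^{-1}\partial_x e^{i\tau(k\pm x/2)^2}$ to cancel the $\tau^{3/2}$ prefactor, leaving a boundary term at $x=0$ controlled by $\|f_{1,\pm}\|_{L^\infty}$ through (12.0.0) and accounting for the $\sqrt{|\tau|}$ loss on the right-hand side. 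The scattering-matrix derivative pieces in $\Phi_{2,\pm}$, $\Phi_{5,\pm}$, $\Phi_{6,\pm}$ are treated by factoring out $\partial_k^j S$ in $L^\infty$ and applying the $L^2$-boundedness of $\mathcal{W}_+$ or $\mathcal{V}_+$; the $\sqrt{|\tau|}$ loss in $\Phi_{5,\pm}$ arises from $\partial_k\mathcal{W}_+$ via the identity (130), which produces a boundary contribution of size $\sqrt{|\tau|}\,|f_{1,\pm}(0)|$ plus an interior term $\mathcal{V}_+(\partial_x f_{1,\pm})$ controlled by (derivativeffr).

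The hardest step will be the $x^2$-weighted estimate for $\tilde\theta_\pm$ inside $\Phi_{3,\pm}$: it requires simultaneously pushing two $x$-weights through both the $\mathcal{W}-\mathcal{V}$ smoothing and through the nonlinearity difference $f_{1,\pm}-f_{1,\pm}^{\operatorname*{fr}}$, and this is precisely where the hypothesis that $V$ admits a regular decomposition with $\delta>0$ together with $V\in L^1_4(\mathbb{R}^+)$ is consumed, via the two-weight variant of (EST12) and a weighted strengthening of (101).
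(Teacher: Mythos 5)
Your overall architecture (term-by-term bounds on the $\Phi_{j,\pm}$, the splitting of $\theta_\pm^{(j)}$ into a $(\mathcal{W}_\pm-\mathcal{V}_\pm)$ part and a $\mathcal{V}_\pm\bigl(x(f_{1,\pm}-f_{1,\pm}^{\operatorname{fr}})\bigr)$ part, the integration by parts in $R_{21}^\pm$ and in $\partial_k\mathcal{W}_+$, and the factoring of $\partial_k^jS$ in $L^\infty$) coincides with the paper's, and your treatment of $\Phi_{2,\pm}$, $\Phi_{4,\pm}$, $\Phi_{5,\pm}$, $\Phi_{6,\pm}$ and of the second summand $\mathcal{V}_\pm(xf_{2,\pm})$ is essentially correct. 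The gap is in the first summand $(\mathcal{W}_\pm-\mathcal{V}_\pm)(xf_{1,\pm})$ and, by the same token, in the $x^2$-weighted term of $\Phi_{3,\pm}$. You propose to bound it by a $|\tau|^{-1/2}$ smoothing gain times $\Vert xf_{1,\pm}\Vert_{L^2}$, claiming $\Vert xf_{1,\pm}\Vert_{L^2}\lesssim|\tau|^{-\alpha/2}\Vert w_\pm\Vert_{H^1}^{\alpha+1}$ via \eqref{fin.1}. This fails twice. First, \eqref{fin.1} controls $x\mathcal{V}(\tau)\phi$ by $|\tau|^{-1}\Vert\partial_k\phi\Vert_{L^2}+\Vert k\phi\Vert_{L^2}$, so $\Vert xf_{1,\pm}\Vert_{L^2}$ is \emph{not} bounded by $\Vert w_\pm\Vert_{H^1}^{\alpha+1}$ under the hypotheses of the lemma (only $w_\pm\in H^1$ is assumed); it would require $\Vert w_\pm\Vert_{L^2_1}$, which is precisely why the bound \eqref{400.3} for $\Vert\langle x\rangle f_{1,\pm}\Vert_{L^2}$ carries an extra $\Vert w_\pm\Vert_{L^2_1}$ factor. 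Second, even granting such a bound, your claimed decay $\Vert\theta_\pm^{(j)}\Vert_{L^2}\lesssim|\tau|^{-(1+\alpha)/2}\Vert w_\pm\Vert_{H^1}^{\alpha+1}$ is a full power of $|\tau|$ short of what \eqref{Lema1F1} needs: since $\Phi_{1,\pm}$ carries the prefactor $|\tau|^{1+\alpha/2}$ and is then multiplied by $\sqrt{|\tau|}$, one must show $\Vert\theta_\pm^{(j)}\Vert_{L^2}\lesssim|\tau|^{-3/2-\alpha/2}\Vert w_\pm\Vert_{H^1}^{\alpha+1}$.

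The correct mechanism is to never separate the weight from the operator: in $(\mathcal{W}_\pm-\mathcal{V}_\pm)(x^nf_{1,\pm})$ the kernel is $m^\dagger(\mp k,\tau x)-I=O\bigl(\langle k\rangle^{-1}\langle\tau x\rangle^{-1-\delta}\bigr)$ by \eqref{3.2}, so $x^n\langle\tau x\rangle^{-1-\delta}$ converts each power of $x$ into a power of $|\tau|^{-1}$. This is exactly \eqref{EST12j}, whose right-hand side involves only $|t|^{-n}\,|t|^{-1/2}\Vert f_{1,\pm}\Vert_{L^\infty}$ and hence, combined with \eqref{12.0.0}, yields the required $|\tau|^{-3/2-\alpha/2}$ for $n=1$, and $|\tau|^{-5/2-\alpha/2}$ for the $n=2$ term in $\tilde\theta_\pm$ (rather than your $|\tau|^{-3/2-\alpha/2}$). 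The same absorption applied to $f_{2,\pm}=f_{1,\pm}-f_{1,\pm}^{\operatorname{fr}}$, through the pointwise factor $\langle\tau x\rangle^{-\delta}$ supplied by \eqref{EST11}, is what produces \eqref{13} and makes the second summand work; that part of your plan is sound.
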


\begin{proof}
We decompose%
\begin{equation}
\theta_{\pm}^{(j)}=\theta^{(j)}_{\pm, 1}+\theta_{\pm,2}^{(j)}, \qquad j=1,2,
\label{6}%
\end{equation}
where%
\begin{equation}
\theta_{\pm,1}^{\left(  1\right)  }:=\left(  \mathcal{W}_{+}\left(
\tau\right)  -\mathcal{V}_{+}\left(  \tau\right)  \right)  \left(  xf_{1,\pm
}\right)  , \qquad\theta_{\pm,1}^{\left(  2\right)  }:=\left(  \mathcal{W}%
_{-}\left(  \tau\right)  -\mathcal{V}_{-}\left(  \tau\right)  \right)  \left(
xf_{1,\pm}\right)  , \label{6.1}%
\end{equation}
and%
\begin{equation}
\theta_{\pm,2}^{\left(  1\right)  }=\mathcal{V}_{+}\left(  \tau\right)
\left(  xf_{2,\pm}\right)  ,\qquad\theta_{\pm,2}^{\left(  2\right)
}=\mathcal{V}_{-}\left(  \tau\right)  \left(  xf_{2,\pm}\right)  . \label{6.2}%
\end{equation}
Using (\ref{12}) and (\ref{EST12j}) with $n=1$ we have%
\[
\left\Vert \theta_{\pm,1}^{\left(  j\right)  }\right\Vert _{L^{2}(
{\mathbb{R}})}\leq C|\tau|^{-3/2}\left\Vert f_{1,\pm}\right\Vert _{L^{\infty}(
{\mathbb{R}})}\leq C|\tau|^{-3/2- \alpha/2}\left\Vert w_{\pm}\right\Vert
_{H^{1}( {\mathbb{R}})}^{\alpha+1}, \qquad j=1,2, \pm\tau\geq a.
\]
Since
\begin{equation}
\left\vert f_{2,\pm}\right\vert \leq C\left\vert \mathcal{W}\left(
\tau\right)  w_{\pm}-\mathcal{V}\left(  \tau\right)  w_{\pm}\right\vert
\left(  \left\vert \mathcal{W}\left(  \tau\right)  w_{\pm}\right\vert
^{\alpha}\mathcal{+}\left\vert \mathcal{V}\left(  \tau\right)  w_{\pm
}\right\vert ^{\alpha}\right)  , \label{11}%
\end{equation}
by (\ref{EST4}) and (\ref{EST11}) with $\delta>n+1/2$ we get%
\begin{equation}
\left\Vert x^{n}f_{2,\pm}\right\Vert _{L^{2}( {\mathbb{R}})}\leq C\left\Vert
x^{n}\left\langle \tau x\right\rangle ^{ -\delta}\right\Vert _{L^{2}(
{\mathbb{R}})} |\tau|^{-\alpha/2} \left\Vert w_{\pm}\right\Vert _{H^{1}(
{\mathbb{R}})}^{\alpha+1}\leq C|\tau|^{-1/2-n-\alpha/2}\left\Vert w_{\pm
}\right\Vert _{H^{1}( {\mathbb{R}})}^{\alpha+1}. \label{13}%
\end{equation}
Then, putting $n=1$ and using (\ref{isomafr}) we show that%
\[
\left\Vert \theta_{\pm,2}^{\left(  j\right)  }\right\Vert _{L^{2}(
{\mathbb{R}})}\leq C|\tau|^{-3/2-\alpha/2}\left\Vert w\right\Vert
_{H^{1}(\mathbb{R}^{+})}^{\alpha+1}, \qquad j=1,2, \pm t \geq a.
\]
Therefore%
\begin{equation}
\left\Vert \theta_{\pm}^{(j)}\right\Vert _{L^{2}( {\mathbb{R}})}\leq
C|\tau|^{-3/2-\alpha/2}\left\Vert w_{\pm}\right\Vert _{H^{1}( {\mathbb{R}%
(\mathbb{R}^{+})})}^{\alpha+1},\qquad j=1,2, \pm t \geq a, \label{th1}%
\end{equation}
and hence,%
\begin{equation}
\Phi_{1,\pm}\leq C|\tau|^{-1/2}\left\Vert w_{\pm}\right\Vert _{H^{1}(
{\mathbb{R}})}^{\alpha+1}, \qquad\pm t\geq a. \label{16}%
\end{equation}
Using (\ref{3.5}) with $\delta>1/2$ and (\ref{12}) we get%
\[
\left\Vert R_{1}^{\pm} f_{1,+} \right\Vert _{L^{2}( {\mathbb{R}})}\leq
C\sqrt{|\tau|}\left\Vert \left\langle \cdot\right\rangle ^{-1}\right\Vert
_{L^{2}( {\mathbb{R}})}\left\Vert \left\langle \tau x\right\rangle ^{-\delta
}\right\Vert _{L^{2}( {\mathbb{R}})}\left\Vert f_{1,+}\right\Vert _{L^{2}(
{\mathbb{R}})}\leq C |\tau|^{-\alpha/2} \left\Vert w_{+}\right\Vert _{H^{1}%
}^{\alpha+1},
\]
and
\[
\left\Vert R_{1}^{\pm} f_{1,-} \right\Vert _{L^{2}( {\mathbb{R}})}\leq
C\sqrt{|\tau|}\left\Vert \left\langle \cdot\right\rangle ^{-1}\right\Vert
_{L^{2}( {\mathbb{R}})}\left\Vert \left\langle \tau x\right\rangle ^{-\delta
}\right\Vert _{L^{2}( {\mathbb{R}})}\left\Vert f_{1,-}\right\Vert _{L^{2}(
{\mathbb{R}})}\leq C |\tau|^{-\alpha/2} \left\Vert w_{-}\right\Vert _{H^{1}%
}^{\alpha+1}.
\]

Moreover, by (\ref{12}), (\ref{scatmatrixderiv}), (\ref{EST8}),and
(\ref{EST8.0}) we show%
\[
\left\Vert \left(  \partial_{k}S\left(  k\right)  \right)  \mathcal{W}%
_{+}\left(  \tau\right)  f_{1,\pm}\right\Vert _{L^{2}( {\mathbb{R}}%
)}+\left\Vert \left(  \partial_{k}S\left(  k\right)  \right)  \mathcal{V}_{+}
\left(  \tau\right)  f_{1,\pm}^{\operatorname*{fr}}\right\Vert _{L^{2}(
{\mathbb{R}})}\leq C |\tau|^{-\alpha/2} \left\Vert w_{\pm}\right\Vert _{H^{1}(
{\mathbb{R}})}^{\alpha+1}.
\]
Hence,%
\begin{equation}
\Phi_{2,\pm}\leq C \left\Vert w_{\pm}\right\Vert _{H^{1}( {\mathbb{R}}%
)}^{\alpha+1}, \qquad\pm\tau\geq a. \label{17}%
\end{equation}
Using (\ref{16}) and (\ref{17}) we prove (\ref{Lema1F1}).

Writing
\[
\widehat{\mathcal{W}}\left(  \tau\right)  \left(  x^{n}f_{1,\pm}\right)
-\widehat{\mathcal{V}}\left(  \tau\right)  \left(  x^{n}f_{1,\pm
}^{\operatorname*{fr}}\right)  =\left(  \widehat{\mathcal{W}}\left(
\tau\right)  -\widehat{\mathcal{V}}\left(  \tau\right)  \right)  \left(
x^{n}f_{1,\pm}\right)  +\widehat{\mathcal{V}}\left(  \tau\right)  \left(
x^{n}f_{2,\pm}\right)  ,
\]
and using (\ref{12}), (\ref{13}), (\ref{EST12}), and (\ref{isomafr}), we get%
\begin{equation}
\left\Vert \widehat{\mathcal{W}}\left(  \tau\right)  \left(  x^{n}f_{1,\pm
}\right)  -\widehat{\mathcal{V}}\left(  \tau\right)  \left(  x^{n}f_{1,\pm
}^{\operatorname*{fr}}\right)  \right\Vert _{L^{2}( {\mathbb{R}})}\leq C|\tau|
^{-1/2-n-\alpha/2} \left\Vert w_{\pm}\right\Vert _{H^{1}( {\mathbb{R}}%
)}^{\alpha+1}. \label{21.0}%
\end{equation}
Using this last equation with $n=0,$ we estimate%
\begin{equation}
\left\Vert \theta_{\pm}\right\Vert _{L^{2}( {\mathbb{R}})}\leq C |\tau|
^{-1/2-\alpha/2} \left\Vert w_{\pm}\right\Vert _{H^{1}( {\mathbb{R}})}%
^{\alpha+1}, \label{th}%
\end{equation}
and using it with $n=2$ gives us
\[
\left\Vert \tilde{\theta}_{\pm}\right\Vert _{L^{2}( {\mathbb{R}})}\leq
C|\tau|^{-5/2-\alpha/2}\left\Vert w\right\Vert _{H^{1}}^{\alpha+1}.
\]
Therefore%
\begin{equation}
\Phi_{3,\pm} \leq C|\tau|^{1/2}\left\Vert w_{\pm}\right\Vert _{H^{1}(
{\mathbb{R}})}^{\alpha+1}, \qquad\pm\tau\geq a. \label{18}%
\end{equation}
%Note that,
%\beq\label{14}
%it (k\pm x/2)\, e^{it(k\pm x/2)^2} = \pm \partial_x e^{it(k\pm x/2)^2} = \frac{1}{2} \partial_k e^{it(k\pm x/2)^2} .
%\ene
Using (\ref{14}) and integrating by parts, we get
\begin{align*}
R_{21}^{\pm}\left(  \phi\right)   &  =2 \sqrt{\frac{2\tau}{\pi i}}e^{i\tau
k^{2}}\left(  \partial_{k}m^{\dagger}\right)  \left(  \mp k,0\right)
\phi\left(  0\right) \\
&  +2\sqrt{\frac{2\tau}{2\pi i}}\int_{0}^{\infty}e^{i\tau\left(  k\pm\frac
{x}{2}\right)  ^{2}}\partial_{x}\left(  \left(  \partial_{k}m^{\ast}\right)
\left(  \mp k,\tau x\right)  \phi\left(  x\right)  \right)  dx.
\end{align*}
Using (\ref{EST5}), and (\ref{EST4}) we show,%
\begin{equation}
\left\Vert \partial_{x}f_{1,\pm}\right\Vert _{L^{2}( {\mathbb{R}})}\leq C
|\tau|^{-\alpha/2} \left\Vert \partial_{x}\mathcal{W}\left(  \tau\right)
w_{\pm}\right\Vert _{L^{2}( {\mathbb{R}})}\left\Vert \mathcal{W}\left(
\tau\right)  w_{\pm}\right\Vert _{L^{\infty}( {\mathbb{R}})}^{\alpha}\leq
C\sqrt{|\tau|} |\tau|^{-\alpha/2} \left\Vert w_{\pm}\right\Vert _{H^{1}(
{\mathbb{R}})}^{\alpha+1}, \qquad\pm\tau\geq a. \label{15}%
\end{equation}
Then, using (\ref{12.0.0}), (\ref{3.5}) with $\delta>1/2$, and (\ref{3.7}), we
show
\begin{align*}
\left\Vert R_{21}^{\pm}\left(  f_{1,+}\right)  \right\Vert _{L^{2}(
{\mathbb{R}})}  &  \leq C\sqrt{\tau}\left\Vert \left\langle \cdot\right\rangle
^{-1}\right\Vert _{L^{2}( {\mathbb{R}})}\left(  \left\Vert f_{1,+}\right\Vert
_{L^{\infty}( {\mathbb{R}})}+\tau\left\Vert \left\langle \tau x\right\rangle
^{-1-\hat{\delta}}\right\Vert _{L^{1}( {\mathbb{R}})}\left\Vert f_{1,+}%
\right\Vert _{L^{\infty}( {\mathbb{R}})}+\right. \\
&  \left.  \left\Vert \left\langle \tau x\right\rangle ^{-\delta}\right\Vert
_{L^{2}( {\mathbb{R}})}\left\Vert \partial_{x}f_{1,+}\right\Vert _{L^{2}(
{\mathbb{R}})}\right) \\
&  \leq C |\tau|^{1/2-\alpha/2} \left\Vert w_{+}\right\Vert _{H^{1}(
{\mathbb{R}})}^{\alpha+1}, \qquad\tau\geq a,
\end{align*}
and in the same way,
\[
\left\Vert R_{21}^{\pm}\left(  f_{1,-}\right)  \right\Vert _{L^{2}(
{\mathbb{R}})} \leq C |\tau|^{1/2-\alpha/2} \left\Vert w_{-}\right\Vert
_{H^{1}( {\mathbb{R}})}^{\alpha+1}, \qquad- \tau\geq a.
\]
Moreover, using (\ref{12.0.0}), and (\ref{3.5bis}) we get%
\[
\left\Vert R_{22}^{\pm}\left(  f_{1,+}\right)  \right\Vert _{L^{2}(
{\mathbb{R}})}\leq C\sqrt{|\tau|}\left\Vert \left\langle \cdot\right\rangle
^{-1}\right\Vert _{L^{2}( {\mathbb{R}})}\left\Vert \left\langle \tau
x\right\rangle ^{-\delta}\right\Vert _{L^{2}( {\mathbb{R}})}\left\Vert
f_{1,+}\right\Vert _{L^{\infty}( {\mathbb{R}})}\leq C |\tau|^{-\alpha/2}
\left\Vert w_{+}\right\Vert _{H^{1}( {\mathbb{R}})}^{\alpha+1}, \qquad\tau\geq
a,
\]
and in the same way,
\[
\left\Vert R_{22}^{\pm}\left(  f_{1,-}\right)  \right\Vert _{L^{2}(
{\mathbb{R}})}\leq C |\tau|^{-\alpha/2} \left\Vert w_{-}\right\Vert _{H^{1}(
{\mathbb{R}})}^{\alpha+1}, \qquad-\tau\geq a.
\]
Hence%
\begin{equation}
\Phi_{4,\pm}\leq C |\tau|^{1/2} \left\Vert w\right\Vert _{H^{1}( {\mathbb{R}%
})}^{\alpha+1}, \qquad\pm\tau\geq a. \label{19}%
\end{equation}
Using (\ref{14}) and integrating by parts we have%
\begin{align}
\left(  \partial_{k}\mathcal{W}_{+}\left(  \tau\right)  \right)  f_{1,\pm}  &
=-2\sqrt{\frac{\tau}{2\pi i}}e^{i\tau k^{2}}m^{\dagger}\left(  -k,0\right)
f_{1,\pm}\left(  0\right)  -2\mathcal{W}_{+}\left(  \tau\right)  \left(
\partial_{x}f_{1,\pm}\right) \nonumber\\
&  -2\tau\sqrt{\frac{\tau}{2\pi i}}\int_{0}^{\infty}e^{i\tau\left(  k+\frac
{x}{2}\right)  ^{2}}\left(  \partial_{x}m^{\dagger}\right)  \left(  -k,\tau
x\right)  f_{1,\pm}\left(  x\right)  dx\nonumber\\
&  -\sqrt{\frac{\tau}{2\pi i}}\int_{0}^{\infty}e^{i\tau\left(  k+\frac{x}%
{2}\right)  ^{2}}\left(  \partial_{k}m^{\dagger}\right)  \left(  -k,\tau
x\right)  f_{1,\pm}\left(  x\right)  dx. \label{52}%
\end{align}
Then, by (\ref{12}), \eqref{12.0.0}, (\ref{130}), (\ref{15}), (\ref{3.2}),
(\ref{3.5}) with $\delta>1/2$, (\ref{3.6}),(\ref{scatmatrixderiv}),
(\ref{EST8}), (\ref{isomafr}), we show%
\begin{equation}
\Phi_{5,\pm}\leq C |\tau|^{1/2} \left\Vert w_{\pm}\right\Vert _{H^{1}(
{\mathbb{R}})}^{\alpha+1}, \qquad\pm\tau\geq a. \label{20}%
\end{equation}
Moreover, using (\ref{12}), (\ref{scatmatrixsecondderiv}),(\ref{EST8}), and
(\ref{EST8.0}) we have%
\begin{equation}
\Phi_{6,\pm}\leq C \left\Vert w_{\pm}\right\Vert _{H^{1}( {\mathbb{R}}%
)}^{\alpha+1}, \qquad\pm\tau\geq a. \label{21}%
\end{equation}
Finally, by (\ref{18}), (\ref{19}), (\ref{20}), and (\ref{21}), we attain
(\ref{Lema1F2}).
\end{proof}

\begin{lemma}
\bigskip\label{Lema2}Suppose that $V\in L_{5+\tilde{\delta}}^{1}%
(\mathbb{R}^{+}),$ for some $\tilde{\delta}>0$ and that $H_{A,B,V}$ does not
have negative eigenvalues. In addition, assume that $V$ admits a regular
decomposition with $\delta>1$ (see Definition \ref{Def1}). Let the nonlinear
function $\mathcal{N}$ satisfy (\ref{ConNonlinearity}). Suppose that $w_{\pm}$
satisfies (\ref{eqw}) for $t\in\mathcal{T}_{\pm}\left(  a,\infty\right)  ,$
$a>0.$ Then, for some $C>0,$ uniformly for $a\geq a_{0}>0,$ we have,%

\begin{equation}
\Omega_{1,\pm}+\Omega_{2,\pm}\leq C \sup_{\tau\in\mathcal{T}_{\pm}(a,t)}
\left(  \left\Vert w_{\pm}\right\Vert _{H^{1}( {\mathbb{R}})}^{\alpha
}+1\right)  \Lambda_{\pm}\left(  \tau;w_{\pm}\right)  , \qquad\pm t \geq a,
\label{Lema2F1}%
\end{equation}
and%
\begin{equation}
\Omega_{3,\pm}\leq C \sqrt{|t|} \sup_{\tau\in\mathcal{T}_{\pm}(a,t)}\left(
\left\Vert w_{\pm}\right\Vert _{H^{1}( {\mathbb{R}})}^{\alpha}+1\right)
\Lambda_{\pm}\left(  \tau;w_{\pm}\right) . \qquad\pm t \geq a. \label{Lema2F2}%
\end{equation}

\end{lemma}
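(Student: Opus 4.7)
The strategy is to use the operator identities (\ref{pp.15})--(\ref{pp.18}) (and their analogues for $\mathcal{W}_\pm, \mathcal{V}_\pm$) to trade each dangerous weight $\tau k$, $\tau^2 k$, or $\tau^2 k^2$ appearing in $\Omega_{1,\pm}, \Omega_{2,\pm}, \Omega_{3,\pm}$ for a derivative $\partial_k$ or $\partial_k^2$ acting on $\theta_\pm$ or $\theta_\pm^{(j)}$, plus lower-order remainders controlled by Lemma~\ref{Lema1}. This is the same conversion mechanism used to prove \eqref{18.0} and \eqref{18.0.1.1} in Subsection~\ref{primeras}, now carried inside the time integral. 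The time integrals of the remainders will converge because their $L^2$ norms decay at least as $|\tau|^{-\alpha/2}$ with $\alpha>2$; the leading derivative terms are then handled by swapping $\partial_k$ (or $\partial_k^2$) with $\int_a^t\,d\tau$ and invoking the evolution equation \eqref{eqw}, which reads $\widehat{\mathcal{W}}(\tau)f_{1,\pm}=-i\partial_\tau w_\pm$, so that $\int_a^t\widehat{\mathcal{W}}(\tau)f_{1,\pm}\,d\tau = -i[w_\pm(t)-w_\pm(a)]$ integrates exactly.

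Concretely, for $\Omega_{1,\pm}$ the difference of (\ref{pp.15}) and (\ref{pp.17}), applied respectively to $f_{1,\pm}$ and $f_{1,\pm}^{\operatorname*{fr}}$, produces
\begin{equation*}
2i\tau k\,\theta_\pm = \partial_k\theta_\pm + i\tau\theta_\pm^{(2)} - i\tau S(k)\theta_\pm^{(1)} - R_1^-(f_{1,\pm}) - S(k)R_1^+(f_{1,\pm}) - (\partial_k S)[\mathcal{W}_+(\tau)f_{1,\pm}-\mathcal{V}_+(\tau)f_{1,\pm}^{\operatorname*{fr}}].
\end{equation*}
Taking the $L^2$ norm of the $\tau$-integral, Minkowski together with Lemma~\ref{Lema1} and \eqref{scatmatrixderiv}, \eqref{EST8}, \eqref{EST8.0} bounds every summand on the right, except $\partial_k\theta_\pm$, by an integrand of size $|\tau|^{-1/2-\alpha/2}$ or $|\tau|^{-\alpha/2}$, integrable on $[a,\infty)$ since $\alpha>2$. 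For the leading term I interchange $\partial_k$ and $\int d\tau$, split $\theta_\pm = \widehat{\mathcal{W}}(\tau)f_{1,\pm} - \widehat{\mathcal{V}}(\tau)f_{1,\pm}^{\operatorname*{fr}}$, evaluate the $\widehat{\mathcal{W}}$ piece via \eqref{eqw} to get $-i[w_\pm(t)-w_\pm(a)]$, and control its $\partial_k$ by $2\sup_\tau\|w_\pm\|_{H^1({\mathbb{R}})}$. The ``free'' piece $\partial_k\int_a^t\widehat{\mathcal{V}}(\tau)f_{1,\pm}^{\operatorname*{fr}}\,d\tau$ is controlled by integration by parts in $\tau$ inside the explicit oscillatory-integral representation of $\widehat{\mathcal{V}}(\tau)$, using $\partial_\tau e^{i\tau(k\mp x/2)^2}=i(k\mp x/2)^2 e^{i\tau(k\mp x/2)^2}$; the boundary and interior terms that arise are then absorbed by \eqref{12.1}, \eqref{derivativeffr} and \eqref{EST14frL2rough}.

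The cases $\Omega_{2,\pm}$ and $\Omega_{3,\pm}$ follow the same template. For $\Omega_{2,\pm}$ I use $2i\tau k\mathcal{W}_\pm\phi = \partial_k\mathcal{W}_\pm\phi \pm i\tau\mathcal{W}_\pm(x\phi) + R_1^\pm(\phi)$ (with $R_1^\pm\equiv 0$ in the free analog) to rewrite $\tau^2 k\theta_\pm^{(j)}$ as $(\tau/2i)\partial_k\theta_\pm^{(j)} \mp (\tau^2/2)\,\tilde{\theta}_\pm^{(x^2,j)} + (\tau/2i)R_1^\pm(xf_{1,\pm})$; the $\tau^2\tilde\theta_\pm^{(x^2,j)}$ and $\tau R_1^\pm(xf_{1,\pm})$ contributions are absorbed by Lemma~\ref{Lema1}-type bounds with an extra weight of $x$, while $\int \tau\partial_k\theta_\pm^{(j)}\,d\tau$ needs another integration by parts in $\tau$ combined with the time-derivative formula \eqref{94bis} and the weighted bounds \eqref{EST15}, \eqref{EST16} on $\partial_x f_{2,\pm}$ and $\partial_\tau f_{2,\pm}$. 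For $\Omega_{3,\pm}$ I apply (\ref{pp.16}) and (\ref{pp.18}) to reduce $\tau^2 k^2\theta_\pm$ to $(2i)^{-2}\partial_k^2\theta_\pm$ plus remainders controlled by $\Phi_{3,\pm},\dots,\Phi_{6,\pm}$; after interchanging $\partial_k^2$ with $\int d\tau$ and using \eqref{eqw}, the leading contribution becomes $\|\partial_k^2[w_\pm(t)-w_\pm(a)]\|_{L^2({\mathbb{R}})}\leq 2\|w_\pm\|_{H^2({\mathbb{R}})}$, which combined with the definition of $\Lambda_\pm$ and the \emph{a priori} bound $\|w_\pm\|_{H^2({\mathbb{R}})}\leq C\sqrt{|t|}$ produces exactly the $\sqrt{|t|}$ factor in \eqref{Lema2F2}. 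The main obstacle throughout is the family of free-evolution integrals $\int_a^t\widehat{\mathcal{V}}(\tau)f_{1,\pm}^{\operatorname*{fr}}\,d\tau$ and their $k$-derivatives, which lack the evolution-equation shortcut available on the $\widehat{\mathcal{W}}$-side; this forces careful oscillatory integration by parts in $\tau$ and is the reason Lemma~\ref{Lema2} requires the stronger hypotheses $V\in L^1_{5+\tilde\delta}({\mathbb{R}})$ with regular-decomposition exponent $\delta>1$, compared with the weaker conditions that suffice for Lemma~\ref{Lema1}.
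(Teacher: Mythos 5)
Your reduction runs the paper's argument backwards, and this creates a genuine gap. The identity $2i\tau k\,\theta_\pm=\partial_k\theta_\pm+(\text{remainders})$ is precisely what the paper uses in \eqref{114} to reduce the estimate of $\int\partial_k\theta_\pm\,d\tau$ to that of $\Omega_{1,\pm}$; inverting it, as you do, requires an independent bound on $\bigl\Vert\partial_k\int_{\mathcal{T}_\pm(a,t)}\theta_\pm\,d\tau\bigr\Vert_{L^2}$. Splitting $\theta_\pm=\widehat{\mathcal{W}}(\tau)f_{1,\pm}-\widehat{\mathcal{V}}(\tau)f_{1,\pm}^{\operatorname{fr}}$ and using \eqref{eqw} on the first piece leaves you with $\Vert\partial_k(w_\pm(t)-w_\pm(a))\Vert_{L^2}$, which you bound by $2\sup_\tau\Vert w_\pm\Vert_{H^1({\mathbb{R}})}$. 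That quantity is \emph{not} dominated by $C\sup(\Vert w_\pm\Vert_{H^1}^\alpha+1)\Lambda_\pm(\tau;w_\pm)$: the right-hand side of \eqref{Lema2F1} is of order $\Vert w_\pm\Vert_{H^1}^{\alpha+1}$ for small data, so a bound that is merely linear in $\Vert w_\pm\Vert_{H^1}$ does not prove the lemma, and it would also destroy the superlinear structure $\beta\leq 6C\tilde\varepsilon+C(\beta^\alpha+1)\beta^{\alpha+1}$ on which the bootstrap \eqref{18.2.9}--\eqref{18.2.11.xx} depends. The only way to recover the correct order would be to estimate $\Vert\partial_k\int\widehat{\mathcal{W}}(\tau)f_{1,\pm}\,d\tau\Vert_{L^2}$ directly --- but that is exactly \eqref{18.1}, whose proof requires Lemma \ref{Lth}, hence \eqref{114}, hence $\Omega_{1,\pm}$: the argument is circular. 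The same two defects recur, amplified, in your treatment of $\Omega_{3,\pm}$, where in addition you invoke the a priori bound $\Vert w_\pm\Vert_{H^2}\leq C\sqrt{|t|}$, which is itself a consequence of Lemma \ref{LemmaPrincipal} and cannot be assumed while proving it.

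The paper's proof uses a different mechanism that avoids both problems: writing $1=e^{-i\tau k^2}\partial_\tau(\tau e^{i\tau k^2})/(1+i\tau k^2)$ as in \eqref{2.20} and integrating by parts in the \emph{time} variable, so that each $\Theta^{(j)}_\pm$ acquires a factor $(1+i\tau k^2)^{-1}$ which absorbs the weights $k$, $k^2$ (giving \eqref{F26}, \eqref{F27}), while the term containing $\partial_\tau(e^{-i\tau k^2}\Theta^{(j)}_\pm)$ is shown, via \eqref{7}, \eqref{8}, \eqref{EST14}, \eqref{EST15}, \eqref{EST16}, to decay like $|\tau|^{-1-\alpha/2}$ in $L^2$ with the correct superlinear dependence on $w_\pm$. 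No $k$-derivative of a time integral ever appears, so there is no circularity and the $\Vert w\Vert^{\alpha+1}$ smallness is preserved throughout. If you want to salvage your outline, the integration by parts must be performed in $\tau$ against the phase $e^{i\tau k^2}$, not in $k$.
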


\begin{proof}
Define%
\[
\Theta^{\left(  1\right)  }_{\pm}\left(  \tau,k\right)  :=\tau k\theta_{\pm},
\]%
\[
\Theta^{\left(  2\right)  }_{\pm}\left(  \tau,k\right)  :=\tau^{2}k\theta
_{\pm}^{(1)},
\]%
\[
\Theta^{\left(  3\right)  }_{\pm}\left(  \tau,k\right)  :=\tau^{2}k\theta
_{\pm}^{(2)},
\]
and%
\[
\Theta^{\left(  4\right)  }_{\pm}\left(  \tau,k\right)  :=\tau^{2}k^{2}%
\theta_{\pm}.
\]
Then%
\[
\Omega_{1,\pm}= \left\Vert \int_{\mathcal{T}_{\pm}(a,t)}\Theta^{\left(
1\right)  }_{\pm}\left(  \tau,k\right)  d\tau\right\Vert _{L^{2}( {\mathbb{R}%
})},
\]%
\[
\Omega_{2,\pm} = \left\Vert \int_{\mathcal{T}_{\pm}(a,t)} \Theta^{\left(
2\right)  }_{\pm}\left(  \tau,k\right)  d\tau\right\Vert _{L^{2}( {\mathbb{R}%
})}+\left\Vert \int_{\mathcal{T}_{\pm}(a,t)}\Theta^{\left(  3\right)  }_{\pm
}\left(  \tau,k\right)  d\tau\right\Vert _{L^{2}( {\mathbb{R}})}%
\]
and%
\[
\Omega_{3,\pm}=\left\Vert \int_{\mathcal{T}_{\pm}(a,t)}\Theta^{\left(
4\right)  }_{\pm}\left(  \tau,k\right)  d\tau\right\Vert _{L^{2}( {\mathbb{R}%
})}.
\]
We have,
\begin{equation}
1= e^{-i\tau k^{2}}\frac{\partial_{\tau}\left(  \tau e^{i\tau k^{2}}\right)
}{1+i\tau k^{2}}. \label{2.20}%
\end{equation}

Using (\ref{2.20}) and integrating by parts, we have%
\begin{equation}
\int_{\mathcal{T}_{\pm}(a,t)}\Theta^{\left(  j\right)  }_{\pm}\left(
\tau,k\right)  d\tau=\Theta_{1,\pm}^{\left(  j\right)  }+\Theta_{2,\pm
}^{\left(  j\right)  }+\Theta_{3,\pm}^{\left(  j\right)  }, \label{22}%
\end{equation}
where%
\[
\Theta_{1,\pm}^{\left(  j\right)  }= \pm\bigskip\left.  \tau\frac{1}{1+i\tau
k^{2}}\Theta^{\left(  j\right)  }_{\pm}\left(  \tau,k\right)  \right\vert
_{\pm a}^{t}, \qquad\pm t \geq a.
\]%
\[
\Theta_{2,\pm}^{\left(  j\right)  }=\int_{\mathcal{T}_{\pm}(a,t)}\frac{i\tau
k^{2}}{\left(  1+i\tau k^{2}\right)  ^{2}}\Theta^{\left(  j\right)  }_{\pm
}\left(  \tau,k\right)  d\tau, \qquad\pm t \geq a,
\]
and%
\[
\Theta_{3,\pm}^{\left(  j\right)  }=- \int_{1}^{t}\tau\frac{e^{i\tau k^{2}}%
}{1+i\tau k^{2}}\partial_{\tau}\left(  e^{-i\tau k^{2}}\Theta^{\left(
j\right)  }_{\pm}\left(  \tau,k\right)  \right)  d\tau, \qquad\pm t\geq a.
\]
for $j=1,2,3.$ Using (\ref{th1}) and (\ref{th}), we obtain%
\begin{equation}
\left\Vert \frac{1}{1+i\tau k^{2}}\Theta^{\left(  j\right)  }_{\pm}\left(
\tau,k\right)  \right\Vert _{L^{2}( {\mathbb{R}})} \leq C |\tau|^{-\alpha/2}
\left\Vert w_{\pm}\right\Vert _{H^{1}( {\mathbb{R}})}^{\alpha+1}, \qquad
\pm\tau\geq a, \label{F26}%
\end{equation}
for $j=1,2,3$ and
\begin{equation}
\left\Vert \frac{1}{1+i\tau k^{2}}\Theta^{\left(  4\right)  }\left(
\tau,k\right)  \right\Vert _{L^{2}}\leq C |\tau|^{1/2-\alpha/2} \left\Vert
w\right\Vert _{H^{1}}^{\alpha+1}, \qquad\pm\tau\geq a. \label{F27}%
\end{equation}
Next, we estimate $\partial_{\tau}\Theta^{\left(  j\right)  }_{\pm}\left(
\tau,k\right)  .$ Let us control first $\partial_{\tau}\left(  e^{-i\tau
k^{2}}\theta_{\pm}\right)  .$ We decompose $\theta_{\pm}$ as
\begin{equation}
\theta_{\pm}=\theta_{\pm}^{\left(  a\right)  }+\theta_{\pm}^{\left(  b\right)
}, \label{3}%
\end{equation}
with%
\[
\theta_{\pm}^{\left(  a\right)  }=\left(  \widehat{\mathcal{W}}\left(
\tau\right)  -\widehat{\mathcal{V}}\left(  \tau\right)  \right)  f_{1,\pm}%
\]
and%
\[
\theta^{\left(  b\right)  }_{\pm}=\widehat{\mathcal{V}}\left(  \tau\right)
f_{2,\pm}.
\]
Observing that
\[
\partial_{\tau}\left(  e^{-i\tau k^{2}}e^{i\tau\left(  k\pm\frac{x}{2}\right)
^{2}}\right)  =\pm\frac{1}{\tau}e^{-i\tau k^{2}}x\partial_{x}e^{i\tau\left(
k\pm\frac{x}{2}\right)  ^{2}}-ie^{-i\tau k^{2}}\frac{x^{2}}{4}e^{i\tau\left(
k\pm\frac{x}{2}\right)  ^{2}},
\]
we calculate%
\begin{align}
&  e^{i\tau k^{2}}\partial_{\tau}\left(  e^{-i\tau k^{2}}\mathcal{W}_{\pm
}\left(  \tau\right)  f\right) \nonumber\\
&  =\mathcal{W}_{\pm}\left(  \tau\right)  \left(  \left[  \frac{1}{2\tau}
\mp\frac{2}{\tau}\right]  f \mp\frac{1}{\tau} x \partial_{x}f\left(  x\right)
-\frac{i}{4}x^{2}f\left(  x\right)  +\partial_{\tau}f) \right) \nonumber\\
&  \mp\sqrt{\frac{\tau}{2\pi i}}\int_{0}^{\infty}e^{i\tau\left(  k \pm\frac
{x}{2}\right)  ^{2}} x\left(  \partial_{x}m^{\dagger}\right)  \left(  \mp
k,\tau x\right)  f\left(  x\right)  dx, \label{7}%
\end{align}
and%
\begin{equation}
e^{i\tau k^{2}}\partial_{\tau}\left(  e^{-i\tau k^{2}}\mathcal{V}_{\pm}\left(
\tau\right)  f\right)  =\mathcal{V}_{\pm}\left(  \tau\right)  \left(  \left[
\frac{1}{2\tau} \mp\frac{2}{\tau}\right]  f \mp\frac{1}{\tau} x \partial_{x}
f\left(  x\right)  -\frac{i}{4}x^{2}f\left(  x\right)  +\partial_{\tau
}f\right)  . \label{8}%
\end{equation}
Then, using (\ref{7}) and (\ref{8}) with $f=f_{1,\pm},$ and \eqref{sp9},
\eqref{sp9.1} we get%
\begin{align*}
e^{i\tau k^{2}}\partial_{\tau}\left(  e^{-i\tau k^{2}}\theta^{\left(
a\right)  }_{\pm}\right)   &  = S(k) \left(  \mathcal{W}_{+}\left(
\tau\right)  -\mathcal{V}_{+}\left(  \tau\right)  \right)  \left(  -\frac
{3}{2\tau}f_{1,\pm}-\frac{1}{\tau}x\partial_{x}f_{1,\pm}\left(  x\right)
-\frac{i}{4}x^{2}f_{1,\pm}\left(  x\right)  +\partial_{\tau}f_{1,\pm}\right)
\\
&  - S(k) \sqrt{\frac{\tau}{2\pi i}}\int_{0}^{\infty}e^{i\tau\left(
k+\frac{x}{2}\right)  ^{2}}x\left(  \partial_{x}m^{\dagger}\right)  \left(  -
k,\tau x\right)  f_{1,\pm}\left(  x\right)  dx\\
&  + \left(  \mathcal{W}_{-}\left(  \tau\right)  -\mathcal{V}_{-}\left(
\tau\right)  \right)  \left(  \frac{5}{2\tau}f_{1,\pm}+\frac{1}{\tau}%
x\partial_{x}f_{1,\pm}\left(  x\right)  -\frac{i}{4}x^{2}f_{1,\pm}\left(
x\right)  +\partial_{\tau}f_{1,\pm}\right) \\
&  + \sqrt{\frac{\tau}{2\pi i}}\int_{0}^{\infty}e^{i\tau\left(  k-\frac{x}%
{2}\right)  ^{2}}x\left(  \partial_{x}m^{\dagger}\right)  \left(  k,\tau
x\right)  f_{1,\pm}\left(  x\right)  dx.
\end{align*}
%Usingwe show%
%\[
%\left\Vert e^{i\tau k^{2}}\partial_{\tau}\left(  e^{-i\tau k^{2}}%
%\theta^{\left(  1\right)  }\right)  \right\Vert _{L^{2}}\leq C\left(
%\tau^{-3/2}\left\Vert f_{1}\right\Vert _{L^{\infty}}+\tau^{-2}\left\Vert
%\partial_{x}f_{1}\right\Vert _{L^{2}}+\left\Vert \partial_{\tau}%
%f_{1}\right\Vert _{L^{2}}\right)  .
%\]
Hence, using (\ref{EST14}), \eqref{UnitaritySM}, (\ref{3.6}), \eqref{EST5},
\eqref{EST4}, and (\ref{EST12}) we obtain,%
\begin{equation}
\label{bb.8.1}\left\Vert e^{i\tau k^{2}}\partial_{\tau}\left(  e^{-i\tau
k^{2}}\theta^{\left(  a\right)  }_{\pm}\right)  \right\Vert _{L^{2}(
{\mathbb{R}})}\leq C |\tau|^{-3/2-\alpha/2}\left(  \left\Vert w_{\pm
}\right\Vert _{H^{1}( {\mathbb{R}})}^{\alpha}+1\right)  \Lambda_{\pm}\left(
\tau;w_{\pm}\right)  , \quad\pm\tau\geq a.
\end{equation}
By (\ref{8}) with $f=f_{2,\pm}$ we get%
\[%
\begin{array}
[c]{l}%
e^{i\tau k^{2}}\partial_{\tau}\left(  e^{-i\tau k^{2}}\theta^{\left(
b\right)  }_{\pm}\right)  = S(k) \mathcal{V}_{+}\left(  \tau\right)  \left(
-\frac{3}{2\tau}f_{2,\pm}-\frac{1}{\tau}x\partial_{x}f_{2,\pm}\left(
x\right)  -\frac{i}{4}x^{2}f_{2,\pm}\left(  x\right)  +\partial_{\tau}%
f_{2,\pm}\right) \\
+ \ \mathcal{V}_{-} \left(  \frac{5}{2\tau}f_{2,\pm}+\frac{1}{\tau}%
x\partial_{x}f_{2,\pm}\left(  x\right)  -\frac{i}{4}x^{2}f_{2,\pm}\left(
x\right)  +\partial_{\tau}f_{2,\pm}\right)  .
\end{array}
\]
Then, using (\ref{EST8.0}), (\ref{EST15}) with $n=1$, (\ref{EST16}) with
$n=0,$ and (\ref{13}) we show that%
\begin{equation}
\label{bb.8.2}\left\Vert e^{i\tau k^{2}}\partial_{\tau}\left(  e^{-i\tau
k^{2}}\theta^{\left(  b\right)  }_{\pm}\right)  \right\Vert _{L^{2}(
{\mathbb{R}})}\leq C |\tau| ^{-3/2-\alpha/2}\left(  \left\Vert w_{\pm
}\right\Vert _{H^{1}( {\mathbb{R}})}^{\alpha}+1\right)  \Lambda_{\pm}\left(
\tau;w_{\pm}\right)  ,\qquad\pm\tau\geq a .
\end{equation}
Therefore, it follows from (\ref{3}), \eqref{bb.8.1} and \eqref{bb.8.2},
\begin{equation}
\left\Vert e^{i\tau k^{2}}\partial_{\tau}\left(  e^{-i\tau k^{2}}\theta_{\pm
}\right)  \right\Vert _{L^{2}( {\mathbb{R}})}\leq C|\tau|^{-3/2-\alpha
/2}\left(  \left\Vert w_{\pm}\right\Vert _{H^{1}( {\mathbb{R}})}^{\alpha
}+1\right)  \Lambda_{\pm}\left(  \tau;w_{\pm}\right)  , \qquad\pm\tau\geq a.
\label{F22}%
\end{equation}
Next, we control $\partial_{\tau}\left(  e^{-i\tau k^{2}}\theta_{\pm}%
^{(j)}\right)  , j=1,2 .$ Observing that%
\[
\partial_{\tau}\left(  e^{-i\tau k^{2}}e^{i\tau\left(  k\pm\frac{x}{2}\right)
^{2}}\right)  =i\left(  \pm kx+\frac{x^{2}}{4}\right)  e^{-i\tau k^{2}%
}e^{i\tau\left(  k\pm\frac{x}{2}\right)  ^{2}},
\]
we show%
\begin{align*}
e^{i\tau k^{2}}\partial_{\tau}\left(  e^{-i\tau k^{2}}\mathcal{W}_{\pm}\left(
\tau\right)  f\right)   &  =\mathcal{W}_{\pm}\left(  \tau\right)  \left(
\frac{1}{2\tau}f+i\left(  \pm k x+\frac{x^{2}}{4}\right)  f\right) \\
&  +\sqrt{\frac{\tau}{2\pi i}}\int_{0}^{\infty}e^{i\tau\left(  k\pm\frac{x}%
{2}\right)  ^{2}}x\left(  \partial_{x}m^{\dagger}\right)  \left(  \mp k,\tau
x\right)  f\left(  x\right)  dx,
\end{align*}
and%
\[
e^{i\tau k^{2}}\partial_{\tau}\left(  e^{-i\tau k^{2}}\mathcal{V}_{\pm}\left(
\tau\right)  f\right)  =\mathcal{V}_{\pm}\left(  \tau\right)  \left(  \frac
{1}{2\tau}f+i\left(  \pm k x+\frac{x^{2}}{4}\right)  f\right)  .
\]
Then, with $\theta_{\pm,1}^{\left(  j\right)  },$ and $\theta_{\pm,2}^{\left(
j\right)  }, j=1,2 $ given respectively by (\ref{6.1}) and (\ref{6.2})
\begin{align}
e^{i\tau k^{2}}\partial_{\tau}\left(  e^{-i\tau k^{2}}\theta_{\pm, 1}^{\left(
1\right)  }\right)   &  =\left(  \mathcal{W}_{+}\left(  \tau\right)
-\mathcal{V}_{+}\left(  \tau\right)  \right)  \left(  \frac{1}{2\tau}%
xf_{1,\pm}+i\left(  kx+\frac{x^{2}}{4}\right)  xf_{1,\pm}+x\partial_{\tau
}f_{1,\pm}\right) \nonumber\\
&  +\sqrt{\frac{\tau}{2\pi i}}\int_{0}^{\infty}e^{i\tau\left(  k+\frac{x}%
{2}\right)  ^{2}}x\left(  \partial_{x}m^{\dagger}\right)  \left(  - k,\tau
x\right)  f_{1,\pm}\left(  x\right)  dx, \label{F18}%
\end{align}
\begin{align}
e^{i\tau k^{2}}\partial_{\tau}\left(  e^{-i\tau k^{2}}\theta_{\pm, 1}^{\left(
2\right)  }\right)   &  =\left(  \mathcal{W}_{-}\left(  \tau\right)
-\mathcal{V}_{-}\left(  \tau\right)  \right)  \left(  \frac{1}{2\tau}%
xf_{1,\pm}+i\left(  - kx+\frac{x^{2}}{4}\right)  xf_{1,\pm}+x\partial_{\tau
}f_{1,\pm}\right) \nonumber\\
&  +\sqrt{\frac{\tau}{2\pi i}}\int_{0}^{\infty}e^{i\tau\left(  k-\frac{x}%
{2}\right)  ^{2}}x \left(  \partial_{x}m^{\dagger}\right)  \left(  k,\tau
x\right)  f_{1,\pm}\left(  x\right)  dx, \label{F18.1}%
\end{align}

\begin{equation}
e^{i\tau k^{2}}\partial_{\tau}\left(  e^{-i\tau k^{2}}\theta_{\pm,2}^{\left(
1\right)  }\right)  =\mathcal{V}_{+}\left(  \tau\right)  \left(  \frac
{1}{2\tau}xf_{2,\pm}+i\left(  kx+\frac{x^{2}}{4}\right)  xf_{2,\pm}%
+x\partial_{\tau}f_{2,\pm}\right)  , \label{F19}%
\end{equation}
and
\begin{equation}
e^{i\tau k^{2}}\partial_{\tau}\left(  e^{-i\tau k^{2}}\theta_{\pm,2}^{\left(
2\right)  }\right)  =\mathcal{V}_{-}\left(  \tau\right)  \left(  \frac
{1}{2\tau}x f_{2,\pm}+i\left(  - kx+\frac{x^{2}}{4}\right)  xf_{2,\pm
}+x\partial_{\tau}f_{2,\pm}\right)  . \label{F19.1}%
\end{equation}
Using \eqref{EST14}, (\ref{3.6}), and (\ref{EST12j}) in (\ref{F18}),
\eqref{F18.1} we show
%\beq\label{F19.2}
%\left\Vert \frac{k}{1+i\tau k^{2}}e^{i\tau k^{2}}\partial_{\tau}\left(
%e^{-i\tau k^{2}}\theta_{\pm}^{\left(  1\right)  }\right)  \right\Vert _{L^{2}%
%}\leq C\tau^{-3}\left\Vert f_{1}\right\Vert _{L^{\infty}}+C\tau^{-3/2}%
%\min\{\tau^{-1/2}\left\Vert \partial_{\tau}f_{1}\right\Vert _{L^{\infty}%
%},\left\Vert \partial_{\tau}f_{1}\right\Vert _{L^{2}}\}.
%\en
%Hence, by (\ref{EST4}) and (\ref{EST14}) we show%
\begin{equation}
\left\Vert \frac{k}{1+i\tau k^{2}}e^{i\tau k^{2}}\partial_{\tau}\left(
e^{-i\tau k^{2}}\theta_{\pm,1}^{\left(  j\right)  }\right)  \right\Vert
_{L^{2}( {\mathbb{R}})}\leq C|\tau|^{-3-\alpha/2}\left(  \left\Vert w_{\pm
}\right\Vert _{H^{1}( {\mathbb{R}})}^{\alpha}+1\right)  \Lambda_{\pm}\left(
\tau;w_{\pm}\right)  , \label{F20}%
\end{equation}
for $j=1,2$, $\pm\tau\geq a.$ Since by \eqref{sp8.1},
\[
\left\Vert \mathcal{V}_{\pm}\left(  \tau\right)  \phi\right\Vert _{L^{\infty}(
{\mathbb{R}})}\leq C\sqrt{|\tau|}\left\Vert \phi\right\Vert _{L^{1}%
(\mathbb{R}^{+})},
\]
using (\ref{isomafr}) in (\ref{F19}), \eqref{F19.1} we have%
\begin{align*}
&  \left\Vert \frac{k}{1+i\tau k^{2}}e^{i\tau k^{2}}\partial_{\tau}\left(
e^{-i\tau k^{2}}\theta_{\pm,2}^{\left(  j\right)  }\right)  \right\Vert
_{L^{2}( {\mathbb{R}})}\\
&  \leq\frac{C}{|\tau|^{3/2}}\left\Vert xf_{2,\pm}\right\Vert _{L^{2}(
{\mathbb{R}})}+\frac{C}{|\tau|}\left\Vert x^{2}f_{2,\pm}\right\Vert _{L^{2}(
{\mathbb{R}})}+\frac{C}{\sqrt{|\tau|}}\left\Vert x^{3}f_{2,\pm}\right\Vert
_{L^{2}( {\mathbb{R}})}+C\left\Vert x\partial_{t}f_{2,\pm}\right\Vert _{L^{1}(
{\mathbb{R}})}.
\end{align*}
Then, by (\ref{EST16}) with $n=1$ and (\ref{13}) we derive%
\begin{equation}
\left\Vert \frac{k}{1+i\tau k^{2}}e^{i\tau k^{2}}\partial_{\tau}\left(
e^{-i\tau k^{2}}\theta_{\pm,2 }^{\left(  j\right)  }\right)  \right\Vert
_{L^{2}( {\mathbb{R}})}\leq C|\tau|^{-3-\alpha/2}\left(  \left\Vert w_{\pm
}\right\Vert _{H^{1}( {\mathbb{R}})}^{\alpha}+1\right)  \Lambda_{\pm}\left(
\tau;w_{\pm}\right)  , j=1,2, \pm\tau\geq a . \label{F21}%
\end{equation}
Hence, using (\ref{F20}) and (\ref{F21}) in (\ref{6}) we obtain%
\begin{equation}
\left\Vert \frac{k}{1+i\tau k^{2}}e^{i\tau k^{2}}\partial_{\tau}\left(
e^{-i\tau k^{2}}\theta_{\pm}^{(j)}\right)  \right\Vert _{L^{2}( {\mathbb{R}}%
)}\leq C |\tau| ^{-3-\alpha/2}\left(  \left\Vert w_{\pm}\right\Vert _{H^{1}(
{\mathbb{R}})}^{\alpha}+1\right)  \Lambda_{\pm}\left(  \tau;w_{\pm}\right)  ,
\label{F23}%
\end{equation}
for $j=1,2, \pm\tau\geq a.$ Using \eqref{th1}, \eqref{th}, (\ref{F22}) and
(\ref{F23}), we prove%
\begin{equation}
\left\Vert \frac{e^{i\tau k^{2}}}{1+i\tau k^{2}}\partial_{\tau}\left(
e^{-i\tau k^{2}}\Theta^{\left(  j\right)  }_{\pm}\left(  \tau,k\right)
\right)  \right\Vert _{L^{2}( {\mathbb{R}})}\leq C |\tau|^{-1-\alpha/2}\left(
\left\Vert w_{\pm}\right\Vert _{H^{1}( {\mathbb{R}})}^{\alpha}+1\right)
\Lambda_{\pm}\left(  \tau;w_{\pm}\right)  , \label{F28}%
\end{equation}
for $\pm\tau\geq a, $ for $j=1,2,3$ and
\begin{equation}
\left\Vert \frac{e^{i\tau k^{2}}}{1+i\tau k^{2}}\partial_{\tau}\left(
e^{-i\tau k^{2}}\Theta^{\left(  4\right)  }\left(  \tau,k\right)  \right)
\right\Vert _{L^{2}( {\mathbb{R}})}\leq C\tau^{-1/2-\alpha/2}\left(
\left\Vert w\right\Vert _{H^{1}( {\mathbb{R}})}^{\alpha}+1\right)
\Lambda\left(  \tau;w\right)  . \label{F29}%
\end{equation}
Therefore, it follows from (\ref{F26}), (\ref{F27}), (\ref{F28}) and
(\ref{F29})
\[
\sum_{i=1}^{3}\left\Vert \Theta_{i}^{\left(  j\right)  }\right\Vert _{L^{2}(
{\mathbb{R}})}\leq Ct^{-\frac{\alpha}{2}+1}\left(  \left\Vert w\right\Vert
_{H^{1}( {\mathbb{R}})}^{\alpha}+1\right)  \Lambda\left(  t;w\right)  ,
\]
for $j=1,23,$ and%
\[
\sum_{i=1}^{3}\left\Vert \Theta_{i}^{\left(  4 \right)  }\right\Vert _{L^{2}(
{\mathbb{R}})}\leq Ct^{-\frac{\alpha}{2}+3/2}\left(  \left\Vert w\right\Vert
_{H^{1}( {\mathbb{R}})}^{\alpha}+1\right)  \Lambda\left(  t;w\right)  .
\]
Hence, from (\ref{22}) we attain (\ref{Lema2F1}) and (\ref{Lema2F2}).
\end{proof}

\begin{proof}
[Proof of Lemma \ref{Lth}]Using (\ref{Lema1F1}) and (\ref{Lema2F1}) in
(\ref{114}) we get (\ref{Lth1}). Moreover, by (\ref{Lema1F2}), (\ref{Lema2F1})
and (\ref{Lema2F2}) in (\ref{115}) we get (\ref{Lth2}).
\end{proof}

\subsection{Estimates for the derivatives of $\protect\widehat{\mathcal{V}%
}\left(  \tau\right)  f_{1,\pm}^{\operatorname*{fr}}.$ Proof of Lemmas
\ref{Lema3} and \ref{Lema4} \label{ProofLema34}}

Let $\chi\in C^{\infty}_{0}\left(  \mathbb{R}\right)  $ be such that $\chi
\geq0,$%
\begin{equation}
\left\{
\begin{array}
[c]{c}%
\chi\left(  x\right)  =1,\text{ for\ \ }\left\vert x\right\vert \leq1,\\
\chi\left(  x\right)  =0,\text{ for\ }\left\vert x\right\vert \geq2.
\end{array}
\right.  \label{chi}%
\end{equation}

In order to estimate the derivatives $\partial_{k}^{j}\widehat{\mathcal{V}%
}\left(  \tau\right)  f_{1,\pm}^{\operatorname*{fr}},$ $j=1,2,$ we prepare the
following lemma.

\begin{lemma}
\label{Lema5}\bigskip Suppose that $V\in L^{1,3+\tilde{\delta}}(\mathbb{R}%
^{+}),$ for some $\tilde{\delta}>1/2$ and that $H_{A,B,V}$ does not have
negative eigenvalues. In addition, suppose that $V$ admits a regular
decomposition (see Definition \ref{Def1}). Let the nonlinear function
$\mathcal{N}$ satisfy (\ref{ConNonlinearity}). Let $\psi\in C^{2}\left(
\mathbb{R}\right)  $ be such that%
\begin{equation}%
\begin{array}
[c]{l}%
\left\vert \psi\left(  k\right)  \right\vert \leq Ck\left\langle
k\right\rangle ^{-1}\text{, \ }\left\vert \psi^{\prime}\left(  k\right)
\right\vert \leq C\left\langle k\right\rangle ^{-1},\\
\left\vert \psi^{\prime\prime}\left(  k\right)  \right\vert \leq C.\label{128}%
\end{array}
\end{equation}
Then, if $w_{\pm}$ satisfies (\ref{eqw}) for $t\in\mathcal{T}_{\pm}\left(
a,\infty\right)  ,$ there is a constant $C>0$, such that uniformly for $a\geq
a_{0}>0,$ we have,
\begin{equation}
\left\Vert \partial_{k}\int_{\mathcal{T}_{\pm}(a,t)}\psi\left(  k\right)
\mathcal{V}_{+}\left(  \tau\right)  \chi f_{1,\pm}^{\operatorname*{fr}}%
d\tau\right\Vert _{L^{2}({\mathbb{R}})}\leq C\sup_{\tau\in\mathcal{T}_{\pm
}(a,t)}\left(  \left\Vert w_{\pm}\right\Vert _{H^{1}({\mathbb{R}})}^{\alpha
}+1\right)  \Lambda_{\pm}\left(  \tau;w_{\pm}\right)  ,\pm t\geq a,
\label{L5-1}%
\end{equation}
%is true. In addition, if $\psi\ $also satisfies $\left\vert \partial_{k}%
%^{2}\psi\left(  k\right)  \right\vert \leq C,$ we have
and,
\begin{equation}
\left\Vert \partial_{k}^{2}\int_{\mathcal{T}_{\pm}(a,t)}\psi\left(  k\right)
\mathcal{V}_{+}\left(  \tau\right)  \chi f_{1,\pm}^{\operatorname*{fr}}%
d\tau\right\Vert _{L^{2}({\mathbb{R}})}\leq C\sqrt{|t|}\ \sup_{\tau
\in\mathcal{T}_{\pm}(a,t)}\left(  \left\Vert w_{\pm}\right\Vert _{H^{1}%
({\mathbb{R}})}^{\alpha}+1\right)  \Lambda_{\pm}\left(  \tau;w_{\pm}\right)  ,
\label{L5-2}%
\end{equation}
for $\pm t\geq a.$
\end{lemma}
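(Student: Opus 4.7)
The plan is to exploit one integration by parts in $\tau$ on the phase $e^{i\tau k^2}$ intrinsic to the propagator $\mathcal V_+(\tau)$, with the hypotheses (\ref{128}) on $\psi$ chosen precisely to absorb the resulting $k^{-2}$ singularity. Writing
\[
\mathcal V_+(\tau)(\chi f_{1,\pm}^{\operatorname*{fr}})(k)=\sqrt{\tfrac{\tau}{2\pi i}}\,e^{i\tau k^2}\,g_{\pm}(\tau,k),\qquad g_{\pm}(\tau,k):=\int_0^\infty e^{i\tau(kx+x^2/4)}\chi(x)\,f_{1,\pm}^{\operatorname*{fr}}(\tau,x)\,dx,
\]
and using $e^{i\tau k^2}=(ik^2)^{-1}\partial_\tau e^{i\tau k^2}$, the time integral in (\ref{L5-1})--(\ref{L5-2}) decomposes into boundary contributions at $\tau=\pm a$ and $\tau=t$, plus an interior contribution carrying $\partial_\tau(\sqrt\tau\,g_{\pm})$. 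The prefactor $\psi(k)/k^2$ behaves near the origin like $\langle k\rangle^{-1}/k$, so after one $\partial_k$ (using $|\psi'|\lesssim\langle k\rangle^{-1}$) or two $\partial_k$'s (using $|\psi''|\leq C$ together with (\ref{130}) to rewrite $\partial_k\mathcal V_+$ as a boundary term plus $\mathcal V_+$ applied to $\partial_x$) it yields a density that is locally square integrable in $k$.

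The boundary contributions are controlled directly by (\ref{12.1})--(\ref{12.1.00}) and the dispersive bound on $\mathcal V_+$; the extra $\sqrt{|t|}$ that appears in (\ref{L5-2}) arises from the boundary term at $\tau=t$ when both $\partial_k$'s land on the oscillatory factor $e^{i\tau k^2}$ and produce a factor of $\tau$. For the interior term, I invoke the decomposition $\partial_\tau f_{1,\pm}^{\operatorname*{fr}}=\mathcal A_{1,\pm}^{(1)}+\mathcal A_{2,\pm}^{(1)}$ from (\ref{303.3}): the $L^2$-small piece $\mathcal A_{1,\pm}^{(1)}$, bounded by (\ref{303.4}), enters through $L^2$-boundedness of $\mathcal V_+$ from (\ref{2.11}), and directly produces the target factor $\tau^{-\alpha/2}\Lambda_\pm(\tau;w_\pm)(1+\|w_\pm\|_{H^1}^\alpha)$ after $\tau$-integration; the pointwise/$H^1$-bounded piece $\mathcal A_{2,\pm}^{(1)}$, controlled via (\ref{303.5}), is handled by the dispersive bounds already used in Section~\ref{AuxEst} together with (\ref{130}) whenever a second $\partial_k$ falls on $\mathcal V_+$. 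The compact support of $\chi$ ensures that every stray factor of $x$ arising from differentiating the phase in $x$ is bounded, so the only time growth we must tolerate is the factor $\tau$ produced when $\partial_k$ lands on $e^{i\tau k^2}$; that factor is compatible with the $\sqrt{|t|}$ appearing on the right-hand side of (\ref{L5-2}).

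The main difficulty will be the book-keeping: a second $\tau$-integration by parts is unavailable because $\psi$ vanishes only to order one at the origin, and therefore the interior term for the second $k$-derivative must be handled by integrating by parts in $x$ via (\ref{130}) whenever a differentiation in $k$ would otherwise produce a $k^{-2}$ singularity unaccompanied by sufficient vanishing of $\psi$. Ensuring that each of the many terms generated by the product rule and these integrations by parts carries the right combination of time decay and $k$-integrability --- in particular, that the $\Lambda_\pm$-structure is preserved and the growth does not exceed $\sqrt{|t|}$ for the second derivative --- is the delicate step; once this bookkeeping is controlled, (\ref{L5-1})--(\ref{L5-2}) follow by assembling the bounds established above.
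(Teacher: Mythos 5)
Your overall architecture is the right one (integrate by parts in $\tau$, treat boundary and interior pieces separately, use \eqref{130} to convert $\partial_k\mathcal{V}_+$ into a trace term plus $\mathcal{V}_+\partial_x$, and invoke the decomposition \eqref{303.3} with \eqref{303.4}--\eqref{303.5} for $\partial_\tau f_{1,\pm}^{\operatorname*{fr}}$), but the specific integration-by-parts identity you chose creates a gap that the stated hypotheses on $\psi$ cannot close. Writing $e^{i\tau k^2}=(ik^2)^{-1}\partial_\tau e^{i\tau k^2}$ produces the prefactor $\psi(k)/k^2$, which by \eqref{128} behaves like $1/k$ near $k=0$; this is already \emph{not} locally square integrable (the boundary term $A^{(1)}$ at $\tau=t$ would then be $\sim k^{-1}$ times a quantity that does not vanish at $k=0$, so its $L^2$ norm diverges), and applying $\partial_k$ or $\partial_k^2$ only worsens the singularity to $k^{-2}$ or $k^{-3}$ — even when the derivative lands on $e^{i\tau k^2}$ and produces a factor $2i\tau k$, you are left with $\tau\psi(k)/k\sim\tau/\langle k\rangle$ near infinity but still $O(1/k)\cdot\tau$... actually $\tau \psi(k)/k \sim \tau\langle k\rangle^{-1}$ is fine at the origin, but the terms where no derivative hits the phase retain the uncompensated $k^{-2}$ and $k^{-3}$ singularities. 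Your assertion that ``after one $\partial_k$ \dots it yields a density that is locally square integrable in $k$'' is therefore false: first-order vanishing of $\psi$ at the origin cannot absorb a second-order pole.

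The paper avoids this entirely by integrating by parts with the \emph{regularized} identity \eqref{2.20}, $1=e^{-i\tau k^2}\,\partial_\tau\bigl(\tau e^{i\tau k^2}\bigr)/(1+i\tau k^2)$, so that every term carries the bounded weight $\tau/(1+i\tau k^2)$ (or $i\tau k^2/(1+i\tau k^2)^2$) rather than $k^{-2}$; the inequalities \eqref{170} then convert the hypotheses \eqref{128} into the precise powers of $|\tau|$ needed, and the $\sqrt{|t|}$ in \eqref{L5-2} emerges from bounds of the form $\|(\tau k^2)^l\partial_k^2(\psi(k)(1+i\tau k^2)^{-1}(\tau k^2(1+i\tau k^2)^{-1})^m)\|_{L^\infty}\le C\sqrt{|\tau|}$. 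To repair your argument you would have to restrict the naive integration by parts to $|k|\gtrsim|\tau|^{-1/2}$ and treat the complementary region separately — which is exactly what the weight $(1+i\tau k^2)^{-1}$ accomplishes in one stroke. As written, your plan fails at $k=0$.
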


\begin{proof}
Using (\ref{2.20}) and integrating by parts we have%
\begin{equation}
\int_{\mathcal{T}_{\pm}(a,t)}\psi\left(  k\right)  \mathcal{V}_{ + }\left(
\tau\right)  \chi f_{1,\pm}^{\operatorname*{fr}}d\tau=A^{\left(  1\right)
}+A^{\left(  2\right)  }+\tilde{A}, \label{51b}%
\end{equation}
where%
\[
A^{\left(  1\right)  }= \pm\left.  \tau\frac{\psi\left(  k\right)  }{1+i\tau
k^{2}}\mathcal{V}_{ + }\left(  \tau\right)  \chi f_{1,\pm}^{\operatorname*{fr}%
}\right\vert _{\pm a}^{t},
\]
%
%\[
%A^{\left(  2\right)  }=\left(  \frac{\alpha}{2}-1\right)  \int_{1}^{t}%
%\tau^{-\frac{\alpha}{2}}\frac{\psi\left(  k\right)  }{1+i\tau k^{2}%
%}\mathcal{V}^{\left(  +\right)  }\left(  \tau\right)  \chi f_{1}%
%^{\operatorname*{fr}}d\tau,
%\]%
\[
A^{\left(  2\right)  }=\int_{\mathcal{T}_{\pm}(a,t)} \frac{i\tau k^{2}%
\psi\left(  k\right)  }{\left(  1+i\tau k^{2}\right)  ^{2}}\mathcal{V}_{ +
}\left(  \tau\right)  \chi f_{1,\pm}^{\operatorname*{fr}}d\tau,
\]
and%
\[
\tilde{A}=-\int_{\mathcal{T}_{\pm}(a,t)}\tau\frac{\psi\left(  k\right)
}{1+i\tau k^{2}}e^{i\tau k^{2}}\partial_{\tau}\left(  e^{-i\tau k^{2}%
}\mathcal{V}_{ + }\left(  \tau\right)  \chi f_{1,\pm}^{\operatorname*{fr}%
}\right)  d\tau.
\]
Using (\ref{14}) and integrating by parts we get%
\begin{equation}
i\tau\mathcal{V}_{ + }\left(  \tau\right)  \left(  k+\frac{x}{2}\right)
\phi=-\sqrt{\frac{\tau}{2\pi i}}\phi\left(  0\right)  -\mathcal{V}_{ +
}\left(  \tau\right)  \left(  \partial_{x}\phi\right)  . \label{80}%
\end{equation}
Then, using that
\[
\partial_{\tau}\left(  e^{-i\tau k^{2}}e^{i\tau\left(  k+\frac{x}{2}\right)
^{2}}\right)  =i\left(  \frac{1}{2}x\left(  k+\frac{x}{2}\right)  +\frac{1}%
{2}kx\right)  e^{-i\tau k^{2}}e^{i\tau\left(  k+\frac{x}{2}\right)  ^{2}}%
\]
we obtain,%
\begin{align*}
e^{i\tau k^{2}}\partial_{\tau}\left(  e^{-i\tau k^{2}}\mathcal{V}^{\left(
+\right)  }\left(  \tau\right)  \chi f_{1}^{\operatorname*{fr}}\right)   &
=\frac{i}{2}\mathcal{V}^{\left(  +\right)  }\left(  \tau\right)  \left(
k+\frac{x}{2}\right)  x\chi f_{1}^{\operatorname*{fr}}+\frac{i}{2}%
k\mathcal{V}^{\left(  +\right)  }\left(  \tau\right)  x\chi f_{1}%
^{\operatorname*{fr}}\\
&  +\mathcal{V}^{\left(  +\right)  }\left(  \tau\right)  \chi\partial_{\tau
}f_{1}^{\operatorname*{fr}}+\frac{1}{2\tau}\mathcal{V}^{\left(  +\right)
}\left(  \tau\right)  \chi f_{1}^{\operatorname*{fr}}\\
&  =-\frac{1}{2\tau}\mathcal{V}^{\left(  +\right)  }\left(  \tau\right)
\partial_{x}\left(  x\chi f_{1}^{\operatorname*{fr}}\right)  +\frac{i}%
{2}k\mathcal{V}^{\left(  +\right)  }\left(  \tau\right)  \left(  x\chi
f_{1}^{\operatorname*{fr}}\right) \\
&  +\mathcal{V}^{\left(  +\right)  }\left(  \tau\right)  \chi\partial_{\tau
}f_{1}^{\operatorname*{fr}}+\frac{1}{2\tau}\mathcal{V}^{\left(  +\right)
}\left(  \tau\right)  \chi f_{1}^{\operatorname*{fr}}.
\end{align*}
Thus, we decompose $\tilde{A}$ as follows%
\[
\tilde{A}=A^{\left(  3\right)  }+A^{\left(  4\right)  }+A^{\left(  5\right)
}+ A^{\left(  6\right)  },
\]
where
\[
A^{\left(  3\right)  }=\frac{1}{2}\int_{\mathcal{T}_{\pm}(a,t)}\frac
{\psi\left(  k\right)  }{1+i\tau k^{2}}\mathcal{V}_{ + }\left(  \tau\right)
\partial_{x}\left(  x\chi f_{1,\pm}^{\operatorname*{fr}}\right)  d\tau,
\]%
\[
A^{\left(  4\right)  }=-\frac{i}{2}\int_{\mathcal{T}_{\pm}(a,t)}\tau
\frac{k\psi\left(  k\right)  }{1+i\tau k^{2}}\mathcal{V}_{ + }\left(
\tau\right)  x\chi f_{1,\pm}^{\operatorname*{fr}}d\tau,
\]%
\[
A^{\left(  5\right)  }= -\int_{\mathcal{T}_{\pm}(a,t)} \frac{ \psi\left(
k\right)  }{1+i\tau k^{2}}\mathcal{V}_{ + } \left(  \tau\right)  \left(
\chi\partial_{\tau}f_{1,\pm}^{\operatorname*{fr}}\right)  d\tau,
\]
and%
\[
A^{\left(  6\right)  }=-\frac{1}{2}\int_{\mathcal{T}_{\pm}(a,t)}\frac
{\psi\left(  k\right)  }{1+i\tau k^{2}}e^{i\tau k^{2}}\mathcal{V}_{ + }\left(
\tau\right)  \chi f_{1,\pm}^{\operatorname*{fr}}d\tau.
\]
Observe that%
\begin{equation}
\sqrt{|\tau|}\left\Vert \left\langle k\right\rangle \frac{k^{m}\psi\left(
k\right)  }{1+i\tau k^{2}}\right\Vert _{L^{\infty}( {\mathbb{R}})}+\left\Vert
\left\langle k\right\rangle \partial_{k}\frac{k^{m}\psi\left(  k\right)
}{1+i\tau k^{2}}\right\Vert _{L^{\infty}( {\mathbb{R}})}+\sqrt{|\tau
|}\left\Vert k\partial_{k}\frac{k^{m}\psi\left(  k\right)  }{1+i\tau k^{2}%
}\right\Vert _{L^{\infty}( {\mathbb{R}})}\leq C|\tau|^{-\frac{m}{2}},
\label{170}%
\end{equation}
for $m=0,1.$ Then, by (\ref{12.1}), (\ref{derivativeffr}), (\ref{130}), and
(\ref{EST8.0}) we show that%
\[
\left\Vert \partial_{k}A^{\left(  j\right)  } \right\Vert _{L^{2}(
{\mathbb{R}})}\leq C \sup_{\tau\in\mathcal{T}_{\pm}(a,t)} \left\Vert w_{\pm
}\right\Vert _{H^{1}( {\mathbb{R}})}^{\alpha+1}, \qquad\pm t \geq a,
\]
for $j=1,2,3,6.$ Next, using (\ref{130}), and (\ref{80}) we decompose%
\[
\partial_{k}A^{\left(  4\right)  }=A^{\left(  41\right)  }+A^{\left(
42\right)  }+A^{\left(  43\right)  }+A^{\left(  44\right)  },
\]
where%
\[
A^{\left(  41\right)  }=\sqrt{\frac{1}{2\pi i}}\int_{\mathcal{T}_{\pm}%
(a,t)}\sqrt{\tau}\left(  \partial_{k}\frac{k\psi\left(  k\right)  }{1+i\tau
k^{2}}\right)  \left(  f_{1,\pm}^{\operatorname*{fr}}\right)  \left(
0\right)  d\tau,
\]%
\[
A^{\left(  42\right)  }=\int_{\mathcal{T}_{\pm}(a,t)}\left(  \partial_{k}%
\frac{k\psi\left(  k\right)  }{1+i\tau k^{2}}\right)  \mathcal{V}_{ + }\left(
\tau\right)  \left(  \partial_{x}\left(  \chi f_{1,\pm}^{\operatorname*{fr}%
}\right)  \right)  d\tau,
\]%
\[
A^{\left(  43\right)  }=i\int_{\mathcal{T}_{\pm}(a,t)}\tau\left(
k\partial_{k}\frac{k\psi\left(  k\right)  }{1+i\tau k^{2}}\right)
\mathcal{V}_{ + }\left(  \tau\right)  \chi f_{1,\pm}^{\operatorname*{fr}}%
d\tau,
\]
and%
\[
A^{\left(  44\right)  }=i\int_{\mathcal{T}_{\pm}(a,t)}\tau\frac{k\psi\left(
k\right)  }{1+i\tau k^{2}}\mathcal{V}_{ + }\left(  \tau\right)  \left(
\partial_{x}\left(  x\chi f_{1,\pm}^{\operatorname*{fr}}\right)  \right)
d\tau.
\]
Thus, by (\ref{12.1}), \eqref{12.1.00}, (\ref{derivativeffr}), (\ref{170}),
and \eqref{EST8.0} we prove%
\[
\left\Vert \partial_{k}A^{\left(  4\right)  }\right\Vert _{L^{2}( {\mathbb{R}%
})}\leq C \sup_{\tau\in\mathcal{T}_{\pm}(a,t)} \left\Vert w_{\pm}\right\Vert
_{H^{1}( {\mathbb{R}})}^{\alpha+1}, \pm t \geq a.
\]
Next, using (\ref{303.3}), as well as (\ref{130}) we decompose%
\begin{align*}
\partial_{k} A^{\left(  5\right)  }  &  =-\int_{\mathcal{T}_{\pm}(a,t)}%
\tau\left(  \partial_{k}\frac{\psi\left(  k\right)  }{1+i\tau k^{2}}\right)
\mathcal{V}_{ + }\left(  \tau\right)  \left(  \chi\partial_{\tau}f_{1,\pm
}^{\operatorname*{fr}}\right)  d\tau\\
&  -2i\int_{\mathcal{T}_{\pm}(a,t)}\tau\frac{\psi\left(  k\right)  }{1+i\tau
k^{2}}\tau\mathcal{V}_{ + }\left(  \tau\right)  \left(  k+\frac{x}{2}\right)
\left(  \chi\mathcal{A}_{1,\pm}^{\left(  1\right)  }\right)  d\tau\\
&  +\sqrt{\frac{2}{\pi i}}\int_{\mathcal{T}_{\pm}(a,t)}\tau^{3/2}\frac
{\psi\left(  k\right)  e^{i\tau k^{2}}}{1+i\tau k^{2}}\mathcal{A}_{2,\pm
}^{\left(  1\right)  }\left(  0\right)  d\tau\\
&  +2\int_{\mathcal{T}_{\pm}(a,t)}\tau\frac{\psi\left(  k\right)  }{1+i\tau
k^{2}}\mathcal{V}_{+ }\left(  \tau\right)  \partial_{x}\left(  \chi
\mathcal{A}_{2,\pm}^{\left(  1\right)  }\right)  d\tau.
\end{align*}
Hence, by \eqref{303.4}, \eqref{303.5}, (\ref{170}) and (\ref{EST8.0}), we get%
\[
\left\Vert \partial_{k}A^{\left(  5\right)  }\right\Vert _{L^{2}( {\mathbb{R}%
})}\leq C \sup_{\tau\in\mathcal{T}_{\pm}(a,t)} \left(  \left(  1+\left\Vert
w_{\pm}\right\Vert _{H^{1}() {\mathbb{R}}}^{\alpha}\right)  \Lambda_{\pm
}\left(  \tau;w_{\pm}\right)  \right)  , \qquad\pm t \geq a .
\]
Therefore, using these estimates for $A^{\left(  j\right)  },$ $j=1,...,6,$ in
(\ref{51b}) we obtain (\ref{L5-1}).

Now, we estimate (\ref{L5-2}). Using (\ref{130}) we have
\begin{align}
\partial_{k}^{2}\left(  \mathcal{V}_{\left(  +\right)  }\left(  \tau\right)
\varphi\right)   &  =2i\tau\partial_{k}\left(  \mathcal{V}_{\left(  +\right)
}\left(  \tau\right)  \left(  k+\frac{x}{2}\right)  \varphi\right) \nonumber\\
&  =-4i\tau\sqrt{\frac{\tau}{2\pi i}}ke^{itk^{2}}\varphi\left(  0\right)
+2i\tau\mathcal{V}_{\left(  +\right)  }\left(  \tau\right)  \varphi\nonumber\\
&  -4i\tau\mathcal{V}_{\left(  +\right)  }\left(  \tau\right)  \left(
\partial_{x}\left(  \left(  k+\frac{x}{2}\right)  \varphi\right)  \right)  .
\label{54}%
\end{align}
Then, it follows from \eqref{12.1}, (\ref{12.1.00}), (\ref{derivativeffr}),
(\ref{130}), \eqref{80}, \eqref{170}, (\ref{54}), and (\ref{EST8.0}) that%
\[
\left\Vert \partial_{k}^{2}\left(  \frac{\psi\left(  k\right)  }{1+i\tau
k^{2}}\left(  \frac{i\tau k^{2}}{1+i\tau k^{2}}\right)  ^{m}\mathcal{V}%
_{\left(  +\right)  }\left(  \tau\right)  \chi f_{1,\pm}^{\operatorname*{fr}%
}\right)  \right\Vert _{L^{2}( {\mathbb{R}})}\leq C\sqrt{|\tau|}\left\Vert
w_{\pm}\right\Vert _{H^{1}( {\mathbb{R}})}^{\alpha+1},\qquad\pm\tau\geq a,
\]
for $m=0,1.$ Hence, we show that%
\begin{equation}
\left\Vert \partial_{k}^{2}A^{\left(  j\right)  }\right\Vert _{L^{2}(
{\mathbb{R}})}\leq C\sqrt{|t|}\sup_{\tau\in\mathcal{T}_{\pm}(a,t)}\left\Vert
w_{\pm}\right\Vert _{H^{1}( {\mathbb{R}})}^{\alpha+1}, \qquad\pm\tau\geq a,
\label{65b}%
\end{equation}
for $j=1,2.$ We decompose%
\begin{equation}
\partial_{k}^{2}\tilde{A}=\tilde{A}^{\left(  1\right)  }+\tilde{A}^{\left(
2\right)  }+\tilde{A}^{\left(  3\right)  }, \label{63b}%
\end{equation}
where%
\[
\tilde{A}^{\left(  1\right)  }=-\int_{\mathcal{T}_{\pm}(a,t)}\tau\left[
\partial_{k}^{2}\left(  \frac{\psi\left(  k\right)  }{1+i\tau k^{2}}\right)
\right]  e^{i\tau k^{2}}\partial_{\tau}\left(  e^{-i\tau k^{2}}\mathcal{V}_{ +
}\left(  \tau\right)  \chi f_{1,\pm}^{\operatorname*{fr}}\right)  d\tau,
\]%
\[
\tilde{A}^{\left(  2\right)  }=-2\int_{\mathcal{T}_{\pm}(a,t)}\tau\left[
\partial_{k}\left(  \frac{\psi\left(  k\right)  }{1+i\tau k^{2}}\right)
\right]  \partial_{k}\left(  e^{i\tau k^{2}}\partial_{\tau}\left(  e^{-i\tau
k^{2}}\mathcal{V}_{ + }\left(  \tau\right)  \chi f_{1,\pm}^{\operatorname*{fr}%
}\right)  \right)  d\tau,
\]
and
\[
\tilde{A}^{\left(  3\right)  }=-\int_{\mathcal{T}_{\pm}(a,t)}\tau\frac
{\psi\left(  k\right)  }{1+i\tau k^{2}}\partial_{k}^{2}\left(  e^{i\tau k^{2}%
}\partial_{\tau}\left(  e^{-i\tau k^{2}}\mathcal{V}_{ + }\left(  \tau\right)
\chi f_{1,\pm}^{\operatorname*{fr}}\right)  \right)  d\tau.
\]
First, we estimate $\tilde{A}^{\left(  1\right)  }.$ Integrating by parts in
$\tau$ we have,
\begin{align*}
\tilde{A}^{\left(  1\right)  }  &  = \mp\left.  \tau\left[  \partial_{k}%
^{2}\left(  \frac{\psi\left(  k\right)  }{1+i\tau k^{2}}\right)  \right]
\mathcal{V}_{ + }\left(  \tau\right)  \chi f_{1,\pm}^{\operatorname*{fr}%
}\right\vert _{\pm a}^{t}\\
&  +\int_{\mathcal{T}_{\pm}(a,t)}\tau ik^{2}\left[  \partial_{k}^{2}\left(
\frac{\psi\left(  k\right)  }{1+i\tau k^{2}}\right)  \right]  \mathcal{V}_{ +
}\left(  \tau\right)  \chi f_{1,\pm}^{\operatorname*{fr}}d\tau\\
&  + \int_{\mathcal{T}_{\pm}(a,t)} \left[  \partial_{k}^{2}\left(  \frac
{\psi\left(  k\right)  }{1+i\tau k^{2}}\right)  \right]  \mathcal{V}_{ +
}\left(  \tau\right)  \chi f_{1,\pm}^{\operatorname*{fr}}d\tau\\
&  -\int_{\mathcal{T}_{\pm}(a,t)} \left[  \partial_{k}^{2}\left(  \psi\left(
k\right)  \frac{i\tau k^{2}}{\left(  1+i\tau k^{2}\right)  ^{2}}\right)
\right]  \mathcal{V}_{ + }\left(  \tau\right)  \chi f_{1,\pm}%
^{\operatorname*{fr}}d\tau.
\end{align*}
Observe that%
\[
\left\Vert \left(  \tau k^{2}\right)  ^{l}\partial_{k}^{2}\left(  \frac
{\psi\left(  k\right)  }{1+i\tau k^{2}}\left(  \frac{\tau k^{2}}{1+i\tau
k^{2}}\right)  ^{m}\right)  \right\Vert _{L^{\infty}( {\mathbb{R}})}\leq
C\sqrt{|\tau|},
\]
for $m,l=0,1.$Then, by \eqref{12.1}, (\ref{12.1.00}), and (\ref{EST8.0}) we
show%
\begin{equation}
\left\Vert \tilde{A}^{\left(  1\right)  }\right\Vert _{L^{2}( {\mathbb{R}}%
)}\leq C\sqrt{|t|}\sup_{\tau\in\mathcal{T}_{\pm}(a,t)} \left\Vert w_{\pm
}\right\Vert _{H^{1}( {\mathbb{R}})}^{\alpha+1}, \pm t \geq a. \label{62b}%
\end{equation}
Next $,$ using that%
\[
e^{i\tau k^{2}}\partial_{\tau}\left(  e^{-i\tau k^{2}}\mathcal{V}_{ + }\left(
\tau\right)  \chi f_{1,\pm}^{\operatorname*{fr}}\right)  =\partial_{\tau
}\mathcal{V}_{ + }\left(  \tau\right)  \chi f_{1,\pm}^{\operatorname*{fr}%
}-ik^{2}\mathcal{V}_{ + }\left(  \tau\right)  \chi f_{1,\pm}%
^{\operatorname*{fr}}%
\]
we write $\tilde{A}^{\left(  2\right)  }$ as%
\[
\tilde{A}^{\left(  2\right)  }=\tilde{A}^{\left(  21\right)  }+\tilde
{A}^{\left(  22\right)  }+\tilde{A}^{\left(  23\right)  },
\]
with%
\[
\tilde{A}^{\left(  21\right)  }=-2\int_{\mathcal{T}_{\pm}(a,t)}\tau\left[
\partial_{k}\left(  \frac{\psi\left(  k\right)  }{1+i\tau k^{2}}\right)
\right]  \partial_{\tau}\left(  \partial_{k}\mathcal{V}^{\left(  +\right)
}\left(  \tau\right)  \chi f_{1,\pm}^{\operatorname*{fr}}\right)  d\tau,
\]%
\[
\tilde{A}^{\left(  22\right)  }=4i\int_{\mathcal{T}_{\pm}(a,t)}\tau k\left[
\partial_{k}\left(  \frac{\psi\left(  k\right)  }{1+i\tau k^{2}}\right)
\right]  \mathcal{V}_{ + }\left(  \tau\right)  \chi f_{1,\pm}%
^{\operatorname*{fr}}d\tau
\]
and%
\[
\tilde{A}^{\left(  23\right)  }=2i\int_{\mathcal{T}_{\pm}(a,t)}\tau k^{2}
\left[  \partial_{k}\left(  \frac{\psi\left(  k\right)  }{1+i\tau k^{2}%
}\right)  \right]  \partial_{k}\mathcal{V}_{ + }\left(  \tau\right)  \chi
f_{1,\pm}^{\operatorname*{fr}}d\tau.
\]
Integrating by parts in $\tau$ we have
\begin{align*}
\tilde{A}^{\left(  21\right)  }  &  =\mp2\left.  \tau\left[  \partial
_{k}\left(  \frac{\psi\left(  k\right)  }{1+i\tau k^{2}}\right)  \right]
\partial_{k}\mathcal{V}_{ + }\left(  \tau\right)  \chi f_{1,\pm}%
^{\operatorname*{fr}}\right\vert _{\pm a}^{t}\\
&  + 2 \int_{\mathcal{T}_{\pm}(a,t)} \left[  \partial_{k}\left(  \frac
{\psi\left(  k\right)  }{1+i\tau k^{2}}\right)  \right]  \left(  \partial
_{k}\mathcal{V}_{ + }\left(  \tau\right)  \chi f_{1,\pm}^{\operatorname*{fr}%
}\right)  d\tau\\
&  -2\int_{\mathcal{T}_{\pm}(a,t)} \left[  \partial_{k}\left(  \psi\left(
k\right)  \frac{i\tau k^{2}}{\left(  1+i\tau k^{2}\right)  ^{2}}\right)
\right]  \left(  \partial_{k}\mathcal{V}_{ + }\left(  \tau\right)  \chi
f_{1,\pm}^{\operatorname*{fr}}\right)  d\tau.
\end{align*}
Observe that%
\[
\left\Vert \left\langle k\right\rangle k^{l}\partial_{k}\left(  \frac
{\psi\left(  k\right)  }{1+i\tau k^{2}}\left(  \frac{\tau k^{2}}{1+i\tau
k^{2}}\right)  ^{m}\right)  \right\Vert _{L^{\infty}( {\mathbb{R}})}\leq
C|\tau|^{-\frac{l}{2}},
\]
for $m=0,1$ and $l=0,1,2.$ Then, using (\ref{12.1}), (\ref{130}),
\eqref{derivativeffr}, and (\ref{EST8.0}) we estimate $\tilde{A}^{\left(
2\right)  }$ as%
\begin{equation}
\left\Vert \tilde{A}^{\left(  2\right)  }\right\Vert _{L^{2}( {\mathbb{R}}%
)}\leq\left\Vert \tilde{A}^{\left(  21\right)  }\right\Vert _{L^{2}(
{\mathbb{R}})}+\left\Vert \tilde{A}^{\left(  22\right)  }\right\Vert _{L^{2}(
{\mathbb{R}})}+\left\Vert \tilde{A}^{\left(  23\right)  }\right\Vert _{L^{2}(
{\mathbb{R}})}\leq C\sqrt{|t|}\sup_{\tau\in\mathcal{T}_{\pm}(a,t)}\left\Vert
w_{\pm}\right\Vert _{H^{1}( {\mathbb{R}})}^{\alpha+1}, \pm t \geq a.
\label{61b}%
\end{equation}
We now estimate $\tilde{A}^{\left(  3\right)  }.$ Let us define,%
\begin{align*}
b_{1}  &  =-4i\mathcal{V}_{\ + }\left(  \tau\right)  \partial_{x}\left(  x\chi
f_{1,\pm}^{\operatorname*{fr}}\right)  +4\sqrt{\frac{\tau}{2\pi i}}ike^{i\tau
k^{2}}f_{1,\pm}^{\operatorname*{fr}}\left(  0\right)  ,\\
&  +8ik\mathcal{V}_{ + }\left(  \tau\right)  \partial_{x}\left(  \chi
f_{1,\pm}^{\operatorname*{fr}}\right)  +4ik\mathcal{V}_{ + }\left(
\tau\right)  \left(  x\partial_{x}^{2}\left(  \chi f_{1,\pm}%
^{\operatorname*{fr}}\right)  \right)  ,
\end{align*}%
\begin{align*}
b_{2}  &  =i\mathcal{V}_{ +\ }\left(  \tau\right)  \partial_{x}\left(  \left(
\partial_{x}\left(  x^{2}\chi\right)  \right)  f_{1,\pm}^{\operatorname*{fr}%
}\right)  -2ik\mathcal{V}_{ + }\left(  \tau\right)  \partial_{x}\left(
x\chi\left(  \partial_{x}f_{1,\pm}^{\operatorname*{fr,\pm}}\right)  \right) \\
&  +i\mathcal{V}_{+ }\left(  \tau\right)  \left(  x\chi\partial_{x}f_{1,\pm
}^{\operatorname*{fr}}\right)  ,
\end{align*}%
\[
b_{3}=-2i\tau\left(  \partial_{\tau}\mathcal{V}_{ + }\left(  \tau\right)
\right)  \left(  x\chi\partial_{x}f_{1,\pm}^{\operatorname*{fr}}\right)  ,
\]
and%
\[
b_{4}=-4\tau^{2}k\mathcal{V}_{ + }\left(  \tau\right)  \left(  k+\frac{x}%
{2}\right)  \left(  \chi\partial_{\tau}f_{1,\pm}^{\operatorname*{fr}}\right)
-2i\tau\mathcal{V}_{ + }\left(  \tau\right)  \left(  x\left(  \partial_{x}%
\chi\right)  \partial_{\tau}f_{1,\pm}^{\operatorname*{fr}}\right)
-2i\tau\mathcal{V}_{ + }\left(  \tau\right)  \left(  x\chi\partial_{\tau
}\partial_{x}f_{1,\pm}^{\operatorname*{fr}}\right)  .
\]
Using (\ref{14}) in the last term in the right-hand side of (\ref{54}) we have%
\begin{align}
\partial_{k}^{2}\left(  \mathcal{V}_{ + }\left(  \tau\right)  \varphi\right)
&  =-4i\tau\sqrt{\frac{\tau}{2\pi i}}ke^{itk^{2}}\varphi\left(  0\right)
+4\sqrt{\frac{\tau}{2\pi i}}e^{itk^{2}}\left(  \partial_{x}\varphi\right)
\left(  0\right) \nonumber\\
&  +4\mathcal{V}_{ + }\left(  \tau\right)  \partial_{x}^{2}\varphi.
\label{54bis}%
\end{align}
Then, by (\ref{130}) and (\ref{54bis}) we calculate%
\[
\partial_{k}^{2}\left(  ik\mathcal{V}_{ + }\left(  \tau\right)  x\chi
f_{1,\pm}^{\operatorname*{fr}}\right)  =b_{1}.
\]

By (\ref{14}) and (\ref{130}) we have%
\begin{align*}
-2i\tau\left(  \partial_{\tau}\mathcal{V}_{ + }\left(  \tau\right)  \right)
\left(  x\chi\partial_{x}f_{1,\pm}^{\operatorname*{fr}}\right)   &
=-i\mathcal{V}_{ + }\left(  \tau\right)  \left(  x\chi\partial_{x}f_{1,\pm
}^{\operatorname*{fr}}\right)  -i\mathcal{V}_{ + }\left(  \tau\right)
2i\tau\left(  k+\frac{x}{2}\right)  ^{2}\left(  x\chi\partial_{x}f_{1,\pm
}^{\operatorname*{fr}}\right) \\
&  =-i\mathcal{V}_{ + }\left(  \tau\right)  \left(  x\chi\partial_{x}f_{1,\pm
}^{\operatorname*{fr}}\right)  -i\left(  \partial_{k}\mathcal{V}_{ + }\left(
\tau\right)  \right)  \left(  k+\frac{x}{2}\right)  \left(  x\chi\partial
_{x}f_{1,\pm}^{\operatorname*{fr}}\right) \\
&  =2i\mathcal{V}_{ + }\left(  \tau\right)  \left(  \left(  k+\frac{x}%
{2}\right)  \partial_{x}\left(  x\chi\partial_{x}f_{1,\pm}^{\operatorname*{fr}%
}\right)  \right) \\
&  =i\mathcal{V}_{ + }\left(  \tau\right)  \left(  x\partial_{x}\left(
x\chi\partial_{x}f_{1,\pm}^{\operatorname*{fr}}\right)  \right)
+2ik\mathcal{V}^{\left(  +\right)  }\left(  \tau\right)  \left(  \partial
_{x}\left(  x\chi\partial_{x}f_{1,\pm}^{\operatorname*{fr}}\right)  \right) \\
&  =-i\mathcal{V}_{ + }\left(  \tau\right)  \left(  x\chi\partial_{x}f_{1,\pm
}^{\operatorname*{fr}}\right)  +i\mathcal{V}_{ + }\left(  \tau\right)  \left(
\partial_{x}\left(  x^{2}\chi\partial_{x}f_{1,\pm}^{\operatorname*{fr}%
}\right)  \right) \\
&  +2ik\mathcal{V}_{ + }\left(  \tau\right)  \left(  \partial_{x}\left(
x\chi\left(  \partial_{x}f_{1,\pm}^{\operatorname*{fr}}\right)  \right)
\right)  .
\end{align*}
Then%
\begin{align*}
&  i\mathcal{V}_{ + }\left(  \tau\right)  \left(  \partial_{x}\left(
x^{2}\chi\partial_{x}f_{1,\pm}^{\operatorname*{fr}}\right)  \right) \\
&  =b_{3}-2ik\mathcal{V}_{ + }\left(  \tau\right)  \left(  \partial_{x}\left(
x\chi\left(  \partial_{x}f_{1,\pm}^{\operatorname*{fr}}\right)  \right)
\right)  +i\mathcal{V}_{ + }\left(  \tau\right)  \left(  x\chi\partial
_{x}f_{1,\pm}^{\operatorname*{fr}}\right)  .
\end{align*}
Thus, it follows from (\ref{54bis}) that%
\[
\partial_{k}^{2}\left(  i\mathcal{V}_{ + }\left(  \tau\right)  \left(
\frac{x^{2}}{4}\right)  \chi f_{1,\pm}^{\operatorname*{fr}}\right)
=b_{2}+b_{3}.
\]
Moreover, since%
\[
\partial_{k}^{2}\left(  \mathcal{V}_{ + }\left(  \tau\right)  \varphi\right)
=-4\tau^{2}k\mathcal{V}_{ + }\left(  \tau\right)  \left(  k+\frac{x}%
{2}\right)  \varphi-2i\tau\mathcal{V}_{ + }\left(  \tau\right)  \left(
x\partial_{x}\varphi\right)  ,
\]
we show,%
\[
\partial_{k}^{2}\left(  \mathcal{V}_{ + }\left(  \tau\right)  \chi
\partial_{\tau}f_{1,\pm}^{\operatorname*{fr}}\right)  =b_{4}.
\]
Hence
\[
\partial_{k}^{2}\left(  e^{i\tau k^{2}}\partial_{\tau}\left(  e^{-i\tau k^{2}%
}\mathcal{V}_{ + }\left(  \tau\right)  \chi f_{1,\pm}^{\operatorname*{fr}%
}\right)  \right)  =\sum_{j=1}^{4}b_{j}.
\]
Therefore, we decompose $\tilde{A}^{\left(  3\right)  }$ as%
\begin{equation}
\tilde{A}^{\left(  3\right)  }=\tilde{A}^{\left(  31\right)  }+\tilde
{A}^{\left(  32\right)  }, \label{59b}%
\end{equation}
where%
\[
\tilde{A}^{\left(  31\right)  }=-\int_{\mathcal{T}_{\pm}(a,t)}\tau\frac
{\psi\left(  k\right)  }{1+i\tau k^{2}}\left(  b_{1}+b_{2}\right)  d\tau,
\]
and%
\[
\tilde{A}^{\left(  32\right)  }=-\int_{1}^{t}\tau\frac{\psi\left(  k\right)
}{1+i\tau k^{2}}\left(  b_{3}+b_{4}\right)  d\tau.
\]
Using (\ref{12.1}), (\ref{12.1.00}), (\ref{derivativeffr}), (\ref{170}), and
\eqref{EST8.0} we show,%
\begin{equation}
\left\Vert \tilde{A}^{\left(  31\right)  }\right\Vert _{L^{2}( {\mathbb{R}}%
)}\leq C\sqrt{|t|} \sup_{\tau\in\mathcal{T}_{\pm}(a,t)} \|w_{\pm}\|_{H^{1}(
{\mathbb{R}})}^{\alpha+1}. \label{57b}%
\end{equation}
In order to estimate $\tilde{A}^{\left(  32\right)  }$, we observe that%
\begin{align*}
b_{3}+b_{4}  &  =-4\tau^{2}k\mathcal{V}_{ + }\left(  \tau\right)  \left(
k+\frac{x}{2}\right)  \left(  \chi\partial_{\tau}f_{1,\pm}^{\operatorname*{fr}%
}\right) \\
&  -2i\tau\mathcal{V}_{ + }\left(  \tau\right)  \left(  x\left(  \partial
_{x}\chi\right)  \partial_{\tau}f_{1,\pm}^{\operatorname*{fr}}\right)
-2i\tau\partial_{\tau}\left(  \mathcal{V}_{ + }\left(  \tau\right)  \left(
x\chi\partial_{x}f_{1,\pm}^{\operatorname*{fr}}\right)  \right)  .
\end{align*}
Then, using (\ref{303.3}) we write $\tilde{A}^{\left(  32\right)  }$ as
follows%
\begin{equation}
\tilde{A}^{\left(  32\right)  }=\tilde{A}_{1}^{\left(  32\right)  }+\tilde
{A}_{2}^{\left(  32\right)  }+\tilde{A}_{3}^{\left(  32\right)  }+\tilde
{A}_{4}^{\left(  32\right)  }, \label{203}%
\end{equation}
where%
\[
\tilde{A}_{1}^{\left(  32\right)  }=4\int_{\mathcal{T}_{\pm}(a,t)}\tau
^{3}\frac{k\psi\left(  k\right)  }{1+i\tau k^{2}}\mathcal{V}_{ + }\left(
\tau\right)  \left(  k+\frac{x}{2}\right)  \left(  \chi\mathcal{A}%
_{1}^{\left(  1\right)  }\right)  d\tau,
\]%
\[
\tilde{A}_{2}^{\left(  32\right)  }=4\int_{\mathcal{T}_{\pm}(a,t)}\tau
^{3}\frac{k\psi\left(  k\right)  }{1+i\tau k^{2}}\mathcal{V}_{ + }\left(
\tau\right)  \left(  k+\frac{x}{2}\right)  \left(  \chi\mathcal{A}%
_{2}^{\left(  1\right)  }\right)  d\tau,
\]%
\[
\tilde{A}_{3}^{\left(  32\right)  }=2i\int_{\mathcal{T}_{\pm}(a,t)}\tau
^{2}\frac{\psi\left(  k\right)  }{1+i\tau k^{2}}\mathcal{V}_{ + }\left(
\tau\right)  \left(  x\left(  \partial_{x}\chi\right)  \left(  \mathcal{A}%
_{1}^{\left(  1\right)  }+\mathcal{A}_{2}^{\left(  1\right)  }\right)
\right)  d\tau,
\]
and%
\[
\tilde{A}_{4}^{\left(  32\right)  }=2i\int_{\mathcal{T}_{\pm}(a,t)}\tau
^{2}\frac{\psi\left(  k\right)  }{1+i\tau k^{2}}\partial_{\tau}\left(
\mathcal{V}_{ +\ }\left(  \tau\right)  \left(  x\chi\partial_{x}f_{1,\pm
}^{\operatorname*{fr}}\right)  \right)  d\tau.
\]
Then, using(\ref{303.4}), \eqref{303.5}, (\ref{170}), and (\ref{EST8.0}), we
get%
\begin{equation}
\left\Vert \tilde{A}_{1}^{\left(  32\right)  }\right\Vert _{L^{2}(
{\mathbb{R}})}+\left\Vert \tilde{A}_{3}^{\left(  32\right)  }\right\Vert
_{L^{2}( {\mathbb{R}})}\leq C\sqrt{|t|} \sup_{\tau\in\mathcal{T}_{\pm}(a,t)}
\left(  \left\Vert w_{\pm}\right\Vert _{H^{1}( {\mathbb{R}})}^{\alpha
}+1\right)  \Lambda_{\pm}\left(  t;w_{\pm}\right)  , \qquad\pm t \geq a .
\label{200}%
\end{equation}
Using (\ref{80}), it follows from (\ref{303.4}), and (\ref{EST8.0}),
\begin{equation}%
\begin{array}
[c]{l}%
\left\Vert \tilde{A}_{2}^{\left(  32\right)  }\right\Vert _{L^{2}(
{\mathbb{R}})}\leq C \sup_{\tau\in\mathcal{T}_{\pm}(a,t)} \left(  |\tau
|^{3/2}\left\Vert \mathcal{A}_{2}^{\left(  1\right)  }\right\Vert _{L^{\infty
}( {\mathbb{R}})}+|\tau|\left\Vert \mathcal{A}_{2}^{\left(  1\right)
}\right\Vert _{H^{1}( {\mathbb{R}})( {\mathbb{R}})}\right)  \leq\\
\\
C\sqrt{|t|} \sup_{\tau\in\mathcal{T}_{\pm}(a,t)} \left(  \left\Vert w_{\pm
}\right\Vert _{H^{1}( {\mathbb{R}})}^{\alpha}+1\right)  \Lambda_{\pm}\left(
\tau;w\right)  , \pm t \geq a . \label{201}%
\end{array}
\end{equation}
Integrating by parts we get%
\begin{align*}
\tilde{A}_{4}^{\left(  32\right)  }  &  =\pm2i\left.  \tau^{2}\frac
{\psi\left(  k\right)  }{1+i\tau k^{2}}\mathcal{V}_{ + }\left(  \tau\right)
x\chi\partial_{x}f_{1,\pm}^{\operatorname*{fr}}\right\vert _{\pm a}^{t}\\
&  -2i\int_{\mathcal{T}_{\pm}(a,t)}\psi\left(  k\right)  \left[
\partial_{\tau}\left(  \frac{\tau^{2}}{1+i\tau k^{2}}\right)  \right]  \left(
\mathcal{V}_{ + }\left(  \tau\right)  x\chi\partial_{x}f_{1,\pm}%
^{\operatorname*{fr}}\right)  d\tau.
\end{align*}
Then, using (\ref{derivativeffr}), (\ref{170}), and (\ref{EST8.0}), we show%
\begin{equation}
\left\Vert \tilde{A}_{4}^{\left(  32\right)  }\right\Vert _{L^{2}(
{\mathbb{R}})}\leq C\sqrt{|t|} \sup_{\tau\in\mathcal{T}_{\pm}(a,t)} \|w_{\pm
}\|_{H^{1}( {\mathbb{R}})}^{\alpha+1}, \qquad\pm t \geq a. \label{202}%
\end{equation}
Hence, by (\ref{203}), (\ref{200}), (\ref{201}) and (\ref{202}) we get%
\begin{equation}
\left\Vert \tilde{A}^{\left(  32\right)  }\right\Vert _{L^{2}( {\mathbb{R}}%
)}\leq C \sqrt{|t|} \sup_{\tau\in\mathcal{T}_{\pm}(a,t)} \left(  \left\Vert
w_{\pm}\right\Vert _{H^{1}( {\mathbb{R}})}^{\alpha}+1\right)  \Lambda_{\pm
}\left(  \tau;w\right)  , \pm t \geq a . \label{58b}%
\end{equation}
From (\ref{59b}), (\ref{57b}) and (\ref{58b}), we deduce%
\begin{equation}
\left\Vert \tilde{A}^{\left(  3\right)  }\right\Vert _{L^{2}( {\mathbb{R}}%
)}\leq C \sqrt{|t|} \sup_{\tau\in\mathcal{T}_{\pm}(a,t)} \left(  \left\Vert
w_{\pm}\right\Vert _{H^{1}( {\mathbb{R}})}^{\alpha}+1\right)  \Lambda_{\pm
}\left(  \tau;w\right)  , \pm t \geq a . \label{60b}%
\end{equation}
Introducing (\ref{62b}), (\ref{61b}) and (\ref{60b}) into (\ref{63b}) we prove%
\begin{equation}
\left\Vert \partial_{k}^{2}\tilde{A}\right\Vert _{L^{2}( {\mathbb{R}})}\leq C
\sqrt{|t|} \sup_{\tau\in\mathcal{T}_{\pm}(a,t)} \left(  \left\Vert w_{\pm
}\right\Vert _{H^{1}( {\mathbb{R}})}^{\alpha}+1\right)  \Lambda_{\pm}\left(
\tau;w\right)  , \pm t \geq a . \label{64b}%
\end{equation}
Therefore, using (\ref{65b}) and (\ref{64b}) in (\ref{51b}) we deduce
(\ref{L5-2}).
\end{proof}

\begin{proof}
[Proof of Lemma \ref{Lema3}]We begin by decomposing
\begin{equation}
\widehat{\mathcal{V}}\left(  \tau\right)  f_{1,\pm}^{\operatorname*{fr}}%
=B_{1}+B_{2}, \label{35.0}%
\end{equation}
with
\[
B_{1}=\widehat{\mathcal{V}}\left(  \tau\right)  \left(  \left(  1-\chi\right)
f_{1,\pm}^{\operatorname*{fr}}\right)  ,
\]
and%
\[
B_{2}=\widehat{\mathcal{V}}\left(  \tau\right)  \left(  \chi f_{1,\pm
}^{\operatorname*{fr}}\right)  ,
\]
where we recall that $\chi$ is defined by (\ref{chi}). Using (\ref{14}) and
integrating by parts we have
\begin{align}
\partial_{k}B_{1}  &  =\left(  \partial_{k}S\left(  k\right)  \right)
\mathcal{V}_{ + }\left(  \tau\right)  \left(  1-\chi\right)  f_{1,\pm
}^{\operatorname*{fr}}\nonumber\\
&  -2S\left(  k\right)  \mathcal{V}_{ + }\left(  \tau\right)  \partial
_{x}\left(  \left(  1-\chi\right)  f_{1,\pm}^{\operatorname*{fr}}\right)
+2\mathcal{V}_{ - }\left(  \tau\right)  \partial_{x}\left(  \left(
1-\chi\right)  f_{1,\pm}^{\operatorname*{fr}}\right)  . \label{39}%
\end{align}
Then, using (\ref{12.1}), (\ref{derivativeffr}), (\ref{UnitaritySM}),
(\ref{scatmatrixderiv}), (\ref{EST8.0}), we get%
\begin{equation}
\left\Vert \partial_{k}B_{1}\right\Vert _{L^{2}( {\mathbb{R}})}\leq C
|\tau|^{-\alpha/2} \left\Vert w_{\pm}\right\Vert _{H^{1}( {\mathbb{R}}%
)}^{\alpha+1}, \qquad\pm\tau\geq a. \label{35}%
\end{equation}

Next, we calculate%
\begin{equation}
\partial_{k}B_{2}=\left(  \partial_{k}S\left(  k\right)  \right)
\mathcal{V}_{ + }\left(  \tau\right)  \left(  \chi f_{1,\pm}%
^{\operatorname*{fr}}\right)  +S\left(  k\right)  \partial_{k}\mathcal{V}_{ +
}\left(  \tau\right)  \left(  \chi f_{1,\pm}^{\operatorname*{fr}}\right)
+\partial_{k}\mathcal{V}_{ - }\left(  \tau\right)  \left(  \chi f_{1,\pm
}^{\operatorname*{fr}}\right)  . \label{120}%
\end{equation}
Recall that $P_{\pm}$ are the projectors on the eigenspaces of $S(0)$
corresponding to the eigenvalues $\pm1,$ respectively (see Appendix
\ref{App1}). Using (\ref{130}) we decompose%
\begin{align}
\partial_{k}B_{2}  &  =\partial_{k}\left(  \left(  S\left(  k\right)
-S\left(  0\right)  \right)  \mathcal{V}_{ + }\left(  \tau\right)  \left(
\chi f_{1,\pm}^{\operatorname*{fr}}\right)  \right) \nonumber\\
&  -2P_{+}\left(  \mathcal{V}_{ + }\left(  \tau\right)  \left(  \partial
_{x}\left(  \chi f_{1,\pm}^{\operatorname*{fr}}\right)  \right)
-\mathcal{V}_{ - }\left(  \tau\right)  \left(  \partial_{x}\left(  \chi
f_{1,\pm}^{\operatorname*{fr}}\right)  \right)  \right) \nonumber\\
&  +P_{-}\left(  \partial_{k}\mathcal{V}_{ - }\left(  \tau\right)  \left(
\chi f_{1,\pm}^{\operatorname*{fr}}\right)  -\partial_{k}\mathcal{V}_{ +
}\left(  \tau\right)  \left(  \chi f_{1,\pm}^{\operatorname*{fr}}\right)
\right)  . \label{126}%
\end{align}
We use Lemma \ref{Lema5} with $\psi\left(  k\right)  =S\left(  k\right)
-S\left(  0\right)  $ to estimate the first term on the right-hand side of the
above equality. By (\ref{S-1}), (\ref{scatmatrixderiv}), and
\eqref{scatmatrixsecondderiv} $\psi$ satisfies (\ref{128})$.$ Then, it follows
from (\ref{L5-1}) that%
\begin{equation}
\label{36}%
\begin{array}
[c]{l}%
\left\Vert \partial_{k}\int_{\mathcal{T}_{\pm}(a,t)} \left(  \left(  S\left(
k\right)  -S\left(  0\right)  \right)  \mathcal{V}_{ + }\left(  \tau\right)
\left(  \chi f_{1,\pm}^{\operatorname*{fr}}\right)  \right)  d\tau\right\Vert
_{L^{2}( {\mathbb{R}})}\leq C \sup_{\tau\in\mathcal{T}_{\pm}(a,t)} \left(
\left\Vert w_{\pm}\right\Vert _{H^{1}( {\mathbb{R}})}^{\alpha}+1\right)
\Lambda_{\pm}\left(  \tau;w_{\pm}\right)  , 
\end{array}
\end{equation}
for $\pm t \geq a.$
Moreover, by (\ref{derivativeffr}), and (\ref{EST8.0}),%
\begin{equation}
\left\Vert \mathcal{V}_{ \pm}\left(  \tau\right)  \partial_{x}\left(  \chi
f_{1,\pm}^{\operatorname*{fr}}\right)  \right\Vert _{L^{2}( {\mathbb{R}})}\leq
C\left\Vert \partial_{x}\left(  \chi f_{1,\pm}^{\operatorname*{fr}}\right)
\right\Vert _{L^{2}( {\mathbb{R}})}\leq C |\tau|^{-\alpha/2} \left\Vert
w\right\Vert _{H^{1}( {\mathbb{R}})}^{\alpha+1}, \qquad\pm\tau\geq a.
\label{127}%
\end{equation}
Then,%
\begin{equation}
\label{127.1}\left\Vert 2P_{+}\left(  \mathcal{V}_{ + }\left(  \tau\right)
\left(  \partial_{x}\left(  \chi f_{1,\pm}^{\operatorname*{fr}}\right)
\right)  -\mathcal{V}_{ - }\left(  \tau\right)  \left(  \partial_{x}\left(
\chi f_{1,\pm}^{\operatorname*{fr}}\right)  \right)  \right)  \right\Vert
_{L^{2}( {\mathbb{R}})}\leq C |\tau|^{-\alpha/2} \left\Vert w_{\pm}\right\Vert
_{H^{1}( {\mathbb{R}})}^{\alpha+1}, \qquad\pm\tau\geq a.
\end{equation}
Let us assume that $w_{\pm}(k)= S(k) w_{\pm}(-k), k \in{\mathbb{R}},$ or
equivalently,
\begin{equation}
w_{\pm}\left(  -k\right)  =S\left(  -k\right)  w_{\pm}\left(  k\right)  ,
\qquad k \in{\mathbb{R}}. \label{68}%
\end{equation}
Using (\ref{68}) we have%
\begin{equation}
\left(  \mathcal{V}\left(  \tau\right)  w_{\pm}\right)  \left(  0\right)
=\sqrt{\frac{i\tau}{2\pi}}\int_{0}^{\infty}e^{-i\tau k^{2}}\left(  S\left(
-k\right)  +I\right)  w_{\pm}\left(  k\right)  dk. \label{129}%
\end{equation}
Let $\psi$ satisfy (\ref{128}). Since $e^{-i\tau k^{2}}=\left(  1-2i\tau
k^{2}\right)  ^{-1}\partial_{k}\left(  ke^{-i\tau k^{2}}\right)  $,
integrating by parts we show
\begin{align}
&  \left\vert \sqrt{\frac{i\tau}{2\pi}}\int_{0}^{\infty}e^{-i\tau k^{2}}%
\psi\left(  k\right)  w_{\pm}\left(  k\right)  dk\right\vert \nonumber\\
&  \leq C\sqrt{|\tau|}\int_{0}^{\infty}\left\vert k\partial_{k}\left(
\frac{\psi\left(  k\right)  }{1-2itk^{2}}w_{\pm}\left(  k\right)  \right)
\right\vert dk\nonumber\\
&  \leq C\sqrt{|\tau|}\int_{0}^{\infty}\left\vert \frac{k\partial_{k}%
\psi\left(  k\right)  }{1-2i\tau k^{2}}w_{\pm}\left(  k\right)  \right\vert
dk+C\sqrt{|\tau|}\int_{0}^{\infty}\left\vert \frac{\psi\left(  k\right)
}{1-2i\tau k^{2}}w_{\pm}\left(  k\right)  \right\vert dk\nonumber\\
&  +C\sqrt{|\tau|}\int_{0}^{\infty}\left\vert k\frac{\psi\left(  k\right)
}{1-2i\tau k^{2}}\partial_{k}w_{\pm}\left(  k\right)  \right\vert dk.
\label{70}%
\end{align}
Hence%
\begin{equation}
\left\vert \sqrt{\frac{i\tau}{2\pi}}\int_{0}^{\infty}e^{-i\tau k^{2}}%
\psi\left(  k\right)  w_{\pm}\left(  k\right)  dk\right\vert \leq
C|\tau|^{-\rho/2}\left\Vert w_{\pm}\right\Vert _{H^{1}( {\mathbb{R}})},
\label{71}%
\end{equation}
for any $0\leq\rho<1.$ By (\ref{S-1}), (\ref{scatmatrixderiv}), and
\eqref{scatmatrixsecondderiv}, $\psi=P_{-}\left(  S\left(  -k\right)
+I\right)  $ satisfies (\ref{128}). Then, using (\ref{71}) we show%
\begin{equation}
\left\vert P_{-}\left(  \mathcal{V}\left(  \tau\right)  w_{\pm}\right)
\left(  0\right)  \right\vert \leq C|\tau|^{-\rho/2}\left\Vert w_{\pm
}\right\Vert _{H^{1}( {\mathbb{R}})}. \label{76}%
\end{equation}
Since $\mathcal{N}$ commutes with $P_{-},$ using this estimate together with
(\ref{89.xxxx}), we get%
\begin{equation}
\left\vert P_{-} f_{1,\pm}^{\operatorname*{fr}}\left(  0\right)  \right\vert
\leq C|\tau|^{-\rho/2-\alpha/2} \left\Vert w_{\pm}\right\Vert _{H^{1}(
{\mathbb{R}})}^{\alpha+1}, \qquad\pm\tau\geq a, \label{140}%
\end{equation}
for any $0\leq\rho<1.$ Further, by (\ref{130})%
\begin{align}
&  \partial_{k}\mathcal{V}_{ - }\left(  \tau\right)  \left(  \chi f_{1,\pm
}^{\operatorname*{fr}}\right)  -\partial_{k}\mathcal{V}_{ + }\left(
\tau\right)  \left(  \chi f_{1,\pm}^{\operatorname*{fr}}\right)  =4\sqrt
{\frac{\tau}{2\pi i}}e^{i\tau k^{2}}\left(  \chi f_{1,\pm}^{\operatorname*{fr}%
}\right)  \left(  0\right) \nonumber\\
&  +2\mathcal{V}_{ + }\left(  \tau\right)  \left(  \partial_{x}\left(  \chi
f_{1,\pm}^{\operatorname*{fr}}\right)  \right)  +2\mathcal{V}_{ - }\left(
\tau\right)  \left(  \partial_{x}\left(  \chi f_{1,\pm}^{\operatorname*{fr}%
}\right)  \right)  . \label{130bis}%
\end{align}
Then, it follows from (\ref{127}) and (\ref{140}) that%
\begin{equation}
\left\Vert P_{-}\left[  \partial_{k}\mathcal{V}_{ - }\left(  \tau\right)
\left(  \chi f_{1,\pm}^{\operatorname*{fr}}\right)  -\partial_{k}\mathcal{V}_{
+ }\left(  \tau\right)  \left(  \chi f_{1,\pm}^{\operatorname*{fr}}\right)
\right]  \right\Vert _{L^{2}( {\mathbb{R}})}\leq C |\tau|^{\frac{1-\rho}%
{2}-\alpha/2}\left\Vert w_{\pm}\right\Vert _{H^{1}( {\mathbb{R}})}^{\alpha+1},
\qquad\pm\tau\geq a, \label{133}%
\end{equation}
$0 \leq\rho<1.$ Hence, using (\ref{36}), (\ref{127.1}) and (\ref{133}) in
(\ref{126}), and taking $\rho$ sufficiently close to $1,$ since $\alpha>2$ we
obtain%
\begin{equation}
\left\Vert \partial_{k}\int_{\mathcal{T}_{\pm}(a,t)} B_{2}\,d\tau\right\Vert
_{L^{2}}\leq C \sup_{\tau\in\mathcal{T}_{\pm}(a,t)} \left(  \left\Vert w_{\pm
}\right\Vert _{H^{1}( {\mathbb{R}})}^{\alpha}+1\right)  \Lambda_{\pm}\left(
\tau;w_{\pm}\right)  , \qquad\pm t \geq a. \label{35.1}%
\end{equation}

Combining estimates (\ref{35}) and (\ref{35.1})\ in (\ref{35.0}) we attain
(\ref{Lema3F1}).
\end{proof}

\begin{proof}
[Proof of Lemma \ref{Lema4}]Derivating (\ref{39}), using (\ref{14}) and
integrating by parts we have%
\begin{align*}
\partial_{k}^{2}B_{1}  &  =\left(  \partial_{k}^{2}S\left(  k\right)  \right)
\mathcal{V}_{+}\left(  \tau\right)  \left(  1-\chi\right)  f_{1,\pm
}^{\operatorname*{fr}}\\
&  -4\left(  \partial_{k}S\left(  k\right)  \right)  \mathcal{V}{_{+}}\left(
\tau\right)  \partial_{x}\left(  \left(  1-\chi\right)  f_{1,\pm
}^{\operatorname*{fr}}\right) \\
&  +4S\left(  k\right)  \mathcal{V}_{+}\left(  \tau\right)  \partial_{x}%
^{2}\left(  \left(  1-\chi\right)  f_{1,\pm}^{\operatorname*{fr}}\right)
+4\mathcal{V}{\ }_{-}\left(  \tau\right)  \partial_{x}^{2}\left(  \left(
1-\chi\right)  f_{1,\pm}^{\operatorname*{fr}}\right)  .
\end{align*}
Using (\ref{derivativeffr}), (\ref{UnitaritySM}), (\ref{scatmatrixderiv}),
(\ref{scatmatrixsecondderiv}), and (\ref{EST8.0}), we get%
\begin{equation}
\left\Vert \partial_{k}^{2}B_{1}\right\Vert _{L^{2}({\mathbb{R}})}\leq
C\sqrt{|\tau|}\,|\tau|^{-\alpha/2}\,\Lambda_{\pm}\left(  \tau;w_{\pm}\right)
. \label{146}%
\end{equation}
To estimate $\partial_{k}^{2}B_{2}$ we decompose $B_{2}$ as%
\begin{align}
B_{2}  &  =\left(  S\left(  k\right)  -S\left(  0\right)  \right)
\mathcal{V}_{+}\left(  \tau\right)  \left(  \chi f_{1,\pm}^{\operatorname*{fr}%
}\right) \nonumber\\
&  +P_{+}\left(  1-\chi\right)  \left(  \mathcal{V}_{+}\left(  \tau\right)
\left(  \chi f_{1,\pm}^{\operatorname*{fr}}\right)  +\mathcal{V}_{-}\left(
\tau\right)  \left(  \chi f_{1,\pm}^{\operatorname*{fr}}\right)  \right)
\nonumber\\
&  +P_{+}\chi\left(  \mathcal{V}_{+}\left(  \tau\right)  \left(  \chi
f_{1,\pm}^{\operatorname*{fr}}\right)  +\mathcal{V}_{-}\left(  \tau\right)
\left(  \chi f_{1,\pm}^{\operatorname*{fr}}\right)  \right) \label{146.1}\\
&  +P_{-}\left(  -\mathcal{V}_{+}\left(  \tau\right)  \left(  \chi f_{1,\pm
}^{\operatorname*{fr}}\right)  +\mathcal{V}_{-}\left(  \tau\right)  \left(
\chi f_{1,\pm}^{\operatorname*{fr}}\right)  \right)  .
\end{align}
From now on we take $\chi$ even. Then, using (\ref{68}) we have%
\begin{align*}
\mathcal{V}_{-}\left(  \tau\right)  \left(  \chi f_{1,\pm}^{\operatorname*{fr}%
}\right)   &  =\sqrt{\frac{\tau}{2\pi i}}\int_{-\infty}^{0}e^{i\tau\left(
k+\frac{x}{2}\right)  ^{2}}\chi f_{1,\pm}^{\operatorname*{fr}}\left(
-x\right)  dx\\
&  =\sqrt{\frac{\tau}{2\pi i}}\int_{-\infty}^{0}e^{i\tau\left(  k+\frac{x}%
{2}\right)  ^{2}}\chi f_{1,\pm}^{\operatorname*{fr}}dx+\mathcal{\widetilde{W}%
}\left(  \tau\right)  \left(  \chi a_{1}\right)  ,
\end{align*}
with%
\[
\mathcal{\widetilde{W}}\left(  \tau\right)  \phi=\mathcal{V}_{-}\left(
t\right)  \left(  \phi\left(  -x\right)  \right)  =\sqrt{\frac{\tau}{2\pi i}%
}\int_{-\infty}^{0}e^{i\tau\left(  k+\frac{x}{2}\right)  ^{2}}\phi dx,
\]
and%
\[
a_{1}\left(  x\right)  =f_{1,\pm}^{\operatorname*{fr}}\left(  S(-k)w_{\pm
}\left(  k\right)  \right)  -f_{1,\pm}^{\operatorname*{fr}}\left(  w_{\pm
}\right)  =f_{1,\pm}^{\operatorname*{fr}}\left(  w_{\pm}\left(  -k\right)
\right)  -f_{1,\pm}^{\operatorname*{fr}}\left(  w_{\pm}\right)  ,
\]
where we used \eqref{68}. Then, by \eqref{146.1}%
\begin{equation}
B_{2}=B_{2}^{\left(  1\right)  }+B_{2}^{\left(  2\right)  }, \label{767}%
\end{equation}
with%
\begin{align*}
B_{2}^{\left(  1\right)  }  &  =\left(  S\left(  k\right)  -S\left(  0\right)
\right)  \mathcal{V}_{+}\left(  \tau\right)  \left(  \chi f_{1,\pm
}^{\operatorname*{fr}}\right) \\
&  +P_{+}\left(  1-\chi\right)  \left(  \mathcal{V}_{+}\left(  \tau\right)
\left(  \chi f_{1,\pm}^{\operatorname*{fr}}\right)  +\mathcal{V}_{-}\left(
\tau\right)  \left(  \chi f_{1,\pm}^{\operatorname*{fr}}\right)  \right) \\
&  +P_{-}\left(  1-\chi\right)  \left(  \mathcal{V}_{-}\left(  \tau\right)
\left(  \chi f_{1,\pm}^{\operatorname*{fr}}\right)  -\mathcal{V}_{+}\left(
\tau\right)  \left(  \chi f_{1,\pm}^{\operatorname*{fr}}\right)  \right) \\
&  +P_{+}\chi\left(  \mathcal{\widetilde{V}}\left(  \tau\right)  \left(  \chi
f_{1,\pm}^{\operatorname*{fr}}\right)  +\mathcal{\widetilde{W}}\left(
\tau\right)  \left(  \chi a_{1}\right)  \right) ,
\end{align*}
and%
\[
B_{2}^{\left(  2\right)  }=P_{-}\chi\left(  \mathcal{V}_{\ -}\left(
\tau\right)  \left(  \chi f_{1,\pm}^{\operatorname*{fr}}\right)
-\mathcal{V}_{+}\left(  \tau\right)  \left(  \chi f_{1,\pm}%
^{\operatorname*{fr}}\right)  \right) ,
\]
where we denote%
\[
\mathcal{\widetilde{V}}\left(  \tau\right)  \phi=\sqrt{\frac{\tau}{2\pi i}%
}\int_{-\infty}^{\infty}e^{i\tau\left(  k+\frac{x}{2}\right)  ^{2}}\phi dx.
\]

We calculate%
\begin{equation}
\label{767.1}\partial_{k}^{2}B_{2}^{\left(  1\right)  }=B_{21}+B_{22}+B_{23},
\end{equation}
where%
\begin{align*}
B_{21}  &  =\partial_{k}^{2}\left(  \left(  S\left(  k\right)  -S\left(
0\right)  \right)  \mathcal{V}_{ + }\left(  \tau\right)  \left(  \chi
f_{1,\pm}^{\operatorname*{fr}}\right)  \right) \\
&  +P_{+}\partial_{k}^{2}\left(  \left(  1-\chi\right)  \left(  \mathcal{V}_{
+ }\left(  \tau\right)  \left(  \chi f_{1,\pm}^{\operatorname*{fr}}\right)
+\mathcal{V}_{ -}\left(  \tau\right)  \left(  \chi f_{1,\pm}%
^{\operatorname*{fr}}\right)  \right)  \right) \\
&  +P_{-}\partial_{k}^{2}\left(  \left(  1-\chi\right)  \left(  \mathcal{V}_{
- }\left(  t\right)  \left(  \chi f_{1,\pm}^{\operatorname*{fr}}\right)
-\mathcal{V}_{ + }\left(  \tau\right)  \left(  \chi f_{1,\pm}%
^{\operatorname*{fr}}\right)  \right)  \right)  ,
\end{align*}%
\[
B_{22}= \left(  \partial^{2}_{k} \chi\right)  P_{+}\mathcal{\widetilde{V}%
}\left(  \tau\right)  \left(  \chi f_{1,\pm}^{\operatorname*{fr}}\right)  - 4
(\partial_{k} \chi) P_{+}\mathcal{\widetilde{V}}\left(  \tau\right)
\partial_{x}\left(  \chi f_{1,\pm}^{\operatorname*{fr}}\right)  +
4P_{+}\mathcal{\tilde{V}}\left(  \tau\right)  \partial_{x}^{2}\left(  \chi
f_{1,\pm}^{\operatorname*{fr}}\right)  ,
\]
and%
\[
B_{23}=P_{+}\partial_{k}^{2}\left(  \chi\mathcal{\widetilde{W}}\left(
\tau\right)  \left(  \chi a_{1}\right)  \right)  .
\]
By (\ref{S-1}), \eqref{scatmatrixderiv}, and (\ref{scatmatrixsecondderiv})
$\psi=S\left(  k\right)  -S\left(  0\right)  $ satisfies (\ref{128}).
Moreover, also $\psi=1-\chi$ satisfies (\ref{128}). Further, as
\[
\left(  \mathcal{V}_{ - }\left(  \tau\right)  \phi\right)  \left(  k\right)
=\left(  \mathcal{V}_{ + }\left(  \tau\right)  \phi\right)  \left(  -k\right)
,
\]
using (\ref{L5-2}) of Lemma \ref{Lema5}, (\ref{UnitaritySM}),
(\ref{scatmatrixderiv}) and (\ref{scatmatrixsecondderiv}) we get
\begin{equation}
\label{B21}\left\Vert \int_{\mathcal{T}_{\pm}(a,t)} B_{21}d\tau\right\Vert
_{L^{2}( {\mathbb{R}})}\leq C \sqrt{|t|}\, \sup_{\tau\in\mathcal{T}_{\pm
}(a,t)} \left(  \left\Vert w_{\pm}\right\Vert _{H^{1}( {\mathbb{R}})}^{\alpha
}+1\right)  \Lambda_{\pm}\left(  \tau;w_{\pm}\right)  , \pm t \geq a.
\end{equation}
Moreover, by (\ref{derivativeffr}), and\eqref{EST8.0}, we have
\begin{equation}
\left\Vert B_{22}\right\Vert _{L^{2}( {\mathbb{R}})}\leq C\sqrt{|\tau|}
|\tau|^{-\alpha/2} \Lambda\left(  \tau;w_{\pm}\right)  . \label{B22}%
\end{equation}
Next, we decompose
\begin{equation}
\label{B22.1}B_{23}= B_{23}(1)+ B_{23}(2)+B_{23}(3),
\end{equation}
where%
\[
B_{23}\left(  1\right)  :=P_{+}\left(  \partial_{k}^{2}\chi\right)
\mathcal{\widetilde{W}}\left(  \tau\right)  \left(  \chi a_{1}\right)  ,
\]%
\[
B_{23}\left(  2\right)  :=2 P_{+} \left(  \partial_{k}\chi\right)
\partial_{k}\mathcal{\widetilde{W}}\left(  \tau\right)  \left(  \chi
a_{1}\right)  ,
\]
and%
\[
B_{23}\left(  3\right)  := P_{+} \chi\partial_{k}^{2}\mathcal{\widetilde{W}%
}\left(  \tau\right)  \left(  \chi a_{1}\right)  .
\]
By (\ref{12.1}), and (\ref{EST8.0}) we estimate,%
\begin{equation}
\left\Vert B_{23}\left(  1\right)  \right\Vert _{L^{2}( {\mathbb{R}})}\leq C
|\tau|^{-\alpha/2} \left\Vert w_{\pm}\right\Vert _{H^{1}( {\mathbb{R}}%
)}^{\alpha+1}, \qquad\pm\tau\geq a. \label{B231}%
\end{equation}
Note that by (\ref{68}),
\begin{equation}
a_{1}\left(  -x\right)  =-a_{1}\left(  x\right)  , \label{69}%
\end{equation}
and as $a_{1} \in H^{2}( {\mathbb{R}}),$ we have, $a_{1}(0)=0.$

Then,
\begin{equation}
\partial_{k}\mathcal{\widetilde{W}}\left(  \tau\right)  \left(  \chi
a_{1}\right)  =-2\mathcal{\widetilde{W}}\left(  \tau\right)  \partial
_{x}\left(  \chi a_{1}\right)  , \label{135}%
\end{equation}
and it follows from (\ref{derivativeffr}), and \eqref{EST8.0},%
\begin{equation}
\left\Vert B_{23}\left(  2\right)  \right\Vert _{L^{2}}\leq C |\tau
|^{-\alpha/2} \left\Vert w\right\Vert _{H^{1}}^{\alpha+1}. \label{B232}%
\end{equation}
Moreover, derivating (\ref{135}) we have
\begin{equation}
\label{B232.1}\partial_{k}^{2}\mathcal{\widetilde{W}}\left(  \tau\right)
\left(  \chi a_{1}\right)  =-4\sqrt{\frac{\tau}{2\pi i}}e^{i\tau k^{2}}\left(
\partial_{x}\left(  \chi a_{1}\right)  \right)  \left(  0\right)
+4\mathcal{\widetilde{W}}\left(  \tau\right)  \partial_{x}^{2}\left(  \chi
a_{1}\right)  .
\end{equation}
Using (\ref{derivativeffr}), (\ref{UnitaritySM}), (\ref{scatmatrixderiv}),
(\ref{scatmatrixsecondderiv}), and (\ref{EST8.0}) we get%
\begin{equation}
\left\Vert \mathcal{\tilde{W}}\left(  \tau\right)  \partial_{x}^{2}\left(
\chi a_{1}\right)  \right\Vert _{L^{2}( {\mathbb{R}})}\leq C |\tau
|^{-\alpha/2} \left\Vert w_{\pm}\right\Vert _{H^{1}( {\mathbb{R}})}^{\alpha
}\left\Vert w_{\pm}\right\Vert _{H^{2}( {\mathbb{R}})}. \label{136}%
\end{equation}
Since $\mathcal{N}$ conmutes with $P_{+},$ using \eqref{UnitaritySM} we can
write%
\begin{equation}
P_{+}a_{1}\left(  x\right)  =P_{+}\mathcal{N}\left(  \left\vert \tau^{-1/2}
\left(  \mathcal{V}\left(  \tau\right)  w_{\pm}\right)  _{1}\right\vert
,...,\left\vert \left(  \tau^{-1/2} \mathcal{V}\left(  \tau\right)  w_{\pm
}\right)  _{n}\right\vert \right)  \left(  \mathcal{V}\left(  \tau\right)
\left(  S\left(  -k\right)  -S\left(  0\right)  \right)  w_{\pm}\right)  .
\label{136bis}%
\end{equation}
Then, it follows from (\ref{100}) with $g= \tau^{-1/2} \mathcal{V}\left(
\tau\right)  w_{\pm}$ and $\partial_{t}$ replaced with $\partial_{x}$ that%
\begin{align*}
P_{+}\partial_{x}a_{1}  &  = P_{+} \sum_{j=1}^{n}\left(  E_{1}^{\left(
j\right)  }\left(  \tau^{-1/2} \mathcal{V}\left(  \tau\right)  w_{\pm}\right)
\partial_{x}\left(  \tau^{-1/2} \mathcal{V}\left(  \tau\right)  w_{\pm
}\right)  _{j}+E_{2}^{\left(  j\right)  }\left(  \tau^{-1 /2} \mathcal{V}%
\left(  \tau\right)  w_{\pm}\right)  \partial_{x}\overline{\left(  \tau^{-1/2}
\mathcal{V}\left(  \tau\right)  w_{\pm}\right)  _{j}}\right) \\
&  \left(  \mathcal{V}\left(  \tau\right)  \left(  S\left(  -k\right)
-S\left(  0\right)  \right)  w_{\pm}\right)  +P_{+}\left(  \mathcal{N}\left(
\left\vert \tau^{-1/2} \left(  \mathcal{V}\left(  \tau\right)  w_{\pm}\right)
_{1}\right\vert ,...,\left\vert \left(  \tau^{-1/2} \mathcal{V}\left(
\tau\right)  w_{\pm}\right)  _{n}\right\vert \right)  \right) \\
&  \partial_{x}\left(  \mathcal{V}\left(  \tau\right)  \left(  S\left(
-k\right)  -S\left(  0\right)  \right)  w_{\pm}\right)  .
\end{align*}
Then, using (\ref{ConNonlinearity}), \eqref{96.1}, and integrating by parts we
estimate,
\begin{align*}
\left\vert P_{+}\partial_{x}a_{1}\right\vert  &  \leq C |\tau|^{-\alpha/2}
\left\vert \mathcal{V}\left(  \tau\right)  w_{\pm}\right\vert ^{\alpha
-1}\left\vert \mathcal{V}\left(  \tau\right)  \left(  S\left(  -k\right)
-S\left(  0\right)  \right)  w_{\pm}\left(  k\right)  \right\vert \left\vert
\mathcal{V}\left(  \tau\right)  \partial_{k}w_{\pm}\right\vert \\
&  +C |\tau|^{-\alpha/2} \left\vert \mathcal{V}\left(  \tau\right)  w_{\pm
}\right\vert ^{\alpha}\left\vert \mathcal{V}\left(  \tau\right)  \partial
_{k}\left(  S\left(  -k\right)  -S\left(  0\right)  \right)  w_{\pm}\left(
k\right)  \right\vert , \qquad\pm\tau\geq a .
\end{align*}
Thus, it follows from (\ref{71}), (\ref{UnitaritySM}), (\ref{S-1}),
(\ref{scatmatrixderiv}), (\ref{scatmatrixsecondderiv}), and (\ref{89.xxxx}),
\[
\left\vert P_{+}\left(  \partial_{x}a_{1}\right)  \left(  0\right)
\right\vert \leq C |\tau|^{-\alpha/2} \left\Vert w_{\pm}\right\Vert _{H^{1}(
{\mathbb{R}})}^{\alpha}\left(  \left\Vert w_{\pm}\right\Vert _{H^{1}(
{\mathbb{R}})}+|\tau|^{-\rho/2}\left\Vert w_{\pm}\right\Vert _{H^{2}(
{\mathbb{R}})}\right)  , \qquad\pm\tau\geq a ,
\]
for any $0\leq\rho<1.$ Hence, using \eqref{B232.1}, and (\ref{136}) we deduce
\begin{equation}
\left\Vert B_{23}\left(  3\right)  \right\Vert _{L^{2}( {\mathbb{R}})}\leq C
|\tau|^{1-\rho/2- \alpha/2}\Lambda_{\pm}\left(  \tau;w_{\pm}\right)  ,
\qquad\pm\tau\geq a . \label{B233}%
\end{equation}
By (\ref{B231}), (\ref{B232}) and (\ref{B233}) we get%
\begin{equation}
\left\Vert B_{23}\right\Vert _{L^{2}( {\mathbb{R}})}\leq C|\tau|^{1-\rho
/2-\alpha/2}\Lambda_{\pm}\left(  \tau;w_{\pm}\right)  , \qquad\pm\tau\geq a .
\label{B23}%
\end{equation}
Therefore$,$ using (\ref{B21}), (\ref{B22}) and (\ref{B23}) and taking
$0\leq\rho<1$ sufficiently close to one we conclude that
\begin{equation}
\left\Vert \partial_{k}^{2}\int_{\mathcal{T}_{\pm}(a,t)} B_{2}^{\left(
1\right)  }d\tau\right\Vert _{L^{2}( {\mathbb{R}})}\leq C \sqrt{|t|}\,
\sup_{\tau\in\mathcal{T}_{\pm}(a,t)} \left(  \left\Vert w_{\pm}\right\Vert
_{H^{1}( {\mathbb{R}})}^{\alpha}+1\right)  \Lambda_{\pm}\left(  \tau;w_{\pm
}\right)  . \label{145}%
\end{equation}

Next, we estimate $\partial_{k}^{2}B_{2}^{\left(  2\right)  }.$ We calculate
\begin{align}
\partial_{k}^{2}B_{2}^{\left(  2\right)  }  &  =2P_{-}\left(  \partial_{k}%
\chi\right)  \left(  \partial_{k}\mathcal{V}_{ - }\left(  \tau\right)  \left(
\chi f_{1,\pm}^{\operatorname*{fr}}\right)  -\partial_{k}\mathcal{V}_{ +
}\left(  \tau\right)  \left(  \chi f_{1,\pm}^{\operatorname*{fr}}\right)
\right) \nonumber\\
&  +\left(  \partial_{k}^{2}\chi\right)  P_{-}\left(  \mathcal{V}_{ - }\left(
\tau\right)  \left(  \chi f_{1,\pm}^{\operatorname*{fr}}\right)
-\mathcal{V}_{ + }\left(  \tau\right)  \left(  \chi f_{1,\pm}%
^{\operatorname*{fr}}\right)  \right) \nonumber\\
&  +\chi P_{-}\partial_{k}^{2}\left(  -\mathcal{V}_{ + }\left(  \tau\right)
\left(  \chi f_{1,\pm}^{\operatorname*{fr}}\right)  +\mathcal{V}_{ - }\left(
\tau\right)  \left(  \chi f_{1,\pm}^{\operatorname*{fr}}\right)  \right)  .
\label{144}%
\end{align}
Let us define,%
\[
f_{2}^{\operatorname*{fr}}\left(  w_{\pm}\right)  :=\mathcal{N}\left(
\left\vert \left(  \mathcal{V}\left(  \tau\right)  w_{\pm}\right)
_{1}\right\vert ,...,\left\vert \left(  \mathcal{V}\left(  \tau\right)
w_{\pm}\right)  _{n}\right\vert \right)  \left(  0\right)  .
\]
Using (\ref{130}) we get%
\begin{align}
\label{qpqpqp}\chi P_{-}  &  \partial_{k}^{2}\left(  -\mathcal{V}_{ + }\left(
\tau\right)  +\mathcal{V}_{ - }\left(  \tau\right)  \right)  \left(  \chi
f_{1,\pm}^{\operatorname*{fr}}\right) \nonumber\\
&  =8 \chi P_{-}i\tau\sqrt{\frac{\tau}{2\pi i}}ke^{i\tau k^{2}}f_{2}%
^{\operatorname*{fr}}\left(  \chi w_{\pm}\right)  \left(  \mathcal{V}\left(
\tau\right)  w_{\pm}\right)  \left(  0\right) \nonumber\\
&  +8\chi P_{-}8 i\tau\sqrt{\frac{\tau}{2\pi i}}ke^{i\tau k^{2}}\left(
f_{2}^{\operatorname*{fr}}\left(  w_{\pm}\right)  -f_{2}^{\operatorname*{fr}%
}\left(  \chi w_{\pm}\right)  \right)  \left(  \mathcal{V}\left(  \tau\right)
w_{\pm}\right)  \left(  0\right) \\
&  -4 \chi P_{-}\mathcal{V}_{ + }\left(  \tau\right)  \left(  \partial_{x}%
^{2}\left(  \chi f_{1,\pm}^{\operatorname*{fr}}\right)  \right)
+4\mathcal{V}_{\left(  -\right)  }\left(  \tau\right)  \left(  \partial
_{x}^{2}\left(  \chi f_{1\pm,}^{\operatorname*{fr}}\right)  \right)
.\nonumber
\end{align}
Let $\zeta\in C^{\infty}_{0}\left(  \mathbb{R}\right)  $ be symmetric and such
that%
\[
\left\{
\begin{array}
[c]{c}%
\zeta\left(  y\right)  =1,\text{ for\ \ }\left\vert y\right\vert \leq2,\\
\zeta\left(  y\right)  =0,\text{ for\ }\left\vert y\right\vert \geq3.
\end{array}
\right.
\]
We decompose%
\begin{align}
&  8 \chi P_{-} i \tau\sqrt{\frac{\tau}{2\pi i}}ke^{i\tau k^{2}}%
f_{2}^{\operatorname*{fr}}\left(  \chi w_{\pm}\right)  \left(  \mathcal{V}%
\left(  \tau\right)  w_{\pm}\right)  \left(  0\right) \nonumber\\
&  =8 \chi i \tau\sqrt{\frac{\tau}{2\pi i}}ke^{i\tau k^{2}}f_{2}%
^{\operatorname*{fr}}\left(  \chi w_{\pm}\right)  P_{-} \left(  \mathcal{V}%
\left(  \tau\right)  \left(  \zeta\left(  \frac{\cdot}{k}\right)  w_{\pm
}\right)  \right)  \left(  0\right) \nonumber\\
&  +8\chi i \tau\sqrt{\frac{\tau}{2\pi i}}ke^{i\tau k^{2}}f_{2}%
^{\operatorname*{fr}}\left(  \chi w_{\pm}\right)  P_{-} \left(  \mathcal{V}%
\left(  \tau\right)  \left(  \left(  1-\zeta\right)  \left(  \frac{\cdot}%
{k}\right)  w_{\pm}\right)  \right)  \left(  0\right)  , \label{29.0}%
\end{align}
where we used that $P_{-}$ conmutes with $\mathcal{N}.$ Using (\ref{2.20}) and
integrating by parts, we get
\begin{align}
&  \chi\int_{\mathcal{T}_{\pm}(a,t)}\tau^{3/2}ke^{i\tau k^{2}}f_{2}%
^{\operatorname*{fr}}\left(  \chi w_{\pm}\right)  P_{-} \left(  \mathcal{V}%
\left(  \tau\right)  \left(  \zeta\left(  \frac{\cdot}{k}\right)  w_{\pm
}\right)  \right)  \left(  0\right)  d\tau\nonumber\\
&  = \pm\chi\left.  \tau^{3/2}\frac{\tau ke^{i\tau k^{2}}}{1+i\tau k^{2}}%
f_{2}^{\operatorname*{fr}}\left(  \chi w_{\pm}\right)  P_{-} \left(
\mathcal{V}\left(  \tau\right)  \left(  \zeta\left(  \frac{\cdot}{k}\right)
w_{\pm}\right)  \right)  \left(  0\right)  \right\vert _{\pm a}^{t}\nonumber\\
&  -\chi\int_{\mathcal{T}_{\pm}(a,t)}\frac{\tau ke^{i\tau k^{2}}}{1+i\tau
k^{2}}\left[  \partial_{\tau}\left(  \frac{\tau^{3/2}}{1+i\tau k^{2}}%
f_{2}^{\operatorname*{fr}}\left(  \chi w_{\pm}\right)  \right)  \right]  P_{-}
\left(  \mathcal{V}\left(  \tau\right)  \left(  \zeta\left(  \frac{\cdot}%
{k}\right)  w_{\pm}\right)  \right)  \left(  0\right)  d\tau\nonumber\\
&  -\chi\int_{\mathcal{T}_{\pm}(a,t)} \tau^{3/2}k\frac{\tau e^{i\tau k^{2}}%
}{1+i\tau k^{2}}f_{2}^{\operatorname*{fr}}\left(  \chi w\right)
\partial_{\tau} P_{-} \left(  \mathcal{V}\left(  \tau\right)  \left(
\zeta\left(  \frac{\cdot}{k}\right)  w_{\pm}\right)  \right)  \left(
0\right)  d\tau. \label{26.0}%
\end{align}
By applying \eqref{129}, (\ref{70}), (\ref{S-1}), and (\ref{scatmatrixderiv}),
we control%
\begin{equation}
\label{26.0.1}\left\vert P_{-}\mathcal{V}\left(  \tau\right)  \left(
\zeta\left(  \frac{\cdot}{k}\right)  w_{\pm}\right)  (0) \right\vert \leq C
|\tau| ^{-\rho/2}\left\Vert w_{\pm}\right\Vert _{H^{1}( {\mathbb{R}})}.
\end{equation}
From (\ref{99}), via (\ref{derivativeffr}), (\ref{130}), (\ref{21.0}),
\eqref{76}, (\ref{UnitaritySM}), (\ref{scatmatrixderiv}), \eqref{89.xxxx},
using that $P_{+}+P_{-}= I, P_{+}^{2}= P_{+},$ and since $\alpha>2$ we
estimate%
\begin{align*}
\left\vert \partial_{\tau}\left(  \mathcal{V}\left(  \tau\right)  \left(  \chi
w_{\pm}\right)  \right)  (0) \right\vert  &  \leq C|\tau|^{-2}\left\Vert
\mathcal{V}\left(  \tau\right)  \partial_{k}^{2}\left(  \chi w_{\pm}\right)
\right\Vert _{L^{\infty}( {\mathbb{R}})}\\
&  +C\left\Vert \mathcal{V}\left(  \tau\right)  \chi\left(
\widehat{\mathcal{W}}\left(  \tau\right)  f_{1,\pm}-\widehat{\mathcal{V}%
}\left(  \tau\right)  f_{1,\pm}^{\operatorname*{fr}}\right)  \right\Vert
_{L^{\infty}( {\mathbb{R}})}\\
&  +C \left\vert \left(  P_{+} \mathcal{V}\left(  \tau\right)  \chi
\widehat{\mathcal{V}}\left(  \tau\right)  f_{1,\pm}^{\operatorname*{fr}%
}\right)  (0)\right\vert + \left\vert \left(  P_{-} \mathcal{V}\left(
\tau\right)  \chi\widehat{\mathcal{V}}\left(  \tau\right)  f_{1,\pm
}^{\operatorname*{fr}}\right)  (0)\right\vert \\
&  \leq C|\tau|^{-3/2}\left\Vert w_{\pm}\right\Vert _{H^{2}( {\mathbb{R}})}+C
|\tau|^{-1}\left\Vert w_{\pm}\right\Vert _{H^{1}( {\mathbb{R}})}^{\alpha+1},
\pm\tau\geq a,
\end{align*}
and then, via (\ref{ConNonlinearity}), (\ref{100}) with $g= \tau^{-1/2}
\mathcal{V}\left(  \tau\right)  w_{\pm},$ and (\ref{89.xxxx}),%
\begin{equation}
\left\vert \partial_{\tau}f_{2}^{\operatorname*{fr}}\left(  \chi w_{\pm
}\right)  \right\vert \leq C|\tau|^{-1-\alpha/2}\left(  \left\Vert w_{\pm
}\right\Vert _{H^{1}( {\mathbb{R}})}^{\alpha+1}+|\tau|^{-1/2}\left\Vert
w_{\pm}\right\Vert _{H^{2}( {\mathbb{R}})}\right)  \left\Vert w_{\pm
}\right\Vert _{H^{1}( {\mathbb{R}})}^{\alpha-1}, \qquad\pm\tau\geq a.
\label{23.0}%
\end{equation}
Also, by (\ref{ConNonlinearity}) and (\ref{EST4}) we have%
\begin{equation}
\left\vert f_{2}^{\operatorname*{fr}}\left(  \chi w_{\pm}\right)  \right\vert
\leq C |\tau|^{-\alpha/2} \left\Vert w_{\pm}\right\Vert _{H^{1}( {\mathbb{R}%
})}^{\alpha}. \label{f2}%
\end{equation}

Moreover, since $\alpha>2,$ by \eqref{26.0.1}, \eqref{f2}, the first two terms
on the right-hand side of (\ref{26.0}) are estimated by%
\[
C\sqrt{|t|}\sup_{\tau\in\mathcal{T}_{\pm}(a,t)}\left\Vert w\right\Vert
_{H^{1}({\mathbb{R}})}^{\alpha}\left(  \left\Vert w_{\pm}\right\Vert
_{H^{1}({\mathbb{R}})}+\left\Vert w_{\pm}\right\Vert _{H^{1}({\mathbb{R}}%
)}^{\alpha+1}+|\tau|^{-1/2}\left\Vert w_{\pm}\right\Vert _{H^{2}({\mathbb{R}%
})}\right)  ,\qquad\pm t\geq a.
\]
Using (\ref{99}), via \eqref{129}, and (\ref{S-1}), we estimate%
\begin{align*}
\left\vert P_{-}\partial_{\tau}\left(  \mathcal{V}\left(  \tau\right)  \left(
\zeta\left(  \frac{\cdot}{k}\right)  w_{\pm}\right)  \right)  (0)\right\vert
&  \leq\frac{C}{|\tau|^{3/2}}\left(  \left\Vert w_{\pm}\right\Vert
_{L^{\infty}({\mathbb{R}})}+|k|^{-1/2}\left\Vert w_{\pm}\right\Vert
_{H^{1}({\mathbb{R}})}+|k|^{1/2}\left\Vert w_{\pm}\right\Vert _{H^{2}%
({\mathbb{R}})}\right) \\
&  +C\left\vert P_{-}\mathcal{V}\left(  \tau\right)  \left(  \zeta\left(
\frac{\cdot}{k}\right)  \left(  \widehat{\mathcal{W}}\left(  \tau\right)
f_{1,\pm}\left(  w_{\pm}\right)  \right)  \right)  (0)\right\vert .
\end{align*}
Then, using (\ref{derivativeffr}), (\ref{130}), (\ref{70}), (\ref{EST4}),
(\ref{UnitaritySM}), (\ref{3.2}), (\ref{3.5}), \eqref{3.6}, (\ref{S-1}),
(\ref{scatmatrixderiv}), to estimate the second term on the right-hand side of
the last inequality we show that%
\begin{align*}
\left\vert P_{-}\partial_{\tau}\left(  \mathcal{V}\left(  \tau\right)  \left(
\zeta\left(  \frac{\cdot}{k}\right)  w_{\pm}\right)  \right)  (0)\right\vert
&  \leq\frac{C}{|\tau|^{3/2}}\left(  \left\Vert w_{\pm}\right\Vert
_{L^{\infty}({\mathbb{R}})}+|k|^{-1/2}\left\Vert w_{\pm}\right\Vert
_{H^{1}({\mathbb{R}})}+|k|^{1/2}\left\Vert w_{\pm}\right\Vert _{H^{2}%
({\mathbb{R}})}\right) \\
&  +C|\tau|^{-\frac{\alpha}{2}}\left(  \frac{1}{\sqrt{|\tau|}}+|k|\right)
\left\Vert w_{\pm}\right\Vert _{H^{1}({\mathbb{R}})}^{\alpha+1},\qquad\pm
\tau\geq a.
\end{align*}
Then, by (\ref{f2}), since $\alpha>2,$ we estimate the third term on the
right-hand side of (\ref{26.0}) as%
\begin{align*}
&  \left\Vert \chi\int_{\mathcal{T}_{\pm}(a,t)}\tau^{3/2}k\frac{\tau e^{i\tau
k^{2}}}{1+i\tau k^{2}}f_{2}^{\operatorname*{fr}}\left(  \chi w_{\pm}\right)
\partial_{\tau}P_{-}\left(  \mathcal{V}\left(  \tau\right)  \left(
\zeta\left(  \frac{\cdot}{k}\right)  w_{\pm}\right)  \right)  \left(
0\right)  d\tau\right\Vert _{L^{2}({\mathbb{R}})}\\
&  \leq C\int_{\mathcal{T}_{\pm}(a,t)}|\tau|^{-\alpha/2}\left\Vert \chi
\frac{\tau}{1+i\tau k^{2}}\left(  k\left\Vert w_{\pm}\right\Vert _{L^{\infty
}({\mathbb{R}})}+|k|^{1/2}\left\Vert w_{\pm}\right\Vert _{H^{1}({\mathbb{R}}%
)}+|k|^{3/2}\left\Vert w_{\pm}\right\Vert _{H^{2}({\mathbb{R}})}\right)
\right\Vert _{L^{2}({\mathbb{R}})}\left\Vert w_{\pm}\right\Vert _{H^{1}%
({\mathbb{R}})}^{\alpha}d\tau\\
&  +C\int_{\mathcal{T}_{\pm}(a,t)}|\tau|^{3/2-\alpha/2}\left\Vert \chi\frac
{k}{1+i\tau k^{2}}\left(  \frac{1}{\sqrt{|\tau|}}+|k|\right)  \right\Vert
_{L^{2}({\mathbb{R}})}\left\Vert w_{\pm}\right\Vert _{H^{1}({\mathbb{R}}%
)}^{\alpha+1}\left\Vert w_{\pm}\right\Vert _{H^{1}({\mathbb{R}})}^{\alpha
}d\tau\\
&  \leq C\sqrt{|t|}\sup_{\tau\in\mathcal{T}_{\pm}(a,t)}\left\Vert w_{\pm
}\right\Vert _{H^{1}({\mathbb{R}})}^{\alpha}\left(  \left\Vert w_{\pm
}\right\Vert _{H^{1}({\mathbb{R}})}+\left\Vert w_{\pm}\right\Vert
_{H^{1}({\mathbb{R}})}^{\alpha+1}+|\tau|^{-1/2}\left\Vert w_{\pm}\right\Vert
_{H^{2}({\mathbb{R}})}\right)  ,\qquad\pm t\geq a.
\end{align*}
Hence, we see from (\ref{26.0}),%
\begin{align}
&  \left\Vert \chi\int_{\mathcal{T}_{\pm}(a,t)}|\tau|^{3/2}ke^{i\tau k^{2}%
}f_{2}^{\operatorname*{fr}}\left(  \chi w_{\pm}\right)  P_{-}\left(
\mathcal{V}\left(  \tau\right)  \left(  \zeta\left(  \frac{\cdot}{k}\right)
w_{\pm}\right)  \right)  \left(  0\right)  d\tau\right\Vert _{L^{2}%
({\mathbb{R}})}\nonumber\\
&  \leq C\sqrt{|t|}\sup_{\mathcal{T}_{\pm}(a,t)}\left(  \left\Vert w_{\pm
}\right\Vert _{H^{1}({\mathbb{R}})}+\left\Vert w_{\pm}\right\Vert
_{H^{1}({\mathbb{R}})}^{\alpha+1}+|\tau|^{-1/2}\left\Vert w_{\pm}\right\Vert
_{H^{2}({\mathbb{R}})}\right)  \left\Vert w_{\pm}\right\Vert _{H^{1}%
({\mathbb{R}})}^{\alpha},\pm t\geq a. \label{27.0}%
\end{align}
By the definition of $\mathcal{V}\left(  \tau\right)  ,$ in \eqref{sp5.xxx} we
have%
\begin{align*}
&  4i\tau\sqrt{\frac{\tau}{2\pi i}}ke^{i\tau k^{2}}f_{2}^{\operatorname*{fr}%
}\left(  \chi w_{\pm}\right)  \left(  \mathcal{V}\left(  \tau\right)  \left(
\left(  1-\zeta\right)  \left(  \frac{\cdot}{k}\right)  w_{\pm}\right)
\right)  \left(  0\right) \\
&  =2i\frac{\tau^{2}}{\pi}ke^{i\tau k^{2}}f_{2}^{\operatorname*{fr}}\left(
\chi w_{\pm}\right)  \int_{-\infty}^{\infty}e^{-itk_{1}^{2}}\left(
1-\zeta\right)  \left(  \frac{k_{1}}{k}\right)  w_{\pm}\left(  k_{1}\right)
dk_{1}.
\end{align*}
Using that
\[
e^{i\tau\left(  k^{2}-k_{1}^{2}\right)  }=\frac{\partial_{\tau}\left(  \tau
e^{i\tau\left(  k^{2}-k_{1}^{2}\right)  }\right)  }{1+i\tau\left(  k^{2}%
-k_{1}^{2}\right)  },
\]
and integrating by parts, we obtain%
\begin{align}
&  \chi\int_{\mathcal{T}_{\pm}(a,t)}4i\tau\sqrt{\frac{\tau}{2\pi i}}ke^{i\tau
k^{2}}f_{2}^{\operatorname*{fr}}(\chi w_{\pm})P_{-}\left(  \mathcal{V}\left(
\tau\right)  \left(  \left(  1-\zeta\right)  \left(  \frac{\cdot}{k}\right)
w\right)  \right)  \left(  0\right)  d\tau\nonumber\\
&  =\pm\chi\frac{2i}{\pi}\left.  kf_{2}^{\operatorname*{fr}}\left(  \chi
w_{\pm}\right)  e^{i\tau k^{2}}\int_{-\infty}^{\infty}\frac{\tau^{3}e^{-i\tau
k_{1}^{2}}}{1+i\tau\left(  k^{2}-k_{1}^{2}\right)  }\left(  1-\zeta\right)
\left(  \frac{k_{1}}{k}\right)  P_{-}w_{\pm}\left(  k_{1}\right)
dk_{1}\right\vert _{\pm a}^{t}\nonumber\\
&  -\chi\frac{2i}{\pi}\int_{\mathcal{T}_{\pm}(a,t)}ke^{i\tau k^{2}}%
\int_{-\infty}^{\infty}\tau e^{-i\tau k_{1}^{2}}\partial_{\tau}\left(
\frac{\tau^{2}}{1+i\tau\left(  k^{2}-k_{1}^{2}\right)  }f_{2}%
^{\operatorname*{fr}}\left(  \chi w_{\pm}\right)  P_{-}w_{\pm}\left(
k_{1}\right)  \right)  \left(  1-\zeta\right)  \left(  \frac{k_{1}}{k}\right)
dk_{1}d\tau. \label{20.0}%
\end{align}
First, using (\ref{68}) we have%
\begin{align}
&  kf_{2}^{\operatorname*{fr}}(\chi w_{\pm})e^{i\tau k^{2}}\int_{-\infty
}^{\infty}\frac{\tau^{3}e^{-i\tau k_{1}^{2}}}{1+i\tau\left(  k^{2}-k_{1}%
^{2}\right)  }\left(  1-\zeta\right)  \left(  \frac{k_{1}}{k}\right)
P_{-}w_{\pm}\left(  k_{1}\right)  dk_{1}\nonumber\\
&  =kf_{2}^{\operatorname*{fr}}(\chi w_{\pm})e^{i\tau k^{2}}\int_{0}^{\infty
}\frac{\tau^{3}e^{-i\tau k_{1}^{2}}}{1+i\tau\left(  k^{2}-k_{1}^{2}\right)
}\left(  1-\zeta\right)  \left(  \frac{k_{1}}{k}\right)  P_{-}\left(  S\left(
-k_{1}\right)  +I\right)  w_{\pm}\left(  k_{1}\right)  dk_{1}. \label{19.0}%
\end{align}
Let $g\in W^{1,\infty}.$ Similarly to (\ref{70}), by using that
\[
\left\vert \frac{\left(  1-\zeta\right)  \left(  \frac{k_{1}}{k}\right)
}{1+i\tau\left(  k^{2}-k_{1}^{2}\right)  }\right\vert \leq C\left\vert \left(
1-\zeta\right)  \left(  \frac{k_{1}}{k}\right)  \right\vert \min\left\{
\frac{1}{1+\tau k^{2}},\frac{1}{1+\tau k_{1}^{2}}\right\}  ,
\]
and (\ref{UnitaritySM}), (\ref{S-1}), (\ref{scatmatrixderiv}), we estimate%
\begin{align}
&  K:=\left\vert \int_{0}^{\infty}\frac{ke^{-i\tau k_{1}^{2}}}{1+i\tau\left(
k^{2}-k_{1}^{2}\right)  }\left(  1-\zeta\right)  \left(  \frac{k_{1}}%
{k}\right)  P_{-}\left(  S\left(  -k_{1}\right)  +I\right)  g\left(
k_{1}\right)  dk_{1}\right\vert \nonumber\label{yy.zz.nn}\\
&  \leq C\left\vert \int_{0}^{\infty}kk_{1}\partial_{k_{1}}\left(  \frac
{1}{1+i\tau\left(  k^{2}-k_{1}^{2}\right)  }\frac{1}{1-i\tau k_{1}^{2}}\left(
1-\zeta\right)  \left(  \frac{k_{1}}{k}\right)  P_{-}\left(  S\left(
-k_{1}\right)  +I\right)  g\left(  k_{1}\right)  \right)  dk_{1}\right\vert
\nonumber\\
&  \leq C\frac{\left\vert k\right\vert }{1+\tau k^{2}}\int_{0}^{\infty}%
\frac{1}{1+\tau k_{1}^{2}}\left\vert P_{-}\left(  S\left(  -k_{1}\right)
+I\right)  g\left(  k_{1}\right)  \right\vert dk_{1}\\
&  +C\frac{\left\vert k\right\vert }{1+\tau k^{2}}\int_{0}^{\infty}\frac
{k_{1}}{1+\tau k_{1}^{2}}\left\vert S^{\prime}\left(  -k_{1}\right)  g\left(
k_{1}\right)  \right\vert dk_{1}\nonumber\\
&  +\frac{C}{\tau}\int_{0}^{\infty}\frac{1}{1+\tau k_{1}^{2}}\left\vert
P_{-}\left(  S\left(  -k_{1}\right)  +I\right)  g^{\prime}\left(
k_{1}\right)  \right\vert dk_{1}.\nonumber
\end{align}
Then, by \eqref{yy.zz.nn},
\begin{equation}
K\leq\frac{C}{|\tau|^{\rho}}\frac{\left\vert k\right\vert }{1+\tau k^{2}%
}\left\Vert g\right\Vert _{L^{\infty}({\mathbb{R}})}+\frac{C}{\tau^{1+\rho}%
}\left\Vert g^{\prime}\right\Vert _{L^{\infty}({\mathbb{R}})},
\label{yy.zz.nn.1}%
\end{equation}
$0\leq\rho<1.$ Using \eqref{yy.zz.nn.1}, from (\ref{19.0}) via (\ref{f2}), we
get%
\begin{align}
&  \left\Vert \chi\frac{2i}{\pi}\left.  kf_{2}^{\operatorname*{fr}}e^{i\tau
k^{2}}\int_{-\infty}^{\infty}\frac{\tau^{3}e^{-i\tau k_{1}^{2}}}%
{1+i\tau\left(  k^{2}-k_{1}^{2}\right)  }\left(  1-\zeta\right)  \left(
\frac{k_{1}}{k}\right)  P_{-}w_{\pm}\left(  k_{1}\right)  dk_{1}\right\vert
_{\pm a}^{t}\right\Vert _{L^{2}({\mathbb{R}})}\nonumber\\
&  \leq C\sqrt{|t|}\sup_{\tau\in\mathcal{T}_{\pm}(a,t)}\left(  \left\Vert
w_{\pm}\right\Vert _{H^{1}({\mathbb{R}})}+|\tau|^{-1/2}\left\Vert w_{\pm
}\right\Vert _{H^{2}({\mathbb{R}})}\right)  \left\Vert w_{\pm}\right\Vert
_{H^{1}({\mathbb{R}})}^{\alpha},\qquad\pm t\geq a. \label{22.0}%
\end{align}
From the equation (\ref{eqw}) for $w_{\pm},$ using (\ref{ConNonlinearity}),
(\ref{130}) , \eqref{UnitaritySM}, (\ref{3.2}), \eqref{3.5}, (\ref{3.6}),
\eqref{scatmatrixderiv}, \eqref{89.1}, and (\ref{EST4}), we have%
\begin{equation}
\left\Vert \partial_{\tau}w_{\pm}\right\Vert _{L^{\infty}({\mathbb{R}})}\leq
C|\tau|^{-\frac{\alpha}{2}}\left\Vert w_{\pm}\right\Vert _{H^{1}({\mathbb{R}%
})}^{\alpha+1}, \label{24.0}%
\end{equation}
and%
\begin{equation}
\left\Vert \partial_{k}\partial_{\tau}w_{\pm}\right\Vert _{L^{\infty
}({\mathbb{R}}9}\leq C|\tau|^{-\frac{\alpha}{2}+\frac{1}{2}}\left\Vert w_{\pm
}\right\Vert _{H^{1}({\mathbb{R}})}^{\alpha}\left(  \left\Vert w_{\pm
}\right\Vert _{H^{1}({\mathbb{R}})}+|\tau|^{-\frac{1}{2}}\left\Vert w_{\pm
}\right\Vert _{H^{2}({\mathbb{R}})}\right)  . \label{25.0}%
\end{equation}
Therefore, arguing as in the proof of (\ref{22.0}), by using (\ref{23.0}),
\eqref{yy.zz.nn}, (\ref{24.0}) and (\ref{25.0}) we control the second term on
the right-hand side of (\ref{20.0}) by%
\[
C\sqrt{|t|}\sup_{\tau\in\mathcal{T}_{\pm}(a,t)}\left(  \left\Vert w_{\pm
}\right\Vert _{H^{1}({\mathbb{R}})}+\left\Vert w_{\pm}\right\Vert
_{H^{1}({\mathbb{R}})}^{\alpha+1}+|\tau|^{-1/2}\left\Vert w_{\pm}\right\Vert
_{H^{2}({\mathbb{R}})}\right)  \left\Vert w_{\pm}\right\Vert _{H^{1}%
({\mathbb{R}})}^{\alpha}\left(  1+\left\Vert w_{\pm}\right\Vert _{H^{1}%
({\mathbb{R}})}^{\alpha}\right)  ,\pm t\geq a.
\]
Together with (\ref{22.0}) this proves that%
\begin{align}
&  \left\Vert \chi P_{-}\int_{\mathcal{T}_{\pm}(a,t)}4i\tau\sqrt{\frac{\tau
}{2\pi i}}ke^{i\tau k^{2}}f_{2}^{\operatorname*{fr}}(\chi w_{\pm})\left(
\mathcal{V}\left(  \tau\right)  \left(  \left(  1-\zeta\right)  \left(
\frac{\cdot}{k}\right)  w_{\pm}\right)  \right)  \left(  0\right)
d\tau\right\Vert _{L^{2}({\mathbb{R}})}\nonumber\\
&  \leq C\sqrt{|t|}\sup_{\tau\in\mathcal{T}_{\pm}(a,t)}\left(  \left\Vert
w_{\pm}\right\Vert _{H^{1}({\mathbb{R}})}+\left\Vert w_{\pm}\right\Vert
_{H^{1}({\mathbb{R}})}^{\alpha+1}+|\tau|^{-1/2}\left\Vert w_{\pm}\right\Vert
_{H^{2}({\mathbb{R}})}\right)  \left\Vert w_{\pm}\right\Vert _{H^{1}%
({\mathbb{R}})}^{\alpha}\left(  1+\left\Vert w_{\pm}\right\Vert _{H^{1}%
({\mathbb{R}})}^{\alpha}\right)  . \label{28.0}%
\end{align}
Gathering (\ref{27.0}) and (\ref{28.0}) in (\ref{29.0}) we arrive to
\begin{align}
&  \left\Vert \chi P_{-}\int_{\mathcal{T}_{\pm}(a,t)}4i\tau\sqrt{\frac{\tau
}{2\pi i}}ke^{i\tau k^{2}}f_{2}^{\operatorname*{fr}}\left(  \chi w_{\pm
}\right)  \left(  \mathcal{V}\left(  \tau\right)  w_{\pm}\right)  \left(
0\right)  d\tau\right\Vert _{L^{2}({\mathbb{R}})}\nonumber\label{w5w5w5}\\
&  \leq C\sqrt{|t|}\sup_{\tau\in\mathcal{T}_{\pm}(a,t)}\left(  \left\Vert
w_{\pm}\right\Vert _{H^{1}({\mathbb{R}})}+\left\Vert w_{\pm}\right\Vert
_{H^{1}({\mathbb{R}})}^{\alpha+1}+|\tau|^{-1/2}\left\Vert w_{\pm}\right\Vert
_{H^{2}({\mathbb{R}})}\right)  \left\Vert w_{\pm}\right\Vert _{H^{1}%
({\mathbb{R}})}^{\alpha}\left(  1+\left\Vert w_{\pm}\right\Vert _{H^{1}%
({\mathbb{R}})}^{\alpha}\right)  =\\
&  C\sqrt{|t|}\sup_{\tau\in\mathcal{T}_{\pm}(a,t)}\left(  \Lambda_{\pm}%
(\tau;w_{\pm})+\left\Vert w_{\pm}\right\Vert _{H^{1}({\mathbb{R}})}%
^{2\alpha+1}\right)  \left(  1+\left\Vert w_{\pm}\right\Vert _{H^{1}%
({\mathbb{R}})}^{\alpha}\right)  ,\qquad\pm t\geq a.\nonumber
\end{align}
By (\ref{ConNonlinearity}) and (\ref{EST4})%
\[
\left\vert f_{2}^{\operatorname*{fr}}\left(  w\right)  -f_{2}%
^{\operatorname*{fr}}\left(  \chi w\right)  \right\vert \leq C|\tau
|^{-\alpha/2}\left\vert \mathcal{V}\left(  \tau\right)  \left(  \left(
1-\chi\right)  w\right)  \right\vert \left\Vert w\right\Vert _{H^{1}}%
^{\alpha-1}.
\]
Then, as $1-\chi$ satisfies \eqref{128}, by \eqref{71}, and, as $P_{-}$
conmutes with $\mathcal{N},$ (\ref{71}), and \eqref{89.xxxx}, we get,%
\begin{equation}
\left\Vert \chi P_{-}4i\tau\sqrt{\frac{\tau}{2\pi i}}ke^{i\tau k^{2}}\left(
f_{2}^{\operatorname*{fr}}\left(  w_{\pm}\right)  -f_{2}^{\operatorname*{fr}%
}\left(  \chi w_{\pm}\right)  \right)  \left(  \mathcal{V}\left(  \tau\right)
w_{\pm}\right)  \left(  0\right)  \right\Vert _{L^{2}({\mathbb{R}})}\leq
C|\tau|^{\frac{3}{2}-\rho-\alpha/2}\left\Vert w_{\pm}\right\Vert
_{H^{1}({\mathbb{R}})}^{\alpha+1},\qquad\pm\tau\geq a, \label{aaaaa}%
\end{equation}
$0\leq\rho<1.$ Finally, by using (\ref{derivativeffr}), and (\ref{EST8.0}) we
show
\begin{equation}
\left\Vert \mathcal{V}^{\left(  \pm\right)  }\left(  \tau\right)  \left(
\partial_{x}^{2}\left(  \chi f_{1,\pm}^{\operatorname*{fr}}\right)  \right)
\right\Vert _{L^{2}({\mathbb{R}})}\leq C|\tau|^{-\alpha/2}\left\Vert w_{\pm
}\right\Vert _{H^{1}({\mathbb{R}})}^{\alpha}\left\Vert w_{\pm}\right\Vert
_{H^{2}({\mathbb{R}})}. \label{xyxy}%
\end{equation}
Hence, by \eqref{qpqpqp}, \eqref{w5w5w5}, \eqref{aaaaa}, \eqref{xyxy}, and
since $\alpha>2,$ we prove that
\begin{align}
&  \left\Vert \chi P_{-}\int_{\mathcal{T}_{\pm}(a,t)}\partial_{k}^{2}\left(
-\mathcal{V}_{+}\left(  \tau\right)  +\mathcal{V}_{-}\left(  \tau\right)
\right)  \left(  \chi f_{1,\pm}^{\operatorname*{fr}}\right)  d\tau\right\Vert
_{L^{2}({\mathbb{R}})}\nonumber\\
&  \leq C\sqrt{|t|}\sup_{\tau\in\mathcal{T}_{\pm}(a,t)}\left(  \Lambda_{\pm
}(\tau;w_{\pm})+\left\Vert w_{\pm}\right\Vert _{H^{1}({\mathbb{R}})}%
^{2\alpha+1}\right)  \left(  1+\left\Vert w_{\pm}\right\Vert _{H^{1}%
({\mathbb{R}})}^{\alpha}\right)  ,\qquad\pm t\geq a. \label{rrrrr}%
\end{align}
Using (\ref{derivativeffr}), and(\ref{2.11}), we show
\begin{equation}
\left\Vert \left(  \partial_{k}^{j}\chi\right)  \mathcal{V}^{\left(
\pm\right)  }\left(  \tau\right)  \left(  \chi f_{1,\pm}^{\operatorname*{fr}%
}\right)  \right\Vert _{L^{2}({\mathbb{R}})}\leq C|\tau|^{-\alpha/2}\left\Vert
w\right\Vert _{H^{1}({\mathbb{R}})}^{\alpha+1},\qquad\pm\tau\geq a,
\label{134}%
\end{equation}
for $j=0,1,2.$ Therefore, using (\ref{133}), (\ref{rrrrr}) and (\ref{134}) in
(\ref{144}) we prove
\begin{equation}
\left\Vert \partial_{k}^{2}\int_{a}^{t}B_{2}^{\left(  2\right)  }%
d\tau\right\Vert _{L^{2}({\mathbb{R}})}\leq C\sqrt{|t|}\sup_{\tau
\in\mathcal{T}_{\pm}(a,t)}\left(  \Lambda_{\pm}(\tau;w_{\pm})+\left\Vert
w_{\pm}\right\Vert _{H^{1}({\mathbb{R}})}^{2\alpha+1}\right)  \left(
1+\left\Vert w_{\pm}\right\Vert _{H^{1}({\mathbb{R}})}^{\alpha}\right)
,\qquad\pm t\geq a. \label{143}%
\end{equation}
Then, using (\ref{767}), (\ref{145}), and \eqref{143}, we get%
\begin{equation}
\left\Vert \partial_{k}^{2}\int_{\mathcal{T}_{\pm}(a,t)}B_{2}d\tau\right\Vert
_{L^{2}({\mathbb{R}})}\leq C\sqrt{|t|}\sup_{\tau\in\mathcal{T}_{\pm}%
(a,t)}\left(  \Lambda_{\pm}(\tau;w_{\pm})+\left\Vert w_{\pm}\right\Vert
_{H^{1}({\mathbb{R}})}^{2\alpha+1}\right)  \left(  1+\left\Vert w_{\pm
}\right\Vert _{H^{1}({\mathbb{R}})}^{\alpha}\right)  , \label{146bis}%
\end{equation}
for $\pm t\geq a.$ Finally, using (\ref{146}) and (\ref{146bis}) in
(\ref{35.0}), we attain (\ref{Lema3F2}).
\end{proof}

\appendix
\renewcommand{\theequation}{\thesection.\arabic{equation}}
\newtheorem{theorem2}{THEOREM}[section]
\renewcommand{\thetheorem}{\arabic{section}.\arabic{theorem}}
\newtheorem{prop2}[theorem2]{PROPOSITION} \newtheorem{lemma2}[theorem2]{LEMMA} \newtheorem{remark2}[theorem2]{Remark}

\section{Appendix. The linear matrix Schr\"{o}dinger equation on the half
line\label{App1}}

\label{appendix} In this appendix we present results on the spectral and the
scattering theory for the matrix Schr\"odinger equation on the half line, with
the general selfadjoint boundary condition, that we need. Some of the results
that we give here are borrowed from \cite{WederBook}, and others are new.

We consider the matrix Schr\"{o}dinger equation on the half line given by,
\begin{equation}
-\psi^{\prime\prime}+V\left(  x\right)  \psi=k^{2}\psi,\text{ }x\in
\mathbb{R}^{+}. \label{MSStationary}%
\end{equation}
The prime denotes the derivative with respect to the spacial coordinate $x,$
the scalar $k^{2}$ is the complex-valued spectral parameter, the potential $V
$ is a $n\times n$ self-adjoint matrix-valued function that belongs at least
to $L^{1}(\mathbb{R}^{+}).$ Later we impose further conditions of $V.$
Moreover, $n$ is any positive integer. The selfadjointness of $V$ means that,
\begin{equation}
V(x)=V^{\dagger}(x),\qquad x\in\mathbb{R}^{+}, \label{ap.2}%
\end{equation}
where the dagger denotes the matrix adjoint. The wavefunction that appears in
\eqref{MSStationary} may be an $n\times n$ matrix-valued function or it may be
a column vector with n components. Our interest is to study
\eqref{MSStationary} under the general self-adjoint boundary condition at
$x=0.$ For a discussion of the general self-adjoint boundary condition at $x=0
$ see Section 2.2 in Chapter 2, and Sections 3.4 and 3.6 in Chapter 3 of
\cite{WederBook}. We find it convenient to use the parametrization of the
general self-adjoint boundary condition at $x=0,$ given in \cite{WederBook} in
terms of two constant $n\times n$ matrices $A$ and $B$ as,
\begin{equation}
-B^{\dagger}\psi(0)+A^{\dagger}\psi^{\prime}(0)=0, \label{ap.3}%
\end{equation}
where $A$ and $B$ satisfy,
\begin{equation}
B^{\dagger}A =A^{\dagger}B, \label{ap.4}%
\end{equation}%
\begin{equation}
A^{\dagger}A+B^{\dagger}B>0. \label{ap.5}%
\end{equation}
We call $A$ and $B$ the boundary matrices. The condition in \eqref{ap.4} means
that the matrix $A^{\dagger}B$ is self-adjoint, and the condition in
\eqref{ap.5} means that the matrix $A^{\dagger}A+B^{\dagger}B$ is positive
definite. One can directly verify that the boundary condition
\eqref{ap.3}-\eqref{ap.5} remains invariant if the boundary matrices $A,$ and
$B$ are replaced, respectively, by $AT$ and $BT$ where $T$ is an arbitrary
$n\times n $ invertible matrix. Actually, it is proved in Proposition 2.2.1 in
page 23 of \cite{WederBook} that this is the only freedom in the choice of the
matrices $A,B$. In other words, the boundary condition
\eqref{ap.3}-\eqref{ap.5} is uniquely determined by the matrix pair $(A,B)$
modulo an invertible matrix $T,$ or, equivalently, the boundary condition
\eqref{ap.3}-\eqref{ap.5} determines the matrix pair $(A,B)$ modulo $T.$ There
is a simpler equivalent form of the boundary condition \eqref{ap.3} where the
matrices $A,B$ are diagonal. In Section 3.4 of Chapter three of
\cite{WederBook}, it is given the explicit steps to go from any pair of
matrices $A$ and $B$ appearing in the selfadjoint boundary condition
\eqref{ap.3}, and that satisfy \eqref{ap.4}, and (\ref{ap.5}) to a pair
$\tilde{A}$ and $\tilde{B},$ given by%
\begin{equation}
\tilde{A}=-\operatorname*{diag}\left\{  \sin\theta_{1},...,\sin\theta
_{n}\right\}  ,\text{ }\tilde{B}=\operatorname*{diag}\left\{  \cos\theta
_{1},...,\cos\theta_{n}\right\}  , \label{A,Btilde}%
\end{equation}
with appropriate real parameters $\theta_{j}\in(0,\pi].$ The matrices
$\tilde{A}, \tilde{B}$ satisfy (\ref{ap.4}), (\ref{ap.5}). In the case of the
matrices $\tilde{A}$, $\tilde{B},$ the boundary condition (\ref{ap.3}) is
given by%
\begin{equation}
\cos\theta_{j}Y_{j}\left(  0\right)  +\sin\theta_{j}\frac{d}{d x}Y_{j} \left(
0\right)  =0,\text{ \ \ }j=1,2,...,n. \label{PSM13}%
\end{equation}

The case $\theta_{j}=\pi$ corresponds to the Dirichlet boundary condition and
the case $\theta_{j}=\pi/2$ corresponds to the Neumann boundary condition. In
the general case, there are $n_{\operatorname*{N}}\leq n$ values with
$\theta_{j}=\pi/2$ and $n_{\operatorname*{D}}\leq n$ values with $\theta
_{j}=\pi$. Further, there are $n_{\operatorname*{M}}$ remaining values, where
$n_{\operatorname*{M}}=n-n_{\operatorname*{N}}-n_{\operatorname*{D}}$ such
that those $\theta_{j}$-values lie in the interval $(0,\pi/2)$ or $(\pi
/2,\pi).$ They correspond to Robin, or mixed, boundary conditions. It is
proven in Section 3.4 of Chapter three of \cite{WederBook}, that for any pair
of matrices $(A,B)$ that satisfy (\ref{ap.4}), and (\ref{ap.5}) there is a
pair of matrices $(\tilde{A},\tilde{B})$ as in (\ref{A,Btilde}), a unitary
matrix $M$ and two invertible matrices $T_{1},T_{2}$ such%

\begin{equation}
A=M\,\tilde{A}T_{1}M^{\dagger}T_{2},\quad B=M\,\tilde{B}T_{1}M^{\dagger}T_{2},
\label{trans}%
\end{equation}
and further, the Hamiltonians with the boundary condition given by matrices
$A,B$ and with the matrices $\tilde{A}, \tilde{B},$ are unitarily equivalent,
as we will see.

We construct a selfadjoint realization of the matrix Schr\"{o}dinger operator
$-\frac{d^{2}}{dx^{2}}+V(x)$ by quadratic forms methods. For the following
discussion see Sections 3.3 and 3.5 of Chapter three in \cite{WederBook}. Let
$\theta_{j}$ be as in equations (\ref{A,Btilde}). We denote
\begin{equation}
{H}_{j}^{(1)}(\mathbb{R}^{+};\mathbb{C}):=W_{1,2}^{\left(  0\right)
}(\mathbb{R}^{+};\mathbb{C}),\text{ if }\theta_{j}=\pi, \text{ and} \,{H}%
_{j}^{(1)}(\mathbb{R}^{+} ;\mathbb{C}):=H^{1}(\mathbb{R}^{+};\mathbb{C}%
),\text{ if }\theta_{j}\neq\pi. \label{spaceW1j}%
\end{equation}
We put%
\[
\widetilde{{H}}^{(1)}(\mathbb{R}^{+};\mathbb{C}^{n}):=\oplus_{j=1}^{n}{H}%
_{j}^{(1)}(\mathbb{R}^{+};\mathbb{C}).
\]
We define%
\[
\Theta:=\operatorname*{diag}[\widehat{\cot}\theta_{1},...,\widehat{\cot}%
\theta_{n}],
\]
where $\widehat{\cot}\theta_{j}=0,$ if $\theta_{j}=\pi/2,$ or $\theta_{j}%
=\pi,$ and $\widehat{\cot}\theta_{j}=\cot\theta_{j},$ if $\theta_{j}\neq
\pi/2,\pi.$ Suppose that the potential $V$ is self-adjoint and that it belongs
to $L^{1}(\mathbb{R}^{+}).$

The following quadratic form is closed, symmetric and bounded below,
\begin{equation}%
\begin{array}
[c]{l}%
h\left(  Y,Z\right)  :=\left(  \frac{d}{dx}Y,\frac{d}{dx}Z\right)
_{L^{2}(\mathbb{R}^{+},\mathbb{C}^{n})}-\left\langle M\Theta M^{\dag}Y\left(
0\right)  ,Z\left(  0\right)  \right\rangle +\left(  VY,Z\right)
_{L^{2}(\mathbb{R}^{+},\mathbb{C}^{n})},\\
Q\left(  h\right)  :={H}_{A,B}^{1}(\mathbb{R}^{+};\mathbb{C}^{n}),
\end{array}
\label{quadratic}%
\end{equation}
where by $Q(h)$ we denote the domain of $h$ and,
\begin{equation}
{H}^{1}_{A,B}(\mathbb{R}^{+};\mathbb{C}^{n}):=M\widetilde{{H}}^{(1)}%
(\mathbb{R}^{+};\mathbb{C}^{n})\subset H^{1}(\mathbb{R}^{+};\mathbb{C}^{n}).
\label{W1AB}%
\end{equation}
For simplicity we will also use the notation,
\[
H^{1}_{A,B}(\mathbb{R}^{+})\equiv H^{1}_{A,B}(\mathbb{R}^{+}; \mathbb{C}%
^{n}).
\]
We denote by $H_{A,B,V}$ the selfadjoint bounded below operator associated to
$h$ \cite{Kato}. The operator $H_{A,B,V}$ is the selfadjoint realization of
$\displaystyle-\frac{d^{2}}{dx^{2}}+V\left(  x\right)  $ with the selfadjoint
boundary condition (\ref{ap.3}). Actually, it is proven in Proposition 3.5.1
in page 107 of \cite{WederBook} that the domain of $H_{A,B,V}$ is given by,
\begin{equation}
\label{domainh}%
\begin{array}
[c]{l}%
D[H_{A,B,V}]= \left\{  Z \in H^{1}(\mathbb{R}^{+}) : Z^{\prime}\,
\text{\textrm{is absolutely continuous in any}} \, [0,a]\, \text{\textrm{for
all}}\, a >0,\right. \\
\left.  -B^{\dagger}Z(0) + A^{\dagger}Z^{\prime}(0)=0, \, \text{\textrm{and}%
}\, -\frac{d^{2}}{d x^{2}} Z(x)+ V(x) Z(x)\in L^{2}(\mathbb{R}^{+})\right\}  .
\end{array}
\end{equation}
If, moreover $V\in L^{1}(\mathbb{R}^{+})\cap L^{\infty}(\mathbb{R}^{+})$ the
domain of $H_{A,B,V}$ can be, equivalently, characterized as follows,
\begin{equation}
\label{domainh2}D[H_{A,B,V}]= \left\{  Z \in H^{2}(\mathbb{R}^{+}) :
-B^{\dagger}Z(0) + A^{\dagger}Z^{\prime}(0)=0\right\}  .
\end{equation}
When there is no possibility of misunderstanding we will use the notation $H,
$ i.e., $H\equiv H_{A,B,V}.$ It is proven in Section 3.6 of Chapter three of
\cite{WederBook} that,
\begin{equation}
H_{A,B,V}=MH_{\tilde{A},\tilde{B},M^{\dagger}VM}M^{\dagger}.
\label{diagonalization}%
\end{equation}

In the next proposition we introduce the Jost solution given in
\cite{AgrMarch}. See also Sections 3.1 and 3.2 of \cite{WederBook}.

\begin{prop2}
\label{PJostsolution}Suppose that the potential $V$ is self-adjoint and that it belongs to $L^1(\mathbb R^+).$
Then, for each fixed
$k\in\overline{\mathbb{C}^{+}}\backslash\{0\}$ there exists a unique $n\times
n$ matrix-valued Jost solution $f\left(  k,x\right)  $ to equation
(\ref{MSStationary}) satisfying the asymptotic condition%
\begin{equation}
f\left(  k,x\right)  =e^{ikx}\left(  I+o\left(1\right)  \right)
,\qquad x\rightarrow+\infty. \label{Jostsolution}%
\end{equation}
Moreover, for any fixed $x\in\lbrack0,\infty),$ $f\left(  k,x\right)  $ is
analytic in $k\in\mathbb{C}^{+}$ and continuous in $k\in\overline
{\mathbb{C}^{+}}\setminus\{0\}$. If, moreover, $V$  belongs to $L^1_1(\mathbb R^+),$ the Jost solution also exists at $k=0,$ and for each  fixed $x\in\lbrack0,\infty),$ $f\left(  k,x\right)  $ is continuous in $k\in\overline
{\mathbb{C}^{+}}.$ Furthermore if $V$ belongs to $L^1_1(\mathbb R^+),$ for $k\in\overline{\mathbb{C}^{+}}\backslash\{0\},$ the $o(1)$ term  in \eqref{Jostsolution} can be replaced by $o\left(\frac{1}{x}\right).$
\end{prop2}
Using the Jost solution and the boundary matrices $A$ and $B$ satisfying
(\ref{ap.4})-(\ref{ap.5}), we construct the Jost matrix $J\left(  k\right)
,$
\begin{equation}
J\left(  k\right)  =f\left(  -k^{*},0\right)  ^{\dagger}B-f^{\prime}\left(
-k^{*} ,0\right)  ^{\dagger}A,\text{ \ }k\in\overline{\mathbb{C}^{+}},
\label{Jostmatrix}%
\end{equation}
where the asterisk denotes complex conjugation. For the following result
Theorem 3.8.1 in page 114 of \cite{WederBook}.

\begin{prop2}
\label{Jostnozero}Suppose that the potential $V$ is self-adjoint and that
it belongs to $L^1(\mathbb R^+).$ Then,
$J\left(  k\right)  $ is analytic for $k\in\mathbb{C}^{+}$, continuous for
$k\in\overline{\mathbb{C}^{+}} \setminus\{0\},$ and invertible for $k\in\mathbb{R}%
\setminus\{0\}.$ Furthermore, if $V$ belongs to $L^1_1(\mathbb R^+),$ the Jost matrix $J(k)$ is continuous at $k=0.$
\end{prop2}

The scattering matrix, $S\left(  k\right)  ,$ is a $n\times n$ matrix-valued
function of $k\in\mathbb{R}$ that is given by
\begin{equation}
S\left(  k\right)  =-J\left(  -k\right)  J\left(  k\right)  ^{-1},\text{ }%
k\in\mathbb{R}\text{.} \label{Scatteringmatrix}%
\end{equation}

In the \textit{exceptional case} where $J(0)$ is not invertible the scattering
matrix is defined by \eqref{Scatteringmatrix} only for $k\neq0.$ However, it
is proved in Theorem 3.8.14 in pages 138-139 of \cite{WederBook}, that for
self-adjoint potentials $V$ that belong to $L_{1}^{1}(\mathbb{R}^{+})$ the
limit $S(0):=\lim_{k\rightarrow0}S(k)$ exists in the exceptional case and,
moreover, a formula for $S(0)$ is given. Furthermore, it is proved in
Proposition 3.8.17 in page 140 of \cite{WederBook},
\begin{equation}
S(-k)=S(k)^{\dagger}=S(k)^{-1}, \vert S(k)\vert=1, \qquad k \in\mathbb{R}.
\label{UnitaritySM}%
\end{equation}
It is proved in Theorem 3.10.6 in pages 196-197 of \cite{WederBook} that the
following limit exist,
\begin{equation}
S_{\infty}:=\lim_{|k|\rightarrow\infty}S\left(  k\right)  . \label{sinft}%
\end{equation}

\begin{remark2}\label{rema.1}{\rm
It follows from equation (3.10.37) in page 197 of \cite{WederBook} that $S_\infty=I_n$ if and only if there are no Dirichlet boundary conditions in the diagonal representation of the boundary matrices given in \eqref{A,Btilde}, \eqref{PSM13}, and \eqref{trans}.
Moreover, if $V \in L^{1}_{1}(\mathbb R^+),$ by Proposition 3.8.18 in page 142 of \cite {WederBook} the only two eigenvalues that  $S(0)$ can have are $+1$ and $-1$. We denote by $P_\pm$ the projectors onto the eigenspaces of $S(0)$ corresponding to the eigenvalues $\pm1,$ respectively.
Further, it follows from  Theorem 3.8.13 in page 137 of \cite{WederBook}  and Theorem 3.8.14 in pages 138-139 of \cite{WederBook}, that $S(0)=-I$ if and only if  we are in the {\it generic case} where $J(0)$ is invertible, and that  $S(0)=I$ if and only if the geometric multiplicity, $\mu,$ of the eigenvalue zero of the zero energy Jost matrix, $J(0),$  (see \eqref{Jostmatrix}) is equal to $n.$ Moreover, by Remark 3.8.10 in pages 129-130 of \cite{WederBook} the geometric multiplicity of the eigenvalue zero of $J(0)$ is equal to $n$ if and only if there are $n$ linearly independent bounded solutions to the Schr\"odinger equation \eqref{MSStationary} with zero energy, $k^2=0,$ that satisfy the boundary condition \eqref{ap.3}. This corresponds to the { \it purely exceptional case} where there are $n$ linearly independent half-bound states, or zero-energy resonances.  }
\end{remark2}

In terms of the Jost solution $f(k,x)$ and the scattering matrix $S(k)$ we
construct the physical solution to \eqref{MSStationary}, see equation (2.2.29)
in page 26 of \cite{WederBook},%
\begin{equation}
\Psi\left(  k,x\right)  =f\left(  -k,x\right)  +f\left(  k,x\right)  S\left(
k\right)  ,\text{ }k\in\mathbb{R}\text{.} \label{Physicalsolution}%
\end{equation}
%Observe that by (\ref{jostK}), (\ref{ESTK}) and the unitarity of $S(k)$ we
%have%
%\begin{equation}
%\left\vert \Psi\left(  k,x\right)  \right\vert \leq C. \label{Psi1}%
%\end{equation}
By Proposition 3.2.12 in page 89 of \cite{WederBook} the physical solution
satisfies the boundary condition \eqref{ap.3}. The physical solution
$\Psi(k,x)$ is the main input to construct the generalized Fourier maps,
$\mathbf{F}^{\pm},$ for the absolutely continuous subspace of $H,$ that are
defined in equation (4.3.44) in page 284 of \cite{WederBook} (see also
Proposition 4.3.4 in page 287 of \cite{WederBook}),
%\subsection{Generalized Fourier
%transforms.\label{Generalized Fourier transform}}%
\begin{equation}
\left(  \mathbf{F}^{\pm}Y\right)  \left(  k\right)  =\sqrt{\frac{1}{2\pi}}%
\int_{0}^{\infty}\left(  \Psi\left(  \mp k,x\right)  \right)  ^{\dagger
}Y\left(  x\right)  \,dx, \label{gefoma}%
\end{equation}
for $Y\in L^{1}(\mathbb{R}^{+})\cap L^{2}(\mathbb{R}^{+}).$

%For any Borel set $O$ let $E(O)$ be the
%spectral projector of $H$ for $O$. Then, (\cite{WederBook,WederStar})%
We have (see equation (4.3.46) in page 284 of \cite{WederBook}),
\begin{equation}
\left\Vert \mathbf{F}^{\pm}Y\right\Vert _{L^{2}(\mathbb{R}^{+})}=\left\Vert
E(\mathbb{R}^{+};H)Y\right\Vert _{L^{2}(\mathbb{R}^{+})}, \label{isom}%
\end{equation}
where, for any Borel set $U$ we denote by $E(U;H)$ the spectral projector of
$H$ for the Borel set $U.$ Thus, the $\mathbf{F}^{\pm}$ extend to bounded
operators on $L^{2}(\mathbb{R}^{+})$ that we also denote by $\mathbf{F}^{\pm
}.$

The following results on the spectral theory of $H$ are proved in Theorem
3.11.1 in pages 199-200, Theorem 4.3.3 in page 284 and Proposition 4.3.4 in
page 287, of \cite{WederBook}.
%Below we denote by $\mathcal H_{\text{\rm ac}}(H)$ the subspace of absolute continuity of $H.$

\begin{theorem2} \label{theospec}
\label{spect}Suppose that the potential $V$ is self-adjoint, that it belongs to $L^1(\mathbb R^+),$ and that the constant boundary matrices $A, B$ fulfill \eqref{ap.4}, \eqref{ap.5}. Then, the Hamiltonian $H$
has no positive eigenvalues and the negative spectrum of $H$ consists of
isolated eigenvalues of multiplicity smaller or equal than $n$, that can
accumulate only at zero. Furthermore, $H$ has no singular continuous spectrum
and its absolutely continuous spectrum is given by $[0,\infty)$. The
generalized Fourier maps $\mathbf{F}^{\pm}$ are partially isometric with
initial subspace the absolutely continuous subspace of $H$   and
final subspace $L^{2}(\mathbb R^+)$. Moreover, the adjoint operators are given by%
\beq \label{fad}
\left(  \left(  \mathbf{F}^{\pm}\right)  ^{\dagger}Z\right)  \left(
x\right)  =\sqrt{\frac{1}{2\pi}}\int_{0}^{\infty}\Psi\left(  \mp k,x\right)
Z\left(  k\right)  \,dk,
\ene
for $Z\in L^{1}(\mathbb R^+) \cap L^{2}(\mathbb R^+).$ Furthermore,
\begin{equation}
\mathbf{F}^{\pm}H\left(  \mathbf{F}^{\pm}\right)  ^{\dagger}=\mathcal{M},
\label{spectralrepr}%
\end{equation}
where $\mathcal{M}$ is the operator of multiplication by $k^{2}.$ If, in
addition, $V\in L^{1}_1(\mathbb R^+),$ zero is not an eigenvalue and the number of
eigenvalues of $H$ including multiplicities is finite.
\end{theorem2}

Note that by (\ref{isom}) $\left(  \mathbf{F}^{\pm}\right)  ^{\dagger
}\mathbf{F}^{\pm}$ is the orthogonal projector onto the absolutely continuous
subspace of $H,$ and as there is no singular continuous spectrum the
absolutely continuous subspace of $H$ coincides with the continuous subspace
of $H.$ Then,
\begin{equation}
\left(  \mathbf{F}^{\pm}\right)  ^{\dagger}\mathbf{F}^{\pm}%
=P_{\operatorname*{c}}(H) , \label{projector}%
\end{equation}
where $P_{\operatorname*{ c}}(H)$ is the orthogonal projector onto the
continuous subspace of $H.$ Moreover, from (\ref{spectralrepr}) and functional
calculus, it follows,%
\begin{equation}
\mathbf{F}^{\pm}e^{-itH}\left(  \mathbf{F}^{\pm}\right)  ^{\ast}%
=e^{-it\mathcal{M}}. \label{spectralgroup}%
\end{equation}
Assuming faster decay of the potential $V$ at infinity we now obtain further
properties of the Jost solution and of the scattering matrix.

Let us denote,
\begin{equation}
\label{m}m\left(  k,x\right)  :=e^{-ikx}f\left(  k,x\right)  .
\end{equation}

Assuming that $V \in L^{1}(\mathbb{R}),$ it is proved in the proof of
Proposition 3.2.1 in Chapter~3 of \cite{WederBook} that the function $m(k,x),
$ $k \in\overline{\mathbb{C}^{+}}\setminus\{0\}, $ is the unique solution of
the following Volterra integral equation (see equation (3.2.9) in page 53 of
\cite{WederBook}),%

\begin{equation}
m\left(  k,x\right)  =1+\int_{x}^{\infty}D\left(  y-x,k\right)  V\left(
y\right)  m\left(  k,y\right)  dy, \label{3.24}%
\end{equation}
where,
\begin{equation}
D\left(  k,x\right)  :=\frac{1}{2ik}\left[  e^{2ikx}-1\right]  =\int_{0}%
^{x}e^{2iky}\,dy. \label{ap.6}%
\end{equation}
We have the following result. \begin{prop2}\label{Proposition 3.1}
Assume that the potential $V$ is selfadjoint. Then.
\begin{enumerate}
\item Suppose that $V\in{L}^{1}_{2+\delta}(\mathbb R^+),$ for some $\delta\geq 0.$  Then,
for all $k\in\overline{\mathbb{C}^{+}}$ and $x\geq0,$ we have%
\begin{equation}
\left\vert m\left(  k,x\right)  -1\right\vert \leq C\left\langle
k\right\rangle ^{-1}\left\langle x\right\rangle ^{-1-\delta}, \label{3.2}%
\end{equation}%
\begin{equation}
\left\vert \partial_{k}m\left(  k,x\right)  \right\vert \leq C\left\langle
k\right\rangle ^{-1}\left\langle x\right\rangle ^{-\delta}. \label{3.5}%
\end{equation}
\item Assume that $V\in{L}^{1}_{2+\tilde{\delta}}(\mathbb R^+),\ $ for  some $\tilde{\delta} \geq 0,$ and that it admits a regular decomposition. Then, for
all $k\in\overline{\mathbb{C}^{+}},$ and $x\geq0,$ the following estimates
hold,%
\begin{equation}
\left\vert \partial_{x}m\left(  k,x\right)  \right\vert \leq C\left\langle
k\right\rangle ^{-1}\left\langle x\right\rangle ^{-2-\hat{\delta}}, \hat{\delta}:= \min[ \tilde{\delta}, \delta ], \label{3.6}%
\end{equation}
and%
\begin{equation}
\left\vert \partial_{k}\partial_{x}m\left(  k,x\right)  \right\vert \leq
C\left\langle k\right\rangle ^{-1}\left\langle x\right\rangle ^{-1-\hat{\delta}},   \hat{\delta}:= \min[ \tilde{\delta}, \delta].
\label{3.7}%
\end{equation}
\item Suppose that $V\in{L}^{1}_{3+\delta}(\mathbb R^+),$ for some $\delta\geq0.$ Then, for
all $k\in\overline{\mathbb{C}^{+}}$ and $x\geq0,$ we have%
\begin{equation}
\left\vert \partial_{k}^{2}m\left(  k,x\right)  \right\vert \leq C\left\langle
k\right\rangle ^{-1}\left\langle x\right\rangle ^{-\delta}, \label{3.5bis}%
\end{equation}
and%
\begin{equation}
\left\vert \partial_{k}^{2}\partial_{x}m\left(  k,x\right)  \right\vert \leq
C\left\langle x\right\rangle ^{-1-\delta}.\text{ \ } \label{3.6bis}%
\end{equation}
\item If $V\in{L}^{1}_{4+\delta}(\mathbb R^+),$then, for all $k\in\overline{\mathbb{C}^{+}%
}$ and $x\geq0,$ we have%
\begin{equation}
\left\vert \partial_{k}^{3}m\left(  k,x\right)  \right\vert \leq C\left\langle
k\right\rangle ^{-1}\left\langle x\right\rangle ^{-\delta}, \label{3.5bisbis}%
\end{equation}
and%
\begin{equation}
\left\vert \partial_{k}^{3}\partial_{x}m\left(  k,x\right)  \right\vert \leq
C\left\langle x\right\rangle ^{-1-\delta}.\text{ \ } \label{3.6bisbis}%
\end{equation}
\end{enumerate}
\end{prop2}

\begin{proof}
By equations (3.2.13), (3.2.14), (3.2.15) and (3.2.16) in page 54 of
\cite{WederBook}, and since,
\begin{equation}
|e^{z}-1|\leq C\frac{|z|}{1+|z|},\qquad z\in\mathbf{C}, \label{ap.7}%
\end{equation}
we have
\begin{equation}
|m(k,x)|\leq C,\qquad k\in\overline{\mathbb{C}^{+}},x\in\mathbb{R}^{+}.
\label{ap.8}%
\end{equation}
Then, \eqref{3.2} follows from \eqref{3.24}, the first equality in
\eqref{ap.6}, \eqref{ap.7} and \eqref{ap.8}. Differentiating with respect to
$k$ both sides of \eqref{3.24} we get,
\begin{equation}
\partial_{k}m(k,x)=\dot m_{0}(k,x)+\int_{x}^{\infty}%
\,dy\,D(k(y-x))\,V(y)\partial_{k}m(k,y), \label{ap.9}%
\end{equation}
where, we have defined,
\begin{equation}
\dot m_{0}(k,x):=\frac{1}{2ik^{2}}\,\int_{x}^{\infty}\,dy\,\left[
1-e^{2ik(y-x)}+2ik(y-x)e^{2ik(y-x)}\right]  \,V(y)\,m(k,y). \label{ap.10}%
\end{equation}
We have that,
\begin{equation}
\left\vert e^{iz}-iz-1\right\vert \leq\frac{C|z|^{2}}{1+|z|}. \label{ap.11}%
\end{equation}
Moreover, in equation (3.9.15) in page 157 of \cite{WederBook} it is proved,
\begin{equation}
\left\vert \partial_{k}m(k,x)\right\vert \leq C. \label{ap.12}%
\end{equation}
Equation \eqref{3.5} follows from the first equality in \eqref{ap.6}, and
\eqref{ap.7}-\eqref{ap.12}. This completes the proof of 1. Let us prove 2.
Taking the derivative of both sides of \eqref{3.24} with respect to $x$ and
using the first equality in \eqref{ap.6} we obtain,
\begin{equation}
\partial_{x}m(k,x)=-\int_{x}^{\infty}\,dye^{2ik(y-x)}\,V(y)-\int_{x}^{\infty
}\,dye^{2ik(y-x)}\,V(y)\,(m(k,y)-1). \label{ap.13}%
\end{equation}
Note that
\begin{equation}
|V(x)|=\left\vert \int_{x}^{\infty}\,|\partial_{y}V(y)|dy\right\vert \leq
\frac{1}{|x|^{2+\delta}}\,\int_{x}^{\infty}\,|y|^{2+\delta}\,|\partial
_{y}V(y)|,\qquad x\geq x_{N}. \label{ap.14}%
\end{equation}
Then,
\begin{equation}
|V(x)|\leq C\frac{1}{<x>^{2+\delta}}. \label{ap.15}%
\end{equation}
Using \eqref{ap.15},
\[
e^{2ik(y-x)}=\frac{1}{2ik}\,\partial_{y}e^{2ik(y-x)},
\]
that $<\cdot>^{2+\delta}\,\partial_{x}V\in L^{1}(x_{N},\infty),$ and
integrating by parts we prove that,
\begin{equation}
\left\vert \int_{x}^{\infty}\,dye^{2ik(y-x)}\,V(y)\right\vert \leq
C\left\langle k\right\rangle ^{-1}\left\langle x\right\rangle ^{-2-\delta}.
\label{ap.16}%
\end{equation}
Hence, \eqref{3.6} follows from \eqref{3.2}, \eqref{ap.13} and \eqref{ap.16}.
Equation \eqref{3.7} is proved taking the derivative with respect to $k$ of
both sides of \eqref{ap.13}, using \eqref{3.5}, and arguing as in the proof of
\eqref{3.6}. This finishes the proof of 2. We now proceed to prove 3.
Differentiating both sides of (\ref{3.24}) two times with respect to the $k$
variable, we have%
\begin{align}
\partial_{k}^{2}m\left(  k,x\right)   &  =\int_{x}^{\infty}\partial_{k}%
^{2}D\left(  y-x,k\right)  V\left(  y\right)  m\left(  k,y\right)
dy+2\int_{x}^{\infty}\partial_{k}D\left(  y-x,k\right)  V\left(  y\right)
\partial_{k}m\left(  k,y\right)  dy\nonumber\\
&  +\int_{x}^{\infty}D\left(  y-x,k\right)  V\left(  y\right)  \partial
_{k}^{2}m\left(  k,y\right)  dy. \label{116}%
\end{align}
From the second equality in \eqref{ap.6} we obtain,
\begin{equation}
\partial_{k}^{j}D(k,x)=(2i)^{j}\,\int_{0}^{x}\,y^{j}\,e^{2iky}dy,\qquad
j=0,1,\dots. \label{ap.17}%
\end{equation}
Hence, by \eqref{ap.17}
\begin{equation}
\left\vert \partial_{k}^{j}D(k,x)\right\vert \leq\frac{2^{j}}{j+1}%
x^{j+1},\qquad j=0,1,\dots. \label{ap.18}%
\end{equation}
Moreover, using again \eqref{ap.17}, and integrating by parts,
\begin{equation}
\partial_{k}^{j}D(k,x)=\frac{(2i)^{j}}{2ik}\,\int_{0}^{x}\,dy\,y^{j}%
\,\partial_{y}e^{2iky}=\frac{(2i)^{j}}{2ik}\left[  x^{j}e^{2ikx}-j\int_{0}%
^{x}\,dy\,y^{j-1}e^{2iky}\right]  . \label{ap.19}%
\end{equation}
By \eqref{ap.19}
\begin{equation}
\left\vert \partial_{k}^{j}D(k,x)\right\vert \leq2^{j}\,\frac{x^{j}}%
{|k|},\qquad j=1,2,\dots. \label{ap.20}%
\end{equation}

Moreover, by \eqref{ap.18} and \eqref{ap.20}%
\begin{equation}
\left\vert \partial_{k}^{j}D\left(  k,x\right)  \right\vert \leq C\min\left\{
\left\vert x\right\vert ^{j+1},\frac{\left\vert x\right\vert ^{j}}{\left\vert
k\right\vert }\right\}  \leq C\frac{\left\langle x\right\rangle ^{j+1}%
}{\left\langle k\right\rangle }, \label{ap.21}%
\end{equation}
for $j=1,2,\dots,$ and using (\ref{3.5}), and (\ref{ap.8}) we show that the
two first terms on the right-hand side of (\ref{116}) are controlled as
\begin{equation}%
\begin{array}
[c]{l}%
\left\vert \int_{x}^{\infty}\partial_{k}^{2}D\left(  y-x,k\right)  V\left(
y\right)  m\left(  k,y\right)  dy+2\int_{x}^{\infty}\partial_{k}D\left(
y-x,k\right)  V\left(  y\right)  \partial_{k}m\left(  k,y\right)
dy\right\vert \\
\\
\leq C\left\langle k\right\rangle ^{-1}\left\langle x\right\rangle ^{-\delta
}\left\Vert V\right\Vert _{{L}_{3+\delta}^{1}},
\end{array}
\label{ap.22}%
\end{equation}
for $\delta\geq0.$ The Volterra integral equation \eqref{116} is solved by
successive approximations as follows,
\begin{equation}
\partial_{k}^{2}m\left(  k,x\right)  =\partial_{k}^{2}m\left(  k,x\right)
_{0}+\sum_{l=1}^{\infty}\,\partial_{k}^{2}m\left(  k,x\right)  _{l},
\label{ap.23}%
\end{equation}
where,
\begin{equation}
\partial_{k}^{2}m\left(  k,x\right)  _{0}:=\int_{x}^{\infty}\partial_{k}%
^{2}D\left(  y-x,k\right)  V\left(  y\right)  m\left(  k,y\right)
dy+2\int_{x}^{\infty}\partial_{k}D\left(  y-x,k\right)  V\left(  y\right)
\partial_{k}m\left(  k,y\right)  \,dy, \label{ap.24}%
\end{equation}
and
\begin{equation}
\partial_{k}^{2}m\left(  k,x\right)  _{l+1}=\int_{x}^{\infty}D\left(
y-x,k\right)  V\left(  y\right)  \partial_{k}^{2}m\left(  k,y\right)
_{l}dy,\qquad l=1,2,\dots. \label{ap.25}%
\end{equation}
Further, by \eqref{ap.18} with $j=0,$ \eqref{ap.22}, \eqref{ap.24} and
iterating \eqref{ap.25} we obtain,
\begin{equation}
|\partial_{k}^{2}m\left(  k,x\right)  _{l}|\leq C\frac{1}{l!}\,\left(
\int_{x}^{\infty}\,y|V(y)|\,dy\right)  ^{l}\,\left\langle k\right\rangle
^{-1}\left\langle x\right\rangle ^{-\delta}\left\Vert V\right\Vert
_{L^{1}_{3+\delta}(\mathbb{R}^{+})}. \label{ap.26}%
\end{equation}
Hence, by \eqref{ap.22}-\eqref{ap.24} and \eqref{ap.26} we have that,
\begin{equation}
|\partial_{k}^{2}m\left(  k,x\right)  |\leq Ce^{\int_{x}^{\infty
}\,y|V(y)|\,dy}\,\left\langle k\right\rangle ^{-1}\left\langle x\right\rangle
^{-\delta}\left\Vert V\right\Vert _{L^{1}_{3+\delta}(\mathbb{R}^{+})}.
\label{ap.27}%
\end{equation}
This proves \eqref{3.5bis}.
%Then, by an iteration argument similar to \cite[proof of
%formula (2.7) in Lemma 2.1]{Weder2000} we obtain (\ref{3.5bis}). Next,

Derivating both sides of (\ref{ap.13}) two times with respect to $k$ we get%
\begin{align*}
\partial_{k}^{2}\partial_{x}m\left(  k,x\right)   &  =-\left(  2i\right)  ^{2}%
%TCIMACRO{\dint \limits_{x}^{\infty}}%
%BeginExpansion
{\displaystyle\int\limits_{x}^{\infty}}
%EndExpansion
\left(  y-x\right)  ^{2}e^{2ik\left(  y-x\right)  }V\left(  y\right)  m\left(
k,y\right)  dy\\
&  -4i%
%TCIMACRO{\dint \limits_{x}^{\infty}}%
%BeginExpansion
{\displaystyle\int\limits_{x}^{\infty}}
%EndExpansion
e^{2ik\left(  y-x\right)  }\left(  y-x\right)  V\left(  y\right)  \partial
_{k}m_{+}\left(  k,y\right)  dy-%
%TCIMACRO{\dint \limits_{x}^{\infty}}%
%BeginExpansion
{\displaystyle\int\limits_{x}^{\infty}}
%EndExpansion
e^{2ik\left(  y-x\right)  }V\left(  y\right)  \partial_{k}^{2}m_{+}\left(
k,y\right)  dy.
\end{align*}
Then, using (\ref{3.5}), (\ref{3.5bis}), and (\ref{ap.8}), we deduce
(\ref{3.6bis}). This completes the proof of 3. We prove \eqref{3.5bisbis}
derivating both sides of \eqref{116} and using an iteration argument as in the
proof of \eqref{3.5bis}. Finally we obtain \eqref{3.6bisbis} arguing in a
similar way as in the proof of \eqref{3.6bis}.
\end{proof}

In the following lemma we collect results on the scattering matrix that we need.

\begin{lemma2}\label{scontinuity}
Assume that the potential $V$ is self-adjoint, and that the constant boundary matrices $A,B$ fulfill \eqref{ap.4}, and \eqref{ap.5}. Then:
\begin{enumerate}
\item
Suppose that $V \in L^{1}_{1}(\mathbb R^+).$  Then, the scattering matrix $S(k)$ is continuous for $ k \in \mathbb R,$ and
\begin{equation}
\left\vert S\left(  k\right)  -S_{0}\left(  k\right)  \right\vert \leq
C\left\langle k\right\rangle ^{-1},\label{S-S0}%
\end{equation}
where $S_0(k)$ is the scattering matrix with the boundary condition \eqref{ap.3} given by the boundary matrices $A,B$ and the potential identically zero.
\item
Moreover, if $V\in L^{1}_{2}(\mathbb R^+)$ the following relation
is  true
\begin{equation}
\left\vert S\left(  k\right)  -S\left(  0\right)  \right\vert \leq
Ck\left\langle k\right\rangle ^{-1}. \label{S-1}%
\end{equation}
\item
Further, if $V\in {L}^{1}_{3}(\mathbb R^+), S\in C^{1}\left(  \mathbb{R}\right)  $
and
\begin{equation}
\left\vert \dot{S}\left(  k\right)  \right\vert \leq C\left\langle
k\right\rangle ^{-1}.\label{scatmatrixderiv}%
\end{equation}
\item
Furthermore, if $V\in {L}^{1}_{4}(\mathbb R^+),$ $S\in C^{2}\left(  \mathbb{R}\right)
$ and
\begin{equation}
\left\vert \ddot{S}\left(  k\right)  \right\vert \leq C\left\langle
k\right\rangle ^{-1}.\label{scatmatrixsecondderiv}%
\end{equation}
\end{enumerate}
\end{lemma2}

\begin{proof}
By the definition of $S(k)$ in \eqref{Scatteringmatrix} and by Theorem 3.8.1
in page 114, Theorem 3.8.13 in page 137, and Theorem 3.8.14 in pages 138 and
139, of \cite{WederBook} $S(k)$ is continuous for $k\in\mathbb{R}.$ Further
\eqref{S-S0} is proved in equation (3.10.41) in page 198 of \cite{WederBook}.
This proves 1. Item 2 is proved in Theorem 3.9.15 in pages 189 and 190 of
\cite{WederBook}. Item 3 is proved in Propositions ~A.1 and A.2 of \cite{Wlp}.
Let us prove 4. By \eqref{Jostmatrix} and \eqref{m}
\begin{equation}
\dot{J}(k)=-\dot{m}(-k,0)^{\dagger}B-im(-k,0)^{\dagger}A+ik\dot{m}%
(-k,0)^{\dagger}A+\dot{m}^{\prime\dagger}A,\qquad k\in\mathbb{R},
\label{ap.28}%
\end{equation}
where by $\dot{m}(k,x)$ we denote the derivative with respect to $k$ of
$m(k,x).$ By Proposition ~\ref{Jostnozero}, ${J}(k)$ is continuous for
$k\in\mathbb{R}.$ Further, by equation (3.9.17) in page 158 of
\cite{WederBook}
\begin{equation}
|\dot{m}^{\prime}(k,0)|\leq C,\qquad k\in\overline{\mathbb{C}^{+}}.
\label{ap.29}%
\end{equation}
Then, by \eqref{ap.8}, \eqref{3.5},\eqref{ap.28} and \eqref{ap.29}
\begin{equation}
\dot{J}(k)=O(1)A+O\left(  \frac{1}{|k|}\right)  ,\qquad|k|\rightarrow
\infty,\text{ }k\in\mathbb{R}. \label{ap.30}%
\end{equation}
Derivating (\ref{ap.28}) and using (\ref{3.5}), (\ref{3.5bis}) and
(\ref{3.6bis}) we show that $\ddot{J}\left(  k\right)  $ is continuous for
$k\in\mathbb{R}$ and
\begin{equation}
\ddot{J}\left(  k\right)  =O(1)A+O\left(  \frac{1}{|k|}\right)  ,\qquad
|k|\rightarrow\infty,\text{ }k\in\mathbb{R}. \label{ap.31}%
\end{equation}
Moreover, derivating (\ref{Scatteringmatrix}) we have%
\begin{equation}%
\begin{array}
[c]{l}%
\ddot{S}\left(  k\right)  =-\ddot{J}\left(  -k\right)  J\left(  k\right)
^{-1}-2\dot{J}\left(  -k\right)  J\left(  k\right)  ^{-1}\dot{J}\left(
k\right)  J\left(  k\right)  ^{-1}-\\
\\
2J\left(  -k\right)  J\left(  k\right)  ^{-1}\dot{J}\left(  k\right)  J\left(
k\right)  ^{-1}\dot{J}\left(  k\right)  J\left(  k\right)  ^{-1}-J\left(
-k\right)  J\left(  k\right)  ^{-1}\ddot{J}(k)J\left(  k\right)  ^{-1}.
\end{array}
\label{153}%
\end{equation}
As by Proposition~\ref{Jostnozero} $J(k)$ is continuous and invertible for
$k\in\mathbb{R}\setminus\{0\}$ it follows from (\ref{153}) that $\ddot{S}(k) $
is continuous for $k\in\mathbb{R}\setminus\{0\}.$ Further, by equation
(3.7.11) in page 113 and equation 3.10.24) in page 195 of \cite{WederBook},
\begin{equation}
J\left(  k\right)  ^{-1}=O(1),\qquad k\rightarrow\infty,\text{ \textrm{in}%
}\,\overline{\mathbb{C}^{+}}. \label{ap.32}%
\end{equation}
Moreover, by equations (3.7.3) and (3.7.4) in page 113 and equations (3.10.17)
and (3.10.18) in page 194 of \cite{WederBook},
\begin{equation}
J\left(  -k\right)  J\left(  k\right)  ^{-1}=O(1),\qquad|k|\rightarrow
\infty,\text{ }k\in\mathbb{R}. \label{ap.33}%
\end{equation}

Furthermore, by equation (3.7.12) in page 113 and taking the inverse in both
sides of equation (3.10.23) in page 195 of \cite{WederBook},
\begin{equation}
AJ\left(  k\right)  ^{-1}=O\left(  \frac{1}{|k|}\right)  ,\qquad
|k|\rightarrow\infty,\text{ \textrm{in}}\,\overline{\mathbb{C}^{+}}.
\label{ap.34}%
\end{equation}

Hence by \eqref{ap.30}-\eqref{ap.34}
\begin{equation}
\label{ap.33.1}\ddot{S}\left(  k\right)  =O\left(  \frac{1}{|k|}\right)  ,
\qquad|k| \to\infty, k \in\mathbb{R}.
\end{equation}
Moreover, if $J\left(  0\right)  $ is invertible, $\ddot{S}\left(  k\right)  $
is continuous also at $k=0.$ In the case where $J\left(  0\right)  $ is not
invertible, it remains for us to control the behaviour of $\ddot{S}\left(
k\right)  $ in a neighborhood of $k=0.$ Derivating twice with respect to $k$
both sides of \eqref{ap.28} and using \eqref{3.5bis}-\eqref{3.6bisbis} we see
that $\dddot{J}(k)$ is continuous for $k\in\mathbb{R}$ and then,
\begin{equation}
\label{ap.34.1}J\left(  k\right)  =J\left(  0\right)  +\dot{J}\left(
0\right)  k+\frac{\ddot{J}\left(  0\right)  }{2}k^{2}+\frac{\dddot{J}\left(
0\right)  }{6}k^{3}+o\left(  k^{3}\right)  ,\qquad k \to0, k \in\mathbb{R},
\end{equation}
\begin{equation}
\label{ap.35}\dot{J}\left(  k\right)  =\dot{J}\left(  0\right)  +\ddot
{J}\left(  0\right)  k+\frac{\dddot{J}\left(  0\right)  }{2}k^{2}+o\left(
k^{2}\right)  , \qquad k \to0, k \in\mathbb{R},
\end{equation}
and
\begin{equation}
\label{ap.36}\ddot{J}(k)= \ddot{J}(0)+ \dddot{J}(0) k+ o(|k|), \qquad k \to0,
k \in\mathbb{R}.
\end{equation}
Moreover, by \eqref{3.5}, \eqref{3.5bis} and \eqref{3.5bisbis}, for any fixed
$a \geq0,$
\begin{equation}
\label{ap.37}f(k,a)= f(0,a)+ \dot{f}(0,a) k + \ddot{f}(0,a) \frac{k^{2}}{2}+
\dddot{f}(0,a)\frac{k^{3}}{6}+o(k^{3}), \qquad k \to0, k \in\overline
{\mathbb{C}^{+}}.
\end{equation}
In equation (3.9.239) in page 190 of \cite{WederBook} the following small $k$
expansion of $S(k)$ is given, assuming that $V\in L^{1}_{2}(\mathbb{R}^{+}),$
\[
S(k)= S(0)+ \dot{S}(0) k+ o(k), \qquad\qquad k \to0.
\]
It is easily verified that if $V\in L^{1}_{4}(\mathbb{R}^{+})$ the error term
$o(k)$ can be improved using \eqref{ap.37} to obtain,
\begin{equation}
\label{ap.38}S(k)= S(0)+ \dot{S}(0) k+ \mathcal{H }k^{2}+ o\left(
k^{2}\right)  , \qquad\qquad k \to0,
\end{equation}
for some constant matrix $\mathcal{H}.$ Furthermore in equation (3.9.237) in
page 189 of \cite{WederBook} it is proved that if $V\in L^{1}_{2}%
(\mathbb{R}^{+}),$
\begin{equation}
\label{ap.39}J(k)^{-1}= \frac{1}{k} \, \mathcal{M}+ \mathcal{E}_{1}+o(1),
\qquad k\to0 \, \text{\textrm{in}}\, \overline{\mathbb{C}^{+}} ,
\end{equation}
where $\mathcal{M}$ and $\mathcal{E}_{1}$ are constant matrices. It is also
easily checked that if $V\in L^{1}_{4}(\mathbb{R}^{+})$ the error term $o(1)$
can be improved using \eqref{ap.37} to obtain,
\begin{equation}
\label{ap.40}J(k)^{-1}= \frac{1}{k} \, \mathcal{M}+ \mathcal{E}_{1}+
\mathcal{E}_{2} k+ o(k), \qquad k\to0 \, \text{\textrm{in}} \,\overline
{\mathbb{C}^{+}},
\end{equation}
for a constant matrix $\mathcal{E}_{2}.$ Using \eqref{Scatteringmatrix} we
write \eqref{153} as follows,
\begin{equation}
\label{ap.41}%
\begin{array}
[c]{l}%
\ddot{S}\left(  k\right)  =-\ddot{J}\left(  -k\right)  J\left(  k\right)
^{-1}- 2\dot{J}\left(  - k\right)  J\left(  k\right)  ^{-1}\dot{J}\left(
k\right)  J\left(  k\right)  ^{-1}+\\
\\
2 S(k)\dot{J}\left(  k\right)  J\left(  k\right)  ^{-1} \dot{J}\left(  k
\right)  J\left(  k\right)  ^{-1} - S(k) \ddot{J}(k) J\left(  k\right)  ^{-1}.
\end{array}
.
\end{equation}

Then, by \eqref{ap.34.1}-\eqref{ap.36}, \eqref{ap.40} and \eqref{ap.41}
\begin{equation}
\label{ap.42}\ddot{S}\left(  k\right)  =\frac{C_{1}}{k}+\frac{C_{2}} {k^{2}}+
g\left(  k\right)  , \qquad k \to0,
\end{equation}
for some constants matrices $C_{1},C_{2}$ and a function $g\in C(\mathbb{R}).
$ Moreover, integrating \eqref{ap.42} we see that for $0 < k < \varepsilon,$
\begin{equation}
\label{ap.43}\dot{S}\left(  k\right)  = \dot{S}\left(  \varepsilon\right)
+C_{1} \ln\left(  \frac{k}{\varepsilon}\right)  + C_{2} \left(  \frac{1}{k}-
\frac{1}{\varepsilon}\right)  - \int_{k}^{\varepsilon}g(\lambda) d\lambda.
\end{equation}
Since $\dot{S}\left(  k\right)  $ is continuous for all $k\in\mathbb{R}$,
\eqref{ap.43} implies that $C_{1}=C_{2}=0.$ Therefore, $S\in C^{2}\left(
\mathbb{R}\right)  $ and by \eqref{ap.33.1}, (\ref{scatmatrixsecondderiv}) is true.
\end{proof}

We prepare the following results that we need.

\begin{lemma2}\label{Lemma2}
Assume that the potential $V$ is self-adjoint,  and that the constant boundary
matrices $A,B$ fulfill \eqref{ap.4}, and \eqref{ap.5}. Then.
\begin{enumerate}
\item
Suppose that  $V\in L^{1}(\mathbb{R}^{+}),$  and that $H$ has no negative
eigenvalues. Then,
\begin{equation}
\label{61}\left\Vert k\left(  \mathbf{F}^{\pm}\psi\right)  \left(  k\right)
\right\Vert _{{L}^{2}(\mathbb{R}^{+})}\leq C\left\Vert \psi\right\Vert
_{{H}^1_{A,B}(\mathbb{R}^{+})}, \qquad\psi\in
{H}^1_{A,B}(\mathbb{R}^{+}).
\end{equation}
\item Assume that $V\in L^{1}(\mathbb{R}^{+}) \cap L^{\infty}(\mathbb{R}%
^{+}),$ and that $H$ has no negative eigenvalues. Then, for all $\psi
\in {H}^{2}(\mathbb R^+)$ that satisfy the boundary condition \eqref{ap.3}
\begin{equation}
\left\Vert k^{2}\left(  \mathbf{F}^{\pm}\psi\right)  \left(  k\right)
\right\Vert _{{L}^{2}(\mathbb{R}^{+})}\leq C\left\Vert \psi\right\Vert
_{{H}^{2}(\mathbb R^+)}.\label{62}%
\end{equation}
\item Suppose that $V\in L^{1}_{ 3}(\mathbb{R}^{+}),$ and that it admits a regular decomposition (see Definition ~\ref{Def1}). Then,
\begin{equation}
\left\Vert k\partial_{k}\mathbf{F}^{\pm}\psi\left(  k\right)  \right\Vert
_{{L}^{2}(\mathbb{R}^{+})}\leq C\left\Vert \psi\right\Vert _{{H}^{1,1}(\er)%
}, \qquad\psi\in {H}^{1,1}(\er). \label{61BIS}%
\end{equation}
\item Assume that $ V  \in L^{1}_{ 3}(\mathbb{R}^{+}).$ Then,
\beq\label{61BIS.2}
\left\| \partial_k \mathbf F^{\pm} \psi\left(k\right) \right\|_{L^2(\er)}\leq C \|\psi\|_{L^2_1(\er)}, \qquad \psi \in L^2_1(\er).
\ene
\item Assume that $ V  \in L^{1}_{ 4}(\mathbb{R}^{+}).$ Then,
\beq\label{61BIS.3}
\left\| \partial_k^2 \mathbf F^{\pm} \psi\left(k\right) \right\|_{L^2(\er)}\leq C \|\psi\|_{L^2_2(\er)}, \qquad \psi \in L^2_2(\er).
\ene
\end{enumerate}
\end{lemma2}

\begin{proof}
If $H$ has no eigenvalues, its absolutely continuous subspace coincides with
$L^{2}(\mathbb{R}^{+}),$ and hence, the generalized Fourier maps
$\mathbf{F}^{\pm}$ are unitary on $L^{2}(\mathbb{R}^{+})$ (see
Theorem~\ref{theospec}). Further, by \eqref{spectralrepr} and functional
calculus,
\begin{equation}
k=\mathbf{F}^{\pm}\sqrt{H}(\mathbf{F}^{\pm})^{\dagger}. \label{ap.44}%
\end{equation}
By \eqref{ap.44}
\begin{equation}
\Vert k\mathbf{F}^{\pm}\psi\Vert_{L^{2}(\mathbb{R}^{+})}^{2}=\Vert\sqrt{H}%
\psi\Vert_{L^{2}(\mathbb{R}^{+})}^{2}=h(\psi,\psi), \label{ap.45}%
\end{equation}
where $h$ is the quadratic form of $H$ defined in \eqref{quadratic}. Further,
by \eqref{quadratic} and Sobolev's inequality,
\begin{equation}
|q(\psi,\psi)|\leq C\|\psi\|_{ {H}^{1}_{A,B}(\mathbb{R}^{+})},\qquad\psi\in
{H}^{1}_{A,B}(\mathbb{R}^{+}). \label{ap.46}%
\end{equation}
Equation \eqref{61} follows from \eqref{ap.45} and \eqref{ap.46}. This proves
1. Let us prove 2. By \eqref{domainh2} we have, $\psi\in D[H].$ Further, by
\eqref{spectralrepr}
\begin{equation}
k^{2}=\mathbf{F}^{\pm}H(\mathbf{F}^{\pm})^{\dagger}. \label{ap.47}%
\end{equation}
Then,
\begin{equation}
\Vert k^{2}\mathbf{F}^{\pm}\psi\Vert_{L^{2}(\mathbb{R}^{+})}=\Vert H\psi
\Vert_{L^{2}(\mathbb{R}^{+})}\leq C\Vert\psi\Vert_{H^{2}(\mathbb{R}^{+})}.
\label{ap.48}%
\end{equation}
This concludes the proof of 2. Let us prove 3. We give the proof for
$\mathbf{F}^{+}.$ The case of $\mathbf{F}^{-}$ follows similarly. We
calculate
\begin{align}
\partial_{k}\Psi\left(  -k,y\right)  ^{\dagger}  &  =e^{-iky}\partial
_{k}m\left(  k,y\right)  ^{\dagger}+e^{iky}S\left(  -k\right)  ^{\dagger
}\partial_{k}m\left(  -k,y\right)  ^{\dagger}+e^{iky}\partial_{k}S\left(
-k\right)  ^{\dagger}m\left(  -k,y\right)  ^{\dagger}\nonumber\\
&  -iy\left(  f\left(  k,y\right)  ^{\dagger}-S\left(  -k\right)  ^{\dagger
}f\left(  -k,y\right)  ^{\dagger}\right)  . \label{c1}%
\end{align}
Integrating by parts, we have%
\[
\int_{0}^{\infty}ke^{-iky}\partial_{k}m\left(  k,y\right)  ^{\dagger}%
\psi\left(  y\right)  dy=-i\partial_{k}m\left(  k,0\right)  ^{\dagger}%
\psi\left(  0\right)  -i\int_{0}^{\infty}e^{-iky}\partial_{y}\left(
\partial_{k}m\left(  k,y\right)  ^{\dagger}\psi\left(  y\right)  \right)  dy.
\]
Then, using (\ref{3.5}) with $\delta>1/2$ and (\ref{3.7}), via Sobolev's
inequality we get%
\begin{equation}
\left\Vert \int_{0}^{\infty}ke^{-iky}\partial_{k}m\left(  k,y\right)
^{\dagger}\psi\left(  y\right)  dy\right\Vert _{{L}^{2}(\mathbb{R}^{+})}\leq
C\left\Vert \psi\right\Vert _{H^{1}(\mathbb{R}^{+})}. \label{c2}%
\end{equation}
Similarly, by using also (\ref{UnitaritySM}) we have%
\begin{equation}
\left\Vert \int_{0}^{\infty}ke^{iky}S\left(  -k\right)  ^{\dagger}\partial
_{k}m\left(  -k,y\right)  ^{\dagger}\psi\left(  y\right)  dy\right\Vert
_{{L}^{2}(\mathbb{R}^{+})}\leq C\left\Vert \psi\right\Vert _{H^{1}%
(\mathbb{R}^{+})}. \label{c3}%
\end{equation}
By (\ref{scatmatrixderiv}) and Parseval's identify
\begin{equation}
\left\Vert \int_{0}^{\infty} k e^{iky}\dot S\left(  -k\right)  ^{\dagger}%
\psi\left(  y\right)  dy\right\Vert _{{L}^{2}(\mathbb{R}^{+})}\leq C\Vert
\psi\Vert_{L^{2}(\mathbb{R}^{+})}. \label{ap.49}%
\end{equation}
Further, by \eqref{3.2} and (\ref{scatmatrixderiv}),
\begin{equation}
\left\Vert \int_{0}^{\infty} k e^{iky}\dot S\left(  -k\right)  ^{\dagger
}(m(-k,y)^{\dagger}-I)\psi\left(  y\right)  dy\right\Vert _{{L}^{2}%
(\mathbb{R}^{+})}\leq C\Vert\psi\Vert_{L^{2}(\mathbb{R}^{+})}. \label{ap.50}%
\end{equation}
By \eqref{ap.49} and \eqref{ap.50},
\begin{equation}
\left\Vert \int_{0}^{\infty}\, k e^{iky}\dot S\left(  -k\right)  ^{\dagger
}\left(  m\left(  -k,y\right)  ^{\dagger}\right)  \psi\left(  y\right)
dy\right\Vert _{{L}^{2}(\mathbb{R}^{+})}\leq C\left\Vert \psi\right\Vert
_{{L}^{2}(\mathbb{R}^{+})}. \label{c4}%
\end{equation}
Integrating by parts, and recalling that $f(k,y)= e^{iky}\, m(k,y), $ we have
\begin{equation}
\int_{0}^{\infty}kf(k,y)^{\dagger}iy\psi(y)dy=-i\int_{0}^{\infty}%
e^{-iky}\partial_{y}\left(  m\left(  k,y\right)  ^{\dagger}iy\psi\left(
y\right)  \right)  dy. \label{ap.51}%
\end{equation}
By \eqref{3.6}
\begin{equation}
\left\Vert \int_{0}^{\infty}e^{-iky}\left(  \partial_{y}m\left(  k,y\right)
^{\dagger}\right)  y\psi\left(  y\right)  dy\right\Vert _{L^{2}(\mathbb{R}%
^{+})}\leq\Vert\psi\Vert_{L^{2}(\mathbb{R}^{+})}. \label{ap.52}%
\end{equation}
By Parseval's identity,
\begin{equation}
\left\Vert \int_{0}^{\infty}e^{-iky}\psi\left(  y\right)  \right\Vert
_{L^{2}(\mathbb{R}^{+})}\leq C\Vert\psi\Vert_{L^{2}(\mathbb{R}^{+})}.
\label{ap.53}%
\end{equation}
By \eqref{3.2}
\begin{equation}
\left\Vert \int_{0}^{\infty}e^{-iky}\left(  m\left(  k,y\right)  ^{\dagger
}-I\right)  \psi\left(  y\right)  dy\right\Vert _{L^{2}(\mathbb{R}^{+})}\leq
C\Vert\psi\Vert_{L^{2}(\mathbb{R}^{+})}. \label{ap.54}%
\end{equation}
It follows from \eqref{ap.53} and \eqref{ap.54},
\begin{equation}
\left\Vert \int_{0}^{\infty}e^{-iky}m\left(  k,y\right)  ^{\dagger}\psi\left(
y\right)  dy\right\Vert _{L^{2}(\mathbb{R}^{+})}\leq C\Vert\psi\Vert
_{L^{2}(\mathbb{R}^{+})}. \label{ap.55}%
\end{equation}
In the same way we prove,
\begin{equation}
\left\Vert \int_{0}^{\infty}e^{-iky}m\left(  k,y\right)  ^{\dagger}y
\partial_{y} \psi(y) \left(  y\right)  dy\right\Vert _{L^{2}(\mathbb{R}^{+}%
)}\leq C\Vert y \partial_{y} \psi\Vert_{L^{2}(\mathbb{R}^{+})}. \label{ap.56}%
\end{equation}
By \eqref{ap.51}, \eqref{ap.52}, \eqref{ap.55} and \eqref{ap.56},
\begin{equation}
\left\Vert \int_{0}^{\infty}kf\left(  k,y\right)  ^{\dagger}iy\psi\left(
y\right)  dy\right\Vert _{{L}^{2}(\mathbb{R}^{+})}\leq C\left\Vert
\psi\right\Vert _{H^{1,1}(\mathbb{R}^{+})}. \label{c5}%
\end{equation}
Similarly, using (\ref{UnitaritySM}) we show%
\begin{equation}
\left\Vert \int_{0}^{\infty}kS\left(  -k\right)  ^{\dagger}iyf\left(
-k,x\right)  ^{\dagger}\psi\left(  y\right)  dy\right\Vert _{{L}%
^{2}(\mathbb{R}^{+})}\leq C\left\Vert \psi\right\Vert _{H^{1,1}(\mathbb{R}%
^{+})}. \label{c6}%
\end{equation}
Hence, (\ref{61BIS}) follows from \eqref{gefoma}, (\ref{c1}), (\ref{c2}),
(\ref{c3}),~(\ref{c4}), (\ref{c5}) and (\ref{c6}).

Let us now prove 4 in the case of $\mathbf{F}^{+}.$ The case of $\mathbf{F}%
^{-}$ follows similarly. By \eqref{Physicalsolution}
\begin{equation}
\label{c6.1}\partial_{k} \psi(-k,x)^{\dagger}= T_{1}(k,y)+T_{2}(k,y),
\end{equation}
where,
\begin{equation}
\label{c6.2}T_{1}(k,y):= -iy e^{-iky} +i y S(-k)^{\dagger}e^{iky} + e^{iky}
\partial_{k} S(-k)^{\dagger},
\end{equation}
and
\begin{equation}
\label{c6.3}%
\begin{array}
[c]{l}%
T_{2}(k,y):= e^{-iky} \partial_{k} m(k,y)^{\dagger}+ e^{iky} S(-k)^{\dagger
}\partial_{k} m(-k,y)\\
\\
+ e^{iky} \partial_{k}S(-k)^{\dagger}(m(-k,y)^{\dagger}-I)- iy e^{-iky}
(m(k,y)^{\dagger}-I)+ iy e^{iky} S(-k)^{\dagger}(m(-k,y)^{\dagger}-I).
\end{array}
\end{equation}
By \eqref{UnitaritySM}, \eqref{scatmatrixderiv} and Parseval's identity,
\begin{equation}
\label{c6.4}\left\|  \int_{0}^{\infty}\, T_{1}(k,y) \psi(y)\, dy \right\|
_{L^{2}(\mathbb{R}^{+})} \leq C \|\psi\|_{L^{2}_{1}(\mathbb{R}^{+})}.
\end{equation}
Further, by \eqref{UnitaritySM}, \eqref{scatmatrixderiv}, \eqref{3.2} and
\eqref{3.5},
\begin{equation}
\label{c6.5}\left\|  \int_{0}^{\infty}\, T_{2}(k,y) \psi(y)\, dy \right\|
_{L^{2}(\mathbb{R}^{+})} \leq C \|\psi\|_{L^{2}(\mathbb{R}^{+})}.
\end{equation}
Equation \eqref{61BIS.2} follows from \eqref{gefoma}, \eqref{c6.1},
\eqref{c6.4} and \eqref{c6.5}.

Finally, we prove 5 in the case of $\mathbf{F}^{+}.$ The case of
$\mathbf{F}^{-}$ follows in a similar way. By \eqref{c6.1}, \eqref{c6.2} and
\eqref{c6.3}
\begin{equation}
\label{c6.7}\partial_{k}^{2} \psi(-k,^{2}x)^{\dagger}= T_{3}(k,y)+T_{4}(k,y),
\end{equation}
where,
\begin{equation}
\label{c6.8}T_{3}(k,y):= -y^{2} e^{-iky} - y^{2} S(-k)^{\dagger}e^{iky}+ iy
e^{iky} \partial_{k} S(-k)^{\dagger}+ e^{iky} \partial^{2}_{k} S(-k)^{\dagger
}.
\end{equation}
and
\begin{equation}
\label{c6.9}%
\begin{array}
[c]{l}%
T_{4}(k,y):= -iy e^{-iky} \partial_{k} m(k,y)^{\dagger}+ e^{-iky} \partial
^{2}_{k} m(k,y)^{\dagger}+ iy e^{iky} S(-k)^{\dagger}\partial_{k} m(-k,y) +
e^{iky} S(-k)^{\dagger}\partial^{2}_{k} m(-k,y )\\
\\
+ e^{iky} \partial_{k} S(-k)^{\dagger}\partial_{k} m(-k,y) +iy e^{iky}
\partial_{k}S(-k)^{\dagger}(m(-k,y)^{\dagger}-I)+ e^{iky} \partial^{2}_{k}
S(-k)^{\dagger}(m(-k,y)^{\dagger}-I)\\
\\
+ e^{iky} (\partial_{k}S(-k)^{\dagger}) \partial_{k} m(-k,y)^{\dagger}- y^{2}
e^{-iky} (m(k,y)^{\dagger}-I)\\
\\
- iy e^{-iky} \partial_{k} m(k,y)^{\dagger}-y^{2} e^{iky} S(-k)^{\dagger
}(m(-k,y)^{\dagger}-I) -y^{2} e^{iky} S(-k)^{\dagger}(m(-k,y)^{\dagger}-I)\\
\\
+ iy e^{iky} \partial_{k} S(-k)^{\dagger}(m(-k,y)^{\dagger}-I) +iy e^{iky}
S(-k)^{\dagger}\partial_{k} m(-k,y)^{\dagger}.
\end{array}
\end{equation}

By \eqref{UnitaritySM}, \eqref{scatmatrixderiv}, \eqref{scatmatrixsecondderiv}
and Parseval's identity,
\begin{equation}
\label{c6.10}\left\|  \int_{0}^{\infty}\, T_{3}(k,y) \psi(y)\, dy \right\|
_{L^{2}(\mathbb{R}^{+})} \leq C \|\psi\|_{L^{2}_{2}(\mathbb{R}^{+})}.
\end{equation}
Further, by \eqref{UnitaritySM}, \eqref{scatmatrixderiv},
\eqref{scatmatrixsecondderiv}, \eqref{3.2}, \eqref{3.5}, and \eqref{3.5bis}
\begin{equation}
\label{c6.11}\left\|  \int_{0}^{\infty}\, T_{4}(k,y) \psi(y)\, dy \right\|
_{L^{2}(\mathbb{R}^{+})} \leq C \|\psi\|_{L^{2}(\mathbb{R}^{+})}.
\end{equation}
Equation \eqref{61BIS.3} follows from \eqref{gefoma}, \eqref{c6.7},
\eqref{c6.10} and \eqref{c6.11}.
\end{proof}

\subsection{The evolution group}

\label{intev} In this subsection we obtain a convenient representation of the
restriction of the evolution group to the continuous subspace of the Hamiltonian.

The evolution group is given by,
\[
\mathcal{U}(t):=e^{-itH}.
\]
The restriction of $\mathcal{U}(t)$ to the continuous subspace of $H$ is given
by,
\[
\mathcal{U}(t)P_{c}(H).
\]
We derive our representation of $\mathcal{U}(t)P_{c}$ using the generalized
Fourier map $\mathbf{F}^{+}.$ Note, however, that we could also obtain a
representation using $\mathbf{F}^{-}.$ To simplify the notation we denote by
$\mathbf{F}$ the generalized Fourier map $\mathbf{F}^{+}.$

%Recall that $(\mathbf{F}Y)(k)$ is only defined for $k \geq0.$ However,
%\eqref{gefoma} is also well defined for $k < 0,$ and as in the free case we
%also use the notation $(\mathbf{F }Y)(k)$ for $k <0.$
By \eqref{spectralgroup} and as $\mathbf{F}^{\dagger}\,\mathbf{F }= P_{c}(H),
\mathbf{F }\mathbf{F}^{\dagger}=I $ (see Theorem~\ref{theospec}) ,
\begin{equation}
\mathcal{U}(t) P_{\operatorname*{c}} =\mathbf{F}^{\ast}e^{-it\mathcal{M}%
}\mathbf{F}. \label{2.13}%
\end{equation}
%Further, by \eqref{UnitaritySM} and \eqref{gefoma}
%\begin{equation}
%\label{menosk}S(k) \mathbf{F}\psi(-k)= \mathbf{F }\psi(k),
%\end{equation}
%and as in the proof of \eqref{ap.61} we obtain,%
By \eqref{gefoma}, \eqref{fad}, and \eqref{2.13}
\begin{equation}
\label{Spectralrepresentation1}%
\begin{array}
[c]{l}%
\mathcal{U}(t)\,P_{\operatorname*{c}}(H)\psi(x)=\frac{1}{\sqrt{2\pi}}\int%
_{0}^{\infty} f(k,x)e^{-itk^{2}}\mathbf{F}\psi\left(  k\right)  dk\\
\\
+ \int_{0}^{\infty}\, f(-k,x) e^{-itk^{2}}\, S(-k)\, \mathbf{F}\psi\left(
k\right)  \, dk.
\end{array}
\end{equation}
Further, for any $\phi\in L^{2}(\mathbb{R}^{+}),$ it follows from
\eqref{Spectralrepresentation1},
\begin{equation}
\label{sp2}%
\begin{array}
[c]{l}%
\mathcal{U }(t)\mathbf{F}^{\dagger}\phi=\mathcal{U}(t)\,P_{\operatorname*{c}%
}(H)\mathbf{F}^{\dagger}\phi(x)=\frac{1}{\sqrt{2\pi}}\int_{0}^{\infty}
f(k,x)e^{-itk^{2}}\phi\left(  k\right)  dk\\
\\
+\frac{1}{\sqrt{2\pi}} \int_{0}^{\infty}\, f(-k,x) e^{-itk^{2}}\,
S(-k)\,\phi\left(  k\right)  \, dk\\
\\
= \frac{1}{\sqrt{2\pi}}\int_{-\infty}^{\infty}\, e^{ikx}\, m(k,x)e^{-itk^{2}%
}\, (\mathcal{E }\phi)(k) \, dk,
\end{array}
\end{equation}
with, as above, $m\left(  k,x\right)  =f\left(  k,x\right)  e^{-ikx}$, and
where we define
\begin{equation}
\label{sp3}\mathcal{E}\phi(k)= \left\{
\begin{array}
[c]{l}%
\phi(k), \qquad k \geq0,\\
S(k) \phi(-k), \qquad k < 0.
\end{array}
\right.
\end{equation}
Note that since by \eqref{UnitaritySM} $\vert S(k)\vert=1,$
\begin{equation}
\label{sp3.uu}\|\mathcal{E}\phi\|_{L^{2}( {\mathbb{R}})}= \sqrt{2}
\|\phi\|_{L^{2}(\mathbb{R}^{+})}, \qquad\phi\in L^{2}(\mathbb{R}^{+}).
\end{equation}
Moreover, since by \eqref{UnitaritySM} $S(k) S(-k)= S(k) S(k)^{\dagger}=I,$
\begin{equation}
\label{sp4}S(k) (\mathcal{E }\phi)(-k) = (\mathcal{E}\phi)(k), \qquad k
\in{\mathbb{R}}, \phi\in L^{2}(\mathbb{R}^{+}).
\end{equation}

By \eqref{sp2}, we get%
\begin{equation}
\left\Vert \frac{1}{\sqrt{2\pi}}\int_{-\infty}^{\infty}\,e^{ikx}\,m\left(
k,x\right)  e^{-itk^{2}} (\mathcal{E}\phi)\left(  k\right)  dk\right\Vert
_{{L}^{2}(\mathbb{R}^{+})}=\left\Vert \phi\right\Vert _{{L}^{2}(\mathbb{R}%
^{+})},\qquad\phi\in L^{2}(\mathbb{R}^{+}). \label{10}%
\end{equation}
We find it convenient to extend $m(k,x)$ to $x \leq0$ as an even function,
\begin{equation}
\label{sp2.x.1}m(k,x) = m (k,-x), \qquad x \in{\mathbb{R}}.
\end{equation}

It follows from a simple calculation that for $t\neq0$%
\begin{equation}
\frac{1}{\sqrt{2\pi}}\int_{-\infty}^{\infty}e^{ikx}e^{-itk^{2}}m\left(
k,x\right)  \phi\left(  k\right)  dk=M\mathcal{D}_{t}\mathcal{W}\left(
t\right)  \phi, \label{ap.64}%
\end{equation}
where we denote%
\begin{equation}
\mathcal{W}\left(  t\right)  \phi=\sqrt{\frac{it}{2\pi}}\int_{-\infty}%
^{\infty}e^{-it\left(  k-\frac{x}{2}\right)  ^{2}}m\left(  k,tx\right)
\phi\left(  k\right)  dk,\qquad t,x\in{\mathbb{R}}, \label{sp5}%
\end{equation}
and we recall that,
\begin{equation}
M:=e^{\frac{ix^{2}}{4t}},\qquad(\mathcal{D}_{t}\phi)(x)=\frac{1}{\sqrt{it}%
}\,\phi(xt^{-1}). \label{sp6}%
\end{equation}
We define,
\begin{equation}
\mathcal{Q}(t):=\mathcal{W}(t)\mathcal{E}. \label{sp7}%
\end{equation}
Hence, by \eqref{sp2} and \eqref{ap.64} we obtain,
%we arrive to a representation of $\mathcal U(t) P_,$ similar to the one in the free case
%(\ref{factfree1}),%
\begin{equation}
\mathcal{U}\left(  t\right)  \mathbf{F}^{\dagger}\phi=M\mathcal{D}%
_{t}\mathcal{Q}\left(  t\right)  \phi,\qquad\phi\in L^{2}(\mathbb{R}^{+}).
\label{2.5}%
\end{equation}
Note that \eqref{2.5} implies,
\begin{equation}
\mathcal{U}\left(  t\right)  P_{c}(H)\psi=M\mathcal{D}_{t}\mathcal{Q}\left(
t\right)  \mathbf{F}\psi,\qquad\psi\in L^{2}(\mathbb{R}^{+}), \label{ap.65}%
\end{equation}
where we used that $P_{c}(H)=\mathbf{F}^{\dagger}\mathbf{F}.$
%The representation
%\eqref{ap.65} is similar to the one that we obtained in the free case in
%\eqref{factfree1}.
Moreover, if $H$ has no negative eigenvalues, $P_{c}(H)=I,$ and $\mathbf{F}$
is unitary in $L^{2}(\mathbb{R}^{+}).$ Then, using \eqref{2.5} and as $M$ and
$\mathcal{D}_{t}$ are unitary in $L^{2}(\mathbb{R}^{+})$ we obtain,
\begin{equation}
\left\Vert \mathcal{Q}\left(  t\right)  \phi\right\Vert _{{L}^{2}%
(\mathbb{R}^{+})}=\left\Vert \phi\right\Vert _{{L}^{2}(\mathbb{R}^{+})}.
\label{isom1}%
\end{equation}
Moreover, using (\ref{ap.65}) we see that if $H$ has no negative eigenvalues,%
\begin{equation}
\mathcal{Q}^{-1}\left(  t\right)  =\mathbf{F}\,e^{itH}\,M\mathcal{D}%
_{t}=e^{itk^{2}}\mathbf{F}M\mathcal{D}_{t}, \label{2.6}%
\end{equation}
where we used $\mathbf{F}\mathcal{U}^{-1}(t)=\mathbf{F}e^{itH}=e^{itk^{2}%
}\,\mathbf{F}.$ Further, from the first equality in \eqref{2.6} we obtain,
\begin{equation}
\mathbf{F}e^{itH}=\mathcal{Q}^{-1}\,\mathcal{D}_{t}^{-1}\overline{M}.
\label{sp7.ll}%
\end{equation}
Moreover, by \eqref{2.6}
\begin{equation}
\left\Vert \mathcal{Q}^{-1}\left(  t\right)  \phi\right\Vert _{{L}%
^{2}(\mathbb{R}^{+})}=\left\Vert \phi\right\Vert _{{L}^{2}(\mathbb{R}^{+})},
\label{isom2}%
\end{equation}
and using \eqref{UnitaritySM}
\begin{equation}
\mathcal{Q}^{-1}\left(  t\right)  \phi(k)=S\left(  k\right)  \mathcal{W}%
_{+}\left(  t\right)  \phi+\mathcal{W}_{-}\left(  t\right)  \phi,\qquad
k\geq0, \label{winverse}%
\end{equation}
where%
\begin{equation}
\mathcal{W}_{\pm}\left(  t\right)  \phi(k)=\sqrt{\frac{t}{2\pi i}}\int%
_{0}^{\infty}e^{it\left(  k\pm\frac{x}{2}\right)  ^{2}}m^{\dagger}\left(  \mp
k,tx\right)  \phi\left(  x\right)  dx,\qquad k\in{\mathbb{R}}. \label{sp8}%
\end{equation}
Note that in \eqref{sp8}, for convenience, we have defined the quantities
$\mathcal{W}_{\pm}(t)$ for $k\in{\mathbb{R}},$ but that in \eqref{winverse} we
only use then for $k\geq0.$ We also find it useful to introduce the following
quantity,
\begin{equation}
\widehat{\mathcal{W}}(t)\phi(k):=S\left(  k\right)  \mathcal{W}_{+}\left(
t\right)  \phi+\mathcal{W}_{-}\left(  t\right)  \phi,\qquad k\in{\mathbb{R}}.
\label{sp9}%
\end{equation}
Observe that,
\begin{equation}
\widehat{\mathcal{W}}(t)\phi(k)=\mathcal{Q}^{-1}\left(  t\right)
\phi(k),\qquad k\geq0. \label{sp10}%
\end{equation}
Furthermore, by \eqref{UnitaritySM}
\begin{equation}
S(k)\widehat{\mathcal{W}}(t)\phi(-k)=\widehat{\mathcal{W}}(t)\phi(k),\qquad
k\in{\mathbb{R}}. \label{sp11.1}%
\end{equation}
In the following proposition we prove some useful results.
\begin{prop2}\label{propl2}
Assume that  $V \in L^1_{2+\delta}(\er),$ for some $ \delta \geq 0$.
% and that the constant boundary matrices $A,B$ fulfill \eqref{ap.4} and \eqref{ap.5}.
Then.
\begin{enumerate}
\item
\beq\label{l2.1}
\|\mathcal W \phi\|_{L^2(\ere)} \leq C \|\phi\|_{L^2(\ere)}.
\ene
\item
\beq\label{EST8}
\|\mathcal W_\pm \phi\|_{L^2(\ere)} \leq C \|\phi\|_{L^2(\er)}.
\ene
\item
\beq\label{l2.3}
\|\widehat{\mathcal W} \phi\|_{L^2(\ere)} \leq C \|\phi\|_{L^2(\er)}.
\ene
\item
Further, if $ V\in L^{1}_{2+\delta},$ for some $ \delta > 1/2,$ and $V$ admits a regular decomposition (see Definition~\ref{Def1}),
\beq\label{EST5}
\|\partial_x \mathcal W \phi\|_{L^2(\ere)} \leq C \sqrt{\langle |t|\rangle}  \|\phi\|_{H^1(\ere)}.
\ene
\end{enumerate}
\end{prop2}

\begin{proof}
By \eqref{sp5}
\begin{equation}
\label{l2.3.1}%
\begin{array}
[c]{l}%
\mathcal{W}\left(  t\right)  \phi= e^{-it x^{2}/4} \sqrt{\frac{it}{2\pi}}
\int_{-\infty}^{\infty} e^{i ktx } e^{-it k^{2}} \phi\left(  k\right)  dk\\
\\
+ e^{-it x^{2}/4} \sqrt{\frac{i|t|}{2\pi}} \int_{-\infty}^{\infty} e^{i ktx }
e^{-it k^{2}} [ m\left(  k,t x\right)  -I] \phi\left(  k\right)  dk.
\end{array}
\end{equation}
Hence, \eqref{l2.1} follows from \eqref{3.2}, \eqref{l2.3.1} and Parseval's
identity. Further, by \eqref{sp8},
\begin{equation}
\label{l2.4}%
\begin{array}
[c]{l}%
\mathcal{W}_{\pm}\left(  t\right)  \phi= e^{it k^{2}} \sqrt{\frac{it}{2\pi}}
\int_{0}^{\infty} e^{\pm i ktx } e^{-it x^{2}/4} \phi\left(  x\right)  dx\\
\\
+ e^{it k^{2}} \sqrt{\frac{it}{2\pi}} \int_{0}^{\infty} e^{\pm i ktx } e^{-it
x^{2}/4} [ m^{\dagger}\left(  \mp k,tx\right)  -I] \phi\left(  x\right)  dx.
\end{array}
\end{equation}
Then, \eqref{EST8} follows from \eqref{3.2}, \eqref{l2.4} and Parseval's
identity. Equation \eqref{l2.3} follows from \eqref{UnitaritySM} and
\eqref{EST8}. Let us prove \eqref{EST5}. Derivating both sides of \eqref{sp5},
using that
\begin{equation}
\partial_{x}e^{-it\left(  k-\frac{x}{2}\right)  ^{2}}=-\frac{1}{2}\partial
_{k}e^{-it\left(  k-\frac{x}{2}\right)  ^{2}}, \label{9}%
\end{equation}
and integrating by parts, we have
\begin{align}
\label{2}\partial_{x}\mathcal{W}\left(  t\right)  \phi &  = \frac{1}{2}
\mathcal{W}\left(  t\right)  \left(  \partial_{k}\phi\right)  +\frac{1}%
{2}\sqrt{\frac{it}{2\pi}}\int_{-\infty}^{\infty}e^{-it\left(  k-\frac{x}%
{2}\right)  ^{2}}\left(  \partial_{k}m\right)  \left(  k,tx\right)
\phi\left(  k\right)  dk\\
\\
&  +\sqrt{\frac{it}{2\pi}}\int_{-\infty}^{\infty}e^{-it\left(  k-\frac{x}%
{2}\right)  ^{2}}t\partial_{x}m\left(  k,tx\right)  \phi\left(  k\right)
dk.\nonumber\\
\end{align}
Then, using (\ref{3.5}) with $\delta>1/2,$ (\ref{3.6}) and \eqref{l2.1} we show%

\begin{align*}
\left\Vert \partial_{x}\mathcal{W}\left(  t\right)  \phi\right\Vert _{L^{2}(
{\mathbb{R}})}  &  \leq C \left\Vert \phi\right\Vert _{H^{1}( {\mathbb{R}})}
+C\sqrt{|t|}\left\Vert \left\langle tx\right\rangle ^{-\delta}\right\Vert
_{L^{2}}\left\Vert \left\langle k\right\rangle ^{-1}\right\Vert _{L^{2}%
}\left\Vert \phi\right\Vert _{L^{2}( {\mathbb{R}})}\\
&  +C|t|\sqrt{|t|}\left\Vert \left\langle tx\right\rangle ^{-2-\tilde{\delta}%
}\right\Vert _{L^{1}}\left\Vert \phi\right\Vert _{L^{\infty}( {\mathbb{R}})}.
\end{align*}
Hence, (\ref{EST5}) follows from Sobolev's inequality.
\end{proof}

In the following lemma we obtain further properties of $\mathcal{W}_{\pm}$ and
of $\mathcal{W}.$

\begin{lemma2}
Assume  that $V \in L^1_{2+\delta}(\er),$ for some $ \delta >0 $.
% and that the constant boundary matrices $A,B$ fulfill \eqref{ap.4} and \eqref{ap.5}.
\begin{enumerate}
\item
Suppose that $\phi \in H^1(\er),$ and that besides being in $L^2(\er), \partial_k \phi$ admits the following decomposition,  $\partial_{k}\phi=\phi_{1}+\phi_{2},$  where $ \phi_1 \in L^{q_1}(\er),$ and $ \phi_2 \in L^{q_2}(\er),$ for some $q_1, q_2,$ with $ 1\leq q_1, q_2 < \infty.$ Then, we have
\begin{equation}
\left\Vert \mathcal{W}_\pm\left(  t\right)  \phi\right\Vert
_{L^{\infty}(\ere)}\leq C\left(  \left[1+ \frac{1}{\sqrt{|t|}} \right]   \left\Vert \phi\right\Vert _{L^{\infty}(\er)}
+|t|^{-\frac{1}{2p_1}}\left\Vert \phi_1\right\Vert _{L^{q_1}(\er)}+|t|^{-\frac{1}{2p_2}}\left\Vert \phi_2\right\Vert _{L^{q_2}(\er)}\right),
\label{89}
\end{equation}
and
\begin{equation}
\left\Vert \widehat{\mathcal{W}}\left(  t\right)  \phi\right\Vert
_{L^{\infty}(\ere)}\leq C  \left(  \left[ 1+ \frac{1}{\sqrt{|t|}} \right]    \left\Vert \phi\right\Vert _{L^{\infty}(\er)}
+|t|^{-\frac{1}{2p_1}}\left\Vert \phi_1\right\Vert _{L^{q_1}(\er)}+|t|^{-\frac{1}{2p_2}}\left\Vert \phi_2\right\Vert _{L^{q_2}(\er)}\right),
\label{89.1}
\end{equation}
where $ \frac{1}{p_1}+ \frac{1}{q_1}=1,$ and $ \frac{1}{p_2}+ \frac{1}{q_2}=1.$
\item
Suppose that $\phi \in H^1(\ere),$ and that besides being in $L^2(\ere), \partial_k \phi$ admits the following decomposition,  $\partial_{k}\phi=\phi_{1}+\phi_{2},$  where $ \phi_1 \in L^{q_1}(\ere),$ and $ \phi_2 \in L^{q_2}(\ere),$ for some $q_1, q_2,$ with $ 1\leq  q_1, q_2 < \infty.$ Then,
\begin{equation}
\left\Vert \mathcal{W}\left(  t\right)  \phi\right\Vert _{L^{\infty}(\ere)}\leq
C \left( 1+\frac{1}{\sqrt{|t|}}\right)  \left\Vert \phi\right\Vert _{L^{\infty}(\ere)}
+ C |t|^{-\frac{1}{2p_1}}\left\Vert \phi_1\right\Vert _{L^{q_1}(\ere)}+ C |t|^{-\frac{1}{2p_2}}\left\Vert \phi_2\right\Vert _{L^{q_2}(\ere)},
\label{EST4}%
\end{equation}
where $ \frac{1}{p_1}+ \frac{1}{q_1}=1,$ and $ \frac{1}{p_2}+ \frac{1}{q_2}=1.$
\end{enumerate}
\end{lemma2}

\begin{proof}
By \eqref{sp8}
\begin{equation}
\mathcal{W}_{\pm}\phi=T_{1}+T_{2}, \label{89.1.0}%
\end{equation}
where
\begin{equation}
T_{1}:=\sqrt{\frac{t}{2\pi i}}\int_{0}^{\infty}e^{it\left(  k\pm\frac{x}%
{2}\right)  ^{2}}\phi\left(  x\right)  dx, \label{89.1.1}%
\end{equation}
and
\begin{equation}
T_{2}:=\sqrt{\frac{t}{2\pi i}}\int_{0}^{\infty}e^{it\left(  k\pm\frac{x}%
{2}\right)  ^{2}}[m^{\dagger}\left(  \mp k,tx\right)  -I]\phi\left(  x\right)
dx. \label{89.1.2}%
\end{equation}
Since
\begin{equation}
e^{it\left(  k\pm\frac{x}{2}\right)  ^{2}}=\pm\frac{2\partial_{x}\left(
\left(  k\pm\frac{x}{2}\right)  e^{it\left(  k\pm\frac{x}{2}\right)  ^{2}%
}\right)  }{1+2it\left(  k\pm\frac{x}{2}\right)  ^{2}}, \label{89.2}%
\end{equation}
integrating by parts we have%
\begin{align*}
\int_{0}^{\infty}e^{it\left(  k\pm\frac{x}{2}\right)  ^{2}}\phi\left(
x\right)  dx  &  =\mp\frac{2ke^{itk^{2}}}{1+2itk^{2}}\phi\left(  0\right)
+\int_{0}^{\infty}\frac{4it\left(  k\pm\frac{x}{2}\right)  ^{2}e^{it\left(
k\pm\frac{x}{2}\right)  ^{2}}}{\left(  1+2it\left(  k\pm\frac{x}{2}\right)
^{2}\right)  ^{2}}\phi\left(  x\right)  dx\\
&  \mp\int_{0}^{\infty}\frac{2\left(  \left(  k\pm\frac{x}{2}\right)
e^{it\left(  k\pm\frac{x}{2}\right)  ^{2}}\right)  }{1+2it\left(  k\pm\frac
{x}{2}\right)  ^{2}}\partial_{x}\phi(x))dx.
\end{align*}
Then, by H\"{o}lder's and Sobolev's inequalities we get%
\begin{align}
\left\vert \int_{0}^{\infty}e^{it\left(  k\pm\frac{x}{2}\right)  ^{2}}%
\phi\left(  x\right)  dx\right\vert  &  \leq C\frac{1}{\sqrt{|t|}}\left\Vert
\phi\right\Vert _{L^{\infty}(\mathbb{R}^{+})}+C\left\Vert \frac{1}{1+|t|x^{2}%
}\right\Vert _{L^{1}({\mathbb{R}})}\left\Vert \phi\right\Vert _{L^{\infty
}(\mathbb{R}^{+})}\nonumber\\
&  +C\left\Vert \frac{x}{1+|t|x^{2}}\right\Vert _{L^{p_{1}}({\mathbb{R}}%
)}\left\Vert \phi_{1}\right\Vert _{L^{q_{1}}(\mathbb{R}^{+})}+C\left\Vert
\frac{x}{1+|t|x^{2}}\right\Vert _{L^{p_{2}}({\mathbb{R}})}\left\Vert \phi
_{2}\right\Vert _{L^{q_{2}}(\mathbb{R}^{+})}\nonumber\\
&  \leq C\frac{1}{\sqrt{|t|}}\left\Vert \phi\right\Vert _{L^{\infty
}(\mathbb{R}^{+})}+\frac{C}{|t|^{\frac{1}{2}\left(  1+\frac{1}{p_{1}}\right)
}}\left\Vert \phi_{1}\right\Vert _{L^{q_{1}}(\mathbb{R}^{+})}+\frac
{C}{|t|^{\frac{1}{2}\left(  1+\frac{1}{p_{2}}\right)  }}\left\Vert \phi
_{2}\right\Vert _{L^{q_{2}}}. \label{90.1}%
\end{align}
Hence, by \eqref{89.1.1} and \eqref{90.1}
\begin{equation}
|T_{1}|\leq C\left(  \left\Vert \phi\right\Vert _{L^{\infty}(\mathbb{R}^{+}%
)}+|t|^{-\frac{1}{2p_{1}}}\left\Vert \phi_{1}\right\Vert _{L^{q_{1}%
}(\mathbb{R}^{+})}+|t|^{-\frac{1}{2p_{2}}}\left\Vert \phi_{2}\right\Vert
_{L^{q_{2}}(\mathbb{R}^{+})}\right)  . \label{90.1.1}%
\end{equation}
By \eqref{3.2}
\begin{equation}
|T_{2}|\leq C\frac{1}{\sqrt{|t|}}\left\Vert \phi\right\Vert _{L^{\infty
}(\mathbb{R}^{+})}. \label{90.2.yy}%
\end{equation}

Then, (\ref{89}) follows from \eqref{89.1.0}, \eqref{90.1.1}, and
\eqref{90.2.yy}. Further, \eqref{89.1} follows from \eqref{UnitaritySM},
\eqref{sp9}, and \eqref{89}. Let us prove \eqref{EST4}. By \eqref{sp5} ,
\begin{equation}
\label{89.1.0.a}\mathcal{W }\phi= T_{3}+T_{4},
\end{equation}
where
\begin{equation}
\label{89.1.xx}T_{3}:= \sqrt{\frac{t}{2\pi i}}\int_{-\infty}^{\infty
}e^{-it\left(  k-\frac{x}{2}\right)  ^{2}} \phi\left(  x\right)  dx,
\end{equation}
and
\begin{equation}
\label{89.1.2.xx}T_{4}:= \sqrt{\frac{t}{2\pi i}}\int_{-\infty}^{\infty
}e^{-it\left(  k-\frac{x}{2}\right)  ^{2}} [ m^{\dagger}\left(  k,tx\right)
-I ] \phi\left(  x\right)  dx.
\end{equation}
By \eqref{89.2} with $t $ replaced by $-t,$ and integrating by parts we have,%

\begin{align*}
\int_{-\infty}^{\infty}e^{-it\left(  k- \frac{x}{2}\right)  ^{2}} \phi\left(
x\right)  dx  &  = \int_{-\infty}^{\infty}\frac{-4it\left(  k-\frac{x}%
{2}\right)  ^{2}e^{-it\left(  k-\frac{x}{2}\right)  ^{2}}}{\left(
1-2it\left(  k-\frac{x}{2}\right)  ^{2}\right)  ^{2}} \phi\left(  x\right)
dx\\
&  + \int_{-\infty}^{\infty}\frac{2\left(  \left(  k-\frac{x}{2}\right)
e^{-it\left(  k-\frac{x}{2}\right)  ^{2}}\right)  }{1-2it\left(  k-\frac{x}%
{2}\right)  ^{2}}\partial_{x} \phi(x) ) dx.
\end{align*}
Then, by H\"{o}lder's and Sobolev's inequalities we get%
\begin{align}
\left\vert \int_{-\infty}^{\infty}e^{-it\left(  k-\frac{x}{2}\right)  ^{2}}
\phi\left(  x\right)  dx\right\vert  &  \leq C\left\Vert \frac{1}{1+|t|x^{2}%
}\right\Vert _{L^{1}( {\mathbb{R}})}\left\Vert \phi\right\Vert _{L^{\infty
}(\mathbb{R}^{+})}+\label{90.1.x.x}\\
&  C\left\Vert \frac{x}{1+|t|x^{2}}\right\Vert _{L^{p_{1}}( {\mathbb{R}}%
)}\left\Vert \phi_{1}\right\Vert _{L^{q_{1}}(\mathbb{R}^{+})}+C\left\Vert
\frac{x}{1+|t|x^{2}}\right\Vert _{L^{p_{2}}( {\mathbb{R}})}\left\Vert \phi
_{2}\right\Vert _{L^{q_{2}}(\mathbb{R}^{+})}\nonumber\\
&  \leq\frac{C}{\sqrt{|t|}}\left\Vert \phi\right\Vert _{L^{\infty}%
(\mathbb{R}^{+})}+\frac{C}{|t|^{\frac{1}{2}\left(  1+\frac{1}{p_{1}}\right)
}}\left\Vert \phi_{1}\right\Vert _{L^{q_{1}}(\mathbb{R}^{+})}+\frac
{C}{|t|^{\frac{1}{2}\left(  1+\frac{1}{p_{2}}\right)  }}\left\Vert \phi
_{2}\right\Vert _{L^{q_{2}}}.\nonumber
\end{align}
Hence, by \eqref{89.1.xx} and \eqref{90.1.x.x}
\begin{equation}
\label{90.1.1xx}|T_{3}| \leq C \left(  \left\Vert \phi\right\Vert _{L^{\infty
}(\mathbb{R}^{+})} +|t|^{-\frac{1}{2p_{1}}}\left\Vert \phi_{1}\right\Vert
_{L^{q_{1}}(\mathbb{R}^{+})}+|t|^{-\frac{1}{2p_{2}}}\left\Vert \phi
_{2}\right\Vert _{L^{q_{2}}(\mathbb{R}^{+})}\right) .
\end{equation}
By \eqref{3.2}
\begin{equation}
\label{90.2}|T_{4}|\leq C \frac{1}{\sqrt{|t|}} \left\Vert \phi\right\Vert
_{L^{\infty}(\mathbb{R}^{+})}.
\end{equation}

Then \eqref{EST4} follows from \eqref{89.1.0.a}, \eqref{90.1.1xx} and \eqref{90.2}.
\end{proof}

We prepare the following lemma. \begin{lemma2}
Assume that the potential $V$ is self-adjoint, and that the constant boundary
matrices $A,B$ fulfill \eqref{ap.4} and \eqref{ap.5}. Then.
\begin{enumerate}
\item Suppose that $V\in L^{1}(\mathbb{R}^{+})\cap L^{\infty}(\mathbb{R}%
^{+}).$ Then,  for all $\psi\in
{H}^{2}(\er)$ satisfying the boundary condition \eqref{ap.3}
\begin{equation}
\left\Vert \partial_{x}^{2}\mathcal{U}\left(  t\right)  \psi\right\Vert
_{{L}^{2}(\mathbb{R}^{+})}\leq C\left\Vert \psi\right\Vert _{{H}^{2}(\er)%
}.\label{Uderiv}%
\end{equation}
\item Suppose that $H$ has no negative eigenvalues, that $V \in L^\infty(\er)\cap L_{3}%
^{1}(\mathbb{R}^{+}),$ and that $V$ admits a regular decomposition (see Definition~\ref{Def1}). Then, for all $ \psi \in
H^2(\er)$ that satisfy the boundary condition \eqref{ap.3},
\begin{equation}
\left\Vert x\partial_{x}\mathcal{U}\left(  t\right)  \psi\right\Vert _{{L}%
^{2}(\mathbb{R}^{+})}\leq C(1+|t|)\left(  \left\Vert \psi\right\Vert
_{{H}^{1,1}(\er)}+\left\Vert \psi\right\Vert _{{H}^{2}(\er)}\right).
\label{Uweightderiv}%
\end{equation}
\item
Assume that $V\in L^1_3(\er),$  and that $H$ has no negative eigenvalues. Then,
\beq\label{zzz}
\| x U(t)  \psi\|_{L^2(\er)}  \leq C (1+|t|^2)\left( \| \psi\|_{L^2_1(\er)}+ \|\psi\|_{H^{1}(\er)}\right).
\ene
\item
Suppose that $V\in L^1_4(\er),$ that it admits a regular decomposition, and that $H$ has no negative eigenvalues. Then, for all $\psi \in H^2(\er)$ that satisfy the boundary condition \eqref{ap.3},
\beq\label{zzz.1}
\| x^2 U(t)  \psi\|_{L^2(\er)}  \leq C (1+|t|^3) \left(\| \psi\|_{L^2_2(\er)}+ \|\psi\|_{H^{1,1}(\er)}+ \|\psi\|_{H^2(\er)}\right).
\ene
\item
Assume that $V\in L^1_{2+\tilde{\delta}}(\er), \tilde{\delta} >0,$ that it admits a regular decomposition, and that $H$ has no negative eigenvalues. Then, for $ \psi \in H^1_{A,B}(\er),$
\beq\label{zzz.2}
\| \partial_x U(t)  \psi\|_{L^2(\er)}  \leq C  \|\psi\|_{H^{1}_{A,B}(\er)}.
\ene
\end{enumerate}
\end{lemma2}

\begin{proof}
Since $V\in L^{\infty}(\mathbb{R}^{+}),$ by \eqref{domainh2}, the domain of
$H$ consists of all those functions in $H^{2}(\mathbb{R}^{+})$ that satisfy
the boundary condition \eqref{ap.3}. Hence $\psi\in D[H].$ Further,
\[
\partial_{x}^{2}\mathcal{U}\left(  t\right)  \psi=(-H+V)\mathcal{U}\left(
t\right)  \psi=-\mathcal{U}\left(  t\right)  H\psi+V\mathcal{U}\left(
t\right)  \psi.
\]
Then,
\[
\left\Vert \partial_{x}^{2}\mathcal{U}\left(  t\right)  \psi\right\Vert
_{{L}^{2}(\mathbb{R}^{+})}\leq\Vert H\psi\Vert_{L^{2}(\mathbb{R}^{+})}+\Vert
V\Vert_{L^{\infty}(\mathbb{R}^{+})}\Vert\psi\Vert_{L^{2}(\mathbb{R}^{+})}\leq
C\Vert\psi\Vert_{H^{2}(\mathbb{R}^{+})},
\]
where we used that as $D[H]\subset H^{2}(\mathbb{R}^{+}),$
\[
\Vert H\psi\Vert_{L^{2}(\mathbb{R}^{+})}\leq C\Vert\psi\Vert_{H^{2}%
(\mathbb{R}^{+})}.
\]
This completes the proof of \eqref{Uderiv}. Let us now prove
\eqref{Uweightderiv}. As $H$ has no negative eigenvalues, $P_{c}(H)=I (see
Theorem~\ref{theospec} .$ Then, from (\ref{Spectralrepresentation1}) we have
\begin{equation}
\label{24}x\partial_{x}\mathcal{\mathcal{U}}(t)\,\psi(x)= T_{1}+ T_{2},
\end{equation}
where,
\begin{equation}
\label{24.1}T_{1}=- \frac{1}{\sqrt{2\pi}}\int_{0}^{\infty} e^{ikx}
\partial_{k} \left(  k e^{-itk^{2}}\mathbf{F}\psi\left(  k\right)  \right)  dk
+ \frac{1}{\sqrt{2\pi}} \int_{0}^{\infty}\, e^{-ikx} \partial_{k} \left(  k
e^{-itk^{2}}\, S(-k)\, \mathbf{F}\psi\left(  k\right)  \right)  dk,
\end{equation}
and,
\begin{equation}
\label{24.2}%
\begin{array}
[c]{l}%
T_{2}= \frac{i}{\sqrt{2\pi}}\, \int_{0}^{\infty}\, x e^{-it k^{2}} \left(  \,
e^{ikx}( m(k,x)-I)- e^{-ik x} (m(-k,x)-I)\, S(-k)\right)  k \mathbf{F}\psi(k)
\, dk\\
\\
+\frac{1}{\sqrt{2\pi}}\, \int_{0}^{\infty}\, x e^{-it k^{2}} \left(  \,
e^{ikx} \partial_{x} m(k,x)+ e^{-ik x} \partial_{x} m(-k,x)\, S(-k)\right)
\mathbf{F}\psi(k) \, dk.
\end{array}
\end{equation}
By \eqref{UnitaritySM},\eqref{isom}, item 3 of Lemma~ \ref{scontinuity}, items
2 and 3 of Lemma ~\ref{Lemma2}, and Parseval's identity, we estimate,%
\begin{equation}
\label{24.3}%
\begin{array}
[c]{l}%
\|T_{1}\|_{L^{2}(\mathbb{R}^{+})} \leq\| \partial_{k} \left(  k e^{-itk^{2}%
}\mathbf{F}\psi\left(  k\right)  \right)  \|_{L^{2}(\mathbb{R}^{+})} +\|
\partial_{k} \left(  k e^{-itk^{2}}\, S(-k)\, \mathbf{F}\psi\left(  k\right)
\right)  \|_{L^{2}(\mathbb{R}^{+})}\\
\\
\leq C (1+|t|)\left(  \Vert\psi\Vert_{H^{1,1}(\mathbb{R}^{+})}+\Vert\psi
\Vert_{H^{2}(\mathbb{R}^{+})}\right)  .
\end{array}
\end{equation}
Moreover, by \eqref{UnitaritySM}, (\ref{3.2}) with $\delta>1/2$, (\ref{3.6}),
and \eqref{61},
\begin{equation}
\label{24.4}\|T_{2}\|_{L^{2}(\mathbb{R}^{+})}\leq\|\psi\|_{H^{1}( {\mathbb{R}%
})}.
\end{equation}
Hence, we deduce (\ref{Uweightderiv}) from \eqref{24}, \eqref{24.3} and \eqref{24.4}.

Let us now prove item 3. Let $g\in C_{0}^{\infty}(\mathbb{R})$ satisfy
$g(k)=1,|k|\leq1,g(k)=0,|k|\geq2.$ Then, using (\ref{Spectralrepresentation1})
with $P_{c}(H)=I,$ we write,
\begin{equation}
xU(t)\phi=T_{3}+T_{4}, \label{24.4.1}%
\end{equation}
where,
\begin{equation}
T_{3}:=xU(t)g(H)\psi, \label{24.4.2}%
\end{equation}
and
\begin{equation}
T_{4}:=xU(t)(I-g(H))\psi. \label{24.4.3}%
\end{equation}
Further,
\begin{equation}
T_{3}=\psi-i\int_{0}^{t}\,xU(s)g(H)H\psi ds . \label{24.4.4}%
\end{equation}
By \eqref{Spectralrepresentation1} with $P_{c}(H)=I,$
\begin{equation}
xU(s)g(H)H\phi=T_{5}(s)+T_{6}(s), \label{24.4.5}%
\end{equation}
where,
\begin{equation}
T_{5}(s):=\frac{1}{\sqrt{2\pi}}\,\int_{0}^{\infty}\left[  -i(\partial
_{k}e^{ikx})e^{-isk^{2}}k^{2}\mathbf{F}\psi(k)+i(\partial_{k}e^{-ikx}%
)\,e^{-isk^{2}}\,S(-k)k^{2}\mathbf{F}\psi(k)\right]  \,g(k^{2})\,dk,
\label{24.4.6}%
\end{equation}
and,
\begin{equation}%
\begin{array}
[c]{l}%
T_{6}(s):=\frac{1}{\sqrt{2\pi}}\,\int_{0}^{\infty}\left[  e^{ikx}%
x(m(k,x)-1)e^{-isk^{2}}k^{2}\mathbf{F}\psi(k)\right. \\
\\
\left.  +e^{-ikx}\,e^{-isk^{2}}x(m(-k,x)-1)\,S(-k)k^{2}\mathbf{F}%
\psi(k)\right]  \,g(k^{2})dk.
\end{array}
\label{24.4.7}%
\end{equation}

Integrating by parts in \eqref{24.4.6} and using \eqref{UnitaritySM},
\eqref{isom}, \eqref{scatmatrixderiv},\eqref{61BIS.2} and by Parseval's
identity, we obtain,
\begin{equation}
\Vert T_{5}(s)\Vert_{L^{2}(\mathbb{R}^{+})}\leq C(1+|s|)\,\Vert\psi
\Vert_{L_{1}^{2}(\mathbb{R}^{+})}. \label{24.4.8}%
\end{equation}
Moreover, by \eqref{UnitaritySM}, \eqref{isom}, and \eqref{3.2},
\begin{equation}
\Vert T_{6}(s)\Vert_{L^{2}(\mathbb{R}^{+})}\leq C\Vert\psi\Vert_{L^{2}%
(\mathbb{R}^{+})}. \label{24.4.9}%
\end{equation}
By \eqref{24.4.4}, \eqref{24.4.5}, \eqref{24.4.8} and \eqref{24.4.9}
\begin{equation}
\Vert T_{3}\Vert_{L^{2}(\mathbb{R}^{+})}\leq C(1+|t|^{2})\Vert\psi\Vert
_{L_{1}^{2}(\mathbb{R}^{+})}. \label{24.4.10}%
\end{equation}
Similarly,
\begin{equation}
T_{4}(t)=T_{7}(t)+T_{8}(t), \label{24.4.11}%
\end{equation}
where,
\begin{equation}
T_{7}(t):=\frac{1}{\sqrt{2\pi}}\,\int_{0}^{\infty}\left[  -i(\partial
_{k}e^{ikx})e^{-itk^{2}}\mathbf{F}\psi(k)+i(\partial_{k}e^{-ikx}%
)\,e^{-itk^{2}}\,S(-k)\mathbf{F}\psi(k)\right]  \,(I-g(k^{2}))\,dk,
\label{24.4.12}%
\end{equation}
and
\begin{equation}%
\begin{array}
[c]{l}%
T_{8}(t)=\frac{1}{\sqrt{2\pi}}\,\int_{0}^{\infty}\left[  e^{ikx}%
x(m(k,x)-1)e^{-itk^{2}}\mathbf{F}\psi(k)\right. \\
\\
\left.  +e^{-ikx}\,e^{-itk^{2}}x(m(-k,x)-1)\,S(-k)\mathbf{F}\psi(k)\right]
\,(I-g(k^{2}))dk.
\end{array}
\label{24.4.13}%
\end{equation}
Integrating by parts in \eqref{24.4.12} and using \eqref{UnitaritySM},
\eqref{isom}, \eqref{scatmatrixderiv}, \eqref{61}, \eqref{61BIS.2}, and
Parseval's identity, we obtain,
\begin{equation}
\Vert T_{7}(t)\Vert_{L^{2}(\mathbb{R}^{+})}\leq C(1+|t|)\,\left(  \Vert
\psi\Vert_{L_{1}^{2}(\mathbb{R}^{+})}+\Vert\psi\Vert_{H^{1}(\mathbb{R}^{+}%
)}\right)  . \label{24.4.14}%
\end{equation}
Further, by \eqref{UnitaritySM}, \eqref{isom}, and \eqref{3.2},
\begin{equation}
\Vert T_{8}(t)\Vert_{L^{2}(\mathbb{R}^{+})}\leq C\Vert\psi\Vert_{L^{2}%
(\mathbb{R}^{+})}. \label{24.4.15}%
\end{equation}
Hence, by \eqref{24.4.11}, \eqref{24.4.14} and \eqref{24.4.15},
\begin{equation}
\Vert T_{4}\Vert_{L^{2}(\mathbb{R}^{+})}\leq C(1+|t|)\left(  \Vert\psi
\Vert_{L_{1}^{2}(\mathbb{R}^{+})}+\Vert\psi\Vert_{H^{1}(\mathbb{R}^{+}%
)}\right)  . \label{24.4.16}%
\end{equation}
Equation \eqref{zzz} follows from \eqref{24.4.1}, \eqref{24.4.10} and
\eqref{24.4.16}. Let us now prove item 4. Using (\ref{Spectralrepresentation1}%
) with $P_{c}(H)=I,$ we write
\begin{equation}
x^{2}U(t)\psi=T_{9}+T_{10}, \label{24.4.33}%
\end{equation}
where,
\begin{equation}
T_{9}:=x^{2}U(t)g(H)\psi, \label{24.4.34}%
\end{equation}
and
\begin{equation}
T_{10}:=x^{2}U(t)(I-g(H))\psi. \label{24.4.35}%
\end{equation}
Further,
\begin{equation}
T_{9}=\psi-i\int_{0}^{t}\,x^{2}U(s)g(H)H\psi. \label{24.4.36}%
\end{equation}
By \eqref{Spectralrepresentation1} with $P_{c}(H)=I,$
\begin{equation}
x^{2}U(s)g(H)H\phi=T_{11}(s)+T_{12}(s), \label{24.4.37}%
\end{equation}
where,
\begin{equation}
T_{11}(s):=\frac{1}{\sqrt{2\pi}}\,\int_{0}^{\infty}\left[  -(\partial_{k}%
^{2}e^{ikx})e^{-isk^{2}}k^{2}\mathbf{F}\psi(k)-(\partial_{k}^{2}%
e^{-ikx})\,e^{-isk^{2}}\,S(-k)k^{2}\mathbf{F}\psi(k)\right]  \,g(k^{2})\,dk,
\label{24.4.38}%
\end{equation}
and,
\begin{equation}%
\begin{array}
[c]{l}%
T_{12}(s):=\frac{1}{\sqrt{2\pi}}\,\int_{0}^{\infty}\left[  e^{ikx}%
x^{2}(m(k,x)-1)e^{-isk^{2}}k^{2}\mathbf{F}\psi(k)\right. \\
\\
\left.  +e^{-ikx}\,e^{-isk^{2}}x^{2}(m(-k,x)-1)\,S(-k)k^{2}\mathbf{F}%
\psi(k)\right]  \,g(k^{2})dk.
\end{array}
\label{24.4.39}%
\end{equation}
Integrating by parts in \eqref{24.4.38} and using \eqref{UnitaritySM},
\eqref{isom}, \eqref{scatmatrixderiv}, \eqref{scatmatrixsecondderiv},
\eqref{61BIS.2}, \eqref{61BIS.3} and Parseval's identity, we obtain,
\begin{equation}
\Vert T_{11}(s)\Vert_{L^{2}(\mathbb{R}^{+})}\leq C(1+|s|^{2})\,\Vert\psi
\Vert_{L_{2}^{2}(\mathbb{R}^{+})}. \label{24.4.40}%
\end{equation}
Moreover, by \eqref{UnitaritySM}, \eqref{isom} and \eqref{3.2},
\begin{equation}
\Vert T_{12}(s)\Vert_{L^{2}(\mathbb{R}^{+})}\leq C\Vert\psi\Vert
_{L^{2}(\mathbb{R}^{+})}. \label{24.4.41}%
\end{equation}
By \eqref{24.4.36}, \eqref{24.4.37}, \eqref{24.4.40} and \eqref{24.4.41}
\begin{equation}
\Vert T_{9}\Vert_{L^{2}(\mathbb{R}^{+})}\leq C(1+|t|^{3})\Vert\psi\Vert
_{L_{2}^{2}(\mathbb{R}^{+})}. \label{24.4.42}%
\end{equation}
Similarly,
\begin{equation}
T_{10}(t)=T_{13}(t)+T_{14}(t), \label{24.4.43}%
\end{equation}
where,
\begin{equation}
T_{13}(t):=\frac{1}{\sqrt{2\pi}}\,\int_{0}^{\infty}\left[  -(\partial_{k}%
^{2}e^{ikx})e^{-itk^{2}}\mathbf{F}\psi(k)-(\partial_{k}^{2}e^{-ikx}%
)\,e^{-itk^{2}}\,S(-k)\mathbf{F}\psi(k)\right]  \,(I-g(k^{2}))\,dk,
\label{24.4.43.1}%
\end{equation}
and
\begin{equation}%
\begin{array}
[c]{l}%
T_{14}(t):=\frac{1}{\sqrt{2\pi}}\,\int_{0}^{\infty}\left[  e^{ikx}%
x^{2}(m(k,x)-1)e^{-itk^{2}}\mathbf{F}\psi(k)\right. \\
\\
\left.  +e^{-ikx}\,e^{-itk^{2}}x^{2}(m(-k,x)-1)\,S(-k)\mathbf{F}%
\psi(k)\right]  \,(I-g(k^{2})dk.
\end{array}
\label{24.4.44}%
\end{equation}
Integrating by parts in \eqref{24.4.43.1} and using \eqref{UnitaritySM},
\eqref{isom}, \eqref{scatmatrixderiv}, \eqref{scatmatrixsecondderiv},
\eqref{62}, \eqref{61BIS}, \eqref{61BIS.2}, \eqref{61BIS.3}, and Parseval's
identity, we obtain,
\begin{equation}
\Vert T_{13}(t)\Vert_{L^{2}(\mathbb{R}^{+})}\leq C(1+|t|^{2})\,\left(
\Vert\psi\Vert_{L_{2}^{2}(\mathbb{R}^{+})}+\Vert\psi\Vert_{H^{1,1}%
(\mathbb{R}^{+})}+H^{2}(\mathbb{R}^{+})\right)  . \label{24.4.46}%
\end{equation}
Further, by \eqref{UnitaritySM}, \eqref{isom}, and \eqref{3.2},
\begin{equation}
\Vert T_{14}(t)\Vert_{L^{2}(\mathbb{R}^{+})}\leq C\Vert\psi\Vert
_{L^{2}(\mathbb{R}^{+})}. \label{24.4.47}%
\end{equation}
Hence, by \eqref{24.4.43}, \eqref{24.4.46} and \eqref{24.4.47},
\begin{equation}
\Vert T_{10}\Vert_{L^{2}(\mathbb{R}^{+})}\leq C(1+|t|^{2})\left(  \Vert
\psi\Vert_{L_{2}^{2}(\mathbb{R}^{+})}+\Vert\psi\Vert_{H^{1,1}(\mathbb{R}^{+}%
)}+H^{2}(\mathbb{R}^{+})\right)  . \label{24.4.48}%
\end{equation}
Equation \eqref{zzz.1} follows from \eqref{24.4.33}, \eqref{24.4.42} and
\eqref{24.4.48}. Let us now prove item 5. By \eqref{Spectralrepresentation1}
with $P_{c}(H)=I,$
\begin{equation}%
\begin{array}
[c]{l}%
\partial_{x}\mathcal{U}(t)\,\psi(x)=\frac{1}{\sqrt{2\pi}}\int_{0}^{\infty
}e^{-itk^{2}}\left[  (ikm(k,x)+(\partial_{x}m(k,x))e^{ikx}\right. \\
\\
\left.  +(-ikm(-k,x)+(\partial_{x}m(-k,x))e^{-ikx}S(-k)\right]  \mathbf{F}%
\psi\left(  k\right)  dk.
\end{array}
\label{24.4.48.1}%
\end{equation}
Equation \eqref{zzz.2} follows from \eqref{UnitaritySM}, \eqref{3.2},
\eqref{3.6}, \eqref{61}, \eqref{24.4.48.1} and Parseval's identity.
\end{proof}

We prepare the following lemma \begin{lemma2}\label{ext}
Assume that the potential $V$ is self-adjoint, $ V\in L^1(\er),$ that the constant boundary
matrices $A,B$ fulfill \eqref{ap.4} and \eqref{ap.5}, and that $H$ has no negative eigenvalues. We define
\beq\label{ab.1}
w(k):= \left\{\begin{array}{l} w(k):= \mathbf F\phi(k), \qquad k \geq 0 \\ \\
w(k)= S(k) w(-k), \qquad k \leq 0.
\end{array}\right.
\ene
Then.
\begin{enumerate}
\item
\beq\label{ab.2}
w(k)= \frac{1}{\sqrt{2\pi}}\, \int_0^\infty \left[ f(k,x)^\dagger +  S(-k)^\dagger f(-k,x)^\dagger\right] \phi(x) \, dx, \qquad k \in \ere.
\ene
\item
\beq\label{ab.2.1}
S(k) w(-k)= w(k), \qquad k \in \mathbb R.
\ene
\item
We have that  $\phi \in  H^1_{A,B}(\er)$  if and only if  $ w \in L^2_1(\ere),$ and for some $C_1, C_2 >0,$
\beq\label{ab.2.1.1.0}
C_1 \|w\|_{L^2_1(\ere)} \leq \|\phi\|_{ H^1_{A,B}(\er)}\leq C_2 \|w\|_{L^2_1(\ere)}.
\ene
\item
Further, assume that $ V \in L^\infty(\er).$ Then, we have that  $\phi \in H^2(\er)$ and it  satisfies the boundary condition \eqref{ap.3} if and only if  $ w \in L^2_2(\ere),$ and for some $C_1, C_2 >0,$
\beq\label{ab.2.1.1}
C_1 \|w\|_{L^2_2(\ere)} \leq \|\phi\|_{H^2(\er)}\leq C_2 \|w\|_{L^2_2(\ere)}.
\ene
\item Suppose that $V \in L^1_{2+j}(\er), j=1,2.$ Then,  $\phi \in L^2_j(\er)$ if and only if   $w \in H^j(\ere), j=1,2.$ Moreover,
\beq\label{ab.2.1.2}
C_1 \|\phi\|_{L^2_j(\er)}  \leq   \|w\|_{H^j(\ere)}  \leq C_2 \|\phi\|_{L^2_j(\er)}, \qquad j=1,2.
\ene
\end{enumerate}
\end{lemma2}

\begin{proof}
Let us prove 1. We define $q(k)$ by the right-hand side of \eqref{ab.2},
\begin{equation}
q(k):=\frac{1}{\sqrt{2\pi}}\,\int_{0}^{\infty}\left[  f(k,x)^{\dagger
}+S(-k)^{\dagger}f(-k,x)^{\dagger}\right]  \phi(x)\,dx,\qquad k\in{\mathbb{R}%
}. \label{ab.3}%
\end{equation}
By \eqref{gefoma} and \eqref{ab.1}, $w(k)=q(k),k\geq0.$ Suppose that $k\leq0.$
Then,
\[%
\begin{array}
[c]{l}%
w(k)=S(k)w(-k)=S(k)\frac{1}{\sqrt{2\pi}}\,\int_{0}^{\infty}\left[
f(-k,x)^{\dagger}+S(k)^{\dagger}f(k,x)^{\dagger}\right]  \phi(x)\,dx\\
=\frac{1}{\sqrt{2\pi}}\,\int_{0}^{\infty}\left[  S(-k)^{\dagger}%
f(-k,x)^{\dagger}+f(k,x)^{\dagger}\right]  \phi(x)\,dx=q(k),
\end{array}
\]
where we used \eqref{UnitaritySM}. It follows that also $w(k)=q(k),k\leq0.$
This proves \eqref{ab.2}. Equation \eqref{ab.2.1} follows from
\eqref{UnitaritySM} and \eqref{ab.2}. We now prove 3. Since $H$ has no
negative eigenvalues, $\mathbf{F}$ is unitary. If $\phi\in H_{A,B}%
^{1}({\mathbb{R}})$ it follows from \eqref{UnitaritySM}, \eqref{61} and
\eqref{ab.1} that $w\in L_{1}^{2}(\mathbb{R}^{+})$ and,
\begin{equation}
\Vert w\Vert_{L_{1}^{2}({\mathbb{R}})}\leq\sqrt{2}\Vert w\Vert_{L_{1}%
^{2}(\mathbb{R}^{+})}\leq C\Vert\phi\Vert_{H_{A,B}^{1}}(\mathbb{R}^{+}).
\label{ab.3.0}%
\end{equation}
Suppose that $w\in L_{1}^{2}({\mathbb{R}}).$ Denote by $w_{+}$ the restriction
of $w$ to $\mathbb{R}^{+}.$ Then, $\phi=\mathbf{F}^{\dagger}w_{+}.$ Further,
$kw_{+}\in L^{2}(\mathbb{R}^{+}),$ and by \eqref{spectralrepr} and functional
calculus, $\phi\in D[\sqrt{H}]=Q(h)=H_{A,B}^{1}(\mathbb{R}^{+})$ (see
\eqref{quadratic}). Moreover, since $D[\sqrt{H}]=H_{A,B}^{1}(\mathbb{R}^{+}),$
it follows from \eqref{spectralrepr},
\begin{equation}
\Vert\phi\Vert_{H_{A,B}^{1}(\mathbb{R}^{+})}\leq C\left[  \Vert\sqrt{H}%
\phi\Vert_{L^{2}(\mathbb{R}^{+})}+\Vert\phi\Vert_{L^{2}(\mathbb{R}^{+}%
)}\right]  \leq C\Vert w_{+}\Vert_{L_{1}^{2}(\mathbb{R}^{+})}\leq C\Vert
w\Vert_{L_{1}^{2}({\mathbb{R}})}. \label{ab.3.0.0}%
\end{equation}
Equation \eqref{ab.2.1.1.0} follows from \eqref{ab.3.0} and \eqref{ab.3.0.0}.

Let us prove item 4. Suppose that $\phi\in H^{2}(\mathbb{R}^{+})$ and that it
satisfies the boundary condition \eqref{ap.3}. Then, by \eqref{domainh2}
$\phi\in D[H]$ and by \eqref{spectralrepr},
\begin{equation}
\Vert k^{2}w\Vert_{L^{2}(\mathbb{R}^{+})}=\Vert H\phi\Vert_{L^{2}%
(\mathbb{R}^{+})}\leq\Vert-\phi^{\prime\prime}\Vert_{L^{2}(\mathbb{R}^{+}%
)}+\Vert V\phi\Vert_{L^{2}(\mathbb{R}^{+})}\leq C\Vert\phi\Vert_{H^{2}%
(\mathbb{R}^{+})}. \label{ab.3.1}%
\end{equation}
Further by \eqref{UnitaritySM}, \eqref{ab.2.1} and \eqref{ab.3.1}
\begin{equation}
\Vert w\Vert_{L_{2}^{2}({\mathbb{R}})}=\sqrt{2}\Vert w\Vert_{L_{2}%
^{2}(\mathbb{R}^{+})}\leq C\Vert\phi\Vert_{H^{2}({\mathbb{R}})}.
\label{ab.3.2}%
\end{equation}
Assume that $w\in L_{2}^{2}({\mathbb{R}}).$Further, by \eqref{spectralrepr}
$\phi\in D[H],$ and
\begin{equation}
\Vert H\phi\Vert_{L^{2}(\mathbb{R}^{+})}=\Vert k^{2}w_{+}\Vert_{L^{2}%
(\mathbb{R}^{+})}. \label{ab.3.3}%
\end{equation}
Then, moreover, as $D[H]\subset H^{2}(\mathbb{R}^{+}),$ using \eqref{ab.3.3}
we have,
\begin{equation}
\Vert\phi\Vert_{H^{2}(\mathbb{R}^{+})}\leq C[\Vert H\phi\Vert_{L^{2}%
(\mathbb{R}^{+})}+\Vert\phi\Vert_{L^{2}(\mathbb{R}^{+})}]\leq C\Vert
w_{+}\Vert_{L_{2}^{2}(\mathbb{R}^{+})}\leq C\Vert w\Vert_{L_{2}^{2}%
({\mathbb{R}})}. \label{ab.3.2.1}%
\end{equation}
Equation \eqref{ab.2.1.1} follows from \eqref{ab.3.1} and \eqref{ab.3.2.1}.
This completes the proof of 4. Let us prove 5. By \eqref{61BIS.2} and
\eqref{61BIS.3} if $\phi\in L_{j}^{2}(\mathbb{R}^{+}),j=1,2$ we have that
$w\in H^{j}(\mathbb{R}^{+})\cup H^{j}(\mathbb{R}^{-}),j=1,2.$ However, by
\eqref{ab.2} $w$ and $\partial_{k}w$ are continuous at $k=0,$ and then, $w\in
H^{j}({\mathbb{R}}),j=1,2.$ Moreover, using \eqref{UnitaritySM},
\eqref{scatmatrixderiv}, \eqref{scatmatrixsecondderiv}, \eqref{61BIS.2},
\eqref{61BIS.3}, and \eqref{ab.1},
\begin{equation}
\Vert w\Vert_{H^{j}({\mathbb{R}})}\leq C\Vert\phi\Vert_{L_{j}^{2}%
(\mathbb{R}^{+})},\qquad j=1,2 \label{ab.3.3.3}%
\end{equation}
Suppose that $w\in H^{j}({\mathbb{R}}),$ $j=1,2.$ Since for $k\geq0,$
$w_{+}(k)=w(k)=\mathbf{F}\phi,$ we have,
\begin{equation}
\phi(x)=(\mathbf{F}^{\dagger}w_{+})(x)=\frac{1}{\sqrt{2\pi}}\,\int_{0}%
^{\infty}\left[  f(k,x)+f(-k,x)S(-k)\right]  w(k)\,dk, \label{ab.4}%
\end{equation}
and by \eqref{UnitaritySM}, and \eqref{ab.1}
\begin{equation}
\phi(x)=\frac{1}{\sqrt{2\pi}}\,\int_{-\infty}^{\infty}f(k,x)w(k)\,dk.
\label{ab.5}%
\end{equation}
By \eqref{ab.5} and as $f(k,x)=e^{ikx}m(k,x),$
\begin{equation}
x^{j}\phi(x)=R_{1,j}(x)+R_{2,j}(x),\qquad j=1,2, \label{ab.6}%
\end{equation}
where,
\begin{equation}
R_{1,j}(x):=-\frac{1}{\sqrt{2\pi}}\,\int_{-\infty}^{\infty}(\partial_{k}%
^{j}e^{ikx})w(k)\,dk,\qquad j=1,2, \label{ab.7}%
\end{equation}
and
\begin{equation}
R_{2,j}(x):=\frac{1}{\sqrt{2\pi}}\,\int_{-\infty}^{\infty}e^{ikx}%
x^{j}(m(k,x)-I)w(k)\,dk,\qquad j=1,2. \label{ab.8}%
\end{equation}
Integrating by parts in \eqref{ab.7} and using Parseval's identity we obtain,
\begin{equation}
\Vert R_{1,j}\Vert_{L^{2}(\mathbb{R}^{+})}\leq C\Vert w\Vert_{H^{j}%
({\mathbb{R}})},\qquad j=1,2. \label{ab.9}%
\end{equation}
Further, by \eqref{3.2}
\begin{equation}
\Vert R_{2,j}\Vert_{L^{2}(\mathbb{R}^{+})}\leq C\Vert w\Vert_{L^{2}%
({\mathbb{R}})},\qquad j=1,2. \label{ab.10}%
\end{equation}
By \eqref{ab.6}, \eqref{ab.9} and \eqref{ab.10}
\begin{equation}
\Vert\phi\Vert_{L_{j}^{2}(\mathbb{R}^{+})}\leq C\Vert w\Vert_{H^{j}%
({\mathbb{R}})},\qquad j=1,2. \label{ab.11}%
\end{equation}
and it follows that $\phi\in L_{j}^{2}(\mathbb{R}^{+}),j=1,2.$ Finally,
\eqref{ab.2.1.2} follows from \eqref{ab.3.3.3} and \eqref{ab.11}.
\end{proof}

\subsection{The free evolution group}

\label{freeev} In this subsection we consider the case where the potential $V
$ is identically zero. Let us denote by $H_{0}$ the free Hamiltonian with
potential identically zero,
\[
H_{0}:= H_{A,B,0}.
\]
The free evolution group is given by,
\[
\mathcal{U}_{0}(t):= e^{-it H_{0}}.
\]
Let us denote by $P_{c}(H_{0})$ the projector onto the continuous subspace of
$H_{0},$ that coincides with the projector onto the absolutely continuous
subspace.
%subspace of $H_{0},$ since $H_{0}$ has no singular continuous spectrum. In
%this subsection we obtain a convenient representation for the restriction of
%$\mathcal{U}_{0}(t)$ to the continuous subspace of $H_{0},$
%\[
%\mathcal{U}_{0}(t) P_{c}(H_0).
%\]
As above, we denote by $S_{0}(k)$ the scattering matrix
\eqref{Scatteringmatrix} in the case where the potential is zero, and by
$\Psi_{0}(k)$ the physical solution for zero potential. It is given by,
\begin{equation}
\label{ap.57}\Psi_{0}(k,x)= e^{-ikx}+ e^{ikx} S_{0}(k),
\end{equation}
where we used that for potential zero the Jost solution is given by $e^{ikx}.
$
%To derive our representation of $U_{0}(t) P_{c}(H_0)(t)$ we use the
%generalized Fourier map $\mathbf{F}^{+}$ in the case of potential zero. We
%also could obtain a representation using $\mathbf{F}^{-}.$
To simplify the notation we denote by $\mathbf{F}_{0}$ the generalized Fourier
map \eqref{gefoma} in the plus case and with zero potential. It is given by,
\begin{equation}
\label{ap.58}(\mathbf{F}_{0} Y)(k):=\sqrt{\frac{1}{2\pi}} \,\int_{0}^{\infty
}\, \left(  \Psi_{0}(-k,x)\right)  ^{\dagger}\, Y(x) \, dx,
\end{equation}
and the adjoint is given by,
\begin{equation}
\label{ap.59}(\mathbf{F}_{0}^{\dagger}Y)(x):=\sqrt{\frac{1}{2\pi}} \,\int%
_{0}^{\infty}\, \ \Psi_{0}(-k,x) \, Y(k) \, dk.
\end{equation}

The case where $V$ is zero \eqref{sp2} reads
\begin{equation}%
\begin{array}
[c]{l}%
\mathcal{U}_{0}(t)\mathbf{F}_{0}^{\dagger}\phi=\mathcal{U}_{0}%
(t)\,P_{\operatorname*{c}}(H_{0})\mathbf{F}_{0}^{\dagger}\phi(x)=\frac
{1}{\sqrt{2\pi}}\int_{0}^{\infty}e^{ikx}\,e^{-itk^{2}}\phi\left(  k\right)
dk\\
\\
+\frac{1}{\sqrt{2\pi}}\int_{0}^{\infty}\,e^{-ikx}e^{-itk^{2}}\,S_{0}%
(-k)\,\phi\left(  k\right)  \,dk\\
\\
=\frac{1}{\sqrt{2\pi}}\int_{-\infty}^{\infty}\,e^{ikx}\,e^{-itk^{2}%
}\,(\mathcal{E}_{0}\phi)(k)\,dk,
\end{array}
\label{sp2.0}%
\end{equation}
where we define
\begin{equation}
\mathcal{E}_{0}\phi(k)=\left\{
\begin{array}
[c]{l}%
\phi(k),\qquad k\geq0,\\
S_{0}(k)\phi(-k),\qquad k<0.
\end{array}
\right.  \label{sp3.xx}%
\end{equation}
Equation \eqref{sp3.uu} is now given by,%

\begin{equation}
\label{sp3.0}\|\mathcal{E}_{0}\phi\|_{L^{2}( {\mathbb{R}})}= \sqrt{2}
\|\phi\|_{L^{2}(\mathbb{R}^{+})}, \qquad\phi\in L^{2}(\mathbb{R}^{+}).
\end{equation}
Moreover, \eqref{sp4} reads,
\begin{equation}
\label{sp4.0}S_{0}(k) (\mathcal{E}_{0} \phi)(-k) = (\mathcal{E}_{0}\phi)(k),
\qquad k \in{\mathbb{R}}, \phi\in L^{2}(\mathbb{R}^{+}).
\end{equation}

We denote%
\begin{equation}
\label{sp5.xxx}\mathcal{V}\left(  t\right)  \phi=\sqrt{\frac{it}{2\pi}}%
\int_{-\infty}^{\infty}e^{-it\left(  k-\frac{x}{2}\right)  ^{2}} \phi\left(
k\right)  dk,
\end{equation}
and%

\begin{equation}
\label{sp7.ppp}\mathcal{Q}_{0}(t):= \mathcal{V}(t) \mathcal{E}_{0}.
\end{equation}

Equation \eqref{2.5} now reads
\begin{equation}
\mathcal{U}_{0}\left(  t\right)  \mathbf{F_{0}}^{\dagger}\phi=M\mathcal{D}%
_{t}\mathcal{Q}_{0}\left(  t\right)  \phi,\qquad\phi\in L^{2}(\mathbb{R}^{+}).
\label{2.5.1}%
\end{equation}
Further, \eqref{ap.65} is now given by,
\begin{equation}
\mathcal{U}_{0}\left(  t\right)  P_{c}(H_{0})\psi=M\mathcal{D}_{t}%
\mathcal{Q}_{0}\left(  t\right)  \mathbf{F}_{0}\psi,\qquad\psi\in
L^{2}(\mathbb{R}^{+}). \label{ap.65.0}%
\end{equation}
%where we used that $P_{c}=\mathbf{F}^{\dagger}\mathbf{F}.$
%The representation
%\eqref{ap.65} is similar to the one that we obtained in the free case in
%\eqref{factfree1}.
Moreover, if $H_{0}$ has no bound states, $P_{c}(H_{0})=I,$ and \eqref{isom1}
reads now,
%$\mathbf{F}$ is unitary in $L^{2}(\mathbb{R}^{+}).$ Then, using \eqref{2.5}
%and as $M$ and $\mathcal{D}_{t}$ are unitary in $L^{2}(\mathbb{R}^{+})$ we
%obtain,%
\begin{equation}
\left\Vert \mathcal{Q}_{0}\left(  t\right)  \phi\right\Vert _{{L}%
^{2}(\mathbb{R}^{+})}=\left\Vert \phi\right\Vert _{{L}^{2}(\mathbb{R}^{+})},
\label{isom1.xxx}%
\end{equation}
%Moreover, using (\ref{ap.65}) we see that if $H$ has no bound states,%
and \eqref{2.6} is given by,
\begin{equation}
\mathcal{Q}_{0}^{-1}\left(  t\right)  \phi=\mathbf{F}_{0}\,e^{itH_{0}%
}\,M\mathcal{D}_{t}\phi=e^{itk^{2}}\mathbf{F}_{0}M\mathcal{D}_{t}\phi.
\label{2.6.1.yyy}%
\end{equation}
%where we used $\mathbf{F}\mathcal{U}^{-1}(t)=   \mathbf{F} e^{i t H}=  e^{it k^{2}}\,\mathbf{F}.$ Further, from the first equality in \eqref{2.6} we obtain,
Moreover, for $V$ equal zero \eqref{sp7.ll} reads,
\begin{equation}
\mathbf{F}_{0}e^{itH_{0}}=\mathcal{Q}_{0}^{-1}\,\mathcal{D}_{t}^{-1}%
\overline{M}, \label{sp7.0}%
\end{equation}
and \eqref{isom2} corresponds to,
\begin{equation}
\left\Vert \mathcal{Q}_{0}^{-1}\left(  t\right)  \phi\right\Vert _{{L}%
^{2}(\mathbb{R}^{+})}=\left\Vert \phi\right\Vert _{{L}^{2}(\mathbb{R}^{+})}.
\label{isom2.0}%
\end{equation}
Further \eqref{winverse} reads,
\begin{equation}
\mathcal{Q}_{0}^{-1}\left(  t\right)  \phi(k)=S\left(  k\right)
\mathcal{V}_{+}\left(  t\right)  \phi+\mathcal{V}_{-}\left(  t\right)
\phi,\qquad k\geq0, \label{winverse.0}%
\end{equation}
where%
\begin{equation}
\mathcal{V}_{\pm}\left(  t\right)  \phi(k)=\sqrt{\frac{t}{2\pi i}}\int%
_{0}^{\infty}e^{it\left(  k\pm\frac{x}{2}\right)  ^{2}}\phi\left(  x\right)
dx,\qquad k\in{\mathbb{R}}. \label{sp8.1}%
\end{equation}
As in the case of nontrivial potential, for convenience, we have defined the
quantities $\mathcal{V}_{\pm}(t)$ for $k\in{\mathbb{R}},$ but in
\eqref{winverse.0} we only use then for $k\geq0.$ We also find it useful to
introduce the following quantity,
\begin{equation}
\widehat{\mathcal{V}}(t)\phi(k):=S_{0}\left(  k\right)  \mathcal{V}^{+}\left(
t\right)  \phi+\mathcal{V}^{-}\left(  t\right)  \phi,\qquad k\in{\mathbb{R}}.
\label{sp9.1}%
\end{equation}
Observe that,
\begin{equation}
\widehat{\mathcal{V}}(t)\phi(k)=\mathcal{Q}_{0}^{-1}\left(  t\right)
\phi(k),\qquad k\geq0. \label{sp10.1}%
\end{equation}
Furthermore, by \eqref{UnitaritySM}
\begin{equation}
S_{0}(k)\widehat{\mathcal{V}}(t)\phi(-k)=\widehat{\mathcal{V}}(t)\phi
(k),\qquad k\in{\mathbb{R}}. \label{sp11.2}%
\end{equation}
Since for $\phi\in L^{2}(\mathbb{R}),$
\begin{equation}
\sqrt{\frac{it}{2\pi}}\int_{-\infty}^{\infty}e^{-\frac{it}{2}\left(
k-x\right)  ^{2}}\phi\left(  k\right)  dk=\mathcal{F}e^{\frac{ix^{2}}{2t}%
}\mathcal{F}^{-1}\phi, \label{sp11.2.0}%
\end{equation}
it follows from \eqref{sp8.1}
\begin{equation}
\mathcal{V}\left(  t\right)  \phi(x)=\,\frac{1}{\sqrt{2}}\left(
\mathcal{F}^{-1}M\mathcal{F}\phi\right)  \left(  \frac{x}{2}\right)  .
\label{sp11.2.1}%
\end{equation}
Then, by Parseval's identity, we see that
\begin{equation}
\left\Vert \mathcal{V}\left(  t\right)  \phi\right\Vert _{{L}^{2}(\mathbb{R}%
)}=\left\Vert \mathcal{F}^{-1}M\mathcal{F}\phi\right\Vert _{{L}^{2}%
(\mathbb{R})}=\left\Vert \phi\right\Vert _{{L}^{2}(\mathbb{R})}. \label{2.11}%
\end{equation}
Moreover,
\begin{equation}%
\begin{array}
[c]{l}%
\left\Vert \mathcal{V}\left(  t\right)  \phi(x)-\frac{1}{\sqrt{2}}%
\phi(x/2)\right\Vert _{{L}^{2}(\mathbb{R})}=\left\Vert \mathcal{F}^{-1}\left(
M-1\right)  \mathcal{F}\phi\right\Vert _{{L}^{2}(\mathbb{R})}\\
\leq C\left\Vert \left(  \frac{x^{2}}{4t}\right)  ^{\frac{1}{2}}%
\mathcal{F}\phi\right\Vert _{{L}^{2}(\mathbb{R})}\leq C|t|^{-\frac{1}{2}%
}\left\Vert \partial_{k}\phi\right\Vert _{{L}^{2}(\mathbb{R})},
\end{array}
\label{2.11.1}%
\end{equation}
and similarly,
\begin{equation}
\left\Vert \partial_{x}\left(  \mathcal{V}\left(  t\right)  \phi(x)-\frac
{1}{\sqrt{2}}\phi(x/2)\right)  \right\Vert _{{L}^{2}(\mathbb{R})}\leq
C\left\Vert \partial_{k}\phi\right\Vert _{{L}^{2}(\mathbb{R})}. \label{2.11.2}%
\end{equation}
Then, using that
\[
\left\Vert f\right\Vert _{{L}^{\infty}(\mathbb{R})}\leq C\left\Vert
f\right\Vert _{{L}^{2}(\mathbb{R})}^{\frac{1}{2}}\left\Vert \partial
_{x}f\right\Vert _{{L}^{2}(\mathbb{R})}^{\frac{1}{2}},
\]
we obtain the estimate
\begin{equation}
\left\Vert \mathcal{V}\left(  t\right)  \phi(x)-\frac{1}{\sqrt{2}}%
\phi(x/2)\right\Vert _{{L}^{\infty}(\mathbb{R})}\leq C|t|^{-\frac{1}{4}%
}\left\Vert \partial_{k}\phi\right\Vert _{{L}^{2}(\mathbb{R})}. \label{2.3}%
\end{equation}
Moreover, by \eqref{l2.4} with $m^{\dagger}(k,x)=I,$ and Parseval's identity,
we obtain,
\begin{equation}
\Vert\mathcal{V}_{\pm}\phi\Vert_{L^{2}({\mathbb{R}})}\leq C\Vert\phi
\Vert_{L^{2}(\mathbb{R}^{+})}. \label{EST8.0}%
\end{equation}
Further, by \eqref{UnitaritySM}, and \eqref{EST8.0},
\begin{equation}
\Vert\widehat{\mathcal{V}}\phi\Vert_{L^{2}({\mathbb{R}})}\leq C\Vert\phi
\Vert_{L^{2}(\mathbb{R}^{+})}. \label{isomafr}%
\end{equation}
Furthermore, as
\[
\pm\partial_{x}e^{it(k\pm x/2)^{2}}=\frac{1}{2}\partial_{k}e^{it(k\pm
x/2)^{2}},
\]
integrating by parts in \eqref{sp5.xxx} and using \eqref{2.11} we get,
\begin{equation}
\left\Vert \partial_{x}^{n}\mathcal{V}\left(  t\right)  \phi\right\Vert
_{L^{2}({\mathbb{R}})}\leq C\left\Vert \phi\right\Vert _{H^{n}({\mathbb{R}}%
)},\text{ n}\in\mathbb{N\cup}\{0\}\text{.} \label{EST5fr}%
\end{equation}
Suppose that $\phi\in H^{1}(\mathbb{R}^{+}),$ and that besides being in
$L^{2}(\mathbb{R}^{+}),\partial_{k}\phi$ admits the following decomposition,
$\partial_{k}\phi=\phi_{1}+\phi_{2},$ where $\phi_{1}\in L^{q_{1}}%
(\mathbb{R}^{+}),$ and $\phi_{2}\in L^{q_{2}}(\mathbb{R}^{+}),$ for some
$q_{1},q_{2},$ with $1\leq q_{1},q_{2}<\infty.$ Then, as in the proof of
\eqref{89}, but with $m(k,x)=I,$ we obtain,
\begin{equation}
\Vert\mathcal{V}_{\pm}\phi\Vert_{L^{\infty}({\mathbb{R}})}\leq C\left(
\left\Vert \phi\right\Vert _{L^{\infty}(\mathbb{R}^{+})}+|t|^{-\frac{1}%
{2p_{1}}}\left\Vert \phi_{1}\right\Vert _{L^{q_{1}}(\mathbb{R}^{+}%
)}+|t|^{-\frac{1}{2p_{2}}}\left\Vert \phi_{2}\right\Vert _{L^{q_{2}%
}(\mathbb{R}^{+})}\right)  , \label{89.xx}%
\end{equation}
where $\frac{1}{p_{1}}+\frac{1}{q_{1}}=1,$ and $\frac{1}{p_{2}}+\frac{1}%
{q_{2}}=1,$ and by \eqref{UnitaritySM}, \eqref{sp9.1} and \eqref{89.xx} we
get,
\begin{equation}
\Vert\widehat{\mathcal{V}}\phi\Vert_{L^{\infty}({\mathbb{R}})}\leq C\left(
\left\Vert \phi\right\Vert _{L^{\infty}(\mathbb{R}^{+})}+|t|^{-\frac{1}%
{2p_{1}}}\left\Vert \phi_{1}\right\Vert _{L^{q_{1}}(\mathbb{R}^{+}%
)}+|t|^{-\frac{1}{2p_{2}}}\left\Vert \phi_{2}\right\Vert _{L^{q_{2}%
}(\mathbb{R}^{+})}\right)  , \label{89.xxx}%
\end{equation}
with $\frac{1}{p_{1}}+\frac{1}{q_{1}}=1,$ and $\frac{1}{p_{2}}+\frac{1}{q_{2}%
}=1.$ Assume that $\phi\in H^{1}({\mathbb{R}}),$ and that besides being in
$L^{2}({\mathbb{R}}),\partial_{k}\phi$ admits the following decomposition,
$\partial_{k}\phi=\phi_{1}+\phi_{2},$ where $\phi_{1}\in L^{q_{1}}%
({\mathbb{R}}),$ and $\phi_{2}\in L^{q_{2}}({\mathbb{R}}),$ for some
$q_{1},q_{2},$ with $1\leq q_{1},q_{2}<\infty.$ Then, by \eqref{sp5.xxx}, as
in the proof of \eqref{EST4}, we prove,
\begin{equation}
\Vert\mathcal{V}\phi\Vert_{L^{\infty}(\mathbb{R}^{+}}\leq C\left(  \left\Vert
\phi\right\Vert _{L^{\infty}({\mathbb{R}})}+|t|^{-\frac{1}{2p_{1}}}\left\Vert
\phi_{1}\right\Vert _{L^{q_{1}}({\mathbb{R}})}+|t|^{-\frac{1}{2p_{2}}%
}\left\Vert \phi_{2}\right\Vert _{L^{q_{2}}({\mathbb{R}})}\right)  ,
\label{89.xxxx}%
\end{equation}
where $\frac{1}{p_{1}}+\frac{1}{q_{1}}=1,$ and $\frac{1}{p_{2}}+\frac{1}%
{q_{2}}=1.$

\begin{lemma}
The following estimates hold,
\begin{equation}
\left\Vert x\mathcal{V}(t)\phi\right\Vert _{L^{2}({\mathbb{R}})}\leq C\left(
\frac{1}{|t|}\Vert\partial_{k}\phi(k)\Vert_{L^{2}({\mathbb{R}})}+\Vert
k\phi(k)\Vert_{L^{2}({\mathbb{R}})}\right) , \label{fin.1}%
\end{equation}
and
\begin{equation}%
\begin{array}
[c]{l}%
\left\Vert x^{2}\mathcal{V}(t)\phi\right\Vert _{L^{2}({\mathbb{R}})}\leq
C\left(  \frac{1}{|t|}\Vert k\partial_{k}\phi(k)\Vert_{L^{2}({\mathbb{R}}%
)}+\frac{1}{|t|^{2}}\Vert\partial_{k}^{2}\phi(k)\Vert_{L^{2}({\mathbb{R}}%
)}+\frac{1}{|t|}\Vert\phi(k)\Vert_{L^{2}({\mathbb{R}})}+\Vert k^{2}\phi
\Vert_{L^{2}({\mathbb{R}})}\right)  .
\end{array}
\label{fin.2}%
\end{equation}

\end{lemma}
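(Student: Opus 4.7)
The plan is to derive \eqref{fin.2} from \eqref{fin.1} by iterating the commutation identity that expresses multiplication by $x$ in terms of the free propagator $\mathcal{V}(t)$. The key observation is the algebraic identity
\[
x\, e^{-it(k-x/2)^{2}} \;=\; 2k\, e^{-it(k-x/2)^{2}} + \frac{1}{it}\,\partial_{k} e^{-it(k-x/2)^{2}},
\]
which follows from writing $x = 2k - 2(k-x/2)$ and recognizing that $(k-x/2)\, e^{-it(k-x/2)^{2}} = -\frac{1}{2it}\partial_{k} e^{-it(k-x/2)^{2}}$. Substituting into the definition \eqref{sp5.xxx} and integrating by parts in $k$ yields the commutation relation
\[
x\,\mathcal{V}(t)\phi \;=\; 2\,\mathcal{V}(t)(k\phi) \;-\; \frac{1}{it}\,\mathcal{V}(t)(\partial_{k}\phi),
\]
which, together with the isometry \eqref{2.11}, already gives \eqref{fin.1}.

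The main step is then to apply this identity twice: first write $x^{2}\mathcal{V}(t)\phi = x\bigl[x\mathcal{V}(t)\phi\bigr]$, substitute the expression above, and apply the commutation relation again to the two resulting terms $x\,\mathcal{V}(t)(k\phi)$ and $x\,\mathcal{V}(t)(\partial_{k}\phi)$. Using $\partial_{k}(k\phi) = \phi + k\partial_{k}\phi$, one obtains
\[
x^{2}\,\mathcal{V}(t)\phi \;=\; 4\,\mathcal{V}(t)(k^{2}\phi) \;-\; \frac{2}{it}\,\mathcal{V}(t)(\phi) \;-\; \frac{4}{it}\,\mathcal{V}(t)(k\partial_{k}\phi) \;-\; \frac{1}{t^{2}}\,\mathcal{V}(t)(\partial_{k}^{2}\phi).
\]
Taking $L^{2}({\mathbb R})$ norms and invoking \eqref{2.11} term-by-term produces exactly the four contributions $\|k^{2}\phi\|_{L^{2}}$, $|t|^{-1}\|\phi\|_{L^{2}}$, $|t|^{-1}\|k\partial_{k}\phi\|_{L^{2}}$, and $|t|^{-2}\|\partial_{k}^{2}\phi\|_{L^{2}}$ on the right-hand side of \eqref{fin.2}.

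There is no real obstacle here beyond bookkeeping: the argument is a direct algebraic iteration, and all boundary terms in the integration by parts vanish because the relevant quantities can first be proved for Schwartz functions and then extended by density to the space of $\phi$ for which the right-hand side of \eqref{fin.2} is finite. The only mild care needed is to justify that $\phi \mapsto \mathcal{V}(t)\phi$ commutes (in a distributional sense) with $\partial_{k}$-integration by parts when $\phi$ has only the regularity implied by finiteness of the right-hand side; a standard approximation of $\phi$ by Schwartz functions in the norm $\|k^{2}\phi\|_{L^{2}} + \|k\partial_{k}\phi\|_{L^{2}} + \|\partial_{k}^{2}\phi\|_{L^{2}} + \|\phi\|_{L^{2}}$ handles this and yields the estimate for all admissible $\phi$.
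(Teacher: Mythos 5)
Your proof is correct and is essentially the paper's own argument: the paper also obtains \eqref{fin.1} and \eqref{fin.2} from the commutation identity $x\,e^{-it(k-x/2)^{2}}=2k\,e^{-it(k-x/2)^{2}}+\frac{1}{it}\partial_{k}e^{-it(k-x/2)^{2}}$ (stated there in the equivalent form $x\,e^{\pm itkx}=\pm\frac{1}{it}\partial_{k}e^{\pm itkx}$), integration by parts, and the isometry \eqref{2.11}. Your explicit iteration producing $x^{2}\mathcal{V}(t)\phi=4\mathcal{V}(t)(k^{2}\phi)-\frac{2}{it}\mathcal{V}(t)\phi-\frac{4}{it}\mathcal{V}(t)(k\partial_{k}\phi)-\frac{1}{t^{2}}\mathcal{V}(t)(\partial_{k}^{2}\phi)$ matches the four terms in \eqref{fin.2} exactly, and the density remark is a fine way to dispose of the boundary terms.
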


\begin{proof}
The lemma follows using that $x e^{\pm ikx}= \pm1/ it\, \partial_{k} e^{\pm
ikx}$ and integrating by parts.
\end{proof}

%Moreover, we have.
%\begin{lemma}
%\beq\label{fin.3}
%\left\Vert \partial_x^j  \mathcal {V}(t) \phi\right\Vert_{L^2(\ere)} \leq C  \|\partial_j^k \phi(k)\|_{L^2(\ere)},
%\qquad j=,1\dots, .
%\ene
%\end{lemma}
%\begin{proof}
%The lemma follows using that \eqref{9} and integrating by parts.
%\end{proof}

\subsection{Asymptotic estimates}

\label{aymp} We prepare the following lemma \begin{lemma2}
\label{Lemma3}Suppose that $V\in L^{1}_{5/2+\delta}(\er),$ for some $\delta>0.$ Then
\begin{enumerate}
\item
For each $ a >0$ there is a constant $C_a>0$ such that
\begin{equation}
\left\Vert \left(  \mathcal{W}\left(  t\right)  -\mathcal{V}\left(  t\right)
\right)  \phi\right\Vert _{L^{2}(\ere)}\leq\frac{C_a}{\sqrt{|t|}}\left\Vert
\phi\right\Vert _{H^{1}(\ere)}, \qquad |t| \geq a. \label{L2}%
\end{equation}
\item
\begin{equation}
\left\Vert \mathcal{W}\left(  t\right)  \phi-     \frac{1}{\sqrt{2}}m\left(
\frac{x}{2},tx\right)  \phi\left(  \frac{x}{2}\right)  \right\Vert
_{L^{\infty}(\ere)}\leq\frac{C}{|t|^{1/4}}\left\Vert \phi\right\Vert _{H^{1}(\ere)}.
\label{Linf}%
\end{equation}
\end{enumerate}
\end{lemma2}.

\begin{proof}
In order to prove (\ref{L2}) we write%
\begin{equation}
\left(  \mathcal{W}\left(  t\right)  -\mathcal{V}\left(  t\right)  \right)
\phi=\sqrt{\frac{it}{2\pi}}\int_{-\infty}^{\infty}e^{-it\left(  k-\frac{x}%
{2}\right)  ^{2}}\left(  m\left(  k,tx\right)  -1\right)  \phi\left(
k\right)  dk. \label{26}%
\end{equation}
Using that%
\begin{equation}
\label{26.1}e^{-it\left(  k-\frac{x}{2}\right)  ^{2}}=\frac{\partial
_{k}\left(  \left(  k-\frac{x}{2}\right)  e^{-it\left(  k-\frac{x}{2}\right)
^{2}}\right)  }{1-2it\left(  k-\frac{x}{2}\right)  ^{2}}%
\end{equation}
and integrating by parts we get%
\begin{align}
\int_{-\infty}^{\infty}e^{-it\left(  k-\frac{x}{2}\right)  ^{2}}g(k) dk  &
=-\int_{-\infty}^{\infty}\frac{\left(  k-\frac{x}{2}\right)  e^{-it\left(
k-\frac{x}{2}\right)  ^{2}}}{1-2it\left(  k-\frac{x}{2}\right)  ^{2}}%
\partial_{k}g(k) dk\nonumber\\
&  -2\int_{-\infty}^{\infty}\frac{2it\left(  k-\frac{x}{2}\right)
^{2}e^{-it\left(  k-\frac{x}{2}\right)  ^{2}}}{\left(  1-2it\left(  k-\frac
{x}{2}\right)  ^{2}\right)  ^{2}}g(k) dk. \label{27}%
\end{align}
Then, taking $g=\left(  m\left(  k,tx\right)  -1\right)  \phi\left(  k\right)
,$ by (\ref{3.2}), (\ref{3.5}) with $\delta>1/2,$ via Sobolev's inequality
theorem we get%

\begin{equation}
\label{www}%
\begin{array}
[c]{l}%
\left\Vert \int_{-\infty}^{\infty}e^{-it\left(  k-\frac{x}{2}\right)  ^{2}%
}\left(  m\left(  k,tx\right)  -1\right)  \phi\left(  k\right)  dk\right\Vert
_{L^{2}( {\mathbb{R}})} \leq C\left\Vert \left\langle tx\right\rangle
^{-\delta}\right\Vert _{L^{2}( {\mathbb{R}})}\left\Vert \frac{k}{1+|t|k^{2}%
}\right\Vert _{L^{2}( {\mathbb{R}})}\left\Vert \phi\right\Vert _{H^{1}(
{\mathbb{R}})}\\
\\
+C\left\Vert \left\langle tx\right\rangle ^{-1-\delta}\right\Vert _{L^{2}(
{\mathbb{R}})}\left\Vert \frac{1}{1+|t|k^{2}}\right\Vert _{L^{1}( {\mathbb{R}%
})}\left\Vert \phi\right\Vert _{L^{\infty}( {\mathbb{R}})} \displaystyle \leq
\frac{C_{a}}{|t|}\left\Vert \phi\right\Vert _{H^{1}( {\mathbb{R}})},
\qquad|t|\geq a.
\end{array}
\end{equation}
Hence, (\ref{L2}) follows from (\ref{26}), \eqref{www}.

Using that
\[
\sqrt{\frac{it}{2\pi}}\int_{-\infty}^{\infty}e^{-itk^{2}}dk=\frac{1}{\sqrt{2}%
},
\]
we write
\begin{equation}
\mathcal{W}\left(  t\right)  \phi(x)= \frac{1}{\sqrt{2}}m\left(  \frac{x}%
{2},tx\right)  \phi\left(  \frac{x}{2}\right)  +\sqrt{\frac{it}{2\pi}}%
\int_{-\infty}^{\infty}e^{-it\left(  k-\frac{x}{2}\right)  ^{2}}g_{1}dk,
\label{28}%
\end{equation}
where we denote $g_{1}(k,x)=\left(  m\left(  k,tx\right)  \phi\left(
k\right)  -m\left(  \frac{x}{2},tx\right)  \phi\left(  \frac{x}{2}\right)
\right)  .$ Note that,
\begin{equation}
\label{28.www}\left|  g_{1}(k, x)\right|  \leq\left|  \int_{x/2}^{k}
\partial_{s} g_{1}(s,x) ds \right|  \leq\sqrt{|k- x/2|} \, \|\partial_{k}
g_{1}(\cdot, x)\|_{L^{2}( {\mathbb{R}})}.
\end{equation}
By (\ref{27}) with $g=g_{1}$ and \eqref{28.www} we show%
\begin{equation}
\label{zzuuv}%
\begin{array}
[c]{l}%
\left\Vert \int_{-\infty}^{\infty}e^{-it\left(  k-\frac{x}{2}\right)  ^{2}%
}g_{1}(\cdot,x) dk\right\Vert _{L^{\infty}(\mathbb{R}^{+})} \leq C\left\Vert
\frac{k}{1+|t|k^{2}}\right\Vert _{L^{2}( {\mathbb{R}})} \, \sup_{x
\in{\mathbb{R}}} \left\Vert \partial_{k}g_{1}(\cdot,x)\right\Vert _{L^{2}(
{\mathbb{R}})}\\
\\
+C\left\Vert \frac{\sqrt{k}}{1+|t|k^{2}}\right\Vert _{L^{1}( {\mathbb{R}})}
\sup_{x \in\mathbb{R}^{+}} \left\Vert \partial_{k} g_{1}(\cdot, x)\right\Vert
_{L^{2}( {\mathbb{R}})} \leq\frac{C}{|t|^{3/4}} \sup_{x \in\mathbb{R}^{+}}
\left\Vert \partial_{k}g_{1}(\cdot,x)\right\Vert _{L^{2}( {\mathbb{R}})}%
\end{array}
\end{equation}
Then, from (\ref{3.2}), (\ref{3.5}) and \eqref{zzuuv} we deduce,
\[
\left\Vert \sqrt{\frac{it}{2\pi}}\int_{-\infty}^{\infty}e^{-it\left(
k-\frac{x}{2}\right)  ^{2}}g_{1}dk\right\Vert _{L^{\infty}(\mathbb{R}^{+}%
)}\leq\frac{C}{|t|^{1/4}}\left\Vert \phi\right\Vert _{H^{1}( {\mathbb{R}})}.
\]
Together with (\ref{28}), this proves (\ref{Linf}).
\end{proof}

We prepare the following lemma.

\begin{lemma2}
Assume  that $ V\in L^1_{2+\delta}(\er),$  for some $ \delta \geq 0.$
%that the constant boundary matrices $A,B$ fulfill \eqref{ap.4} and \eqref{ap.5}. Then.%
\begin{enumerate}
\item
We have,
\begin{equation}
\left\vert \left(  \left(  \mathcal{W}\left(  t\right)  -\mathcal{V}\left(
t\right)  \right)  \phi \right)  \left(  x\right)  \right\vert \leq C\sqrt
{|t|}\left\langle tx\right\rangle ^{-1-\delta}\left\Vert \phi\right\Vert _{L^{2}(\ere)}.
\label{EST11b}%
\end{equation}%
\item
Assume that $\phi \in H^1(\ere),$ and that besides being in $L^2(\ere), \partial_k \phi$ admits the following decomposition,  $\partial_{k}\phi=\phi_{1}+\phi_{2},$  where $ \phi_1 \in L^{q_1}(\ere),$ and $ \phi_2 \in L^{q_2}(\ere),$ for some $q_1, q_2,$ with $ 1\leq q_1, q_2 < \infty.$ Then,  for $ 1 \leq l < \infty,$ we have,
\begin{equation}\begin{array}{l}
\left\vert   \left(  \mathcal{W}\left(  t\right)  -\mathcal{V}\left(
t\right)  \right)  \phi\left(  x\right)  \right\vert \leq
C \left\langle tx\right\rangle ^{-1-\delta}
\left(  \left\Vert \phi\right\Vert _{L^{\infty}(\ere)}+|t|^{-\frac{1}{2p}}\left\Vert \left\langle k\right\rangle ^{-1} \phi_1\right\Vert _{L^{q}(\ere)}\right.
\\\\ \left. +|t|^{-\frac{1}{2p_{1}}}\left\Vert \left\langle
k\right\rangle ^{-1} \phi_2 \right\Vert _{L^{q_{1}}(\ere)}\right)+ C_l |t|^{-\frac{1}{2l}}\left \langle tx\right\rangle^{-\delta}\,
\|\phi\|_{L^\infty(\ere)},
\label{EST11}
\end{array}
\end{equation}
with  $ \frac{1}{p_1}+ \frac{1}{q_1}=1,$ and $ \frac{1}{p_2}+ \frac{1}{q_2}=1$, and where $C_l$ depends on $l.$
\item
Let $n\in\mathbb{N}\cup\{0\}$ and \bigskip\bigskip assume that $V\in
L^{1}_{2+\delta}\left( \er\right) ,$ for some $\delta>n.$ Then
\begin{equation}
\left\Vert \left(  \mathcal{W}_{\pm}\left(  t\right)  -\mathcal{V}%
_{  \pm  }\left(  t\right)  \right)  x^{n} \phi\right\Vert _{L^{2}(\ere)%
}\leq C|t|^{-n}\min\{|t|^{-1/2}\left\Vert \phi\right\Vert _{L^{\infty}(\ere)},\left\Vert
\phi\right\Vert _{L^{2}(\ere)}\}, \label{EST12j}%
\end{equation}
and%
\begin{equation}
\left\Vert \left( \widehat{ \mathcal{W}}\left(  t\right)  -\widehat{\mathcal{V}}\left(  t\right)  \right)  x^{n} \phi\right\Vert
_{L^{2}(\ere)}\leq C|t|^{-n}\min\{|t|^{-1/2}\left\Vert \phi\right\Vert _{L^{\infty}(\ere)%
},\left\Vert \phi\right\Vert _{L^{2}(\ere)}\}. \label{EST12}%
\end{equation}
\end{enumerate}
\end{lemma2}

\begin{proof}
By \eqref{sp5} and \eqref{sp5.xxx},
\begin{equation}
\left(  \mathcal{W}\left(  t\right)  -\mathcal{V}\left(  t\right)  \right)
\phi(x)=\sqrt{\frac{it}{2\pi}}\int_{-\infty}^{\infty}e^{-it\left(  k-\frac
{x}{2}\right)  ^{2}}[m\left(  k,tx\right)  -I]\phi\left(  k\right)  dk.
\label{xxx.1}%
\end{equation}
Then \eqref{EST11b}, follows from \eqref{3.2} and \eqref{xxx.1}. Further,
using (\ref{3.2}), \eqref{3.5}, \eqref{27} with\linebreak$g=(m(k,tx)-I)\phi
(k),$ and \eqref{xxx.1} we show (\ref{EST11}). Moreover, by \eqref{sp8} and
\eqref{sp8.1}
\begin{equation}
\left[  \left(  \mathcal{W}_{\pm}-\mathcal{V}_{\pm}\right)  x^{n}\phi\right]
(k)=\sqrt{\frac{t}{2\pi i}}\int_{0}^{\infty}e^{it\left(  k\pm\frac{x}%
{2}\right)  ^{2}}[m^{\dagger}\left(  \mp k,tx\right)  -I]x^{n}\phi\left(
x\right)  dx,\qquad k\in{\mathbb{R}}. \label{xxx.2}%
\end{equation}
Using (\ref{3.2}) we estimate,
\begin{align}
\left\vert \left(  \mathcal{W}_{\pm}\left(  t\right)  -\mathcal{V}_{\left(
\pm\right)  }\left(  t\right)  \right)  x^{n}\phi\right\vert  &  \leq
C\sqrt{t}\left\langle k\right\rangle ^{-1}\int_{0}^{\infty}\left\langle
tx\right\rangle ^{-1-\delta}x^{n}\left\vert \phi\left(  x\right)  \right\vert
dx\nonumber\\
&  \leq C\sqrt{t}\left\langle k\right\rangle ^{-1}\min\{\left\Vert
\left\langle tx\right\rangle ^{-1-\delta}x^{n}\right\Vert _{L^{1}({\mathbb{R}%
})}\left\Vert \phi\right\Vert _{L^{\infty}({\mathbb{R}})},\left\Vert
\left\langle tx\right\rangle ^{-1-\delta}x^{n}\right\Vert _{L^{2}({\mathbb{R}%
})}\left\Vert \phi\right\Vert _{L^{2}({\mathbb{R}})}\}. \label{33}%
\end{align}
Equation (\ref{EST12j}) follows from \eqref{33}. Estimate (\ref{EST12})
follows from (\ref{UnitaritySM}) and (\ref{EST12j}).
\end{proof}


\begin{thebibliography}{99}                                                                                               %


\bibitem {ablow}Ablowitz M. J., and Clarkson P.A., \textit{Solitons, Nonlinear
Evolution Equations and Inverse Scattering,} Canbridge University Press,
Cambridge, 1991.

\bibitem {segur}Ablowitz M. J., and Segur H., \textit{Solitons and the Inverse
Scattering Transform,} SIAM, Philadelphia, 1981.

\bibitem {adams}{Adams R.A., and Fournier J.J.F.}, {\ \textit{Sobolev spaces.
Second edition.} Pure and Applied Mathematics (Amsterdam) \textbf{140}.
Elsevier/Academic Press, Amsterdam, (2003)}

\bibitem {AgrMarch}Agranovich Z. S., and Marchenko V. A., \textit{The Inverse
Problem of Scattering Theory.} Gordon and Breach, New York, 1963.

\bibitem {akv}Aktosun T., Klaus M., and Van der Mee C., Small energy
asymptotics of the scattering matrix for the matrix Schr\"odinger equation on
the line, J. Math. Phys. \textbf{42}, (2001) 4627-4652 .

\bibitem {WederBook}Aktosun T., and Weder R, \textit{Direct and Inverse
Scattering for the Matrix Schr\"{o}dinger Equation}. Applied Mathematical
Sciences \textbf{203}, Springer, New York, 2021.

\bibitem {Carles3}Antonelli P., Carles R., and Silva J. D., Scattering for
nonlinear Schr\"{o}dinger equation under partial harmonic confinement, Comm.
Math. Phys. \textbf{334} (2015) 367--396.

\bibitem {Yajima}Asano N., Taniuti T., and Yajima N., Perturbation method for
nonlinear wave modulation, II, J. Math. Phys., \textbf{10}, (1969) 2020-2024.

\bibitem {Barab}Barab J.E., Non-existence of asymptotically free solutions for
nonlinear Schr\"{o}dinger equation, J. Math.Phys., \textbf{25} (1984) 3270--3273.

%\bibitem {Behrndt}Behrndt J. and Luger A. \textit{On the number of negative
%eigenvalues of the Laplacian on a metric graph,} J. Phys. A \textbf{43} (2010).
%\bibitem {Boman}Boman J. and Kurasov P. \textit{Symmetries of quantum graphs
%and the inverse scattering problem,} Adv. Appl. Math. \textbf{35} 58--70 (2005).


\bibitem {Berkolaio}Berkolaio G., and Kuchment P., \textit{Introduction to
Quantum Graphs.} Mathematical Surveys and Monographs \textbf{186} Amer. Math.
Soc., Providence, 2013.

\bibitem {Bespalov}Bespalov V.I., and Talanov V. I., Filamentary structure of
light beams in nonlinear liquids, JETP Lett., \textbf{3 }(1966) 307-310.

\bibitem {bourgain}Bourgain J., \textit{Global Solutions of Nonlinear
Schr\"odinger Equations.} Colloquium Publications \textbf{46} A. M. S.,
Providence, 1999.

\bibitem {caudrelier}Caudrelier V., On the inverse scattering method for
integrable PDEs on a star graph, Comm. Math. Phys. \textbf{338} (2015) 893-917.

\bibitem {Cazenave}T. Cazenave, \textit{Semilinear Schr\"{o}dinger equations},
Courant Institute of Mathematical Sciences, New York, Am. Math. Soc.,
Providence, RI, (2003).

\bibitem {CazenaveW1}{Cazenave T., and Weissler F. B.}, Rapidly decaying
solutions of the nonlinear Schr\"{o}\-din\-ger equation,{\ Comm. Math. Phys.
\textbf{147} (1992) 75--100.}

\bibitem {CN}{Cazenave T., and Naumkin I.}, Local existence, global existence,
and scattering for the nonlinear Schr\"{o}\-din\-ger equation,{\ Commun.
Contemp. Math. \textbf{19} 2 (2017).}

\bibitem {CN1}{Cazenave T., and Naumkin I.,} Modified scattering for the
critical nonlinear Schr\"{o}dinger equation, J. Funct. Anal. \textbf{274}
(2018) 402-432.


\bibitem {Chen}Chen G., Long-time dynamics of small solutions to 1d cubic
nonlinear Schr\"{o}dinger equations with a trapping potential (2021)
arXiv:2106.10106 [math.AP].

\bibitem {Pusateri}Chen G., and Pusateri F., The 1d nonlinear Schr\"{o}dinger
equation with a weighted $L^{1}$ potential, to appear in Analysis \& PDE.
ArXiv 1912.10949 [math.AP]



\bibitem {Cuccagna}Cuccagna S., Georgiev V., and Visciglia N., Decay and
scattering of small solutions of pure power NLS in $R$ with $p>3$ and with a
potential, Comm. Pure Appl. Math. \textbf{67 } (2014) 957--981.

\bibitem {Cuccagna1}Cuccagna S., and Maeda M., A survey on asymptotic
stability of ground states of nonlinear Schr\"{o}dinger equations II. Discrete
Contin. Dyn. Syst. Ser. S \textbf{14} (2021) 1693--1716.

\bibitem {Cuccagna2}Cuccagna, S., Maeda, M., On Selection of Standing Wave at
Small Energy in the 1D Cubic Schr\"{o}dinger Equation with a Trapping
Potential. Commun. Math. Phys. (2022) https://doi.org/10.1007/s00220-022-04487-7. arXiv:2109.08108 [math.AP].

\bibitem {Delort}Delort J.M., Modified scattering for odd solutions of cubic
nonlinear Schr\"{o}dinger equations with potential in dimension one, (2016) (hal-01396705).

\bibitem {deGennes}de Gennes P. G., \textit{Superconductivity of Metals and
Alloys}, Benjamin, New York, 1966.

\bibitem {deift}Deift P., and Park J., Long-time asymptotics for solutions of
the NLS equation with a delta potential and even initial data, Int. Math. Res.
Notices \textbf{24} (2011), 5505--5624

\bibitem {fokas}Fokas A., \textit{A Unified Approach to Boundary Value
Problems,} SIAM, Philadelphia, 2008.

\bibitem {Germain}Germain P., Pusateri F., and Rousset F., The nonlinear
Schr\"{o}dinger equation with a potential, Ann. Inst. H. Poincar\'{e} Anal.
Non Lin\'eaire \textbf{35} 6 (2018) 1477--1530.

\bibitem {Ginibre}Ginibre, J. Introduction aux \'{e}quations de
Schr\"{o}dinger non Iin\'{e}aires Cours de DEA 1994-1995, Paris Onze \'Edition
L 161, Universit\'{e} de Paris-Sud, Orsay.

\bibitem {GinibreV1}{Ginibre J., and Velo G.}, On a class of nonlinear
Schr\"{o}\-din\-ger equations. II. Scattering theory, general case, {\ J.
Funct. Anal. \textbf{32} (1979) 33--71.}

\bibitem {GinibreV2}{Ginibre J., and Velo G.}, On a class of nonlinear
Schr\"{o}\-din\-ger equations. III. Special theories in dimensions 1, 2 and
3,{\ Ann. Inst. H. Poincar\'{e} Section A \textbf{28} (1978) 287--316.}

\bibitem {GinibreOV}{Ginibre J., Ozawa T., and Velo G.}, On the existence of
the wave operators for a class of nonlinear Schr\"{o}\-din\-ger
equations,{\ Ann. Inst. H.~Poin\-ca\-r\'{e} Phys. Th\'{e}or. \textbf{60} 2
(1994) 211--239.}

%\bibitem {Gutkin}Gutkin B. and Smilansky U. \textit{Can one hear the shape of
%a graph?} J. Phys. A \textbf{34} 6061--6068 (2001).
%\bibitem {16}Harmer M.S. \textit{The Matrix Schr\"{o}dinger Operator and
%Schr\"{o}dinger Operator on Graphs}, Ph.D. thesis, University of Auckland, New
%Zealand, 2004.


\bibitem {haka1998}Hayashi N., Kaikina E.I., and Naumkin P.I., On the
scattering theory for the cubic nonlinear Schr\"{o}dinger and Hartree type
equations in one space dimension, Hokkaido Math. J. \textbf{27 } (1998) 651--667.

\bibitem {HayashiNau1}{Hayashi N., and Naumkin P. I.}, Asymptotics for large
time of solutions to the nonlinear Schr\"{o}\-din\-ger and Hartree equations,
{\ Amer. J. Math. \textbf{120} (1998) 369--389.}

\bibitem {HayashiNau2}{Hayashi N., and Naumkin P. I.}, Large time behavior for
the cubic nonlinear Schr\"{o}\-din\-ger equation, {\ Canad. J. Math.
\textbf{54} (2002) 1065--1085.}

\bibitem {HNST}{Hayashi N., Naumkin P.I. , Shimomura A., and Tonegawa S.},
Modified wave operators for nonlinear Schr\"{o}\-din\-ger equations in one and
two dimensions,{\ Electron. J. Differential Equations \textbf{62} (2004).}

\bibitem {H-O}Hayashi N., and Ozawa T., Scattering theory in the weighted
$L^{2}(${$\mathbb{R}$}$^{n})$ spaces for some Schr\"{o}dinger equations, Ann.
Inst. H. Poincar\'e Phys. Th\'{e}or., \textbf{48\ (}1988) 17-37.

\bibitem {hora}Hora A., and Obata N., \textit{Quantum Probability and Spectral
Analysis of Graphs,} Springer, Berlin, 2007.

\bibitem {Karpman}Karpman V.I., and Kruskal E. M., Modulated waves in a
nonlinear dispersive media, Sov. Phys.-JETP\textbf{\ 28} (1969) 277-281.

\bibitem {Kato}Kato T., \textit{Perturbation Theory for Linear Operators,
Second Edition,} Springer, Berlin, 1976.

%\bibitem {Kato}{Kato T.} {Nonlinear Schr\"{o}\-din\-ger equations, in
%\textit{Schr\"{o}\-din\-ger Operators (S\o nderborg, 1988)}, Lecture Notes in
%Phys. Springer, Berlin, \textbf{345} (1989) 218--263.}


%\bibitem {Kostrykin1}Kostrykin V. and Schrader R. \textit{Kirchhoff's rule for
%quantum wires,} J. Phys. A \textbf{32} (1999) 595--630
%\bibitem {Kostrykin2}Kostrykin V. and Schrader R. \textit{Kirchhoff's rule for
%quantum wires}. \textit{II: The inverse problem with possible applications to
%quantum computers}, Fortschr. Phys. \textbf{48} (2000) 703--716 .
%\bibitem {Kurasov2}Kurasov P. and Nowaczyk M. \textit{Geometric properties of
%quantum graphs and vertex scattering matrices,} Opusc. Math. \textbf{30}
%(2010) 295--309


\bibitem {Soffer}Lindblad H., L\"{u}hrmann J., and Soffer A., \textit{Decay
and Asymptotics for the One-Dimensional Klein--Gordon Equation with Variable
Coefficient Cubic Nonlinearities, }SIAM J. Math.Anal., \textbf{52} 6 (2020) 6379--6411.

\bibitem {Soffer1}Lindblad H., L\"uhrmann J., Schlag W., and Soffer A., On
modified scattering for 1 D quadratic Klein-Gordon equations with non-generic
potentials, arXiv 2012.15191v2 [math.AP].

\bibitem {Martel1}Martel Y., Asymptotic $N-$ soliton-like solutions of the
subcritical and critical generalized Korteweg-de Vries equations, Amer. J.
Math. \textbf{127} (2005) 1103--1140.

\bibitem {MartelMerle}Martel Y., and Merle F., Multi-solitary waves for
nonlinear Schr\"{o}dinger equations, Ann. Inst. H. Poincar\'{e} Anal. Non
Lin\'{e}aire \textbf{23 } (2006) 849-864.

\bibitem {Merle}Merle F., Construction of solutions with exactly k blow-up
points for the Schr\"{o}dinger equation with critical nonlinearity, Comm.
Math. Phys. \textbf{129} (1990) 223--240.

\bibitem {Mizumachi}Mizumachi T., Asymptotic stability of small solitary waves
to 1D nonlinear Schr\"{o}dinger equations with potential, J. Math. Kyoto Univ.
\textbf{48} (2008) 471--497.

\bibitem {NakanishiO}{Nakanishi K., and Ozawa T.}, Remarks on scattering for
nonlinear Schr\"{o}\-din\-ger equations,{\ NoDEA Nonlinear Differential
Equations Appl. \textbf{9} 1 (2002) 45--68.}

\bibitem {Ivan}Naumkin I., Sharp asymptotic behavior of solutions for cubic
nonlinear Schr\"{o}dinger equations with a potential, J. Math. Phys.
\textbf{57} (2016).

\bibitem {Ivan1}Naumkin I., Nonlinear Schr\"{o}dinger equations with
exceptional potentials, J. Differential Equations \textbf{265} (2018) 4575--4631.

\bibitem {RicardoIvan}Naumkin I., and Weder R., $L^{p}-L^{p^{\prime}}$
estimates for matrix Schr\"{o}dinger equations, J. Evol. Equ. \textbf{21}
(2021) 891--919.

\bibitem {nov}Novikov S., Manakov S. V., Pitaevskii L. P., and Zakharov V. E.,
\textit{Theory of Solitons: The Inverse Scattering Method,} Consultants
Bureau, New York, 1984.

\bibitem {Ozawa1}{Ozawa T.}, Long range scattering for the nonlinear
Schr\"{o}\-din\-ger equation in one space dimension, {\ Comm. Math. Phys.
\textbf{139} (1991) 479--493.}



\bibitem {racke}Racke R., \textit{Lectures in Nonlinear Evolution Equations.
Initial Value Problems.} Aspects of Mathematics \textbf{19}, F. Vieweg \& Son,
Braunschweig/ Wiesbaden, 1992.

\bibitem {rs.2}Reed M., and Simon B.,\textit{\ Methods of Modern Mathematical
Physics I Functional Analysis.} Academic Press, New York, 1972.

%\bibitem {rf}Rofe-Beketov F.S. and Kholkin A. M. \textit{Spectral Analysis of
%Differential Operators. Interplay Between Spectral and Oscillation
%Properties.} World Scientific, Singapore, 2005.



\bibitem {Segal1}Segal I. E., Non-linear semi-groups, Ann. of Math. (2),
\textbf{78 }(1963) 339--364.

\bibitem {Segal2}Segal I. E., Quantization and dispersion for non-linear
relativistic equations, \textit{Proceeding Conference Mathematical Theory of
Elementary Particles,} MIT Press, Cambridge, Mass. (1966), 79--108.

\bibitem {Segal3}Segal I. E., Dispersion for non-linear relativistic
equations, II, Ann. Sci. \'Ecole Norm. Sup. (4), \textbf{1} (1968), 459--497.

\bibitem {Shimizu}Shimizu K., and Ichikawa Y. H., Automodulation of ion
oscillation modes in plasma, J. Phys. Soc. Japan \textbf{33} (1972) 189-792.

\bibitem {Strauss}{Strauss W. A.}, Nonlinear scattering theory, in
\textit{Scattering Theory in Mathematical Physics}, NATO Advanced Study
Institutes Series \textbf{9} (1974) 53-78.

\bibitem {Strauss2}{Strauss W. A.}, Nonlinear scattering theory at low energy,
J. Funct. Anal. \textbf{41} (1981) ,110--133.

\bibitem {straussl}Strauss W. A., \textit{Nonlinear Wave Equations}, CBMS-RSMC
\textbf{73} Amer. Math. Soc., Providence, 1989.

\bibitem {Sulem}Sulem C. , and Sulem P.L., \textit{The nonlinear
Schr\"{o}dinger equation. Self-focusing and wave collapse,} App. Math.
Sciences,\textbf{\ 139} Springer, New York, (1999).

\bibitem {Taniuti}Taniuti T., and Washimi~H., Self trapping and instability of
hydromagnetic waves along the magnetic field in a cold plasma, Phys. Rev.
Lett., \textbf{21} (1968) 209-212.



\bibitem {Wederkg1}Weder R., Inverse scattering on the line for the nonlinear
Klein-Gordon equation with a potential, J. Math. Anal. Appl .\textbf{252}
(2000) 102-123.

\bibitem {Weder2000}Weder R., $L^{p}-L^{p^{\prime}}$ estimates for the
Schr\"{o}dinger equation on the line and inverse scattering for the nonlinear
Schr\"{o}dinger equation with a potential, J. Funct. Anal. \textbf{170} (2000) 37--68.

\bibitem {Wedercenter}Weder R., Center manifold for nonintegrable nonlinear
Schr\"odinger equations on the line, Comm. Math. Phys. \textbf{215} (2000), 343-356.

\bibitem {Weder2001}Weder R., Inverse scattering for the nonlinear
Schr\"odinger equation: reconstruction of the potential and the nonlinearity,
Math. Meth. Appl. Sci. \textbf{24} (2001) 245-254.

\bibitem {WederPAMS}Weder R., Inverse scattering for the nonlinear
Schr\"odinger equation II: reconstruction of the potential and the
nonlinearity in the multidimensional case , Proc. Amer. Math. Soc.
\textbf{129} (2001) 3637-3645.

\bibitem {Wederkg}Weder R., Multidimensional inverse scattering for the
nonlinear Klein-Gordon equation with a potential, J. Differerential Equations,
\textbf{184} (2002) 62-77.

\bibitem {Wederhalf}Weder R., The forced nonlinear Schr\"odinger equation with
a potential on the half-line, Math. Methods Appl. Sci.\textbf{28} (2005) 1237-1255.

\bibitem {Wederhalfscat}Weder R., Scattering for the forced nonlinear
Schr\"odinger equation with a potential on the half-line, Math. Methods Appl.
Sci. \textbf{28} (2005) 1219-1236.

\bibitem {Wlp}Weder R., The $L^{p}$ boundedness of the wave operators for
matrix Schr\"{o}dinger equations, arXiv: 1912.12793v3 [math-ph]. To appear in
J. Spectral Theory.

\end{thebibliography}
\end{document}